\documentclass{article}
\usepackage{graphicx} % Required for inserting images
\usepackage[alphabetic, abbrev, msc-links, nobysame]{amsrefs}

\usepackage{comment}
\usepackage{physics}
\usepackage{amsmath,amsthm}
\usepackage[all,cmtip]{xy}
\usepackage{amssymb}
\usepackage{geometry}
\usepackage{fancyhdr}
\usepackage{lipsum}
\usepackage{mathtools}
\usepackage{relsize}
\usepackage{physics}
\allowdisplaybreaks
\usepackage{braket}
\usepackage{bm}
\usepackage{esint}
\usepackage{enumitem}
\usepackage{mathrsfs}
\usepackage{xcolor}
\usepackage{bbold}
\usepackage{comment}
\usepackage{hyperref}
\usepackage{amsthm}
\usepackage[alphabetic, abbrev, msc-links, nobysame]{amsrefs}
\usepackage{stmaryrd}
\usepackage{float}
\usepackage[normalem]{ulem}
\newenvironment{issue}{\color{red}}{\ignorespacesafterend}
\numberwithin{equation}{section}
\newtheorem{thm}{Theorem}[section]
\newtheorem{theorem}[thm]{Theorem}
\newtheorem{cor}[thm]{Corollary}
\newtheorem{definition}[thm]{Definition}
\newtheorem{assumption}[thm]{Assumption}
\newtheorem{lemma}[thm]{Lemma}

\newtheorem{remark}[thm]{Remark}
\newtheorem{prop}[thm]{Proposition}
\newtheorem{corollary}[thm]{Corollary}
\newtheorem{example}[thm]{Example}

\def\bi{
\begin{itemize}}
    \def\ei{
  \end{itemize}}
\def\bem{
\begin{emuerate}}
  \def\eem{
\end{enumerate}}
\def\beq{
\begin{eqnarray*}}
\def\eeq{
\end{eqnarray*}}

\def\N{\mathbb{N}} %naturals
\def\Z{\mathbb{Z}} %integers
\def\R{\mathbb{R}} %reals
\def\P{\mathbb{P}} %probability
\def\E{\mathbb{E}} %expectation
\def\T{\mathbb{T}} %Torus
\def\1{\mathbb{1}}
\DeclarePairedDelimiter{\Parentheses}{(}{)} \newcommand{\pr}{\Parentheses*}

\newcommand{\Besovp}[2]{\mathcal B^{#1}_{#2}}
\newcommand{\Binf}[1]{\mathcal B^{#1}_\infty}
\newcommand{\prk}[1]{{(#1)}}

\newcommand{\Rnum}[1]{\uppercase\expandafter{\romannumeral #1}}

\title{Wick Renormalized Parabolic Stochastic Quantization Equations on Rough Metric Measure Spaces}
\author{Hongyi Chen \footnote{Department of Mathematics Aarhus University, Denmark} \and Yifan (Johnny) Yang \footnote{School of Mathematics and Statistical Sciences, Arizona State University, Tempe, U.S.A}}
\allowdisplaybreaks

\begin{document}

\maketitle

\begin{abstract}
\noindent On metric measure spaces with sub-Gaussian heat kernel behavior in small time, we obtain sufficient conditions for the well-posedness of Wick renormalized stochastic quantization equations with polynomial interaction. Given the power of the nonlinearity, the local solution condition depends on the Hausdorff dimension $d_h$, the walk dimension $d_w$, and the maximal spatial H\"older regularity  $\Theta$ of the heat kernel. Global solutions exist under a slightly more restrictive condition on the same parameters, corresponding to the full range in which the a priori energy argument of \cite{MW17,TW18} can be carried out. For all global solutions, we construct an invariant measure for the Markov process defined by the solution.  Our results apply to many rough spaces such as Barlow--Kigami type fractals as well as their Cartesian products and open up the possibility of making rigorous various structures in quantum field theory and statistical mechanics in non-integer dimensions. In the process, we build entirely from the short-time heat semigroup the necessary analytic framework that accommodates the issues which come with allowing rough local geometry.
\end{abstract}
{ \tableofcontents}

\section{Introduction}\label{sec: Intro}

For $n\geq 3$ odd, we seek to develop a solution theory for the  $\Phi^{n+1}$ equations, formally written as
\begin{equation}\label{eq:phi n formal}
\partial_t \phi = -L\phi - \phi - \phi^n + \xi,
\end{equation}
where $\xi$ is a Gaussian space-time noise, $L$ is a  self-adjoint, non-negative operator, in a general setting where renormalization is required. In the Euclidean setting, $L$ is some diffusion operator, usually the Laplacian $-\Delta$, and $\xi$ is space-time white noise. Both of these can be defined on certain metric measure spaces via the theory of Dirichlet forms pioneered by Fukushima\cite{FukushimaDF}. This paper will provide some sufficient conditions, in terms of heat kernel properties, such that a renormalized (local/global) solution theory exists for \eqref{eq:phi n formal} as well as its invariant measures on those spaces (see Theorem \ref{thm: main}). 

Of course, \eqref{eq:phi n formal} is only formal as written and giving rigorous meaning to it can be challenging. Its solution theory, even locally-in-time, is non-trivial and requires ``renormalization" when ``dimension" is bigger than or equal to 2, due to the singularity of the non-linear term $\phi^n$. Stochastic equations which require renormalization are known as singular stochastic partial differential equations (singular SPDEs).

Equation \eqref{eq:phi n formal} has connections to quantum field theory and statistical mechanics. In quantum field theory, equation \eqref{eq:phi n formal} is the Langevin dynamic of the formal Gibbs measure 
\[
\mu(d\phi) \propto \exp\Bigl(-\int_M \Bigl[\tfrac12 (\phi L \phi+\phi^2) + \frac{1}{n+1}\phi^{n+1}\Bigr] \, d\mu \Bigr) D\phi,
\]
where $``D\phi"$ is the non-existent Lebesgue measure over some infinite-dimensional function space (see \cite{PW81}). In this context, \eqref{eq:phi n formal} is known as the ``stochastic quantization equation". The rigorous construction of this Gibbs measure in dimensions 2 and 3 were crowning achievements of constructive quantum field theory in the 1970s, and they still remain the source of fruitful theoretical exploration. For this reason, the study of the solution of \eqref{eq:phi n formal} and its invariant measure remains an active topic in low dimensional Euclidean spaces\cite{bauerschmidt2024log,GH21,HairerSteele21,MW17,SZZ23,CLTphi4,TW18}. In recent years, stochastic quantization has been used to study ergodic problems involving the defocusing Schrodinger equation\cite{bringmannStaffilani}. The invariant measure has also been identified as the semiclassical limit of Gibbs states of some many-body quantum systems (see \cite{nam2025phi43theorymanybodyquantum} and the references therein). In statistical mechanics, the solution to \eqref{eq:phi n formal} in the case $n=3$ has also been identified as scaling limits of finite range interaction Ising models at critical temperature \cite{IsingKac1D,isingkacLine,IsingKacConjecture,IsingKac2D,IsingKac3D,ADC}. This is believed to be one manifestation of a not fully explored universality class. In infinite volume $d\in\set{2,3}$, the fact that $\Phi^4$ measures exist only for sufficiently small coupling corresponds to the uniqueness of equilibrium Gibbs measures of the nearest neighbor Ising model only at sufficiently high temperature\cite{simon1974p}.

Calculations which ``treat dimension as a continuum'' \cite{FW72,Wilson1973QuantumFM} in the physics renormalization group literature predict phase transitions not visible in integer dimensions. Recently, this has motivated the stochastic quantization community to study a version of \eqref{eq:phi n formal} where $L$ is a fractional Laplacian on Euclidean spaces \cite{BCCH20,DGRfracPhi4,chandra2025non}. It is believed that since the underlying space operator is no longer local, this equation and its invariant measure belong to the universality class of certain long-range Ising models with infinite interaction radius on lattices.

This paper seeks to  start a program to realize these phenomena directly on spaces with non-integer dimensions using stochastic quantization. Motivated by how physics predicts spectral dimension to govern renormalization \cite{FW72,Wilson1973QuantumFM,ADRfracphoton,QFTonCRT}, we look to the setting of metric measure spaces where spectral dimension enters naturally via short time heat kernel behavior and strongly local Dirichlet forms. This naturally includes many rough spaces, such as fractals studied by Barlow and Kigami\cite{barlow2003heat,kigami2001analysis}. 
Coincidentally, independent motivations from physics has led to rich theoretical explorations of constructing quantum field theories on fractal spacetimes\cite{CraneSmolin,QFTonCRT}. With the exception of \cite{Eyink1989QuantumFM}, who worked using hierarchical models in low spectral dimension, there is no non-perturbative literature in this direction. Attempts to study the Ising model on fractal-like graphs have also been made, mostly in the physics literature\cite{IsingFractalPhysics}. These works suggest a broader program of understanding interacting field theories and statistical mechanics on irregular geometries is yet to be developed, and the present work provides a first step in this direction via stochastic quantization.

Since our central objects of study are  singular SPDEs, we give a brief overview of the literature in this area. On the $d-$dimensional torus $\T^d$ where $L=-\Delta$, \eqref{eq:phi n formal} requires renormalization in dimensions $d\geq 2$ for any nontrivial solution theory. It is regarded as one of the simplest singular SPDEs, as it was the focus of \cite{DPD} which provided a probabilistically strong solution theory $d=2$. The field experienced breakthroughs with the advent of regularity structures\cite{Hai14} and Paracontrolled Calculus\cite{GIP15}, which provided a rigorous solution theory for  \eqref{eq:phi n formal} in $d=3$ for $n=3$. Since then, further techniques involving renormalization group concepts \cite{duch2025flow,Polchinski,SarahAndMax} to study \eqref{eq:phi n formal} and/or its invariant measure have also emerged.

Regularity structures\cite{Hai14} is a local expansion-based theory and has been lauded for its wide applicability in locally Euclidean spaces. One of its most celebrated recent results is the local theory for the non-Abelian Stochastic Yang-Mills Higgs equations on $\T^3$\cite{YMH3d}, which is closely connected to the Yang-Mills millennium problem. In recent years, the theory has been extended to manifold domains in a series of works \cite{DDD19,MSLieGrp,HairerSingh}. Notably, all of these seem tied to the locally Euclidean structure of the space (for example, \cite{HairerSingh} relies on the concept of jet bundles). This is natural, as regularity structures is heuristically a generalization of Taylor expansion. Since such structures do not exist on generic metric spaces, we believe that re-building this theory in our setting will need new ideas that are not in the current literature. 

On the other hand, paracontrolled calculus\cite{GIP15} is based on harmonic analysis\cite{BCD11}, and is thus readily applicable in the framework of existing function space theory, as opposed to Regularity structures, which must build all of the function spaces and estimates ``by hand". For \eqref{eq:phi n formal}, global well-posedness was shown for $\T^2$ first in \cite{DPD}, then without reliance on invariant measures \cite{MW17,TW18} (see \cite{Weber_2dMoyal} for the latest development in an exotic 2-dimensional setting). Then in \cite{GH21}, paracontrolled calculus was used to give a global solution on $\R^3$. Since then, paracontrolled calculus has been the tool to generate global solutions for many more complicated equations in $\R^d$ and $\T^d$, including vector-valued and gauge-theoretic equations\cite{Bcao1,BCao2,HairerRosatiSNSE,SZZ24,SingularKEZhu2}. 

In \cite{bailleul2016heat}, many of the key results in \cite{GIP15} were reproduced in the setting of a metric measure spaces with small-time Gaussian heat kernel upper bounds. Notably, these spaces include all Riemannian and sub-Riemannian manifolds, as well as all regular (metric) graphs and their Cartesian products. A notable aspect of this work is that Besov spaces were defined only using the short time behavior of the heat kernel and all of the results did not assume (as one familiar with \cite{FollandAbstractHA} may expect) a group structure on the underlying space or the need for some kind of Bernstein lemma (\cite[Section 2.1.1]{BCD11}, see \cite[Section 2.4]{SixPerson_Heisenberg} for a nontrivial geometric context). On many rough metric measure spaces, such as the Sierpinski gasket, one can obtain the precise upper and lower heat kernel estimates. In light of this, the heat semi-group approach of \cite{bailleul2016heat} seems to be a perfect setting to study singular (S)PDEs on general spaces from the paracontrolled perspective. One informal goal of this paper is to give evidence for the possibility of re-building paracontrolled calculus on certain rough metric spaces.

While singular SPDEs have not been studied on rough metric measure spaces, the literature on SPDEs without renormalization on general metric spaces has been growing for more than a decade. Besides the abstract functional analytic formulations \cite{da2014stochastic}, the earliest results trace back to the Dirichlet form approaches in \cite{hinz2013vector}, and the semi-group approaches using heat kernel estimates in \cite{hinz2012semigroups,issoglio2015regularity}. The well-posedness and regularity of solutions to parabolic-type SPDEs on post critically finite fractals were studied in \cite{hambly2017continuous, hambly2018existence,hinz2022approximation}, as well as stochastic wave equations in \cite{hambly2020damped}. For metric graphs, we refer to \cite{cerrai2017spdes,cerrai2019fast,cerrai2021incompressible,cerrai2024spdes,fan2021stochastic}. In \cite{baudoin2025parabolic}, whose setting is close to ours, the authors studied random field solutions to the multiplicative stochastic heat equation on spaces with sub-Gaussian heat kernels. In an upcoming work, Zhenyao Sun, Louis Fan and the second author have developed a framework for SPDEs with random field solutions on general metric measure spaces and studied their comparison principles.

To the best of our knowledge, there is currently no literature on equations which require renormalization on general metric measure spaces. This article is the first step in filling that gap, using the growing theory of heat kernel based Besov spaces\cite{triebel2006theory,grigor2015heat,liu2016besov,UConn1,UConn2,alonso2021besov,UConn4}. However, the existing literature on such spaces is mostly focused on the structure of the heat flow and function spaces themselves, and does not provide the nonlinear analytic tools required for the study of singular stochastic PDE. In particular, fundamental ingredients such as Schauder-type estimates, paraproduct decompositions, and precise connections between the Dirichlet form and useful notions of Besov-type norms are largely absent in this setting.

We discovered that reconstructing this analytic structure forces us to confront difficulties arising from allowing fractal geometry. Indeed, in Euclidean-type settings, smooth functions form an algebra and Fourier analytic tools provide fine localization. Such structures are known to not exist on general metric measure spaces. As a result, the formulation of nonlinear operations becomes delicate. These difficulties are closely related to phenomena observed in the Dirichlet forms and fractal analysis literature. In particular, the space of ``smooth" test functions typically are not algebras \cite{ben1999not}, and ``derivatives" are often singular with respect to the reference measure \cite{kajino2020singularity}, leading to a lack of standard harmonic analytic tools such as Bernstein-type inequalities. For the study of \eqref{eq:phi n formal}, these also make certain PDE type arguments, such as the maximum principle arguments of \cite{GH19}, difficult to implement here.

One of the main contributions of this paper is to show that, despite these limitations, it is possible to develop a sufficiently rich analytic framework to treat renormalized stochastic quantization equations via the Da Prato–Debussche method\cite{DPD} on such spaces. This is achieved by working entirely within a heat-semigroup-based Besov framework and exploiting the available H\"older regularity of the heat kernel. From the PDE viewpoint, it is also worth mentioning that we used an energy argument to obtain the global solution to a nonlinear equation, despite all the difficulties caused by rough geometry. In this way, we obtain a clearer understanding of how rough geometry gives rise to analytic difficulties and what structures can be used to overcome them in the study of nonlinear equations. We believe these difficulties will also prohibit a general extension of our results to Sobolev settings similar to the type discussed \cite[Appendix B]{bailleul2016heat}. For an exploration of the bad behavior of Sobolev spaces in a particular rough geometry when studying a PDE, we refer the reader to \cite{SGNLS}, which studied the nonlinear Schrodinger equation on the Sierpinski gasket.

\subsection*{Acknowledgments} 
We are grateful to SLmath (formerly MSRI) for hosting the summer school on stochastic quantization in July 2024, during which we learned most of the singular SPDE and mathematical physics material which made us believe this project is possible. We thank the summer school instructors Hao Shen, Massimiliano Gubinelli, and Lorenzo Zambotti as well as the teaching assistants Sky Cao, Sarah Jean-Meyer, and Evan Sorensen for their carefully crafted lessons and patience for our questions during the summer school. We also thank Ajay Chandra, Pawel Duch, Martin Hairer, Nicholas Perkowski,  Harprit Singh and Rongchan Zhu for helpful discussions on singular SPDEs and mathematical physics and comments on earlier versions of this project, as well as Fabrice Baudoin and Li Chen for discussions on Besov theory for spaces with sub-Gaussian heat kernels. H.C. is supported by the research grant (VIL73729) from Villum Fonden.
\subsection{Assumptions and Main Results}\label{sec: Setup}

Throughout the paper $(M,d)$ denotes a locally compact complete metric space
and $\mu$ a Radon measure on $M$ with full support. We refer to the triple
$(M,d,\mu)$ as a \emph{metric measure space}. Its diameter is
\[
\mathrm{diam}(M):=\max_{x,y\in M} d(x,y).
\]
For simplicity, and in order to avoid weighted function
spaces, we assume throughout that $(M,d)$ is \textbf{compact}. Without loss
of generality we also take $\mathrm{diam}(M)\ge 1$ and $\mu(M)\ge 1$.

We write $\mathcal C_M$ for the space of continuous functions on $M$ and, for
$p\ge 1$, we abbreviate $L^p_M:=L^p(M,\mu)$. For functions $f,g:M\to\R$, we
write $f\lesssim g$ (equivalently $f(x)\lesssim g(x)$ uniformly in $x\in M$)
if there is a constant $C>0$ such that $f(x)\le Cg(x)$ for every $x\in M$,
and $f\asymp g$ if $f\lesssim g$ and $g\lesssim f$. When used repeatedly the
implicit constants may change from line to line.  For $t,s \in \R$, we denote $t \wedge s:= \min\{t,s\}$ and $t\vee s:=\max\{t,s\}$. Finally we set
$\Z_+:=\{0,1,2,\dots\}$ and $\N:=\Z_+\setminus\{0\}$.

For~\eqref{eq:phi n formal} to formally make sense, we assume that $L$ is a
non-negative, densely defined, self-adjoint operator on $L^2_M$ with domain
$\mathcal D_2(L)$, and that $L$ generates a Markov semigroup
$(P_t)_{t\ge 0}$, written interchangeably as $e^{-tL}=P_t$. We further assume
that $P_t$ is absolutely continuous with respect to $\mu$ for every $t>0$,
so that there exists a symmetric, everywhere-defined continuous function
$p_t:M\times M\to\R_+$ satisfying
\[
P_tf(x)=\int_M p_t(x,y)f(y)\,\mu(\mathrm dy)
\]
for every non-negative measurable $f$ and every $x\in M$ for which the
integral converges. We call $p_t(x,y)$ the \emph{heat
kernel} of $L$.

By the classical theory of Dirichlet
forms (See \cite[Theorem 1.3.1]{FukushimaDF}, also see \cite{chen2012symmetric,ma2012introduction}), $L$ is
associated with a symmetric quadratic form: for $f,g \in \mathcal D_2(L)$
\begin{align}
    \mathcal E(f,g):=\int_M f(Lg)\,\mathrm d\mu,
\end{align}
whose domain $\mathcal F\supset \mathcal D_2(L)$ makes $(\mathcal E,\mathcal
F)$ a Dirichlet form on $L^2_M$. More details will be discussed in Section~\ref{sec:prelim}.

%\paragraph{The main assumption and consequences}
We assume the following for the rest of the paper.

\begin{assumption}\label{A:main}
With $(M,d,\mu)$, $L$, $(P_t)_{t\ge 0}$ and $(p_t(x,y),\ t>0,\ x,y\in M)$ as
above, there exist constants $d_h\ge d_w\ge 2$ and $\Theta\in(0,1]$ such that
the following hold.
\begin{itemize}
  \item \label{cond:sgu} \emph{Sub-Gaussian heat-kernel bounds.} Set
  $\Phi(u):=\exp\!\bigl(-u^{d_w/(d_w-1)}\bigr)$ for $u\in[0,\infty)$. There
  are $C_1,C_2>0$ such that, uniformly in $(t,x,y)\in[0,1]\times M^2$,
    \begin{align}\label{ineq:sgu}
      t^{-\frac{d_h}{d_w}}\,\Phi\!\left(C_1\frac{d(x,y)}{t^{1/d_w}}\right)
      \;\lesssim\; p_t(x,y) \;\lesssim\;
      t^{-\frac{d_h}{d_w}}\,\Phi\!\left(C_2\frac{d(x,y)}{t^{1/d_w}}\right).
      \tag{$\mathrm{HKE_f}$}
    \end{align}
  \item \emph{Spatial H\"older regularity.} Uniformly in $t\in(0,1]$ and
  $x,x',y\in M$ with $d(x,x')\le t^{1/d_w}$,
    \begin{align}\label{ineq:holder_kernel}
      \bigl|p_t(x,y)-p_t(x',y)\bigr|
      \;\lesssim\;
      \pr{\frac{d(x,x')}{t^{1/d_w}}}^{\!\Theta}
      t^{-\frac{d_h}{d_w}}\,\Phi\!\pr{C\frac{d(x,y)}{t^{1/d_w}}}.
      \tag{$\mathrm{HC}_\Theta$}
    \end{align}
  \item \emph{Stochastic completeness.} For every $t>0$ and $x\in M$,
    \begin{align}\label{eq:s_complete}
      \int_M p_t(x,y)\,\mu(\mathrm dy)=1.\tag{SC}
    \end{align}
\end{itemize}
\end{assumption}

\noindent Examples and implications of Assumption~\ref{A:main} are numerous
and are deferred to Section~\ref{sec:example and implications}.

Assumption~\ref{A:main} implies the \emph{chain condition} on the metric
space $(M,d)$ (see~\cite{murugan2020length}); as a result any $\alpha$-H\"older
continuous function with $\alpha>1$ must be constant. By~\cite[Theorem
4.1]{grigor2008dichotomy}, it also entails volume regularity: uniformly in
$x\in M$ and $r\in(0,\mathrm{diam}(M)]$,
    \begin{align}\label{ineq:a_reg}
      \mu(B(x,r))\asymp r^{d_h}.\tag{$V_{h}$}
    \end{align}
In particular $d_h$ is the Hausdorff dimension of $(M,d,\mu)$. Moreover, the
semigroup $(P_t)_{t\ge 0}$ satisfies the Feller and strong Feller properties
(see~\cite{lierl2015scale}): it is strongly continuous on $\mathcal C_M$,
and $P_tf\in\mathcal C_M$ for every $t>0$ and every bounded Borel measurable
$f$ on $M$.

Combining \eqref{ineq:a_reg} with Lemma~\ref{lem:long time heat kernel
bounds}, the heat-kernel estimates~\eqref{ineq:sgu} can be recast in the
more compact volume-based form: there exist $C_1,C_2>0$ such that, uniformly
in $x,y\in M$ and $t>0$,
\begin{align}\label{ineq:hk_subg_compact_form}
    \frac{1}{\mu\!\left(B(x,t^{1/d_w})\right)}
      \Phi\!\pr{C_1\frac{d(x,y)}{t^{1/d_w}}}
    \;\lesssim\; p_t(x,y) \;\lesssim\;
    \frac{1}{\mu\!\left(B(x,t^{1/d_w})\right)}
      \Phi\!\pr{C_2\frac{d(x,y)}{t^{1/d_w}}}.
\end{align}

%\paragraph{Test functions, distributions and Besov spaces.}
We now turn to the function spaces used to state our main results. Write
$\norm{\cdot}_{\mathcal C_M}$ for the supremum norm on $\mathcal{C}_M$ and let $\mathcal D_\infty(A)$ be the domain of an operator $A$ in $\mathcal{C}_M$.

\begin{definition}\label{def:test_functions}
The space of \emph{test functions} is
\[
  \mathcal S
  := \bigl\{f\in\mathcal C_M:\ f\in \mathcal D_\infty(L^n)\ \text{for every }
  n\in\Z_+\bigr\}.
\]
The space of \emph{distributions} $\mathcal S'$ is the topological dual of $\mathcal S$.
\end{definition}

\noindent The semigroup-based building blocks of our Littlewood--Paley
decomposition are
\begin{equation}\label{eq:pq operators}
    \begin{aligned}
    Q^{(k)}_t &:= \left(tL\right)^{k}e^{-tL},
      \qquad k\in\Z_+,\\
    P_t^{(k)}&:= \sum_{m=0}^{k-1}\frac{1}{m!}\,L^m e^{-Lt},
      \qquad k\in\N.
    \end{aligned}
\end{equation}

\noindent We now introduce the Besov spaces that will serve as solution
spaces for~\eqref{eq:phi n formal} and the natural setting for
paraproducts. Our definition follows~\cite{liu2016besov} (see
also~\cite{grigor2015heat}).

\begin{definition}\label{def:besov_space}
For $\alpha\in\R$, $p,q\in(0,\infty]$ and $k\in\Z_+$ with $k>\alpha/d_w$, set
\begin{align}\label{eq:besov norm def pq}
      \norm{f}_{\dot{\mathcal B}_{p,q}^\alpha}
  := \left(\int_0^1\!\left[t^{-\alpha/d_w}\norm{Q^{(k)}_t f}_{L^p_M}
  \right]^q\frac{\mathrm dt}{t}\right)^{\!1/q},
\qquad
  \norm{f}_{\mathcal B^\alpha_{p,q}}
  := \norm{P_1 f}_{L^p_M} + \norm{f}_{\dot{\mathcal B}_{p,q}^\alpha},
\end{align}
for $f\in\mathcal S$, with the usual modification when $q=\infty$. The
Besov space $\mathcal B^\alpha_{p,q}$ is the closure of $\mathcal S$ under
$\norm{\cdot}_{\mathcal B^{\alpha}_{p,q}}$. When $q=\infty$ we abbreviate
$\mathcal B^{\alpha}_p:=\mathcal B^\alpha_{p,\infty}$  for $\alpha\in\R$,
$p\in[1,\infty]$.
\end{definition}

\noindent The norms in~\eqref{eq:besov norm def pq} are equivalent for all
sufficiently large $k\in\Z_+$; see~\cite[Proposition 4.4]{bui2012calderon}.

Let $\dot{\xi}$ be space–time white noise on $\R\times M$. Motivated by \cite{DPD}, we decompose \eqref{eq:phi n formal} into the stochastic linear part 
\begin{align}\label{eq:she mass}
  \partial_t Y = -LY  - Y +\dot{\xi},\quad  \text{on }\R_+\times M
\end{align}
and the smoother remainder equation
\begin{equation}\label{eq:formal eq for v intro}
\begin{aligned}
  \partial_t v = -L v- v + \Psi(Y,v)\quad \text{ on } \R_+\times M,
  \end{aligned}
\end{equation}
with $\Psi(Y,v):=-\sum_{j = 0}^n \binom{n}{j}v^j Y^{:n-j:}$, where  $Y^{:k:}$ is the $k$-Wick power of $Y$ for $k \in \N$ (see Section \ref{sec: EW and Wick} for more details on the re-normalizing counter terms). 

Given the tuple $(d_h,d_w,\Theta,n)$ as in Assumption \ref{A:main}, we say equation \eqref{eq:phi n formal} is in the \textbf{Da Prato-Debussche (DPD)} regime if for each $\phi_0$ in the natural Besov space of $Y$ in \eqref{eq:she mass} (see Theorem \ref{thm:Wick Power Convergence}), there exist appropriate $(Y_0,v_0)$ with $Y_0+v_0 = \phi_0$, so that \eqref{eq:she mass} has a solution global in $t \ge 0$ and \eqref{eq:formal eq for v intro} has a local (in time) solution with initial conditions $Y_0$ and $v_0$ respectively.

\noindent Our main result  gives a sufficient condition for the DPD regime with the convention that $a/0 = \infty$ for $a>0$. For the precise statement, see Lemma \ref{lem:local solution to DPD and convergence of approximation} and Corollary \ref{cor:global_wp}.
\begin{theorem}\label{thm: main}
    The tuple $(d_h,d_w,\Theta,n)$ is in the DPD regime when 
    \begin{align}\label{ineq: dbd condition}
        n < \frac{1}{2} \frac{d_h+d_w}{d_h - d_w},\qquad \text{ and } \qquad n < \frac{2\Theta}{d_h - d_w} + 1.
    \end{align}
    If furthermore $n \ge 3$ is odd, and 
    \begin{align}\label{ineq:global solution condition}
        n < \frac{d_w}{d_h - d_w}.
    \end{align}
    Then for appropriate initial condition, the remainder equation \eqref{eq:formal eq for v intro} admits a function-valued global in time solution,  and \eqref{eq:phi n formal} defines a  time-homogeneous Markov process on $\mathcal B^{-\alpha}_{\infty}$, for any $\alpha>\frac{d_h-d_w}{2}$. In addition, it admits at least one invariant probability measure. 
\end{theorem}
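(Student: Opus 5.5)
The proof follows the Da Prato--Debussche decomposition. We treat the linear equation \eqref{eq:she mass} and its Wick powers first, then the remainder equation \eqref{eq:formal eq for v intro} by a fixed point, and finally an energy estimate and a Krylov--Bogoliubov argument.

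\emph{Step 1: stochastic objects.} Using \eqref{ineq:hk_subg_compact_form} and \eqref{ineq:a_reg}, the covariance of $Y$ behaves like
\[
\int_0^\infty e^{-2s}p_{2s}(x,y)\,ds \asymp d(x,y)^{-(d_h-d_w)}
\]
for small $d(x,y)$, with a logarithm when $d_h=d_w$. The covariance of $Y^{:k:}$ is the $k$-th power of this, which is $d(x,y)^{-k(d_h-d_w)}$. We then estimate $\E|Q^{(k)}_tY^{:k:}(x)|^2$ through heat-kernel integrals, apply Gaussian hypercontractivity to pass to high moments, and use a Kolmogorov-type embedding into $\mathcal B^{-\alpha}_\infty$. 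This gives $Y^{:k:}\in C_T\mathcal B^{-k\alpha}_\infty$ for every $\alpha>\frac{d_h-d_w}{2}$, and it is the content of Theorem~\ref{thm:Wick Power Convergence}. Local integrability of the $n$-th power covariance needs $n(d_h-d_w)<d_h$, which is weaker than the first condition in \eqref{ineq: dbd condition}.

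\emph{Step 2: local theory for $v$.} We solve the mild form of \eqref{eq:formal eq for v intro} in $C_T\mathcal B^{\beta}_\infty$ with $\beta>0$. Three ingredients are needed. First, the Schauder estimate $\|P_tf\|_{\mathcal B^{\beta}_\infty}\lesssim t^{-(\beta+\gamma)/d_w}\|f\|_{\mathcal B^{-\gamma}_\infty}$. The gain in regularity is capped by the H\"older exponent $\Theta$ of the kernel from \eqref{ineq:holder_kernel}: since H\"older functions of order above $\Theta$ need not be controlled, we take $\beta<\Theta$. Second, the product estimate $\|fg\|_{\mathcal B^{-\gamma}_\infty}\lesssim\|f\|_{\mathcal B^{\beta}_\infty}\|g\|_{\mathcal B^{-\gamma}_\infty}$ when $\beta>\gamma$, built from semigroup paraproducts as in \cite{bailleul2016heat}. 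Third, the algebra property of $\mathcal B^\beta_\infty$ for $0<\beta<\Theta$, which handles $v^j$.

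The worst term in $\Psi(Y,v)$ is $Y^{:n:}$, of regularity $-n\alpha$. The product $v\,Y^{:n-1:}$ requires $\beta>(n-1)\alpha$. The time integral of the Schauder bound converges if $\beta+n\alpha<d_w$. Combining these with $\beta<\Theta$ and $\alpha\downarrow\frac{d_h-d_w}{2}$:
\begin{itemize}
\item $(n-1)\frac{d_h-d_w}{2}<\Theta$, which is the second condition in \eqref{ineq: dbd condition};
\item $(2n-1)\frac{d_h-d_w}{2}<d_w$, which rearranges to $n<\frac{1}{2}\frac{d_h+d_w}{d_h-d_w}$, the first condition.
\end{itemize}
We take the initial datum $v_0=\phi_0-Y_0$ with $Y_0$ stationary. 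A Banach contraction on a short interval, whose length depends on the norms of the $Y^{:k:}$, then gives the local solution. This is the DPD regime claim, i.e.\ Lemma~\ref{lem:local solution to DPD and convergence of approximation}.

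\emph{Step 3: global existence, following \cite{MW17,TW18}.} Test the equation with $v^{p-1}$ for $p$ even. The Dirichlet form gives the positive term $\mathcal E(v,v^{p-1})\ge0$ by the Markov property, and the coercive term is $-\int v^{n+p-1}$. Each cross term $\int v^{j+p-1}Y^{:n-j:}$ is bounded by duality through $\mathcal B^{\kappa}_{1,1}$ versus $\mathcal B^{-\kappa}_\infty$. Here a gradient of $v^{j+p-1}$ is unavailable, since energy measures may be singular \cite{kajino2020singularity}. Instead we interpolate: $\|w\|_{\mathcal B^\kappa_{1,1}}$ is controlled by $\|w\|_{L^1}$ and a $\mathcal B^{d_w/2}_{2,2}$-type quantity coming from $\mathcal E(v^{p/2})$, via a heat-kernel characterization of $\mathcal F$. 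The Young exponents close when $\kappa$, roughly $(n-j)\alpha$, is less than $d_w/2\cdot$(fraction), which produces $n(d_h-d_w)<d_w$, i.e.\ \eqref{ineq:global solution condition}. We expect this step to be the \textbf{main obstacle}: the missing chain rule on fractals. I would first prove $\mathcal E(v,v^{p-1})\gtrsim\mathcal E(v^{p/2},v^{p/2})$ using the Beurling--Deny/Stroock--Varopoulos inequality for Markov semigroups, which does not need derivatives. The resulting differential inequality for $\|v\|_{L^p}^p$ is superlinear-damped, so it gives bounds independent of the initial datum after time $1$, and smoothing lifts these back to $\mathcal B^\beta_\infty$.

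\emph{Step 4: Markov property and invariant measure.} The solution map is continuous in $\phi_0$ and the noise increments are independent, which yields a time-homogeneous Markov (and Feller) process on $\mathcal B^{-\alpha}_\infty$. The bound from Step 3 that comes down from infinity gives uniform-in-time moments in $\mathcal B^{-\alpha'}_\infty$ for some $\alpha'<\alpha$. Compactness of the embedding $\mathcal B^{-\alpha'}_\infty\hookrightarrow\mathcal B^{-\alpha}_\infty$ on compact $M$ then makes time averages tight, and Krylov--Bogoliubov produces an invariant probability measure.
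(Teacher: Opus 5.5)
Your Steps 1, 2 and 4 follow the paper's route: hypercontractive bounds on the Wick powers, the fixed point in $\mathcal B^\gamma_\infty$ with $\gamma\in(\alpha(n-1),d_w-\alpha n)\cap(0,\Theta)$, then the Markov/Feller property, compact embedding and Krylov--Bogoliubov. You also arrive at the right numerical conditions.

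Two small corrections in Step 2. First, the $\Theta$ cap does not come from the Schauder estimate, which gains arbitrary regularity in the Besov scale. It comes from the product estimate and from the fact that $\mathcal B^\gamma_\infty=\mathcal C^\gamma_M$ only for $\gamma<\Theta$. Second, the resonant term cannot be treated ``as in \cite{bailleul2016heat}'': that argument uses a pointwise carr\'e du champ bound, which fails when $\Gamma\perp\mu$. You should simply invoke Theorem~\ref{thm:besov_product}.

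The genuine gap is in Step 3. The Stroock--Varopoulos bound $\mathcal E(v,v^{p-1})\gtrsim\mathcal E(v^{p/2},v^{p/2})$ is not the obstacle. For a strongly local form it already follows from the chain rule for energy measures, which holds even when $\Gamma$ is singular. What your interpolation actually needs is an \emph{upper} bound on $\|v^{m}\|_{\dot{\mathcal B}^{d_w/2}_{1,\infty}}$, with $m=j+p-1$, in terms of $\mathcal E(v^{p/2},v^{p/2})$. This is exactly where the missing pointwise gradient bites, and ``a heat-kernel characterization of $\mathcal F$'' does not by itself supply it.

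The paper's device (Lemma~\ref{lem: compare our besov and baudoin besov}, Propositions~\ref{prop: mod of baudoin berstein} and~\ref{lemma: Chain rule workaround}) runs as follows. Dominate this norm by $\sup_{t}t^{-1/2}\iint|f(x)-f(y)|\,p_t(x,y)$. Insert the elementary inequality $|a^m-b^m|\le C\bigl(|a|^{m-p/2}+|b|^{m-p/2}\bigr)|a^{p/2}-b^{p/2}|$. Apply Cauchy--Schwarz in the double integral together with stochastic completeness. Finish with $t^{-1}\iint|u(x)-u(y)|^2p_t(x,y)\le 2\mathcal E(u,u)$. This gives
\[
\|v^{m}\|_{\dot{\mathcal B}^{d_w/2}_{1,\infty}}\lesssim\|v^{2m-p}\|_{L^1_M}^{1/2}\,\mathcal E(v^{p/2},v^{p/2})^{1/2}.
\]
Two further points come out of this:
\begin{enumerate}
\item The pointwise inequality fails at $a=-b$ when $p/2$ is even (take $j=0$, so $m=p-1$ is odd). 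This is why the paper tests with $v^{2p-1}$, $p$ odd, rather than with an arbitrary even power.
\item The exponent $2m-p=p+2j-2$ exceeds the coercive exponent $n+p-1$ when $j=n-1$ and $n\ge5$. So the Sobolev inequality of Lemma~\ref{lem: HLS} (with $p$ large) is needed before Young's inequality closes.
\end{enumerate}

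There is also an unaddressed regularity issue. You test the equation for $v$ itself, but the fixed point only controls $v(t)$ in $\mathcal B^\gamma_\infty$ with $\gamma<\Theta\le1<d_w/2$. Hence neither $\mathcal E(v,v^{p-1})$ nor the energy identity for $\|v(t)\|^p_{L^p_M}$ is justified, unlike on $\T^2$ (see Remark~\ref{rmk:need for mollified}). The paper runs the whole energy argument on the mollified equation \eqref{eq: DPD decomposition mollified}, whose solution lies in $\mathcal C^{1/2+\delta_1}_T\mathcal B^{d_w/2+\delta_2}_{2,\infty}$ (Lemma~\ref{lem:regularity of mollified solution}). It obtains the coming-down-from-infinity bound uniformly in $\varepsilon$. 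It then transfers the bound to $v$ via the local Lipschitz dependence of the solution map (Lemma~\ref{lem:local solution to DPD and convergence of approximation}).
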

\begin{remark}
    The first inequality in \eqref{ineq: dbd condition} comes from the Wick power of $Y$ defined as a function of time rather than space-time distribution. The second inequality in \eqref{ineq: dbd condition} is due to the restriction for product estimates (see Theorem \ref{thm:besov_product}). Inequality \eqref{ineq:global solution condition} is due to the need to control the Besov norm of the solution of \eqref{eq:formal eq for v intro} in terms of the Dirichlet energy (see Proposition \ref{prop:CDI_mollified} and Proposition \ref{lemma: Chain rule workaround}). This should be understood as the full regime of the energy argument of \cite{MW17,TW18}.
\end{remark}

\begin{corollary}\label{cor: phi4 DPD}
The $\Phi^4$ equation is in the DPD regime whenever
\begin{align*}
  \frac{d_h+d_w}{d_h -d_w} > 6,\quad \text{ and } \quad \frac{\Theta}{d_h - d_w}> 1.
\end{align*}
\end{corollary}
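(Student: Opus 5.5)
This is a direct specialization of Theorem \ref{thm: main} to $n=3$, so the plan is to substitute $n=3$ into the two inequalities of \eqref{ineq: dbd condition} and rearrange. We may assume $d_h>d_w$: otherwise both denominators vanish, and with the convention $a/0=\infty$ both conditions of Theorem \ref{thm: main} hold trivially.

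First, with $n=3$ the first condition in \eqref{ineq: dbd condition} reads
\begin{align*}
3<\frac12\,\frac{d_h+d_w}{d_h-d_w},
\end{align*}
which is equivalent to $\frac{d_h+d_w}{d_h-d_w}>6$.

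Second, the other condition reads
\begin{align*}
3<\frac{2\Theta}{d_h-d_w}+1,
\end{align*}
that is, $\frac{2\Theta}{d_h-d_w}>2$, which is equivalent to $\frac{\Theta}{d_h-d_w}>1$.

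These are exactly the two hypotheses of the corollary, so Theorem \ref{thm: main} places $(d_h,d_w,\Theta,3)$ in the DPD regime. There is no real obstacle here; the only point needing care is the degenerate case $d_h=d_w$, which is handled by the $a/0=\infty$ convention as noted above. It is also worth noting that the second condition forces $d_h-d_w<\Theta\le 1$, which already implies $\frac{d_h+d_w}{d_h-d_w}>2d_w+1\ge 5$. However, this does not make the first condition automatic, so both conditions must be kept.
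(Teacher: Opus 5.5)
Your proposal is correct and is exactly how the paper obtains the corollary: set $n=3$ in the two inequalities of \eqref{ineq: dbd condition} of Theorem \ref{thm: main} and rearrange. Your side remarks are also accurate, namely the $a/0=\infty$ convention covering $d_h=d_w$ and the fact that the first condition is not implied by the second (for instance when $d_w=2$).
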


\noindent The appearance  of the H\"older regularity $\Theta$ of heat kernel in \eqref{ineq: dbd condition} is due to the following product estimates. See Section \ref{sec:pp_estimates} for details.
\begin{thm}[Multiplicative inequality]\label{thm:besov_product}
There exists a decomposition of product $f \cdot g$ that agrees with the point-wise multiplication when $f,g \in \mathcal C_M$, so that 
$\alpha>0$,  $\beta \in \left(-\Theta, \Theta\right)\backslash\{0\}$ and  $\alpha+ \beta>0$
\begin{align*}
  \norm{f\cdot g}_{\mathcal B^{\alpha \wedge \beta}_{\infty}} \lesssim \norm{f}_{\mathcal B^\alpha_{\infty}} \cdot \norm{g}_{\mathcal B^\beta_{\infty}},
\end{align*}
 uniformly in $f \in \mathcal  B^\alpha_{\infty},\, g \in \mathcal B^\beta_{\infty}$
\end{thm}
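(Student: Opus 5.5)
The plan is to build a Bony-type decomposition out of the semigroup Littlewood--Paley pieces, following the heat-kernel paraproducts of \cite{bailleul2016heat}. The restriction $|\beta|<\Theta$ will enter exactly where, in the Euclidean case, one would differentiate a smooth function; here only the H\"older regularity \eqref{ineq:holder_kernel} of the kernel is available.

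\textbf{Step 1 (Calder\'on reproducing formula).} Fix a large $k$ and a normalising constant $c_k$. For $f\in\mathcal S$ we write
\[
f = P_1^{(k)} f + c_k\int_0^1 Q_t^{(k)} P_t^{(k)} f\,\frac{\mathrm dt}{t},
\]
which follows by differentiating $P_t^{(k)}$ in $t$. We then insert this formula for both $f$ and $g$ and expand the product $fg$ as a double integral over scales $(s,t)$. We split this into the region $s\ll t$ (giving $\Pi_g f$), the region $t\ll s$ (giving $\Pi_f g$), and the region $s\sim t$ (giving the resonant term $\Pi(f,g)$). To get uniform sums, the regions should be cut at $s\le t/2$, $t\le s/2$, and the remainder. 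Each piece is to be applied after one more $Q_r^{(k)}$, following \cite{bailleul2016heat}, and the product is defined as the sum of the three pieces. For $f,g\in\mathcal C_M$ the sum is just the expansion of the pointwise product, so the two agree; continuity then extends the definition to the Besov closures.

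\textbf{Step 2 (paraproduct bounds).} To estimate $\norm{Q_r^{(k)}(Q_s a\cdot P_s b)}_{L^\infty}$ we use the kernel bounds from \eqref{ineq:hk_subg_compact_form} together with \eqref{ineq:a_reg}. The key fact is a cancellation property: $Q_r^{(k)}$ kills constants, so by \eqref{ineq:holder_kernel} it gains a factor $(s/r)^{\Theta/d_w}$ against functions whose modulus of continuity at scale $s^{1/d_w}$ is controlled. With this, $\norm{\Pi_g f}_{\mathcal B^{\alpha+\beta\wedge0}_\infty}\lesssim \norm{f}_{\mathcal B^\alpha_\infty}\norm{g}_{\mathcal B^\beta_\infty}$. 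Here the low-frequency factor, when $\beta<0$, contributes $\norm{P_s g}_\infty\lesssim s^{\beta/d_w}$, and when $\beta>0$ it is bounded. The symmetric term is handled the same way. Summing the scale integrals produces the exponent $\alpha\wedge\beta$ in all cases.

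\textbf{Step 3 (resonant term).} For $s\sim t$ the integrand is bounded by $s^{(\alpha+\beta)/d_w}$. This is integrable, using $\alpha+\beta>0$, and it yields the regularity $\alpha+\beta$. No cancellation is needed here, only positivity of the sum. Combining Steps 2 and 3 gives the stated bound with $\alpha\wedge\beta$.

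\textbf{Main obstacle.} The hard step is obtaining the high--low cancellation without a Bernstein lemma. On a fractal, $Q_s f\cdot P_t g$ is not spectrally localised, and $P_tg$ has at best $\Theta$-H\"older regularity. One must therefore bound $Q_r^{(k)}(F\,G)$, where $G$ varies slowly, by pairing the off-diagonal decay in $Q_r$ with the H\"older increments of $P_tg$. This is the step that should force $|\beta|<\Theta$: regularity beyond $\Theta$ cannot be transferred through the kernel. I would first try to prove a lemma of the form
\[
\norm{Q^{(k)}_r\bigl(h\,(P_t g-P_tg(x))\bigr)(x)}\lesssim (r/t)^{\Theta/d_w}\ldots,
\]
derived from \eqref{ineq:holder_kernel} and Gaussian-type integrals over \eqref{ineq:a_reg}-regular balls.
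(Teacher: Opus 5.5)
\textbf{There is a genuine gap, in the one step that needs real cancellation.} The gap is not the step you flagged.

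Your only cancellation mechanism is that $Q^{(k)}_r$ kills constants, combined with \eqref{ineq:holder_kernel} or with H\"older increments of the integrand. This yields a gain $(r/s)^{\Theta/d_w}$ when the test scale $r$ is \emph{finer} than the scale $s$ of the product (you wrote $(s/r)$). It yields nothing when $r>s$: a function controlled only at scale $s^{1/d_w}$ can vary by $O(1)$ across the support of $q_{k,r}(x,\cdot)$.

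For $\Pi_g f$ this suffices. Since $\alpha+\beta\wedge 0>0$, the coarse part $\int_0^r s^{(\alpha+\beta\wedge 0)/d_w}\,\frac{\mathrm ds}{s}$ converges without any cancellation.

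The sentence ``the symmetric term is handled the same way'' fails for $\beta<0$, which is the case you need for Wick powers. In $\Pi_f g=\int_0^1 F_t\cdot Q_tP_tg\,\frac{\mathrm dt}{t}$, with $F_t$ the low-pass of $f$, the high-frequency factor satisfies $\norm{Q_tP_tg}_{L^\infty_M}\lesssim t^{\beta/d_w}$. So $\int_0^r\norm{Q^{(k)}_r(F_t\,Q_tP_tg)}_{L^\infty_M}\frac{\mathrm dt}{t}$ diverges unless you gain a factor $(t/r)^{\epsilon}$ with $\epsilon>|\beta|/d_w$.

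Your commutator lemma does not provide this gain. Write $Q_r(F_tQ_tP_tg)(x)=F_t(x)\,Q_rQ_tP_tg(x)+Q_r\bigl((F_t-F_t(x))\,Q_tP_tg\bigr)(x)$. The first term is fine by \eqref{ineq:almost orthogonal}. The second is only $\lesssim r^{\alpha'/d_w}t^{\beta/d_w}$, still not integrable at $t=0$, because multiplying by $F_t-F_t(x)$ destroys the oscillation of $Q_tg$.

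The real obstacle is therefore showing that the vanishing-mean structure of the high-frequency factor survives multiplication by the low-frequency one. This replaces the Euclidean Fourier-support argument you are implicitly transplanting, and your plan has no mechanism for it.

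Relatedly, ``no cancellation is needed'' in Step 3 is wrong as stated. Without a fine-scale gain the resonant term lands only in $\mathcal B^0_\infty$. With your H\"older gain it reaches $\mathcal B^{\alpha+\beta}_\infty$ only when $\alpha+\beta<\Theta$. The same cap applies to your claimed bound $\Pi_gf\in\mathcal B^{\alpha+\beta\wedge 0}_\infty$ for large $\alpha$. You must first lower $f$ to $\mathcal B^\gamma_\infty$ with $\gamma<\Theta$, as the paper does.

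\textbf{How the paper gets the cancellation.} It uses the Bailleul--Bernicot decomposition. One differentiates $P^{(b)}_t(P^{(b)}_tf\cdot P^{(b)}_tg)$, writes $Q^{(b)}_t=tL\,Q^{(b-1)}_t$, and uses $L(uv)=uLv+vLu-2\Gamma(u,v)$ to move $tL$ outside the product. The paraproducts then carry outer operators $tLP^{(b)}_t$ and $Q^{(b-1)}_t$. By \eqref{ineq:almost orthogonal} these give gains $(s/t)^{b-1}$ or $t/s$ in both regimes, with no $\Theta$ restriction (Proposition~\ref{prop:pe_para}). All energy-measure terms are pushed into the resonant part and controlled by the Caccioppoli-based Theorem~\ref{thm:gamma_infty_estimates}, which is the only place $\Theta$ enters.

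\textbf{A possible repair within your energy-free route.} Dualize instead of commuting. By self-adjointness, $Q^{(k)}_r(F_t\,Q_tP_tg)(x)=\langle Q_t[q_{k,r}(x,\cdot)F_t],\,P_tg\rangle$. Then $Q_t1=0$, the H\"older bound of Proposition~\ref{prop:kernel_bound_dt} for $q_{k,r}(x,\cdot)$, and H\"older continuity of $F_t$ give the bound $\bigl((t/r)^{\Theta/d_w}+t^{\alpha'/d_w}\bigr)t^{\beta/d_w}$. This integrates on $(0,r)$ to $\lesssim r^{\beta/d_w}$ exactly when $|\beta|<\Theta$ and $\alpha'+\beta>0$. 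That estimate, not the commutator lemma, is what you would have to state and prove.
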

\begin{remark}[Spectral dimension and admissible parameters]\label{remark:spec dim and theta}
The first condition in Theorem~\ref{thm: main} admits a reformulation purely in terms of the spectral dimension
\[
d_s = \frac{2d_h}{d_w}.
\]
A direct computation yields
\[
\frac{1}{2}\frac{d_h + d_w}{d_h - d_w}
=
\frac{1}{2}\frac{d_s+2}{d_s-2},
\]
so that the corresponding restriction on $n$ depends only on $d_s$. This is consistent with physics heuristics, where the spectral dimension governs the divergence of the linear stochastic convolution and therefore determines the renormalizability threshold.

In contrast, the second condition in Theorem~\ref{thm: main},
\[
n < \frac{2\Theta}{d_h - d_w} + 1,
\]
cannot, in general, be expressed solely in terms of the spectral dimension, as it depends explicitly on the H\"older regularity exponent $\Theta$ of the heat kernel. This shows that the Da Prato--Debussche regime is not determined purely by spectral dimension, but also by finer regularity properties of the underlying space.

Such a phenomenon does not arise in locally Euclidean Laplacian or fractional-Laplacian settings. For variable coefficient operators, similar in spirit restrictions have appeared, particularly in connection with homogenisation(see\cite[Definition 2.1, Proposition 2.9]{chen2025operatordependentparacontrolledcalculus}, see also\cite{singh2025canonical,hairer2025homogenisationsingularspdes}). However, we will see that the restriction in this setting is an effect of the rough local geometry.
\end{remark}
\subsection{Interpretations and examples}\label{sec:example and implications}

\textit{Scope of the assumption.}
Assumption~\ref{A:main} is standard in the analysis of Dirichlet spaces and
covers many familiar examples---tori, compact Riemannian manifolds, metric
graphs, and a large class of fractals. Recall that $d_w$ is the walk
dimension, which encodes the space--time scaling of the associated diffusion,
while $\Theta$ quantifies the spatial H\"older regularity of the heat kernel.
The class of admissible geometries is in fact quite broad: for every pair
$(d_h,d_w)$ satisfying
\[
2\le d_w\le 1+d_h,
\]
there exists a compact geodesic metric measure space, equipped with a
Dirichlet form, whose heat kernel obeys the sub-Gaussian
estimate~\eqref{ineq:sgu} (c.f. \cite{murugan2024diffusions}, \cite{murugan2024heat}, \cite{barlow2003heat}).

\textit{Spectral dimension and the singular regime.}
A basic consequence of~\eqref{ineq:sgu} is the short-time on-diagonal estimate
\[
p_t(x,x)\asymp t^{-d_h/d_w},
\]
which connects naturally to the spectral dimension
\[
d_s:=\frac{2d_h}{d_w}.
\]
The spectral dimension governs the regularity of the solution of \eqref{eq:she mass}: by the classic theory of Walsh\cite{Walsh86}, a random field solution of \eqref{eq:she mass} must satisfy
\[
\mathbb E[Y(t,x)^2]= \int_0^t e^{-s}p_s(x,x)\,ds<\infty
\qquad\Longleftrightarrow\qquad d_s<2.
\]
The value $d_s=2$ thus marks the threshold between the random field and the singular regimes, and the assumption $d_h\ge d_w$ is equivalent to the singular regime  $d_s\ge 2$, where renormalization is unavoidable. In the case $d_s<2$, the conclusions of the main results will hold except that \eqref{eq:phi n formal} admits a random field solution in the sense of Walsh \cite{Walsh86} and has no renormalization conditions, and it would almost surely belong to a space of positive Besov spatial regularity.

\textit{Spatial regularity heuristics.}
One notable aspect of Theorem \ref{thm: main} is that in addition to the dimensional quantities $d_h$ and $d_w$, the maximal spatial H\"older regularity $\Theta$ of the heat kernel must also be considered. As noted before, this is due to the appearance of $\Theta$ in Theorem \ref{thm:besov_product}, but notably, this is absent in the Fourier analytic formulation of the paraproduct decomposition in $\R^d$ and $\T^d$(c.f. \cite[Chapter 2]{BCD11}, see also \cite[Proposition 3.11]{baudoin2025parabolic}). Also worth pointint out is that, \cite[Proposition 3.3]{bailleul2016heat} has a similar restriction, but with $\Theta=1$ (which they had assumed). There is evidence that this restriction is not an accident. First, unlike $\mathcal C^\infty(\T^d)$ or $\mathcal C_c^\infty(\R^d)$, our test function space $\mathcal{S}$ needs not be an algebra: in fact, we know it cannot be if $d_w>2$\cite{ben1999not}. Thus, the way of defining arbitrary products of distributions with any test function fails in this setting. However, H\"older spaces are algebras, and it would make sense that we can define products of the H\"older space elements and spaces contained in their dual spaces. The fact that our Besov spaces only agree with H\"older spaces up to $\Theta$(Proposition \ref{prop:holder=besov}) is also reminiscent of recent triviality results for other notions of Besov spaces \cite{baudoin2024korevaarschoensobolevspacescriticalexponents,Kumagai_2025}.

The parameters not entirely independent: by~\cite[Proof of Lemma
4.6]{barlow2012equivalence} and~\cite[Lemma 3.4]{alonso2021besov}, the
sub-Gaussian bound~\eqref{ineq:sgu} already implies~\eqref{ineq:holder_kernel}
for \emph{some} exponent $\Theta\in(0,1]$. What is generally \emph{not} known
is the optimal value of $\Theta$. This becomes decisive for identifying concrete Da Prato--Debussche regimes. For this reason,
although the lower bound in~\eqref{ineq:sgu} is not essential for most of our
analytic arguments, we retain the full two-sided estimate in our assumption. In addition, it keeps the link with the spectral dimension transparent, and for many examples the necessary and sufficient conditions for such two-sided bounds are now well understood.

\textit{Quantitative estimates for $\Theta$.}
In certain cases $\Theta$ can be controlled explicitly in terms of
$(d_h,d_w)$. It is known that spaces with heat kernels satisfying \eqref{ineq:sgu} but with $d_h<d_w$, we have
\[
\Theta\ge d_w-d_h \qquad \text{whenever } 1 \le d_h<d_w\le d_h+1,
\]
with equality (in other words, optimality) if $M$ is a post-critically finite self-similar
fractals~\cite{barlow2006diffusions,alonso2021besov}. In the complementary
regime $d_w\le d_h$, no general lower bound on $\Theta$ is available. This is why the second inequality in Theorem~\ref{thm: main} is
substantive: even when the spectral-dimension restriction is favourable,
insufficient heat-kernel regularity may still rule out the Da
Prato--Debussche argument.

\textit{A classical benchmark.}
Suppose that the heat kernel satisfies two-sided Gaussian bounds, i.e.\
$d_w=2$, and is spatially Lipschitz, i.e.\ $\Theta=1$. Corollary~\ref{cor:
phi4 DPD} then places $\Phi^4$ in the Da Prato--Debussche regime if
\[
d_h<\frac{14}{5},
\]
in agreement with the fractional-dimensional heuristic
of~\cite[Section~2.8.2]{BCCH20}. Whether one can actually \emph{realize}
$d_w=2$, $\Theta=1$, and an arbitrary Hausdorff dimension $d_h$
simultaneously appears to be open:
Laakso's construction\cite{laakso2000ahlfors} yields spaces with arbitrary $d_h$ and $d_w=2$, but no quantitative control of $\Theta$ is known there.

\textit{Construction of Examples.}
Figure~\ref{fig:phi4-parameter-regions} presents two complementary views of
Corollary~\ref{cor: phi4 DPD}; the two panels should be read differently.

The \emph{left} panel is built from a product-space construction. Starting
from a space $X$ for which $\Theta\ge d_w-d_h$, $d_h(X)<d_w(X)$, one considers the product
\[
M=X\times X.
\]
Under this operation the walk dimension is preserved while the Hausdorff
dimension doubles,
\[
d_w(M)=d_w(X),\qquad d_h(M)=2d_h(X),
\]
and the H\"older exponent descends from the single factor\cite[Theorems 3.7 and 3.8]{alonso2021besov}. The left panel
therefore records precisely those product constructions $M=X\times X$ for
which the minimal H\"older regularity possible still allows $M$ to be in the $\Phi^4$ Da Prato--Debussche regime; it is \emph{not}
a plot for arbitrary metric measure spaces $M$. In this picture the Vicsek
product lies inside the admissible region, while the product of two
Sierpi\'nski gaskets lies outside. Notably,  for spaces $M = X\times X$, the second inequality in \eqref{ineq: dbd condition} coincides with \eqref{ineq:global solution condition}. In other words, they 
also all admit global solutions (and thus invariant measures).  There may be a deeper reason behind this, but we shall not explore it further in this project.

The \emph{right} panel considers a different scenaio. If one sets aside the
difficulty that $\Theta$ is typically unknown and simply imposes the
benchmark assumption $\Theta=1$, then Corollary~\ref{cor: phi4 DPD} reduces,
for $\Phi^4$, to
\[
d_h<\tfrac75\,d_w,
\qquad
d_h<d_w+1.
\]
The right panel is therefore a benchmark picture: the maximal region
predicted by our criterion under the strongest realistic assumption on the
spatial regularity of the heat kernel.

\begin{figure}[H]
    \centering
    \includegraphics[width=1.\linewidth]{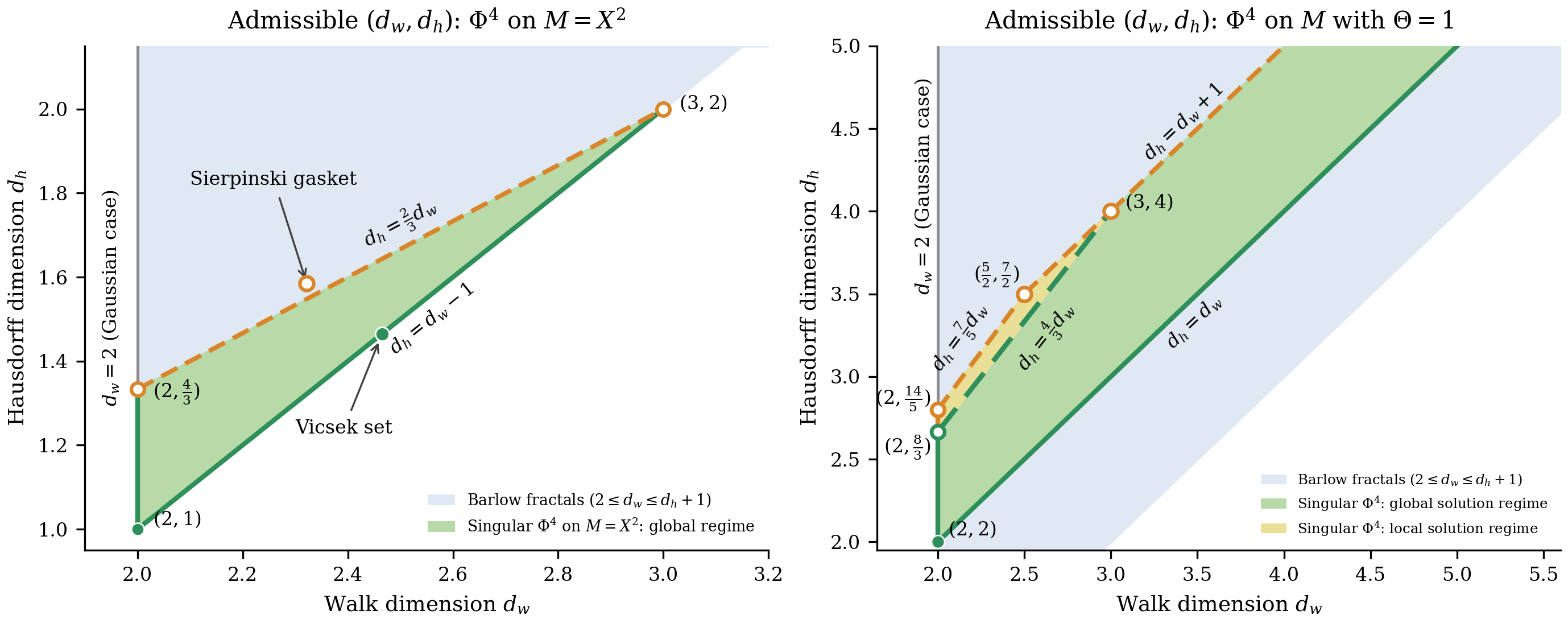}
    \hfill
    \caption{Admissible parameter regimes in the $(d_w,d_h)$-plane. Left: the product construction $M=X^2$, starting from a one-factor Barlow fractals $X$ with $d_h(X)<d_w(X)$ and quantitatively known heat-kernel H\"older exponent. The pale-blue background is the Barlow fractal range $2\le d_w\le d_h+1$, while the green region is the singular $\Phi^4$ global-solution regime on $M=X^2$.  Right: the benchmark case $\Theta=1$, corresponding to Lipschitz spatial regularity of the heat kernel. The green region is the global-solution regime, whereas the yellow region satisfies the local theory but lies outside the global condition.}
    \label{fig:phi4-parameter-regions}
\end{figure}
\noindent 

\begin{remark}
    The number $\frac{8}{3}$ appearing in both figures should have a deeper meaning, as it is known that in the Gaussian case $(d_w = 2)$, the invariant measure of the $\Phi^4_{d_h}$ equation(interpreted using $(-\Delta)^{-\alpha}$ on $\T^2$) is absolutely continuous with respect to the Gaussian free field for $d_h <\frac{8}{3}$ \cite[Corollary 4.5]{hairer2026singularitysolutionssingularspdes}. We conjecture that adapting the content of \cite{hairer2026singularitysolutionssingularspdes}, one can show that the invariant measures of \eqref{eq:phi n formal} constructed in this paper are all absolutely continuous with respect to the Gaussian free field, defined in \eqref{def:GFF measure}.
\end{remark}

\textit{Two fractal examples.}
We list two well-known examples from the above to show that the $\Theta$-constraint is relevant: it can separate solvable from non-solvable regimes even when the scaling condition alone points in the same direction.

\begin{example}[A product space in the $\Phi^4$ Da Prato--Debussche regime]
Let $V$ be the finite Vicsek fractal. By~\cite[Theorem
3.7]{alonso2021besov}, $V$ satisfies Assumption~\ref{A:main} with
\[
d_h'=\frac{\ln 5}{\ln 3},\qquad d_w=\frac{\ln 5}{\ln 3}+1,\qquad \Theta=1.
\]
Equip $M:=V\times V$ with the product metric and product measure. Then
Assumption~\ref{A:main} holds on $M$ with the same $d_w$ and $\Theta$ and
with $d_h=2d_h'$. A direct computation gives
\[
\frac{d_h+d_w}{d_h-d_w}\approx 11.6>6,
\qquad
\frac{\Theta}{d_h-d_w}\approx 2.15>2,
\]
so Corollary~\ref{cor: phi4 DPD} applies: $\Phi^4$ on $M$ lies in the Da
Prato--Debussche regime.
\end{example}

\begin{example}[A space excluded by the $\Theta$-condition]
Let $M=T\times T$, where $T$ is the Sierpi\'nski gasket. Again
by~\cite[Theorem 3.7]{alonso2021besov},
\[
d_h=2\,\frac{\ln 3}{\ln 2}\approx 3.170,
\qquad
d_w=\frac{\ln 5}{\ln 2}\approx 2.322,
\qquad
\Theta=d_w-\frac{d_h}{2}\approx 0.737.
\]
Then
\[
\frac{d_h+d_w}{d_h-d_w}\approx \frac{5.492}{0.848}>6,
\qquad
\frac{\Theta}{d_h-d_w}\approx 0.87<1.
\]
The scaling condition is satisfied, but the H\"older-regularity condition
fails, and $\Phi^4$ on $M$ is not covered by Corollary~\ref{cor: phi4 DPD}.
\end{example}

The one-factor spaces underlying the two examples are depicted in Figure~\ref{fig:fractal-models}.

\begin{figure}[H]
    \centering
    \includegraphics[width=0.4\linewidth]{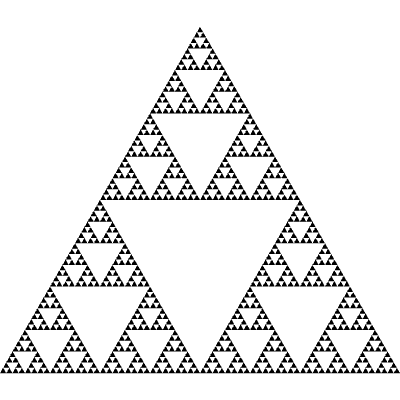}
    \hfill
    \includegraphics[width=0.4\linewidth]{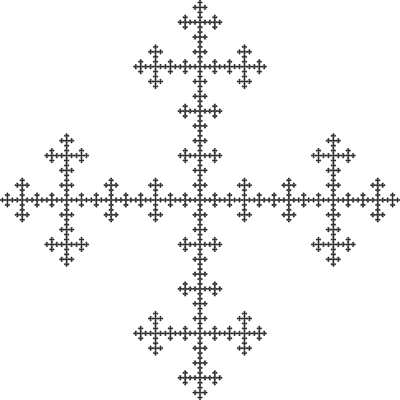}
    \caption{Left: the Sierpi\'nski gasket. Right: the Vicsek fractal.
    These are the one-factor spaces used in the product examples above.}
    \label{fig:fractal-models}
\end{figure}

\textit{Future directions.}
Below are some interesting potential future directions motivated by the existing literature and the current work.
\begin{enumerate}
    \item Here, we do not obtain the uniqueness of the invariant measure. Uniqueness typically requires additional mixing properties (for instance irreducibility and an \emph{(asymptotic) strong Feller} property), which are not proved here. We conjecture that one should be able to use the argument of \cite{TW18} to produce a spectral gap for $\mathbf{P}_t$, the semigroup on probability measures on the relevant Besov space defined by the solution. We also conjecture that the argument of \cite{chandra2025non} or \cite{HairerSteele21} can be adapted to show that the unique invariant measure is non-Gaussian.
    \item The discrepancy between the local and global solution regimes in our setting is possibly due to the lack of sharpness of the energy method we adapted. More precisely, a careful examination of Section \ref{sec:global sol} reveals that this gap is due to the need to control a higher regularity Besov norm than provided by the local solution condition (except in the case $d_h=d_w$, which generalizes $\T^2$ exactly). It is possible that a more optimal exploitation of the coercive nature of the backward superlinear forcing will close this gap, as has been done in Euclidean settings for $\Phi^4$ \cite{MW17c,GH19,CG25}. This idea could also be explored in combination with paracontrolled calculus.
    \item It is also natural to wonder if the machinery we built up here can be extended to other singular SPDEs. For instance, \cite{bailleul2016heat} originally addressed some parabolic Anderson models, and \cite{JP23} has introduced a inverse Cole-Hopf transform which removed the need to use paracontrolled calculus for the $\Phi^4$ equation. The latter strategy has been generalized to Hermitian vector bundles on 3-manifolds in \cite{BDFT23}. It is also known that the dynamical sine-Gordon equation has a Da Prato-Debussche regime on $\T^2$ \cite{HS16,sineGordon24}, and one may expect to find the same in the analogous generalization in our setting: $d_h=d_w$. However, the literature of all of these equations indicate that the analytic difficulties we encounter here manifest as different problems for each equation.
    \item More intricate sub-Gaussian heat-kernel estimates arise on more irregular  geometries, such as continuum random trees~\cite{CroydonCRT} and Liouville quantum gravity surfaces~\cite{LQGHeatKerAIHP,AndresKajinoLQG,LQGDingZeitouniZhang}. Given the interest of quantum field theory in some of these spaces~\cite{QFTonCRT}, deriving an equation of the form~\eqref{eq:phi n formal} and extending the results of the present paper to this setting would constitute a genuinely novel non-perturbative approach in these settings.
\end{enumerate}

\subsection{ Paper Organization}

We first recollect some basic Dirichlet form preliminaries in Section \ref{sec:prelim} and prove some elementary functional inequalities involving the heat-kernel based Besov norms in Section \ref{sec:besov}.
In Section \ref{sec:pp_estimates}, we construct the paraproduct and prove 
%Proposition \ref{prop: schauder} and 
Theorem \ref{thm:besov_product} and a Schauder-type estimate (Proposition \ref{prop: schauder}) using the strategy of \cite{bailleul2016heat}. In section \ref{sec: EW and Wick}, we construct Wick powers of $Y$ and compute their regularities. Then in section \ref{sec:Solution Theory}, we will develop local and global solutions for $v$ using the methods of \cite{DPD,MW17,TW18}. Finally, we employ the machinery developed in \cite{chandra2025non} to view the solution $\phi$ as a time-homogeneous Markov process on a proper state space $\mathfrak C$, to which we can apply the Krylov-Bogliubov method and obtain an invariant measure.

\section{Preliminaries}\label{sec:prelim}

This section collects the basic facts about Dirichlet forms, the associated semigroup and generator, and the energy measure that will be used freely throughout the paper, and then records several preliminary estimates. The material is standard but scattered across the literature; for the convenience of the reader, proofs that are not conveniently available in the literature are collected in Appendix~\ref{sec:appendix}. We claim no novelty for the results in this section. For a systematic treatment of Dirichlet forms we refer to \cite{FukushimaDF,chen2012symmetric,ma2012introduction}, and for the general semigroup theory to \cite{yosida2012functional}.

Let $(\mathcal E, \mathcal F)$ be a densely defined, closed, non-negative definite symmetric bilinear form on $L^2_M$, where $\mathcal F := \mathcal D(\mathcal E)$ is a Hilbert space under the inner product
\begin{align*}
    \mathcal E_1(u,v) := \mathcal E(u,v) + \langle u,v\rangle_{L^2_M}.
\end{align*}
A map $\phi : \R \to \R$ is a \emph{normal contraction} if $\phi(0) = 0$ and $\abs{\phi(s) - \phi(t)} \le \abs{s-t}$ for all $s,t \in \R$. The form $(\mathcal E, \mathcal F)$ is called a \emph{Dirichlet form} if it is \emph{Markovian}, i.e., $\phi \circ u \in \mathcal F$ and $\mathcal E(\phi\circ u, \phi\circ u) \le \mathcal E(u,u)$ for every $u \in \mathcal F$ and every normal contraction $\phi$; see \cite[Theorem~1.4.1]{FukushimaDF}.

A Dirichlet form $(\mathcal E, \mathcal F)$ is called \emph{regular} if $\mathcal F \cap \mathcal C_M$ is simultaneously dense in $\mathcal C_M$ in the supremum norm and dense in $\mathcal F$ in the $\mathcal E_1^{1/2}$-norm; any such dense subalgebra is called a \emph{core} of $(\mathcal E, \mathcal F)$. It is called \emph{strongly local} if $\mathcal E(u,v) = 0$ whenever $u,v \in \mathcal F$ and $u$ is $\mu$-a.e.\ constant on an open neighborhood of the support of $v$. Equivalently, the Beurling--Deny decomposition of $\mathcal E$ contains neither jump nor killing part \cite[Theorem~3.2.1]{FukushimaDF}. Under Assumption~\ref{A:main}, the Dirichlet form $(\mathcal E, \mathcal F)$ associated with $L$ is both regular and strongly local; see \cite{baudoin2024heat}.

Every regular Dirichlet form $(\mathcal E, \mathcal F)$ determines a unique non-negative definite self-adjoint operator $L$ on $L^2_M$ with domain $\mathcal D_2(L) \subset \mathcal F$, characterized by
\begin{align}\label{eq:dform and generator}
    \mathcal E(u, v) = \langle L u, v\rangle_{L^2_M}, \qquad u \in \mathcal D_2(L), \ v \in \mathcal F,
\end{align}
and $-L$ generates a strongly continuous Markovian semigroup $\set{P_t}_{t \ge 0}$ on $L^2_M$ (see \cite[Theorem~1.3.1]{FukushimaDF} and \cite[Chapter~IX]{yosida2012functional}). %Under Assumption~\ref{A:main}, the semigroup admits a jointly continuous heat kernel $p_t(x,y)$ satisfying the sub-Gaussian bounds \eqref{ineq:hk_subg_compact_form}.

For every $u, v \in \mathcal F \cap L^\infty_M$, there exists a unique finite signed Radon measure $\Gamma(u,v)$ on $M$, called the \emph{energy measure} of $u$ and $v$, determined by the identity
\begin{align}\label{eq:energy measure def}
    \int_M \varphi \, \mathrm d \Gamma(u,v)
    = \frac{1}{2}\pr{\mathcal E(\varphi u, v) + \mathcal E(\varphi v, u) - \mathcal E(\varphi, uv)},
    \qquad \varphi \in \mathcal F \cap \mathcal C_M.
\end{align}
The construction extends by truncation to arbitrary $u,v \in \mathcal F$; see \cite[Section~3.2]{FukushimaDF}. By a slight abuse of notation, for $u,v \in \mathcal D_2(L) \cap L^\infty_M$ we also write
\begin{equation}\label{eq:energy measure L form}
    \Gamma(u,v) = \tfrac{1}{2}\pr{u L v + v L u - L(uv)},
\end{equation}
which is to be read as the identity~\eqref{eq:energy measure def} upon integration against a test function in $\mathcal F \cap \mathcal C_M$. By \cite[Lemma 5.4.3]{FukushimaDF}, we have the following Cauchy-Schwarz inequality for energy measures: for $f,g \in \mathcal F \cap \mathcal C_M$ and $u,v \in \mathcal F$, 
\begin{align}\label{ineq:cs for energy measure}
    \int_M \abs{fg} \mathrm d\abs{\Gamma\pr{u,v}} \le \pr{\int_M f^2 \mathrm d\Gamma(u,u)}^\frac{1}{2}\pr{\int_M g^2 \mathrm d\Gamma(v,v)}^\frac{1}{2}.
\end{align}

The form $\mathcal E$ and its energy measure are related by $\mathcal E(u,v) = \int_M \mathrm d\Gamma(u,v)$ whenever $1 \in \mathcal F$, which holds in particular under Assumption~\ref{A:main} since $M$ is compact. Under strong locality, $\Gamma$ satisfies the Leibniz rule
\begin{align*}
    \mathrm d \Gamma(u v, w) = u\, \mathrm d\Gamma(v,w) + v\, \mathrm d\Gamma(u,w),
    \qquad u,v,w \in \mathcal F \cap L^\infty_M,
\end{align*}
and the chain rule $\mathrm d\Gamma(g(u), v) = g'(u)\, \mathrm d\Gamma(u,v)$ for any $g \in \mathcal C^1(\R)$ with bounded derivative; see \cite[Theorem~3.2.2]{FukushimaDF} or \cite[Chapter~4]{chen2012symmetric}. Strong locality and the Leibniz rule will be used repeatedly, and will be essential to the construction of a global solution of~\eqref{eq:phi n formal}.

\begin{lemma}\label{lem:long time semi-group norm bound}
Let $a>0$ and set $c_a:= \inf_{x,y \in M}p_a(x,y) \in (0,1)$. Suppose $p \in[1,\infty]$ and $g \in L^p_M$ with $\int_M g\mathrm d\mu = 0$, then
\begin{align*}
  \norm{e^{-tL}g}_{L^p_M} \le (1-c_a)^{\left\lfloor \frac{t}{a} \right\rfloor} \norm{g}_{L^p_M}. 
\end{align*}
\end{lemma}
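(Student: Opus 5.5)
The argument is a Doeblin-type contraction. It rests on the uniform lower bound $p_a(x,y)\ge c_a$, the Markov property together with (SC), and preservation of the mean. By the semigroup property $e^{-tL}=e^{-(t-ka)L}(e^{-aL})^k$ with $k=\lfloor t/a\rfloor$, and since $e^{-sL}$ is a contraction on $L^p_M$ for every $s\ge0$ (it is Markovian and symmetric), it suffices to prove the one-step bound
\[
\norm{e^{-aL}g}_{L^p_M}\le (1-c_a)\norm{g}_{L^p_M}
\]
for mean-zero $g$, and then to iterate. Iteration is legitimate because $e^{-aL}$ preserves the mean zero property: by symmetry of $p_a$ and (SC), $\int_M e^{-aL}g\,\mathrm d\mu=\int_M g\,\mathrm d\mu=0$.

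First I check that $c_a\in(0,1)$. Positivity follows from the lower bound in \eqref{ineq:hk_subg_compact_form}, using compactness of $M$ and continuity of $p_a$; for $a>1$ one uses the long-time version, or Chapman--Kolmogorov with $p_a=\int p_{a/2}p_{a/2}$. For $c_a<1$, note that (SC) gives $1=\int_M p_a(x,y)\mu(\mathrm dy)\ge c_a\mu(M)\ge c_a$, and equality throughout would force $p_a\equiv 1/\mu(M)$. I would simply record $c_a\mu(M)\le 1$; it is this inequality that is actually needed below.

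For the one-step bound, write $p_a(x,y)=c_a+(p_a(x,y)-c_a)$ and set $q(x,y):=p_a(x,y)-c_a\ge0$. Since $\int g\,\mathrm d\mu=0$, the constant part of the kernel contributes nothing, so
\[
e^{-aL}g(x)=\int_M q(x,y)g(y)\,\mu(\mathrm dy).
\]
The kernel $q$ is symmetric and nonnegative, and $\int_M q(x,y)\mu(\mathrm dy)=1-c_a\mu(M)\le 1-c_a$. By Schur's test (equivalently, Jensen or Hölder applied to the sub-probability kernel, which gives the $L^\infty$ and $L^1$ bounds and then interpolation), the operator with kernel $q$ has $L^p\to L^p$ norm at most $1-c_a\mu(M)\le 1-c_a$ for every $p\in[1,\infty]$. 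This yields the one-step estimate.

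The only real subtlety is the use of $\mu(M)\ge1$, which gives $1-c_a\mu(M)\le1-c_a$; this is exactly why the paper normalizes $\mu(M)\ge 1$. The final step is the iteration. Applying the one-step bound $k$ times to the mean-zero functions $(e^{-aL})^jg$ and then the contraction $e^{-(t-ka)L}$ gives $\norm{e^{-tL}g}_{L^p_M}\le(1-c_a)^{\lfloor t/a\rfloor}\norm{g}_{L^p_M}$.
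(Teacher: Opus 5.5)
Your proposal is correct and is essentially the paper's proof: use $\int_M g\,\mathrm d\mu=0$ to replace the kernel $p_a$ by the nonnegative symmetric kernel $p_a-c_a$, bound that operator on $L^p_M$ for every $p\in[1,\infty]$ by Jensen (your Schur test), and iterate using the semigroup property and preservation of the mean. Your bookkeeping is slightly more accurate than the paper's, which writes $\int_M (p_a(x,y)-c_a)\,\mu(\mathrm dy)=1-c_a$; in fact this integral equals $1-c_a\mu(M)$, and what is needed is only the inequality $1-c_a\mu(M)\le 1-c_a$, which follows from the normalization $\mu(M)\ge 1$.
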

\noindent Under two-sided heat kernel estimates, \eqref{ineq:holder_kernel} can be strengthened.
\begin{lemma}\label{lem:long time heat kernel bounds}
Uniformly in $t \ge 1$ and $x,y \in M$, $p_t(x,y) \asymp 1$.  In addition, there exists $c,C>0$, the following inequality holds uniformly in $t>0$ and $x,x',y \in M$ with $d(x,x') < t^{1/d_w}$
\begin{align*}
    \abs{p_t(x,y) - p_t(x',y)} \lesssim \pr{\frac{d(x,x')}{t^{1/d_w}}}^\Theta t^{-d_h/d_w}\exp \pr{-C\pr{\frac{d(x,y)}{t^{1/d_w}}}^\frac{d_w}{d_w-1}-ct}.
\end{align*}
\end{lemma}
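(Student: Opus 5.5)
The plan has two parts: the uniform two-sided bound for $t\ge 1$, and the Hölder estimate with the extra factor $e^{-ct}$ for large $t$.

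\textbf{Step 1: $p_t\asymp 1$ for $t\ge 1$.} For $t\in[1,2]$ I apply \eqref{ineq:sgu} directly. Since $M$ is compact, $d(x,y)\le \mathrm{diam}(M)$, so both sides of \eqref{ineq:sgu} are bounded above and below by positive constants (this extends to $t\le 2$ by the semigroup property, or by running the argument on $[1/2,1]$). For $t>2$ I write
\[
p_t(x,y)=\int_M p_{t-1}(x,z)\,p_1(z,y)\,\mu(\mathrm dz).
\]
Using $c\le p_1\le C$ together with \eqref{eq:s_complete} for $p_{t-1}$ gives $c\le p_t(x,y)\le C$ for every $t\ge 1$.

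\textbf{Step 2: the Hölder estimate for $t\le 1$.} Here the claim is \eqref{ineq:holder_kernel}, since $e^{-ct}\asymp 1$ on this range. The constant in front of $t^{-d_h/d_w}$ can be absorbed.

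\textbf{Step 3: the Hölder estimate for $t>1$.} Write $t=1+s$ and
\[
p_t(x,y)-p_t(x',y)=\int_M \bigl(p_1(x,z)-p_1(x',z)\bigr)\,p_s(z,y)\,\mu(\mathrm dz).
\]
The difference $p_1(x,\cdot)-p_1(x',\cdot)$ has zero integral by \eqref{eq:s_complete}. Hence I may replace $p_s(z,y)$ by $g(z):=p_s(z,y)-\mu(M)^{-1}$, which is the mean-zero part of $p_s(\cdot,y)$ by \eqref{eq:s_complete} and symmetry.

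I first need $\|g\|_{L^\infty_M}\lesssim e^{-cs}$. I obtain this by writing $g=e^{-(s-1)L}h$ with $h=p_1(\cdot,y)-\mu(M)^{-1}$ when $s\ge 1$. Here $h$ has zero mean and $\|h\|_{L^\infty_M}\lesssim 1$ by Step 1. Lemma \ref{lem:long time semi-group norm bound} with $p=\infty$ and $a=1$ then gives decay $(1-c_1)^{\lfloor s-1\rfloor}\lesssim e^{-cs}$. For $s<1$, the bound $\|g\|_{L^\infty_M}\lesssim 1\lesssim e^{-cs}$ is immediate from $p_s(\cdot,y)$ having unit mass: pointwise $|g|\le \sup_z p_s(z,y)+C$ is not bounded for small $s$. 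So instead, for $t\in(1,2]$, I simply use \eqref{ineq:holder_kernel} with $t$ replaced by $1$ in the splitting $t=1+s$ only when $s\ge1$, and handle $t\in[1,2]$ by extending \eqref{ineq:holder_kernel} to $t\le 2$ via the same splitting with $t=\tfrac12+s$, $s\in[\tfrac12,\tfrac32]$, where $p_s$ is bounded.

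Then
\[
|p_t(x,y)-p_t(x',y)|\le \|g\|_{L^\infty_M}\int_M |p_1(x,z)-p_1(x',z)|\,\mu(\mathrm dz).
\]
By \eqref{ineq:holder_kernel} at time $1$, this is $\lesssim d(x,x')^\Theta e^{-cs}$ when $d(x,x')\le 1$. The Gaussian tail integrates to a constant by \eqref{ineq:a_reg}. If $d(x,x')>1$, I use the trivial bound $|p_1(x,z)-p_1(x',z)|\lesssim 1$, and $d(x,x')^\Theta\ge 1$ anyway.

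\textbf{Step 4: matching the target form.} For $t\ge1$ the target right-hand side is
\[
(d(x,x')/t^{1/d_w})^\Theta\, t^{-d_h/d_w}\exp\bigl(-C(d/t^{1/d_w})^{d_w/(d_w-1)}-ct\bigr).
\]
The Gaussian factor is $\ge e^{-C\,\mathrm{diam}(M)^{d_w/(d_w-1)}}$, a constant. The polynomial factor $t^{-(\Theta+d_h)/d_w}$ is absorbed by slightly decreasing $c$, i.e.\ $e^{-c's}\lesssim t^{-(\Theta+d_h)/d_w}e^{-ct}$ for $c<c'$.

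The main obstacle is obtaining the exponential decay uniformly in $y$. It rests on identifying the mean-zero correction and applying Lemma \ref{lem:long time semi-group norm bound} with $p=\infty$, and on the bookkeeping for intermediate times $t\in[1,2]$.
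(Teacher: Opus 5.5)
Your proof is correct. It uses the same two ingredients as the paper: the splitting $p_t=\int p_1\,p_{t-1}\,\mathrm d\mu$, and the exponential decay of Lemma~\ref{lem:long time semi-group norm bound} on mean-zero functions. The difference is that you place the decay on the other factor.

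The paper writes
\[
p_t(x,y)-p_t(x',y)=P_{t-1}g_{x,x'}(y),\qquad g_{x,x'}:=p_1(x,\cdot)-p_1(x',\cdot).
\]
Here $g_{x,x'}$ has zero mean by \eqref{eq:s_complete}, and the paper applies Lemma~\ref{lem:long time semi-group norm bound} in $L^\infty$ directly to it. This gives the bound $(1-c_1)^{\lfloor t-1\rfloor}\|g_{x,x'}\|_{L^\infty_M}\lesssim e^{-c(t-1)}d(x,x')^\Theta$ for every $t>1$ at once. For $t\in(1,2)$ this is just $L^\infty$-contractivity of $P_{t-1}$, so no small-time issue arises.

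You instead pair $\|g_{x,x'}\|_{L^1_M}$ against $\|p_{t-1}(\cdot,y)-\mu(M)^{-1}\|_{L^\infty_M}$ and apply the decay lemma to $h=p_1(\cdot,y)-\mu(M)^{-1}$. That step is fine for $t\ge 2$. However, it forces a separate treatment of $t\in(1,2]$, because $\|p_s(\cdot,y)\|_{L^\infty_M}\sim s^{-d_h/d_w}$ blows up as $s\downarrow 0$. Your fix, splitting at $\tfrac12$ instead of $1$ so that $p_s$ is bounded for $s\in[\tfrac12,\tfrac32]$, works. The paper's ordering simply never meets the obstacle.

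Two expository points should be cleaned up:
\begin{itemize}
\item In Step~1, \eqref{ineq:sgu} is only assumed on $[0,1]$, so it cannot be applied "directly" on $[1,2]$. Use the Chapman--Kolmogorov argument you give for $t>2$ instead; it works verbatim for every $t>1$, as in the paper.
\item In Step~3, the sentence asserting that $\|g\|_{L^\infty_M}\lesssim 1$ is "immediate" for $s<1$ contradicts itself and should be deleted in favour of the $t=\tfrac12+s$ splitting.
\end{itemize}
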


\begin{prop}\label{prop:kernel_bound_dt}
Let $k\in \N$ and $t >0$, $Q^{(k)}_t$ admits a symmetric density function $q_{k,t}$  w.r.t $\mu$, so that 
\begin{align}\label{eq:locality integral q equals 0}
  t^k\partial_t^kp_t(x,y) = q_{k,t}(x,y),\qquad  \int_M q_{k,t}(x,y) \mu(\mathrm dy) = 0,\qquad (t,x,y) \in (0,\infty)\times M^2.
\end{align}
There exists $C,c>0$ so that 
\begin{itemize}
    \item uniformly in $t >0$ and $x,y \in M$,
        \begin{align}\label{ineq: q sub gaussian bound}
          |q_{k,t}(x,y)| \lesssim t^{-d_h/d_w}\exp \pr{-C\pr{\frac{d(x,y)}{t^{1/d_w}}}^\frac{d_w}{d_w-1}-ct}.
        \end{align}
        \item uniformly in $t >0$ and  $x,x',y \in M$ with $d(x,x') \le t^{1/d_w}$,
\begin{align}\label{ineq: q holder continuous}
  \abs{q_{k,t}(x,y) - q_{k,t}(x',y)} \lesssim \pr{\frac{d(x,x')}{t^{1/d_w}}}^\Theta t^{-d_h/d_w}\exp \pr{-C\pr{\frac{d(x,y)}{t^{1/d_w}}}^\frac{d_w}{d_w-1}-ct}.
\end{align}
\end{itemize}
Consequently, for each $k \in \N$ and $p \in [1,\infty]$, it holds uniformly in $t>0$ and $f \in L^p_M$ that 
\begin{align*}
    \norm{Q^{(k)}_tf}_{L^p_M} \lesssim e^{-ct}\norm{f}_{L^p_M}.
\end{align*}
\end{prop}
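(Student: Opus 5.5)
The plan is to obtain the time-derivative kernels from analyticity of the heat semigroup, reducing everything to the kernel bounds already available for $p_t$: the two-sided estimate \eqref{ineq:sgu}, the H\"older estimate \eqref{ineq:holder_kernel}, and the long-time statement in Lemma~\ref{lem:long time heat kernel bounds}.

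\textbf{Existence, symmetry and the zero-integral identity.} Since $L$ is self-adjoint and non-negative, $(P_t)$ is analytic on $L^2_M$. Moreover, $Q^{(k)}_t = (tL)^k e^{-tL} = (-t)^k\partial_t^k P_t$. I write $Q^{(k)}_t = (tL)^k e^{-tL/2}\, e^{-tL/2}$ and use the semigroup property to get the kernel representation
\[
q_{k,t}(x,y) = (-1)^k t^k \partial_t^k p_t(x,y) = (-1)^k t^k\int_M \partial_s^k p_{s}(x,z)\big|_{s=t/2}\, p_{t/2}(z,y)\,\mu(\mathrm dz).
\]
This makes $q_{k,t}$ well defined and continuous, and symmetric because $p_t$ is. (The sign convention $(-1)^k$ is to be absorbed into the statement's $t^k\partial_t^k p_t$.) The identity $\int_M q_{k,t}(x,y)\,\mu(\mathrm dy)=0$ follows by differentiating \eqref{eq:s_complete} in $t$, which is justified by dominated convergence once the bounds below are known.

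\textbf{Pointwise bound \eqref{ineq: q sub gaussian bound}.} This is the main obstacle. The standard route (Davies; Grigor'yan's time-derivative estimates for heat kernels) proceeds in three steps.
\begin{itemize}
\item First obtain an on-diagonal bound $|\partial_t^k p_t(x,x)|\lesssim t^{-k-d_h/d_w}$. By spectral calculus, $\partial_t^k p_t(x,x) = (-1)^k\|L^{k/2}p_{t/2}(x,\cdot)\|_{L^2}^2$. Then $\|(sL)^{k/2}e^{-sL/2}\|_{2\to2}\lesssim 1$ combined with $\|p_{t/4}(x,\cdot)\|_{L^2}^2=p_{t/2}(x,x)\lesssim t^{-d_h/d_w}$ gives this.
\item Next extend to the complex time sector, $|p_z(x,y)|$ for $|\arg z|<\pi/4$, by the same $L^2$ argument.
\item Finally upgrade to off-diagonal sub-Gaussian decay. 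Here I would invoke the Davies–Phragm\'en–Lindel\"of / Carne–Varopoulos type argument adapted to sub-Gaussian kernels, e.g.\ Grigor'yan's result that under \eqref{ineq:sgu} and \eqref{ineq:a_reg} one has $|\partial_t^k p_t|\lesssim t^{-k}\times$ (sub-Gaussian with a worse constant). Cauchy's integral formula on a circle of radius $\sim t$ then turns the complex-time bounds into time-derivative bounds.
\end{itemize}
For $t\ge 1$, the extra factor $e^{-ct}$ comes from spectral gap considerations. Because $\int q_{k,t}(\cdot,y)=0$, the kernel $q_{k,t}$ annihilates constants, so I can write $Q^{(k)}_t = Q^{(k)}_{1}$-type operator composed with $P_{t-1}$ acting on mean-zero functions. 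Lemma~\ref{lem:long time semi-group norm bound} then gives exponential decay, while the spatial factor is harmless on a compact space for $t\ge1$.

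\textbf{H\"older bound \eqref{ineq: q holder continuous} and the $L^p$ bound.} For the H\"older bound, write $q_{k,t}(x,y)-q_{k,t}(x',y)=\int_M [p_{t/2}(x,z)-p_{t/2}(x',z)]\,(t^k\partial^k_s p_s)|_{s=t/2}(z,y)\,\mu(\mathrm dz)$. I apply \eqref{ineq:holder_kernel} (or Lemma~\ref{lem:long time heat kernel bounds}) to the first factor and \eqref{ineq: q sub gaussian bound} to the second. The convolution of two sub-Gaussian profiles at scale $t^{1/d_w}$ is again sub-Gaussian, by \eqref{ineq:a_reg} and the quasi-triangle inequality $\Phi(a)\Phi(b)\le\Phi(c(a+b))$. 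The condition $d(x,x')\le t^{1/d_w}$ transfers to the scale $t/2$ at the cost of a constant. Finally, the $L^p$ bound follows from Schur's test: by \eqref{ineq: q sub gaussian bound} and \eqref{ineq:a_reg}, $\sup_x\int_M|q_{k,t}(x,y)|\,\mu(\mathrm dy)\lesssim e^{-ct}$, and by symmetry the same bound holds in $x$, so interpolation gives all $p\in[1,\infty]$.
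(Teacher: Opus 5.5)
Your proposal is correct and follows essentially the same route as the paper. The small-time derivative bound comes from a Davies-type complex-time/Phragm\'en--Lindel\"of result: the paper simply cites \cite[Theorem 4, Corollary 5]{davies1997non} rather than sketching it as you do. The factor $e^{-ct}$ for $t\ge 1$ comes from the zero-mean property together with Lemma~\ref{lem:long time semi-group norm bound}, and the H\"older bound comes from splitting off a factor $P_{t/2}$ and convolving with the estimate of Lemma~\ref{lem:long time heat kernel bounds}. Your remark that $t^k\partial_t^k p_t=(-1)^k q_{k,t}$ is right and affects none of the bounds. One small correction: a Carne--Varopoulos or finite-propagation argument would not give sub-Gaussian decay when $d_w>2$, so only the Davies route actually applies here.
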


%The following lemma is an elementary, its proof can be found in the Appendix.

\begin{lemma}{\cite[Lemma 2.3]{alonso2021besov}}\label{lem:subg_moment}
For any $\alpha \in [0,\infty)$, there exists $c = c(\alpha)>0$ so that uniformly in $t >0$ and $x,y \in M$,
$$
d(x,y)^\alpha p_t(x,y) \lesssim \pr{t^\frac{\alpha}{d_w}\wedge 1} p_{ct}(x,y).
$$
\end{lemma}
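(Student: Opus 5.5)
We use the two-sided heat kernel bounds in their volume form \eqref{ineq:hk_subg_compact_form}, together with \eqref{ineq:a_reg} and Lemma~\ref{lem:long time heat kernel bounds}. The idea is to trade the factor $d(x,y)^\alpha$ for a small loss in the Gaussian-type exponent. That loss is then compensated by passing to the larger time $ct$, with $c\ge 1$ chosen depending on $\alpha$, $C_1$ and $C_2$. Throughout, write $\beta:=d_w/(d_w-1)$ and $u:=d(x,y)/t^{1/d_w}$.

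\emph{Case $t\ge 1$.} Since $M$ is compact, $d(x,y)^\alpha\le \mathrm{diam}(M)^\alpha\lesssim 1$. By Lemma~\ref{lem:long time heat kernel bounds}, $p_t(x,y)\asymp 1$, and also $p_{ct}(x,y)\asymp 1$ for any fixed $c\ge1$. Hence $d(x,y)^\alpha p_t(x,y)\lesssim p_{ct}(x,y)=(t^{\alpha/d_w}\wedge1)\,p_{ct}(x,y)$.

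\emph{Case $t\le 1$.} By the upper bound in \eqref{ineq:sgu},
\[
d(x,y)^\alpha p_t(x,y)\lesssim t^{\alpha/d_w}\,t^{-d_h/d_w}\,u^\alpha e^{-C_2^\beta u^\beta}.
\]
The elementary bound $\sup_{u\ge0}u^\alpha e^{-\frac12C_2^\beta u^\beta}<\infty$ then gives
\[
d(x,y)^\alpha p_t(x,y)\lesssim t^{\alpha/d_w}\,t^{-d_h/d_w}e^{-\frac12 C_2^\beta u^\beta}.
\]
Now use the lower bound in \eqref{ineq:hk_subg_compact_form} at time $ct$:
\[
p_{ct}(x,y)\gtrsim \mu\bigl(B(x,(ct)^{1/d_w})\bigr)^{-1}\exp\!\bigl(-C_1^\beta c^{-\beta/d_w}u^\beta\bigr).
\]
Choose $c\ge1$ large enough that $C_1^\beta c^{-\beta/d_w}\le \tfrac12 C_2^\beta$; then the exponential factor here dominates the one obtained above. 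For the prefactor, $\mu(B(x,(ct)^{1/d_w}))\le \mu(B(x,c^{1/d_w}t^{1/d_w}))$. By \eqref{ineq:a_reg}, and the trivial bound $\mu(B)\le\mu(M)$ when the radius exceeds $\mathrm{diam}(M)$, this is $\lesssim_c t^{d_h/d_w}$ for $t\le1$. Hence $t^{-d_h/d_w}e^{-\frac12C_2^\beta u^\beta}\lesssim p_{ct}(x,y)$, which gives the claim with the factor $t^{\alpha/d_w}=t^{\alpha/d_w}\wedge1$.

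The only point needing care is that $ct$ may exceed $1$ even though $t\le1$, so the short-time form \eqref{ineq:sgu} alone is not enough. This is exactly why the compact volume form \eqref{ineq:hk_subg_compact_form}, valid for all $t>0$, is invoked in place of it, together with the volume comparison at scale $(ct)^{1/d_w}$. Apart from this, the argument only requires choosing $c$ to absorb the polynomial factor; the constant $c$ depends on $\alpha$ only through that absorption step.
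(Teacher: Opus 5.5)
Your proof is correct and follows the standard argument behind the cited \cite[Lemma 2.3]{alonso2021besov}; the paper itself gives no proof beyond that citation. You absorb $u^\alpha$ into half of the sub-Gaussian exponent via the upper bound, dominate by the lower bound at a dilated time $ct$ (using the volume form, valid for all $t>0$), and handle the $t\ge 1$ regime, which compactness produces, via Lemma~\ref{lem:long time heat kernel bounds}. One small remark: with your fixed factor $\tfrac12$, the dilation $c$ is in fact independent of $\alpha$; only the implicit constant depends on $\alpha$.
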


\begin{lemma}\label{lem:hk LpLq interpolation}
For $1 \le p \le q \le \infty$, uniformly in $t>0$,
\begin{align*}
  \norm{e^{-tL}f}_{L^q_M} \lesssim \pr{  1+t^{-\frac{d_h}{d_w}\pr{\frac{1}{p} - \frac{1}{q}}}} \norm{f}_{L^p_M}.
\end{align*}
\end{lemma}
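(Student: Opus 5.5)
The plan is to prove Lemma~\ref{lem:hk LpLq interpolation} by treating short and long times separately and, in each regime, interpolating between an $L^p\to L^p$ bound and an $L^1\to L^\infty$ bound. Throughout we use two facts. First, $P_t$ is a Markov semigroup, and by \eqref{eq:s_complete} and symmetry of $p_t$ it is a contraction on every $L^r_M$, $r\in[1,\infty]$. Second, we have the on-diagonal bound $\sup_{x,y}p_t(x,y)\lesssim t^{-d_h/d_w}$ for $t\in(0,1]$, read off from the upper bound in \eqref{ineq:sgu}. Lemma~\ref{lem:long time heat kernel bounds} upgrades this to $\sup_{x,y}p_t(x,y)\lesssim 1$ for $t\ge 1$.

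\emph{Short times $t\in(0,1]$.} The kernel bound gives
\[
\norm{P_tf}_{L^\infty_M}\le \sup_{x,y}p_t(x,y)\,\norm{f}_{L^1_M}\lesssim t^{-d_h/d_w}\norm{f}_{L^1_M}.
\]
Combining this with the $L^1\to L^1$ contraction, Riesz--Thorin yields $\norm{P_t}_{L^1\to L^s}\lesssim t^{-\frac{d_h}{d_w}(1-\frac1s)}$. For general $1\le p\le q$ I would avoid interpolating bilinearly and instead apply Young's inequality for integral operators (the Schur-type test) directly. Define $r$ by $1+\frac1q=\frac1r+\frac1p$. Then
\[
\norm{P_tf}_{L^q_M}\le \Bigl(\sup_x\norm{p_t(x,\cdot)}_{L^r_M}\Bigr)\norm{f}_{L^p_M},
\]
where symmetry of $p_t$ lets us take the same sup in either variable. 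Next, $\norm{p_t(x,\cdot)}_{L^r}^r\le \norm{p_t(x,\cdot)}_{L^\infty}^{r-1}\norm{p_t(x,\cdot)}_{L^1}\lesssim t^{-\frac{d_h}{d_w}(r-1)}$, again by \eqref{eq:s_complete}. Since $1-\frac1r=\frac1p-\frac1q$, this gives exactly
\[
\norm{P_tf}_{L^q_M}\lesssim t^{-\frac{d_h}{d_w}\left(\frac1p-\frac1q\right)}\norm{f}_{L^p_M}.
\]

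\emph{Long times $t\ge 1$.} The same Young argument with $\sup p_t\lesssim 1$ gives $\norm{P_tf}_{L^q_M}\lesssim\norm{f}_{L^p_M}$. Alternatively, write $P_t=P_1P_{t-1}$: the short-time bound at $t=1$ handles $P_1$, and the $L^p$ contraction handles $P_{t-1}$. Either way the long-time constant is uniform in $t$.

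Adding the two regimes gives the factor $1+t^{-\frac{d_h}{d_w}(\frac1p-\frac1q)}$. There is no real obstacle. The only point needing care is the uniformity of the on-diagonal bound for all $t>0$, which Lemma~\ref{lem:long time heat kernel bounds} supplies; compactness, or equivalently $\mu(M)<\infty$, is not even needed beyond that.
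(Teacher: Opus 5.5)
Your proof is correct. It differs from the paper's only in the interpolation tool and in how large $t$ is handled.

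\textbf{The paper's route.} The paper records a single kernel bound, $p_t(x,y)\lesssim 1+t^{-d_h/d_w}$ for all $t>0$, using \eqref{ineq:sgu} together with Lemma~\ref{lem:long time heat kernel bounds}. From this it reads off $\norm{e^{-tL}}_{L^1_M\to L^\infty_M}\lesssim 1+t^{-d_h/d_w}$ and applies Riesz--Thorin twice. The first application interpolates between $L^1_M\to L^\infty_M$ and $L^\infty_M\to L^\infty_M$ to reach $L^p_M\to L^\infty_M$. The second interpolates between $L^p_M\to L^\infty_M$ and $L^p_M\to L^p_M$ (effectively H\"older on the output) to reach $L^p_M\to L^q_M$. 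The final form then follows from $(1+t^{-a})^\theta\le 1+t^{-a\theta}$.

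\textbf{Your route.} You get $L^p_M\to L^q_M$ in one step from the generalized Young (Schur-type) inequality for integral operators. You bound the row $L^r$-norm of the kernel by interpolating $\sup p_t$ against $\int_M p_t(x,\cdot)\,\mathrm d\mu=1$, and you treat $t\le 1$ and $t\ge 1$ separately rather than carrying the combined factor $1+t^{-d_h/d_w}$.

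\textbf{Comparison.} Both proofs rest on the same two inputs: the on-diagonal upper bound, and $L^r$-contractivity from \eqref{eq:s_complete} plus symmetry. The paper's version is a little more compact. Yours makes the exponent identity $1-\frac1r=\frac1p-\frac1q$ explicit.

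Your version also buys something you undersell in your closing remark. The factorization $P_t=P_1P_{t-1}$ shows the large-time regime needs only the bound at $t=1$ and contractivity. So the two-sided Lemma~\ref{lem:long time heat kernel bounds}, whose proof uses the lower bound in \eqref{ineq:sgu} and compactness, is not actually needed for this lemma.
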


\section{Heat semi-group based Besov Spaces}\label{sec:besov}

In this section, we explore heat semi-group based Besov spaces and their relevant properties. We will first list some elementary properties of Besov norms. Next, we study how they relate to H\"{o}lder spaces and with each other via duality, interpolation, various embedding properties, and the smoothing effect of heat flow. We will also study how Besov norms are related to Dirichlet forms and state some useful inequalities that will help us analyze $L^p_M$-norms of solutions to nonlinear (S)PDEs. We define $\alpha$-H\"older norm $\norm{\cdot}_{\mathcal C^\alpha_M}$ for $\alpha \in (0,1]$:
\begin{align*}
\norm{f}_{\mathcal C^\alpha_M}
:=
\norm{f}_{\mathcal C_M}
+
\sup_{0<d(x,y)\le 1}\frac{|f(x)-f(y)|}{d(x,y)^\alpha}.
\end{align*}

\subsection{Elementary properties of Besov norms}
The following almost orthogonality relation will be useful: for each $p \in[1,\infty]$, $k_1,k_2 \ge 0$ and $t,s>0$,
\begin{equation}\label{ineq:almost orthogonal}
    \begin{aligned}
        \norm{Q^{(k_1)}_sQ^{(k_2)}_t}_{L^p_M \to L^p_M} &= \norm{e^{-t\wedge sL } \min\left\{ \pr{\frac{t}{s}}^{k_2}, \pr{\frac{s}{t}}^{k_1}\right\} Q^{(k_1+k_2)}_{t\vee s}}_{L^p_M \to L^p_M}\\
        &\le \min\left\{ \pr{\frac{t}{s}}^{k_2}, \pr{\frac{s}{t}}^{k_1}\right\}\norm{ Q^{(k_1+k_2)}_{t\vee s}}_{L^p_M \to L^p_M}.
    \end{aligned}
\end{equation}

\noindent We will need the following version of Calderon's reproducing formula. 
\begin{theorem}\label{thm:crf}
Let $p \in (1,\infty)$ and $f \in L^p_M$, then  for all $k\in \N$
\begin{align*}
  \lim_{t \downarrow 0} P_t^{(k)}f = f,\quad \text{ in }L^p_M
\end{align*}
and
\begin{align*}
  f =  \frac{1}{\gamma_k}\int_0^1 Q_t^{(k)}f \frac{\mathrm dt}{t} +P^{(k)}_1(f),\qquad \text{in    }L^p_M.
\end{align*}
If $f \in \mathcal C_M$, then the convergence is in $\mathcal C_M$. 
\end{theorem}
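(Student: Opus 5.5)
The plan is to prove both identities via functional calculus of the self-adjoint operator $L$, then upgrade from $L^2_M$ to $L^p_M$ and $\mathcal C_M$ with the uniform kernel bounds of Proposition~\ref{prop:kernel_bound_dt}. Here $\gamma_k:=\int_0^\infty s^{k}e^{-s}\frac{\mathrm ds}{s}=\Gamma(k)=(k-1)!$.

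\textbf{Step 1: exact scalar identity.} For $\lambda\ge 0$ and $k\in\N$, repeated integration by parts (equivalently, $\frac{\mathrm d}{\mathrm dt}\sum_{m<k}\frac{(t\lambda)^m}{m!}e^{-t\lambda}=-\frac{t^{k-1}\lambda^k}{(k-1)!}e^{-t\lambda}$) gives
\begin{align*}
\frac{1}{\gamma_k}\int_\varepsilon^1 (t\lambda)^k e^{-t\lambda}\frac{\mathrm dt}{t}=\sum_{m=0}^{k-1}\frac{(\varepsilon\lambda)^m}{m!}e^{-\varepsilon\lambda}-\sum_{m=0}^{k-1}\frac{\lambda^m}{m!}e^{-\lambda}.
\end{align*}
I will read $P^{(k)}_t$ as $\sum_{m<k}\frac{1}{m!}(tL)^m e^{-tL}$, which is the normalization that makes $P^{(k)}_t\to I$. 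This yields the operator identity
\begin{align*}
P^{(k)}_\varepsilon f=\frac{1}{\gamma_k}\int_\varepsilon^1 Q^{(k)}_t f\,\frac{\mathrm dt}{t}+P^{(k)}_1 f
\end{align*}
for every $f\in L^p_M$. For $\varepsilon>0$ both sides are bounded operators on $L^p_M$ by Proposition~\ref{prop:kernel_bound_dt} (and $\mathcal C_M$ is preserved by strong Feller). The identity holds on $L^2\cap L^p$ by spectral calculus, and it extends by density (or directly via kernels) to all of $L^p_M$. The reproducing formula therefore reduces to the first claim, $P^{(k)}_\varepsilon f\to f$, after which the integral converges as an improper integral in $L^p_M$.

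\textbf{Step 2: $P^{(k)}_\varepsilon f\to f$.} We have $P^{(k)}_\varepsilon=e^{-\varepsilon L}+\sum_{m=1}^{k-1}\frac{1}{m!}Q^{(m)}_\varepsilon$. The first term converges to $f$ in $L^p_M$ for $p\in(1,\infty)$ by strong continuity of the symmetric Markov semigroup. It converges in $\mathcal C_M$ for continuous $f$ by the Feller property, or directly: using \eqref{eq:s_complete} and \eqref{ineq:sgu}, $|P_\varepsilon f(x)-f(x)|\le\int p_\varepsilon(x,y)|f(y)-f(x)|\,\mu(\mathrm dy)$, which is controlled by uniform continuity plus sub-Gaussian tails. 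For the $Q^{(m)}_\varepsilon$ terms, note they are uniformly bounded on $L^p_M$ and $\mathcal C_M$ by Proposition~\ref{prop:kernel_bound_dt}. By density it then suffices to show $Q^{(m)}_\varepsilon g\to 0$ for $g$ in a dense class. For $g\in\mathcal D(L)$-type elements, e.g.\ $g=P_s h$, we have $Q^{(m)}_\varepsilon g=\varepsilon\,(\varepsilon L)^{m-1}e^{-\varepsilon L}Lg$, whose norm is at most $\varepsilon\norm{Lg}$, which tends to $0$. Such $g$ are dense in $L^p_M$ and in $\mathcal C_M$ because $P_s h\to h$. Alternatively, for continuous $f$, the zero-mean property in \eqref{eq:locality integral q equals 0} gives $Q^{(m)}_\varepsilon f(x)=\int q_{m,\varepsilon}(x,y)(f(y)-f(x))\,\mu(\mathrm dy)$. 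Then \eqref{ineq: q sub gaussian bound} with \eqref{ineq:a_reg} gives uniform convergence to $0$ by uniform continuity, and density of $\mathcal C_M$ in $L^p_M$ plus uniform boundedness handles $L^p$.

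\textbf{Main obstacle.} The delicate point is ensuring the $\frac{\mathrm dt}{t}$ integral is meaningful as an $L^p$ (Bochner or improper) limit at $t\downarrow0$, and that the spectral identity transfers off $L^2$. I would handle both through the truncated identity in Step~1, which involves only bounded operators, so the whole formula rests on the uniform bounds of Proposition~\ref{prop:kernel_bound_dt} and the approximation in Step~2. The restriction $p\in(1,\infty)$ is not truly needed by this argument, except that it is not needed for $p=1$ either; $p=\infty$ is replaced by $\mathcal C_M$.
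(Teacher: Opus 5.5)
Your argument is correct and is the standard one behind \cite[Proposition 2.7]{grigor2015heat}, which is all the paper's proof consists of: integrate $\frac{\mathrm d}{\mathrm dt}P^{(k)}_t=-\frac{1}{\gamma_k t}Q^{(k)}_t$ over $[\varepsilon,1]$, let $\varepsilon\downarrow 0$, and obtain the $\mathcal C_M$ case from the Feller property together with the uniform bounds of Proposition~\ref{prop:kernel_bound_dt}. You are also right to read $P^{(k)}_t$ as $\sum_{m<k}\frac{1}{m!}(tL)^m e^{-tL}$ with $\gamma_k=\Gamma(k)$; under the literal definition in \eqref{eq:pq operators} the first claim fails for $k\ge 2$, and the reproducing formula is unaffected because the two definitions agree at $t=1$.
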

\noindent The proof of the case for $f \in L^p_M$ and $p \in (1,\infty)$ can be found in \cite[Proposition 2.7]{grigor2015heat}. For $f \in \mathcal C_M$, this proof applies \textit{mutatis mutandis} by Feller property of $(P_t)_t$. 

\noindent The Besov norms defined in \eqref{eq:besov norm def pq} are independent of the index $k$  for sufficiently large $k$. This is essentially shown in \cite[Proposition 2.9]{grigor2015heat}, and we will omit its proof.
\begin{prop}\label{prop:independent_b}
Suppose $\alpha  \in \R$ and $p,q \in [1,\infty]$, the norm  $\norm{\cdot}_{\mathcal B^\alpha_{p,q}}$ defined as in Definition \ref{def:besov_space}  is equivalent for $k\in \Z_+$ with $k> \alpha$. 
%In particular, $k$ does not need to be an integer.
\end{prop}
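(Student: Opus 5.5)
The plan is to compare the norms built from two indices $k<k'$, both exceeding $\alpha/d_w$ (the statement writes $k>\alpha$; we only use that the defining integrals converge). Write $\norm{\cdot}_{(k)}$ for the homogeneous part computed with $Q^{(k)}_t$. The low-frequency term $\norm{P_1 f}_{L^p_M}$ does not involve $k$, so only the homogeneous parts need comparing.

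\emph{The easy direction} is $\norm{f}_{(k')}\lesssim \norm{f}_{(k)}+\norm{P_1f}_{L^p_M}$. Write
\[
Q^{(k')}_t = (tL)^{k'-k}e^{-tL/2}\,Q^{(k)}_{t/2}\,2^{k}.
\]
The operator $(tL)^{k'-k}e^{-tL/2}$ is a constant multiple of $Q^{(k'-k)}_{t/2}$, so Proposition~\ref{prop:kernel_bound_dt} bounds it on $L^p_M$ uniformly in $t$. Hence
\[
t^{-\alpha/d_w}\norm{Q^{(k')}_tf}_{L^p_M}\lesssim (t/2)^{-\alpha/d_w}\norm{Q^{(k)}_{t/2}f}_{L^p_M}.
\]
For $t\in(0,1]$, the point $t/2$ lies in $(0,1/2]$, so after the change of variables $t\mapsto t/2$ the $L^q(\mathrm dt/t)$ norm is controlled by $\norm{f}_{(k)}$. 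No remainder appears in this direction.

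\emph{The reverse direction} is $\norm{f}_{(k)}\lesssim \norm{f}_{(k')}+\norm{P_1f}_{L^p_M}$. For $f\in\mathcal S$, apply Calder\'on's reproducing formula (Theorem~\ref{thm:crf}) with index $k'$:
\[
f=\gamma_{k'}^{-1}\int_0^1 Q^{(k')}_sf\,\frac{\mathrm ds}{s}+P^{(k')}_1f .
\]
Applying $Q^{(k)}_t$ and using the almost orthogonality bound~\eqref{ineq:almost orthogonal} together with Proposition~\ref{prop:kernel_bound_dt} gives
\[
\norm{Q^{(k)}_tQ^{(k')}_s f}_{L^p_M}\lesssim \min\{(t/s)^{k'},(s/t)^{k}\}\,\norm{Q^{(k')}_{s}f}_{L^p_M}.
\]
Here I factor through $Q^{(k')}_{s/2}$ as in the easy direction, so that the min-factor multiplies a Besov-type quantity of $f$ rather than the operator norm. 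Multiplying by $t^{-\alpha/d_w}$ produces the kernel
\[
K(t,s)=\min\{(t/s)^{k'},(s/t)^{k}\}(s/t)^{\alpha/d_w}
\]
against $s^{-\alpha/d_w}\norm{Q^{(k')}_sf}$. The kernel $K$ is integrable in $\mathrm ds/s$ uniformly in $t$, and in $\mathrm dt/t$ uniformly in $s$, exactly when $k>\alpha/d_w$ (the region $s<t$) and $k'>-\alpha/d_w$ (the region $s>t$). Schur's test, i.e.\ Young's inequality on the multiplicative group, then gives the $L^q(\mathrm dt/t)$ bound for every $q\in[1,\infty]$.

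\emph{The remainder} is $Q^{(k)}_tP^{(k')}_1f=\sum_{m<k'}\frac1{m!}Q^{(k)}_tL^me^{-L}f$. I write $e^{-L}=e^{-L/2}P_{1/2}$ and use $\norm{Q^{(k)}_tL^me^{-L/2}}_{L^p\to L^p}\lesssim t^{k}$, which follows from Proposition~\ref{prop:kernel_bound_dt} with $L^{k+m}e^{-L/2}$ bounded. This bounds the term by $t^k\norm{P_{1/2}f}_{L^p}$, and $\int_0^1 t^{(k-\alpha/d_w)q}\,\mathrm dt/t<\infty$.

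I expect the main obstacle to be that this produces $\norm{P_{1/2}f}$ rather than $\norm{P_1f}$. To handle it I write $P_{1/2}f=P_1f+\int_{1/2}^1 LP_s f\,\mathrm ds$. The integrand equals $s^{-1}Q^{(1)}_sf$, and $Q^{(1)}_s$ can be dominated by $Q^{(k')}_{s/2}$ applied to $f$ plus low-frequency terms, using the reproducing formula again. Combined with the $L^p$-boundedness and the spectral gap (Lemma~\ref{lem:long time semi-group norm bound}) for long times, this bounds $\norm{P_{1/2}f}$ by $\norm{P_1f}+\norm{f}_{(k')}$ over $s\in[1/2,1]$. The estimates extend from $\mathcal S$ to the closure by density. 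This gives equivalence of the two full norms, and hence the two Besov spaces coincide.
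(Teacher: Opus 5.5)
Your easy direction, the bound $\norm{Q^{(k)}_tP^{(k')}_1f}_{L^p_M}\lesssim t^k\norm{P_{1/2}f}_{L^p_M}$, and the overall architecture (reproducing formula, almost orthogonality, Schur test) are fine; the paper itself omits the proof and cites Grigor'yan--Liu. The gap is the central estimate of the reverse direction, which is false as stated.

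No bound of the form $\norm{Q^{(k)}_tQ^{(k')}_sf}_{L^p_M}\le \phi(s/t)\norm{Q^{(k')}_{s}f}_{L^p_M}$, or with $Q^{(k')}_{s/2}f$ on the right, with $\phi(r)\to0$ as $r\to 0$, can hold on $\{s<t\}$. Factoring as you propose gives $Q^{(k)}_tQ^{(k')}_s=2^{k'}(tL)^ke^{-(t+s/2)L}Q^{(k')}_{s/2}$, and the operator in front has norm of order one when $s<t$. Concretely, take $\mathbb T^2$ with $L=-\Delta$, which satisfies Assumption \ref{A:main} with $d_h=d_w=2$. If $f$ is an eigenfunction with eigenvalue $\lambda=1/t\ge 1$, then $Q^{(k)}_tQ^{(k')}_sf=e^{-1}Q^{(k')}_sf$ for every $s<t$.

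What \eqref{ineq:almost orthogonal} actually gives is a gain of $(s/t)^{k'}$ for $s<t$ and $(t/s)^{k}$ for $s>t$ (your min has the exponents swapped). That gain multiplies $Q^{(k+k')}_{t\vee s}$, so on $\{s<t\}$ the decay is relative to the larger scale $t$, not to $s$. This matters: if you keep the pairing with $\norm{Q^{(k')}_{s/2}f}_{L^p_M}$ and use the honest factor $1$ on $\{s<t\}$, the Schur kernel there is $(s/t)^{\alpha/d_w}$. That kernel is integrable in $\mathrm ds/s$ only when $\alpha>0$, so the argument fails exactly for $\alpha\le 0$, the case the paper needs for $\mathcal B^{-\alpha}_\infty$.

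The repair is short.

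\emph{Region $s<t$.} Write $Q^{(k)}_tQ^{(k')}_s=2^{k'}(s/t)^{k'}(tL)^ke^{-(t/2+s)L}Q^{(k')}_{t/2}$. This gives $\norm{Q^{(k)}_tQ^{(k')}_sf}_{L^p_M}\lesssim (s/t)^{k'}\norm{Q^{(k')}_{t/2}f}_{L^p_M}$. The $s$-integral over $(0,t)$ then costs only $1/k'$, and this piece is bounded by $(t/2)^{-\alpha/d_w}\norm{Q^{(k')}_{t/2}f}_{L^p_M}$ pointwise in $t$, with no condition on $\alpha$.

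\emph{Region $s>t$.} Your factorization does give $\norm{Q^{(k)}_tQ^{(k')}_sf}_{L^p_M}\lesssim (t/s)^{k}\norm{Q^{(k')}_{s/2}f}_{L^p_M}$. The Schur test with kernel $(t/s)^{k-\alpha/d_w}\mathbf 1_{\{s>t\}}$ then needs exactly $k>\alpha/d_w$; your extra condition $k'>-\alpha/d_w$ is an artifact.

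Alternatively, as the paper does when proving $\norm{f}_{\mathcal B^\alpha_{p,\infty}}\lesssim\norm{f}_{\mathcal B^\alpha_{p,q}}$, apply the reproducing formula with a larger index and split $Q^{(m+k')}_s=2^{m+k'}Q^{(m)}_{s/2}Q^{(k')}_{s/2}$ with $m$ large. Then $\norm{Q^{(k)}_tQ^{(m)}_{s/2}}$ supplies a factor $\lesssim\min\{(s/t)^{m},(t/s)^{k}\}$ and $Q^{(k')}_{s/2}f$ supplies the Besov quantity.

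The same point governs your treatment of $\norm{P_{1/2}f}_{L^p_M}$. For $s\in[1/2,1]$ and $u\in(0,1]$, use $\norm{Q^{(1)}_sQ^{(k')}_uf}_{L^p_M}\lesssim u^{k'}\norm{Q^{(k')}_{s/2}f}_{L^p_M}$, i.e.\ measure at the larger scale $s$. With that, this step goes through, and the spectral gap is not needed.
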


\begin{prop}\label{prop:Pa1_besov_bound}
For $\sigma \in (-\infty,d_w)$ and $k \in \N$, we have
\begin{align*}
  \norm{P^{(k)}_1f}_{L^\infty_M} \lesssim \norm{f}_{\mathcal B^\sigma_{\infty}}.
\end{align*}
\end{prop}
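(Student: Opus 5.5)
The plan is to bound each summand of $P^{(k)}_1 f=\sum_{m=0}^{k-1}\frac{1}{m!}L^m e^{-L}f$ separately in $L^\infty_M$ by a single term of the Besov norm, using only the semigroup law and the uniform operator bounds of Proposition~\ref{prop:kernel_bound_dt}. By density it suffices to take $f\in\mathcal S$, since both sides are continuous in the $\mathcal B^\sigma_\infty$-norm once the inequality holds on $\mathcal S$. Since $\sigma<d_w$, the index $k'=1$ satisfies $k'>\sigma/d_w$. By Proposition~\ref{prop:independent_b} (norm equivalence in the admissible range of indices), we may therefore compute $\norm{f}_{\mathcal B^\sigma_\infty}$ with $Q^{(1)}_t$:
\[
\norm{f}_{\mathcal B^\sigma_\infty}\asymp \norm{P_1 f}_{L^\infty_M}+\sup_{0<t\le 1} t^{-\sigma/d_w}\norm{Q^{(1)}_t f}_{L^\infty_M}.
\]
If one prefers not to lean on this equivalence for small indices, the same argument below works with $Q^{(k')}_t$ for any fixed larger $k'$, by splitting $e^{-L}$ into more factors.

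The term $m=0$ is $P_1 f$, which is already part of the norm. For $m\ge 1$, I would use the semigroup property to factor
\[
L^m e^{-L}=2^{m}\,Q^{(m-1)}_{1/2}\,Q^{(1)}_{1/2},
\]
since $Q^{(m-1)}_{1/2}Q^{(1)}_{1/2}=(L/2)^{m-1}e^{-L/2}(L/2)e^{-L/2}=2^{-m}L^m e^{-L}$. It then follows that
\[
\norm{L^m e^{-L}f}_{L^\infty_M}\le 2^m\norm{Q^{(m-1)}_{1/2}}_{L^\infty_M\to L^\infty_M}\norm{Q^{(1)}_{1/2}f}_{L^\infty_M}.
\]
The operator norm of $Q^{(m-1)}_{1/2}$ is bounded as follows:
\begin{itemize}
\item if $m-1\ge1$, it is bounded uniformly by the last statement of Proposition~\ref{prop:kernel_bound_dt};
\item if $m-1=0$, it is just $P_{1/2}$, a contraction on $L^\infty_M$ by the Markov property and \eqref{eq:s_complete}.
\end{itemize}
Finally, $\norm{Q^{(1)}_{1/2}f}_{L^\infty_M}\le 2^{-\sigma/d_w}\sup_{t\le1}t^{-\sigma/d_w}\norm{Q^{(1)}_tf}_{L^\infty_M}\lesssim\norm{f}_{\mathcal B^\sigma_\infty}$.

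Summing over the finitely many $m<k$ gives the claim, with a constant depending only on $k$ and $\sigma$.

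The only delicate point is the choice of index in the Besov norm. The restriction $\sigma<d_w$ is precisely what allows the first-order operator $Q^{(1)}_t$ to define an equivalent norm, so one factor $L e^{-L/2}$ can be extracted from each $L^m e^{-L}$ and charged to the Besov seminorm at the fixed scale $t=1/2$. The remaining factor $L^{m-1}e^{-L/2}$ is harmless on $L^\infty_M$. I expect the justification of that norm equivalence (or the bookkeeping if a larger index $k'$ is used instead) to be the only step needing care; the rest is routine.
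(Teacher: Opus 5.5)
Your argument is correct. It is presumably the adaptation the paper intends: the paper omits the proof and refers to Bailleul--Bernicot. The hypothesis $\sigma<d_w$ is exactly what makes $k=1$ an admissible index. With Proposition~\ref{prop:independent_b} read as $k>\alpha/d_w$, as in Definition~\ref{def:besov_space}, rather than the literal $k>\alpha$, the factorization $L^me^{-L}=2^mQ^{(m-1)}_{1/2}Q^{(1)}_{1/2}$ finishes the proof.

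Your fallback remark, however, is wrong as stated. For $m<k'$ no factor $Q^{(k')}$ can be split off from $L^me^{-L}$, however you split $e^{-L}$. What does work for an arbitrary index $k'$, and in fact for every $\sigma\in\R$, is a Taylor expansion in time:
\[
L^me^{-L}f=\sum_{j=0}^{k'-m-1}\frac{1}{j!}L^{m+j}e^{-L}(P_1f)+\int_1^2\frac{(s-1)^{k'-m-1}}{(k'-m-1)!}\,e^{-(s-1)L}Q^{(k')}_1f\,\mathrm ds .
\]
This bounds $\norm{L^me^{-L}f}_{L^\infty_M}$ by $\norm{P_1f}_{L^\infty_M}+\norm{Q^{(k')}_1f}_{L^\infty_M}$, with no appeal to index equivalence.
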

\begin{proof}
    \cite[Proposition 2.7.]{bailleul2016heat} is this proposition with $d_w=2$. The proof there can be adapted \textit{mutatis mutandis}, so we omit it.
\end{proof}
\noindent The following proposition shows that the Besov norm is also independent of the time horizon.
\begin{prop}\label{prop: norm equivalent time horizon}
Suppose $\alpha \in \R$ and $p,q \in [1,\infty]$, then for $k \in \Z_+$ with $k > \alpha$ and $T \in [1,\infty]$, denote for $f \in \mathcal B^\alpha_{p,q}$ that
\begin{align*}
  \norm{f}_{\mathcal B^\alpha_{p,q}(T)}:=\norm{P_1f}_{L^p} + \pr{\int_0^T \pr{t^{-\frac{\alpha}{d_w}} \norm{Q^{(k)}_t f}_{L^p_M}}^q \frac{\mathrm{d}s    }{s} }^\frac{1}{q}
\end{align*}
with obvious modifications when $q = \infty$. Then for any $T,T' \in [1,\infty]$, it holds uniformly in $f \in \mathcal B_{p,q}^\alpha$ that
\begin{align*}
  \norm{f}_{\mathcal B^\alpha_{p,q}(T)} \asymp\norm{f}_{\mathcal B^\alpha_{p,q}(T')}
\end{align*}
\end{prop}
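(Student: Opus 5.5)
It suffices to compare the horizon $T$ with the horizon $1$. Set $I(a,b):=\bigl(\int_a^b (t^{-\alpha/d_w}\norm{Q^{(k)}_t f}_{L^p_M})^q\,\frac{\mathrm dt}{t}\bigr)^{1/q}$. For $T\ge1$ the inequality $\norm{f}_{\mathcal B^\alpha_{p,q}(1)}\le \norm{f}_{\mathcal B^\alpha_{p,q}(T)}$ is trivial, since the integrand is non-negative. So the whole task is the reverse bound: we must show that the tail $I(1,\infty)$ is controlled by $\norm{P_1 f}_{L^p_M}$. Once this holds for $T=\infty$, every finite $T\ge1$ is sandwiched between the cases $T=1$ and $T=\infty$.

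\textbf{Step 1 (factoring out $P_1$).} For $t\ge 1$ write
\[
Q^{(k)}_t f=(tL)^k e^{-(t-1/2)L}e^{-L/2} f .
\]
When $k\ge1$ we factor instead through $P_1$ and use the semigroup property:
\[
Q^{(k)}_t f=\Bigl(\tfrac{t}{t-1}\Bigr)^k Q^{(k)}_{t-1}P_1 f \quad (t\ge 2),
\]
and for $t\in[1,2]$ we write
\[
Q^{(k)}_t f = t^k L^k e^{-(t-1/2)L}\,e^{-L/2}f .
\]
To avoid needing $P_{1/2}$, it is cleaner to split the range as $[1,2]$ and $[2,\infty)$.

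\textbf{Step 2 (the range $t\ge2$).} By the last statement of Proposition~\ref{prop:kernel_bound_dt},
\[
\norm{Q^{(k)}_{t-1}P_1 f}_{L^p_M}\lesssim e^{-c(t-1)}\norm{P_1 f}_{L^p_M}.
\]
Hence
\[
I(2,\infty)^q\lesssim \norm{P_1f}_{L^p_M}^q\int_2^\infty t^{-\alpha q/d_w}e^{-cqt}\,\frac{\mathrm dt}{t}<\infty ,
\]
and the same argument works for $q=\infty$. If $k=0$, then $Q^{(0)}_t=P_t=P_{t-1}P_1$, which is an $L^p$ contraction, so $I(2,\infty)$ is bounded by a constant times $\norm{P_1 f}_{L^p_M}$ for $\alpha>0$. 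That is the relevant case anyway, since $k>\alpha$.

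\textbf{Step 3 (the compact window $t\in[1,2]$).} This is the main obstacle: we need $\norm{Q^{(k)}_t f}_{L^p_M}\lesssim \norm{P_1 f}_{L^p_M}+I(1/2,1)$, using only information from the horizon-$1$ norm. The plan is to use the semigroup and almost orthogonality \eqref{ineq:almost orthogonal}. For $t\in[1,2]$ and $s=t/2\in[1/2,1]$, we have
\[
Q^{(k)}_t f = 2^k e^{-sL}(sL)^k e^{-sL}f = 2^k e^{-sL}Q^{(k)}_s f ,
\]
and $e^{-sL}$ is an $L^p$ contraction. Therefore
\[
\norm{Q^{(k)}_tf}_{L^p_M}\le 2^k\norm{Q^{(k)}_{t/2}f}_{L^p_M}.
\]
After the change of variables $s=t/2$, the weight $t^{-\alpha/d_w}\asymp s^{-\alpha/d_w}$ on this window. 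This gives $I(1,2)\lesssim I(1/2,1)\le \norm{f}_{\mathcal B^\alpha_{p,q}(1)}$, which neatly avoids any delicate step.

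Combining Steps 2 and 3 yields $\norm{f}_{\mathcal B^\alpha_{p,q}(\infty)}\lesssim\norm{f}_{\mathcal B^\alpha_{p,q}(1)}$, with constants depending only on $\alpha,p,q,k$. Applying the sandwich argument from the first paragraph then gives the claimed equivalence for all $T,T'\in[1,\infty]$.
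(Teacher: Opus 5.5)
Your argument is correct for $k\ge 1$ and is essentially the paper's proof. Both proofs use the semigroup to push times $t\ge 1$ back onto $P_1f$ or onto the window $[1/2,1]$, and then apply exponential long-time decay. The paper routes the whole tail through $\norm{L^kP_1f}_{L^p_M}=\norm{Q^{(k)}_1f}_{L^p_M}$. It does this via Lemma~\ref{lem:long time semi-group norm bound} applied to the mean-zero function $L^kP_1f$, and then via $L^kP_1f=s^{-k}e^{-(1-s)L}Q^{(k)}_sf$ for $s\in[1/2,1]$. You instead bound $[2,\infty)$ by $\norm{P_1f}_{L^p_M}$ via Proposition~\ref{prop:kernel_bound_dt}, and $[1,2]$ by $I(1/2,1)$ via $Q^{(k)}_t=2^kP_{t/2}Q^{(k)}_{t/2}$. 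The difference is only bookkeeping. Your opening sentence says the tail is controlled by $\norm{P_1f}_{L^p_M}$ alone, which is not true on $[1,2]$ (it fails for high eigenfunctions of $L$). Your Step 3 correctly uses $I(1/2,1)$ instead, so this does not affect the argument.

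The one real slip is the $k=0$ aside. The condition $k=0>\alpha$ forces $\alpha<0$, not $\alpha>0$. In that case the statement is actually false for $T'=\infty$. For $f\equiv 1$ one has $\norm{Q^{(0)}_tf}_{L^p_M}=\mu(M)^{1/p}$ for all $t$, so the tail, with weight $t^{-\alpha/d_w}$ where $-\alpha>0$, diverges for every $q$. Drop that remark and state the restriction $k\ge 1$ explicitly. The paper's proof tacitly makes the same restriction, since it needs $\int_M L^kP_1f\,\mathrm d\mu=0$.
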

\begin{proof}
Note that it is enough to take $T = 1$ and $T' = \infty$ and prove the  $\gtrsim$ direction. Observe that by stochastic completeness and Proposition \ref{prop:Pa1_besov_bound}, we have for any $k >0$ and $s>0$ \[\int_M L^k e^{-sL}f(x)\mu(\mathrm d x) = 0.\]
Thus by Lemma \ref{lem:long time semi-group norm bound}, we have for $t \in [1,2]$,
\begin{align*}
  \norm{Q^{(k)}_t f}_{L^p_M}  &= \norm{e^{-(t - 1)L} (tL)^kP_1f}_{L^p_M} \le t^k \norm{L^k P_1 f}_{L^p_M},\qquad t \in [1,2],\\
  \norm{Q^{(k)}_t f}_{L^p_M}  &= \norm{e^{-(t - 1)L} (tL)^kP_1f}_{L^p_M} \le t^k(1-c_1)^{\left \lfloor t-1 \right \rfloor} \norm{L^k P_1 f}_{L^p_M},\qquad t \in [2,\infty),
\end{align*}
where $c_1 = \inf_{x,y} p_1(x,y) \in (0,1)$  by the lower bound in \eqref{ineq:sgu}. Therefore, for any $p\in [1,\infty]$ and $q \in [1,\infty)$,
\begin{align*}
  \pr{\int_1^{\infty}\pr{t^{-\frac{\alpha}{d_w}} \norm{Q^{(k)}_t f}_{L^p_M}}^q \frac{\mathrm{d}s}{s} }^\frac{1}{q} \lesssim \norm{L^k P_1f}_{L^p_M},
\end{align*}
and $\sup_{t \ge 1} t^{-\frac{\alpha}{d_w}} \norm{Q_t^{(k)} f  }_{L^p_M} \lesssim \norm{L^k P_1f}_{L^p_M}.$ When $q = \infty$, we see
\begin{align*}
  \sup_{t > 0} t^{-\frac{\alpha}{d_w}} \norm{Q_t^{(k)} f  }_{L^p_M} \lesssim \sup_{t \in (0,1]} t^{-\frac{\alpha}{d_w}} \norm{Q_t^{(k)} f  }_{L^p_M}+\norm{L^k P_1f}_{L^p_M} \lesssim \sup_{t \in (0,1]} t^{-\frac{\alpha}{d_w}} \norm{Q_t^{(k)} f  }_{L^p_M}.
\end{align*}
For $q < \infty$, observe that uniformly in  $s \in [1/2,1],$
\begin{align*}
  \norm{L^k P_1f}_{L^p_M} = s^{-k}\norm{e^{-(1-s)L}(sL)^k e^{-sL}f}_{L^p_M} \lesssim \norm{Q^{(k)}_s f}_{L^p_M}.
\end{align*}
Hence
\begin{align*}
  \norm{L^k P_1f}_{L^p_M} &\asymp \pr{\int_{1/2}^1 \pr{s^{-\frac{\alpha}{d_w}}  \norm{Q^{(k)}_t f}_{L^p_M}}^q\frac{\mathrm ds}{s}}^\frac{1}{q}\\
  &\lesssim \pr{\int_{1/2}^1 \pr{s^{-\frac{\alpha}{d_w}}  \norm{Q^{(k)}_s f}_{L^p_M}}^q\frac{\mathrm ds}{s}}^\frac{1}{q},
\end{align*}
which implies the desired inequality.
\end{proof}

\subsection{Regularizing effect of heat flow}
The following results demonstrates the regularization effects of the heat semi-group.

\begin{lemma}\label{lem:q_regulariz}
Suppose $f \in  \mathcal B^\sigma_{p,\infty}$ for some $\sigma \in \R$ and $p \in [1,\infty]$.
Let $b \in \Z_+$ and $\sigma\le\alpha $, then for   $a \in \mathbb N$ with $a \ge \alpha/d_w$ and  $s >0$ that
\begin{align*}
  \norm{Q_s^{(b)} f}_{\dot{\mathcal B}^\alpha_{p,\infty}} \le s^\frac{\sigma - \alpha}{d_w} \norm{f}_{\dot{\mathcal B}^\sigma_{p,\infty}}.
\end{align*}
Furthermore, if $\sigma \le 0$, then uniformly in $t \in (0,1]$
\begin{align*}
  \norm{P_t f}_{L^p_M} \lesssim
  \begin{cases}
    t^{\frac{\sigma}{d_w}}\norm{f}_{\dot{\mathcal B}^\sigma_{p,\infty}} + \norm{P_1f}_{L^p_M},&\text{if }\sigma <0,\\
    \log(1/t)\norm{f}_{\dot{\mathcal B}^0_{p,\infty}} + \norm{P_1f}_{L^p_M},&\text{if }\sigma =0,
  \end{cases}
\end{align*}
In particular, for each $s >0$,  $Q^{(b)}_sf \in \mathcal B^\alpha_{p,\infty}$ for all $\alpha >0$.
\end{lemma}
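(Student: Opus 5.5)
The first estimate reduces to the semigroup identity between $Q^{(a)}_t$ and $Q^{(b)}_s$, combined with the almost orthogonality relation \eqref{ineq:almost orthogonal}. Take $k\ge a$ large enough to define the $\dot{\mathcal B}^\sigma_{p,\infty}$ norm, which is legitimate by Proposition \ref{prop:independent_b}. For $t\in(0,1]$ we need to bound $t^{-\alpha/d_w}\norm{Q^{(a)}_tQ^{(b)}_s f}_{L^p_M}$. The plan is to write
\[
Q^{(a)}_tQ^{(b)}_s f=\frac{t^a s^b}{u^{a+b}}\,Q^{(a+b)}_{u}f,\qquad u:=t+s,
\]
and then pass from $Q^{(a+b)}_u$ to $Q^{(k)}$ at a comparable time. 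Specifically, for $a+b\le k$ one uses $Q^{(a+b)}_u=c\,e^{-(u/2)L}\cdots$, or more precisely expresses $Q^{(a+b)}_u$ through $Q^{(k)}_{u/2}$ combined with a bounded operator (this costs nothing if $a+b\ge k$; otherwise one can simply take $k\le a+b$). This gives
\[
\norm{Q^{(a+b)}_uf}_{L^p_M}\lesssim u^{\sigma/d_w}\norm{f}_{\dot{\mathcal B}^\sigma_{p,\infty}},
\]
where times $u>1$ are handled by Proposition \ref{prop: norm equivalent time horizon}. Consequently
\[
t^{-\alpha/d_w}\Bigl(\frac{t}{u}\Bigr)^{a}\Bigl(\frac{s}{u}\Bigr)^{b}u^{\sigma/d_w}\norm{f}_{\dot{\mathcal B}^\sigma_{p,\infty}}.
\]
Since $a\ge \alpha/d_w$ and $t\le u$, we have $t^{-\alpha/d_w}(t/u)^a\le u^{-\alpha/d_w}$. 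Also $u\ge s$ and $\sigma-\alpha\le0$, so $u^{(\sigma-\alpha)/d_w}\le s^{(\sigma-\alpha)/d_w}$, and $(s/u)^b\le 1$. Together these give the first claim, up to a constant. The statement has "$\le$", which I would read as "$\lesssim$"; otherwise the normalising constants in the definition of the norm have to be tracked exactly.

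For the bound on $\norm{P_tf}_{L^p_M}$ with $\sigma\le0$, I would use the Calderón reproducing identity (Theorem \ref{thm:crf}) applied to $P_tf$, or more directly the fundamental theorem of calculus in the semigroup parameter:
\[
P_tf=P_1f+\int_t^1 L e^{-rL}f\,\mathrm dr=P_1f+\int_t^1 Q^{(1)}_r f\,\frac{\mathrm dr}{r}.
\]
This formula requires $k=1>\sigma/d_w$, which holds since $\sigma\le 0$. Then $\norm{Q^{(1)}_rf}_{L^p_M}\le r^{\sigma/d_w}\norm{f}_{\dot{\mathcal B}^\sigma_{p,\infty}}$, and integration gives $\int_t^1 r^{\sigma/d_w-1}\mathrm dr\lesssim t^{\sigma/d_w}$ for $\sigma<0$ and $\log(1/t)$ for $\sigma=0$. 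If $f$ is only a distribution, the identity is first justified for $f\in\mathcal S$ and then extended by density, which is how $\mathcal B^\sigma_{p,\infty}$ is defined.

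The final claim follows from the first estimate, which gives $\dot{\mathcal B}^\alpha$ finiteness for $\alpha\ge\sigma$; for $\alpha<\sigma$ this is trivial by monotonicity of $t^{-\alpha/d_w}$ on $(0,1]$. The remaining piece is $\norm{P_1Q^{(b)}_sf}_{L^p_M}=\norm{Q^{(b)}_sP_1f}_{L^p_M}\lesssim\norm{P_1f}_{L^p_M}$, which holds by Proposition \ref{prop:kernel_bound_dt}. The main obstacle I anticipate is the index bookkeeping in the first step: matching $Q^{(a+b)}_u$ with the $k$ used in the norm, and handling times $u>1$. The plan for both is to rely on the norm equivalences in Propositions \ref{prop:independent_b} and \ref{prop: norm equivalent time horizon}.
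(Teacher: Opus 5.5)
Your proposal is correct and is essentially the paper's proof. The paper also rewrites $Q^{(a)}_tQ^{(b)}_sf$ as a ratio factor times $Q^{(a+b)}$ at a comparable time: it applies \eqref{ineq:almost orthogonal} at time $t\vee s$ and splits into $t\le s$ versus $t\ge s$, where you use the exact identity at $u=t+s$ and monotonicity. It obtains the $P_tf$ bound by the same integration of $Q^{(1)}_rf$ over $r\in[t,1]$.

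Your treatment of times $u>1$ is more careful than the paper's, which ignores it. What keeps the right-hand side in terms of the homogeneous seminorm is the argument inside the proof of Proposition \ref{prop: norm equivalent time horizon}, namely $\|Q^{(k)}_uf\|_{L^p_M}\le u^k(1-c_1)^{\lfloor u-1\rfloor}\|Q^{(k)}_1f\|_{L^p_M}$ for $k\ge 1$ with $c_1=\inf_{x,y}p_1(x,y)$. The statement of that proposition, which concerns full norms, is not what you need.
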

\begin{proof}
    A simple calculation gives 
    \begin{align*}
       t^{-\frac{\alpha}{d_w}} \norm{Q_t^{(a)} Q_s^{(b)}f}_{L^p_M} &\overset{\eqref{ineq:almost orthogonal}}{\le }t^{-\frac{\alpha}{d_w}}\pr{\frac{t}{s}}^a\wedge \pr{\frac{s}{t}}^b \norm{Q^{(a+b)}_{t\vee s}f}_{L^p_M}\\
       &\le t^{-\frac{\alpha}{d_w}} \pr{\frac{t}{s} }^a\wedge \pr{\frac{s}{t}}^b \pr{t\vee s}^{\frac{\sigma}{d_w}} \norm{f}_{\dot{\mathcal B}_{p,\infty}^\sigma}.
    \end{align*}
    Thus, 
    \begin{align*}
         t^{-\frac{\alpha}{d_w}} \norm{Q_t^{(a)} Q_s^{(b)}f}_{L^p_M} \le \begin{cases}
             t^{-\frac{\alpha}{d_w}+a}s^{\frac{\sigma}{d_w} - a} \norm{f}_{\dot{\mathcal B}_{p,\infty}^\sigma},& t\le s\\
             t^{ -\frac{\alpha}{d_w}-b+\frac{\sigma}{d_w}} s^{b} \norm{f}_{\dot{\mathcal B}_{p,\infty}^\sigma},& t \ge s.
         \end{cases}
    \end{align*}
    Since $a> \alpha/d_w$, $\sigma\le \alpha $ and $b \ge 0$, we see 
    \begin{align*}
        \sup_{t \in (0,s]} t^{-\frac{\alpha}{d_w }+a}s^{\frac{\sigma}{d_w}-a} = s^\frac{\sigma - \alpha}{d_w}=\sup_{t \ge s} t^{-\frac{\alpha}{d_w}-b +\frac{\sigma}{d_w}} s^b,
    \end{align*}
    which proves the first inequality. For the second inequality, we have by the reverse triangle inequality
    \begin{align*}
        \norm{P_tf}_{L^p_M} - \norm{P_1f}_{L^p_M}  \le \norm{P_tf - P_1f}_{L^p_M}  \le \int_t^1 \norm{Q^{(1)}_sf}_{L^p_M}  \frac{\mathrm ds}{s} \le \int_t^1 s^\frac{\sigma}{d_w} \frac{\mathrm ds}{s} \norm{f}_{\dot{\mathcal B}_{p,\infty}^\sigma},
    \end{align*}
    from which the desired inequalities follow by single variable calculus. 
\end{proof}

Next we show strong continuity of the heat semi-group acting on Besov spaces. 
\begin{lemma}\label{lem:strong continuity of heat semi group}
Let $\sigma \in \R$, $\delta\in(0,1)$. Then uniformly in  $f \in \Binf{\sigma}$ and $t \in (0,1]$,
\begin{align*}
  \norm{P_t f - f}_{\Binf{\sigma - \delta}} \lesssim t^\frac{\delta}{d_w}\norm{f}_{\Binf{\sigma}}.
\end{align*}
For any $\lambda>0$,
\begin{align*}
  \norm{P_tf - f}_{L^\infty_M} \lesssim t^\frac{\lambda}{d_w}\norm{f}_{\Binf{\lambda}}.
\end{align*}
As a consequence, $P_t$ is strongly continuous on $\Binf{\sigma}$ for every $\sigma \in \R$.
\end{lemma}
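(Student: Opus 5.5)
The plan is to write $P_t f - f = -\int_0^t L P_s f\,\mathrm ds = -\int_0^t Q^{(1)}_s f\,\frac{\mathrm ds}{s}$, which holds for $f\in\mathcal S$. By density of $\mathcal S$ in $\Binf{\sigma}$ it then suffices to prove the estimates for test functions. Alternatively, one can note that the right-hand side makes sense in $\mathcal S'$ for general $f$.

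\textbf{First estimate.} The low-frequency part is easy. Since $P_1$ commutes with $P_t - I$, we have $\norm{P_1(P_tf-f)}_{L^\infty_M} \le \int_0^t \norm{Q^{(1)}_s P_1 f}_{L^\infty_M}\frac{\mathrm ds}{s}$. Moreover, $Q^{(1)}_sP_1 f = s\,L e^{-sL}P_1 f$ has norm $\lesssim s\norm{LP_{1}f}_{L^\infty_M}$. This last quantity is controlled by $\norm{Q^{(1)}_{1/2}f}_{L^\infty_M}$, and hence by $\norm{f}_{\Binf\sigma}$, using the trick at the end of the proof of Proposition~\ref{prop: norm equivalent time horizon}. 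This piece is therefore $\lesssim t\norm{f}_{\Binf\sigma}\le t^{\delta/d_w}\norm{f}_{\Binf\sigma}$.

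For the homogeneous part, fix $r\in(0,1]$ and $k>\sigma/d_w$ large, and estimate $\norm{Q^{(k)}_r(P_t f - f)}_{L^\infty_M}$ in two regimes.
\begin{itemize}
\item If $r\le t$, use the crude bound $\norm{Q^{(k)}_r P_t f}_{L^\infty_M} + \norm{Q^{(k)}_r f}_{L^\infty_M}$. Write $Q^{(k)}_rP_t=(r/(r+t))^kQ^{(k)}_{r+t}$, so both terms are $\lesssim r^{\sigma/d_w}\norm{f}_{\Binf\sigma}$ (using $(r+t)^{\sigma/d_w}(r/(r+t))^k\le r^{\sigma/d_w}$ since $k\ge\sigma/d_w$). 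Then $r^{-(\sigma-\delta)/d_w}\cdot r^{\sigma/d_w}=r^{\delta/d_w}\le t^{\delta/d_w}$.
\item If $r>t$, use the integral representation: $Q^{(k)}_r(P_tf-f)=-\int_0^t Q^{(k)}_rQ^{(1)}_s f\,\frac{\mathrm ds}{s}$. By \eqref{ineq:almost orthogonal} with $s<r$, $\norm{Q^{(k)}_rQ^{(1)}_sf}_{L^\infty_M}\le (s/r)\norm{Q^{(k+1)}_rf}_{L^\infty_M}\lesssim (s/r) r^{\sigma/d_w}\norm{f}_{\Binf\sigma}$. Integrating gives $(t/r)r^{\sigma/d_w}$, and multiplying by $r^{-(\sigma-\delta)/d_w}$ gives $t\,r^{\delta/d_w-1}\le t^{\delta/d_w}$, since $\delta/d_w<1$ and $r>t$.
\end{itemize}
Taking the supremum over $r$ gives the first estimate. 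The index change from $k$ to $k+1$ is harmless by Proposition~\ref{prop:independent_b}.

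\textbf{Second estimate.} Here $\lambda>0$. Directly, $\norm{P_tf-f}_{L^\infty_M}\le\int_0^t\norm{Q^{(1)}_sf}_{L^\infty_M}\frac{\mathrm ds}{s}\le\int_0^t s^{\lambda/d_w}\frac{\mathrm ds}{s}\norm{f}_{\Binf\lambda}\lesssim t^{\lambda/d_w}\norm{f}_{\Binf\lambda}$. This needs the $k=1$ norm, which is only equivalent to the $\Binf\lambda$ norm when $1>\lambda/d_w$. For larger $\lambda$, I would instead use the Calder\'on reproducing formula (Theorem~\ref{thm:crf}) with $Q^{(k)}$ and bound $\int_0^t$ of $Q^{(k)}_s$ after writing $P_t - I$ against it. Alternatively, reduce to $\lambda'<d_w$, since $\norm{f}_{\Binf{\lambda'}}\lesssim\norm{f}_{\Binf\lambda}$ and $t\le1$; the resulting bound $t^{\lambda'/d_w}$ is weaker, so for large $\lambda$ the Calder\'on route is the one to pursue. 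Handling this index restriction honestly is the main subtlety.

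\textbf{Strong continuity.} This follows by density. For $f\in\mathcal S$, $f\in\Binf{\sigma+\delta}$, so $\norm{P_tf-f}_{\Binf\sigma}\lesssim t^{\delta/d_w}\norm{f}_{\Binf{\sigma+\delta}}\to0$. Together with the uniform boundedness $\norm{P_t}_{\Binf\sigma\to\Binf\sigma}\lesssim1$ (from the commuting and contraction properties), an $\varepsilon/3$ argument extends this to the closure.
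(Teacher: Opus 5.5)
Your proof of the first estimate and of strong continuity is correct, and it is organized differently from the paper's. The paper inserts $Q^{(k)}_s$ with $k$ large into the time integral and bounds $\tau^{-\frac{\sigma-\delta}{d_w}}\int_0^t\norm{Q^{(k)}_\tau Q^{(k)}_s f}_{L^\infty_M}\frac{\mathrm ds}{s}$ in both regimes $\tau<t$ and $\tau\ge t$. By Theorem~\ref{thm:crf}, that integral represents $f-P^{(k)}_tf$ rather than $f-P_tf$. One therefore tacitly still has to control $P^{(k)}_tf-P_tf=\sum_{m=1}^{k-1}\frac{1}{m!}Q^{(m)}_tf$.

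You instead use the exact identity $P_tf-f=-\int_0^tQ^{(1)}_sf\,\frac{\mathrm ds}{s}$. You put the large index only on the observing operator, via $Q^{(k)}_rQ^{(1)}_s=\frac{s}{r}Q^{(k+1)}_rP_s$, and treat $r\le t$ by the triangle inequality. This is cleaner. Moreover, your bookkeeping gives the estimate for every $\delta\in(0,d_w]$. That is the range actually invoked in Lemma~\ref{lem:regularity of mollified solution}, where a time exponent $\frac12+\frac{\delta}{2d_w}$ costs a loss of $\frac{d_w}{2}+\frac{\delta}{2}>1$ in the Besov index. Two minor points: $r+t$ may exceed $1$, which Proposition~\ref{prop: norm equivalent time horizon} covers; and for $\sigma\ge d_w$ the bound $\norm{Q^{(1)}_{1/2}f}_{L^\infty_M}\lesssim\norm{f}_{\Binf{\sigma}}$ should be routed through $\Binf{0}$ and Proposition~\ref{prop:independent_b}.

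The genuine gap is the second estimate for $\lambda\ge d_w$. The Calder\'on route you propose cannot close it, because the inequality is false there. Take $\phi$ with $L\phi=\lambda_1\phi$ and $\lambda_1>0$; the spectrum is discrete since $M$ is compact and $p_t$ is bounded. Then $\phi\in\mathcal S$ and $\norm{\phi}_{\Binf{\lambda}}<\infty$ for every $\lambda$. However, $\norm{P_t\phi-\phi}_{L^\infty_M}=(1-e^{-\lambda_1 t})\norm{\phi}_{L^\infty_M}\gtrsim t$, which is not $O(t^{\lambda/d_w})$ once $\lambda>d_w$.

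The mechanism is the one noted above. Calder\'on with $Q^{(k)}$ only controls $f-P^{(k)}_tf$, and $P^{(k)}_tf-P_tf$ contains $Q^{(1)}_tf=tLP_tf$, which in general is only $O(t)$. The paper's proof makes precisely this slip, bounding $\norm{P_tf-f}_{L^\infty_M}$ by $\int_0^t\norm{Q^{(k)}_sf}_{L^\infty_M}\frac{\mathrm ds}{s}$ with $k$ large.

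The correct statement restricts to $\lambda\in(0,d_w)$; at $\lambda=d_w$ this argument only yields $t(1+\log\frac1t)$. Your $k=1$ computation is already a complete proof of the restricted statement. So rather than pursuing the Calder\'on plan, you should restrict the range of $\lambda$.
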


\begin{proof}

Let $k \ge 0$ be sufficiently large, and consider for $0<\tau< t$ that
\begin{align*}
  \tau^{-\frac{\sigma-\delta}{d_w}}
  \int_0^t \norm{Q_\tau^{(k)} Q_s^{(k)} f}_{L^\infty_M}\,\frac{\mathrm ds}{s}
  &\lesssim
  \tau^{-\frac{\sigma-\delta}{d_w}}
  \int_0^t
  \left(\frac{\min\{\tau,s\}}{\max\{\tau,s\}}\right)^k
  \norm{Q_{\max\{\tau,s\}}^{(2k)} f}_{L^\infty_M}\,\frac{\mathrm ds}{s}
  \\
  &\lesssim
  \tau^{-\frac{\sigma-\delta}{d_w}}
  \int_0^t
  \left(\frac{\min\{\tau,s\}}{\max\{\tau,s\}}\right)^k
  \max\{\tau,s\}^{\frac{\sigma}{d_w}}
  \frac{\mathrm ds}{s}\,
  \norm{f}_{\Binf{\sigma}}
  \\
  &=
  \tau^{\frac{\delta}{d_w}}
  \int_0^t
  \left(\frac{\min\{\tau,s\}}{\max\{\tau,s\}}\right)^k
  \left(\frac{\max\{\tau,s\}}{\tau}\right)^{\frac{\sigma}{d_w}}
  \frac{\mathrm ds}{s}\,
  \norm{f}_{\Binf{\sigma}}\\
  &=
  \tau^{\frac{\delta}{d_w}}
  \left(
    \int_0^\tau \left(\frac{s}{\tau}\right)^k \frac{\mathrm ds}{s}
    +
    \int_\tau^t \left(\frac{\tau}{s}\right)^{k-\frac{\sigma}{d_w}} \frac{\mathrm ds}{s}
  \right)
  \norm{f}_{\Binf{\sigma}}
  \\
  &=
  \tau^{\frac{\delta}{d_w}}
  \left(
    \frac1k
    +
    \frac{1-\left(\frac{\tau}{t}\right)^{k-\frac{\sigma}{d_w}}}{\,k-\frac{\sigma}{d_w}\,}
  \right)
  \norm{f}_{\Binf{\sigma}}\lesssim t^\frac{\delta}{d_w} \norm{f}_{\Binf{\sigma}}.
\end{align*}
If $\tau \ge t$, then
\begin{align*}
  \tau^{\frac{\delta}{d_w}}
  \int_0^t \left(\frac{s}{\tau}\right)^k \frac{\mathrm ds}{s}
  =
  \frac{1}{k}\tau^{\frac{\delta}{d_w}}\left(\frac{t}{\tau}\right)^k
  \le \frac1k\, t^{\frac{\delta}{d_w}},
\end{align*}
which implies
\begin{align*}
  \tau^{-\frac{\sigma-\delta}{d_w}} \int_0^t \norm{Q^{(k)}_\tau Q^{(k)}_s f}_{L^\infty_M} \frac{\mathrm ds}{s} \lesssim t^\frac{\delta}{d_w} \norm{f}_{\Binf{\sigma}}.
\end{align*}
By taking supremum in $\tau \in (0,1]$, we see by Theorem \ref{thm:crf} that
\begin{align*}
  \norm{P_tf -f}_{\dot{\mathcal B}_\infty^{\sigma - \delta}}\lesssim t^\frac{\delta}{d_w}\norm{f}_{\Binf{\sigma} }.
\end{align*}
Similarly, consider
\begin{align*}
  \norm{P_1\pr{P_t f -f}}_{L^\infty_M}\lesssim \int_0^t \norm{Q^{(1)}_s P_1f}_{L^\infty_M} \frac{\mathrm ds}{s}\lesssim \int_0^t s^\frac{\delta}{d_w} \frac{\mathrm ds}{s} \norm{P_1f}_{\dot{\mathcal B}^\frac{\delta}{d_w}_\infty}\lesssim t^\frac{\delta}{d_w} \norm{f}_{\Binf{\sigma}},
\end{align*}
where we used the fact that $\delta \in (0,1)$ and Proposition \ref{prop:independent_b} in the second inequality and Lemma \ref{lem:q_regulariz} in the last inequality. Collecting the estimates above yields the first inequality.

Now consider for $\lambda>0$, we have
\begin{align*}
  \norm{P_tf - f}_{L^\infty_M}\lesssim \int_0^t \norm{Q^{(k)}_sf}_{L^\infty_M} \frac{\mathrm ds}{s}\lesssim \int_0^t s^\frac{\lambda}{d_w} \frac{\mathrm ds}{s}\norm{f}_{\Binf{\lambda}}\lesssim t^\frac{\lambda}{d_w}\norm{f}_{\Binf{\lambda}}.
\end{align*}

For the strong continuity of $P_t$, we let $\alpha \in \R$, $f \in \Binf{\alpha}$ and  choose $\{f_n\}_{n} \subset \Binf{1+\abs{\alpha}}$ that converges to $f$ in $\Binf{\alpha}$. Then by Lemma \ref{lem:q_regulariz},
\begin{align*}
    \lim_{t \downarrow 0}\norm{P_tf - f}_{\Binf{\alpha}} &\le \lim_{t \downarrow 0}\norm{P_t(f-f_n)}_{\Binf{\alpha}}+\lim_{t \downarrow 0}\norm{P_tf_n - f_n}_{\Binf{\alpha}    } + \norm{f_n - f}_{\Binf{\alpha}}\\
    &\lesssim \norm{f - f_n}_{\Binf{\alpha}}.
\end{align*}
Sending $n \to \infty$ yields the desired result.
\end{proof}

\subsection{Relation to H\"{o}lder space and dyadic formulation}
The following proposition determines the relation between the Besov space $\mathcal B^\sigma_{\infty}$ and H\"{o}lder space ${\mathcal C}^\sigma_M$.
\begin{prop}\label{prop:holder=besov}
Recall $\Theta\in (0,1]$ in \eqref{ineq:holder_kernel}. The following inequalities holds uniformly for $f\in \Binf{\sigma}$:
\begin{align}
  \norm{f}_{\mathcal B^\sigma_{\infty}}& \asymp \norm{f}_{\mathcal C^\sigma},\qquad \text{ if }\sigma \in (0,\Theta)\label{ineq:holder=besov}\\
      \norm{f}_{\mathcal B^\sigma_{\infty}}&\lesssim \norm{f}_{\mathcal C^\sigma},\qquad \text{ if }\sigma \in (0,1).\label{ineq:besov<holder}
\end{align}
\end{prop}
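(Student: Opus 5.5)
We prove the two inequalities separately: the Hölder-to-Besov bound \eqref{ineq:besov<holder} for all $\sigma\in(0,1)$, and the reverse bound for $\sigma\in(0,\Theta)$, which together give \eqref{ineq:holder=besov}.

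\textbf{Hölder to Besov.} Let $f\in\mathcal C^\sigma_M$ with $\sigma\in(0,1)$, and fix $k\ge1$.

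The low-frequency part is immediate: $\norm{P_1f}_{L^\infty_M}\le\norm{f}_{\mathcal C_M}$ by the Markov property.

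For the homogeneous part we use the cancellation in \eqref{eq:locality integral q equals 0}. For $t\in(0,1]$,
\begin{align*}
Q^{(k)}_tf(x)=\int_M q_{k,t}(x,y)\bigl(f(y)-f(x)\bigr)\,\mu(\mathrm dy).
\end{align*}
Split the integral according to whether $d(x,y)\le1$ or $d(x,y)>1$.
\begin{itemize}
\item On $\{d(x,y)\le1\}$, bound $|f(y)-f(x)|\le\norm{f}_{\mathcal C^\sigma}d(x,y)^\sigma$. Then apply \eqref{ineq: q sub gaussian bound} and the moment bound of Lemma~\ref{lem:subg_moment}, in its sub-Gaussian form with the exponential weight. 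This gives a contribution $\lesssim t^{\sigma/d_w}\norm{f}_{\mathcal C^\sigma}$.
\item On $\{d(x,y)>1\}$, bound $|f(y)-f(x)|\le2\norm{f}_{\mathcal C_M}$. The kernel is exponentially small there: $\lesssim e^{-ct^{-1/(d_w-1)}}$, which is in turn $\lesssim t^{\sigma/d_w}$.
\end{itemize}
Taking the supremum over $t\in(0,1]$ gives $\norm{f}_{\dot{\mathcal B}^\sigma_\infty}\lesssim\norm{f}_{\mathcal C^\sigma}$. This direction uses no kernel regularity, which is why it holds on all of $(0,1)$.

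\textbf{Besov to Hölder, for $\sigma\in(0,\Theta)$.} Let $f\in\mathcal B^\sigma_\infty$. We use the Calderón reproducing formula of Theorem~\ref{thm:crf}, in the $L^\infty$/uniform form available for continuous $f$ (extended from $\mathcal S$ by density):
\begin{align*}
f=\gamma_k^{-1}\int_0^1Q^{(k)}_tf\,\frac{\mathrm dt}{t}+P^{(k)}_1f.
\end{align*}

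\emph{Sup norm.} Since $\sigma>0$, the integral converges absolutely: $\norm{Q^{(k)}_tf}_\infty\le t^{\sigma/d_w}\norm{f}_{\dot{\mathcal B}^\sigma_\infty}$, and $\int_0^1 t^{\sigma/d_w}\,\frac{\mathrm dt}{t}<\infty$. The term $P^{(k)}_1f$ is controlled by Proposition~\ref{prop:Pa1_besov_bound}. Hence $\norm{f}_{\mathcal C_M}\lesssim\norm{f}_{\mathcal B^\sigma_\infty}$.

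\emph{Hölder seminorm.} Fix $x,x'$ with $r:=d(x,x')\le1$, and split the $t$-integral at $t=r^{d_w}$.
\begin{itemize}
\item For $t<r^{d_w}$, use the crude bound $2\norm{Q^{(k)}_tf}_\infty\le2t^{\sigma/d_w}\norm{f}_{\mathcal B^\sigma_\infty}$. Integrating over $t<r^{d_w}$ gives $\lesssim r^\sigma\norm{f}_{\mathcal B^\sigma_\infty}$.
\item For $t\ge r^{d_w}$, factor $Q^{(k)}_t=Q^{(k)}_{t/2}\circ P_{t/2}$ up to the constant $2^k$, and write
\begin{align*}
Q^{(k)}_tf(x)-Q^{(k)}_tf(x')=2^k\int_M\bigl(p_{t/2}(x,y)-p_{t/2}(x',y)\bigr)\,Q^{(k)}_{t/2}f(y)\,\mu(\mathrm dy).
\end{align*}
Here $d(x,x')\le t^{1/d_w}$. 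Apply Lemma~\ref{lem:long time heat kernel bounds}, integrate the sub-Gaussian factor with the volume bound \eqref{ineq:a_reg}, and use $\norm{Q^{(k)}_{t/2}f}_\infty\lesssim t^{\sigma/d_w}\norm{f}_{\mathcal B^\sigma_\infty}$. This gives
\begin{align*}
\bigl|Q^{(k)}_tf(x)-Q^{(k)}_tf(x')\bigr|\lesssim (r/t^{1/d_w})^\Theta\,t^{\sigma/d_w}\,\norm{f}_{\mathcal B^\sigma_\infty}.
\end{align*}
Since $\sigma<\Theta$, the integral $\int_{r^{d_w}}^1 r^\Theta t^{(\sigma-\Theta)/d_w}\,\frac{\mathrm dt}{t}$ converges, and it is $\lesssim r^\sigma$.
\item The remaining term $P^{(k)}_1f=\sum_m \frac{1}{m!}L^mP_{1/2}P_{1/2}f$ is handled the same way. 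The Hölder continuity of $p_{1/2}$ at scale $r\le1$ gives a bound $\lesssim r^\Theta\norm{L^mP_{1/2}f}_\infty$, which is $\lesssim r^\sigma\norm{f}_{\mathcal B^\sigma_\infty}$ by Proposition~\ref{prop:Pa1_besov_bound} applied as in the proof of Proposition~\ref{prop: norm equivalent time horizon}.
\end{itemize}

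\textbf{Main obstacle.} The delicate point is justifying the reproducing formula pointwise for a general $f\in\mathcal B^\sigma_\infty$, since Theorem~\ref{thm:crf} is stated only for $L^p$, $p<\infty$, and for $\mathcal C_M$. We first prove the estimates for $f\in\mathcal S$ and then pass to the closure. The uniform bounds above show that $\mathcal B^\sigma_\infty$-Cauchy sequences are uniformly Cauchy, so the limit is continuous and the Hölder estimate passes to it. The large-$t$ step is exactly where $\sigma<\Theta$ enters, and this explains why equivalence is claimed only below $\Theta$.
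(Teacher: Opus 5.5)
Your proof is correct and follows the paper's argument. For the H\"older-to-Besov bound you use the cancellation $\int_M q_{k,t}(x,\cdot)\,\mathrm d\mu=0$ together with Lemma~\ref{lem:subg_moment}; for the converse you use the Calder\'on reproducing formula, split the time integral at $t=d(x,x')^{d_w}$, and apply the kernel H\"older bound on the large-$t$ piece. The differences are cosmetic: you factor $Q^{(k)}_t=2^kQ^{(k)}_{t/2}P_{t/2}$ and apply \eqref{ineq:holder_kernel} to $p_{t/2}$, where the paper uses \eqref{ineq: q holder continuous} for $q_{2,t/2}$, and you spell out the density argument from $\mathcal S$ to its closure, which the paper leaves implicit.
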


\begin{proof}%[Proof of Proposition \ref{prop:holder=besov}]
Let $\sigma \in (0,1)$ and  $f \in {\mathcal C}^\sigma$. By Proposition \ref{prop:kernel_bound_dt}, we see for any $t>0$, $a \in \mathbb N$ and $x \in M$,
\begin{align*}
  Q^{(a)}_tf(x) = Q^{(a)}_t\left( f(\cdot) - f(x)\right)(x) = \int_M \left(q_{a,t}(x,y) \left(f(y) - f(x) \right)\right) \mu(\mathrm dy).
\end{align*}
Therefore, by Lemma \ref{lem:subg_moment} and Proposition \ref{prop:kernel_bound_dt}, we see uniformly in $t \in (0,1]$ and $x \in M$ that
\begin{align*}
  \abs{Q^{(a)}_tf(x)} &=  \left|\int_{M} q_{a,t}(x,y) (f(z) - f(x)) \mu(dz) \right| \\
  &\le \norm{f}_{{\mathcal C}^\sigma} \int_M \left| q_{a,t}(x,y) \right| d(x,y)^\sigma \mu(dy)\\
  &\lesssim t ^\frac{\sigma}{d_w}\norm{f}_{{\mathcal C}^\sigma},
\end{align*}
which implies $\norm{f}_{\mathcal B^\sigma_{\infty}} \lesssim \norm{f}_{{\mathcal C}^\sigma}.$ This completes the proof of \eqref{ineq:besov<holder}. %Note that this means we need only prove the $\gtrsim$ side of \eqref{ineq:holder=besov}.

Suppose $f \in \mathcal B^\sigma_{\infty}$, for some $0 < \sigma < \Theta \le 1$. Since $\frac{\sigma}{d_w}<1$, by Calderon's reproducing formula
\begin{align*}
  \norm{f}_{L^\infty_M}& \le \norm{P_1f}_{L^\infty_M} + \int_0^1 \norm{Q^{(1)}_tf}_{L^\infty_M}  \frac{\mathrm dt}{t}\\
  &\lesssim \norm{f}_{\mathcal B^\sigma_{\infty}}\left(1+ \int_0^1 t^\frac{\sigma}{d_w} \frac{\mathrm dt}{t} \right)\lesssim \norm{f}_{\mathcal B^\sigma_{\infty}}.
\end{align*}
Moreover, for any two points $x,y \in M$ with $0<d(x,y) \le 1$, we have
\begin{align*}
  f(x) - f(y) &= P_1f(x) - P_1f(y)+Q^{(1)}_1f(x) -  Q^{(1)}_1f(y)\\
  &\quad \quad + \int_0^1 Q^{(2)}_tf(x) -  Q^{(2)}_tf(y) \frac{\mathrm dt}{t}.
\end{align*}
By \eqref{ineq:holder_kernel} and Proposition \ref{prop:kernel_bound_dt}, it holds uniformly in $x,y \in M$ that 
\begin{align*}
  \left|P_1f(x) - P_1f(y)\right| +\abs{Q_1^{(1)} f(x) - Q^{(1)}_1f(y)} \lesssim d(x,y)^\Theta \norm{f}_{L^\infty_M}.
\end{align*}

Now observe that $Q^{(2)}_t = 16Q^{(2)}_\frac{t}{2}Q^{(2)}_\frac{t}{2}$. So for $d(x,y) \le t^\frac{1}{d_w}$, by Proposition \ref{prop:kernel_bound_dt} again,
\begin{align*}
  \left|Q^{(2)}_tf(x) - Q^{(2)}_t f(y) \right|&\lesssim \int_M \left|q_{2,t/2}(x,z) - q_{2,t/2}(y,z) \right| \left| Q^{2}_{t/2} f(z)\right|\mu(dz)\\
  &\lesssim \left(\frac{d(x,y)}{t^\frac{1}{d_w}} \right)^\Theta \norm{Q^{(2)}_{t/2}f}_{L^\infty_M} \\
  &\lesssim \left(\frac{d(x,y)}{t^\frac{1}{d_w}} \right)^\Theta t^\frac{\sigma}{d_w} \norm{f}_{\mathcal B^\sigma_{\infty}}
\end{align*}
If $t^\frac{1}{d_w} \le d(x,y) \le 1$, we have
\begin{align*}
  \left|Q^{(2)}_tf(x) - Q^{(2)}_tf(y) \right| \lesssim \norm{Q^{(2)}_tf}_{L^\infty_M}  \lesssim t^\frac{\sigma}{d_w} \norm{f}_{\mathcal B^\sigma_{\infty}}.
\end{align*}
Therefore,
\begin{align*}
  \left|\int_0^1 Q_t^{(2)}f(x) - Q^{(2)}_tf(y) \right| \frac{\mathrm dt}{t} &\lesssim \left(\int_0^{d(x,y)^{d_w}} t^\frac{\sigma}{d_w} \frac{\mathrm dt}{t} + \int_{d(x,y)^{d_w} }^1 \left(\frac{d(x,y)}{t^\frac{1}{d_w}} \right)^\Theta t^\frac{\sigma}{d_w}\frac{\mathrm dt}{t}\right)\norm{f}_{\mathcal B^\sigma}\\
  &\lesssim \left(d(x,y)^\sigma +d(x,y)^\Theta \right) \norm{f}_{\mathcal B^\sigma_{\infty}}.
\end{align*}
Therefore, since $d(x,y) \le 1$,
\begin{align*}
  \left|f(x) - f(y) \right| &\lesssim \left( d(x,y)^\sigma + d(x,y)^\Theta\right)\norm{f}_{\mathcal B^\sigma}\\
  &\lesssim d(x,y)^{\min\{\sigma,\Theta\}}\norm{f}_{\mathcal B^\sigma_{\infty}}.
\end{align*}
So if $\sigma \le \Theta$, we have
\begin{align*}
  \norm{f}_{{\mathcal C}^\sigma} \lesssim \norm{f}_{\mathcal B^\sigma_{\infty}}.
\end{align*}
\end{proof}

\noindent Sometimes it is more convenient to work with a discrete version of the Besov norm, which resembles the Littlewood-Paley formulation. The following proposition gives such a version.
\begin{prop}\label{prop:dyadic norm}
For any $\alpha \in \mathbb R$ and $k \in \mathbb N$ with $k > \frac{\alpha}{d_w}$. Let $t_j := 2^{-d_wj}$ for $j \in \mathbb N$ and for any $f \in \mathcal S'$, $$A_j(f):= \pr{t_jL}^k P_{t_j} f.$$
Then for any $p \in [1,\infty]$ and $q \in [1,\infty)$,
\begin{align*}
  \norm{f}_{\mathcal B^\alpha_{p,q}} \asymp \pr{\sum_{j = 0}^\infty \pr{2^{j\alpha} \norm{A_j(f)}_{L^p_M} }^q }^\frac{1}{q}  + \norm{P_1f}_{L^p_M},
\end{align*}
with the constant depending only on $\alpha$, $k$ and $p,q \in \mathbb N$ but not $f$. When $q = \infty$,
\begin{align*}
  \norm{f}_{\mathcal B^\alpha_{p,\infty}}\asymp  \sup_{j \in \mathbb N} 2^{j\alpha} \norm{A_j f}_{L^p_M} + \norm{P_1f}_{L^p_M}.
\end{align*}
\end{prop}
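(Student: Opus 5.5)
The plan is to compare the continuous integral $\int_0^1 (t^{-\alpha/d_w}\norm{Q^{(k)}_t f}_{L^p_M})^q\,\frac{\mathrm dt}{t}$ with the dyadic sum by cutting $(0,1]$ into the blocks $I_j:=[t_{j+1},t_j]$, $j\in\Z_+$. On $I_j$ the weight $t^{-\alpha/d_w}$ is comparable to $2^{j\alpha}$, with constants depending only on $\alpha$ and $d_w$, and each block has $\frac{\mathrm dt}{t}$-measure $d_w\log 2$. So the task reduces to showing that on each block $\norm{Q^{(k)}_t f}_{L^p_M}$ is controlled, up to constants, by $\norm{A_j f}_{L^p_M}$ and its neighbour $\norm{A_{j+1} f}_{L^p_M}$, and conversely. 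The term $\norm{P_1 f}_{L^p_M}$ appears identically on both sides. Note that $A_j f = Q^{(k)}_{t_j} f$ exactly.

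\emph{Dyadic $\lesssim$ continuous.} For $t\in I_j$ I write $t_j = t + (t_j - t)$ with $0\le t_j-t\le t_j-t_{j+1}\lesssim t$, and factor
\[
A_j f = \Bigl(\tfrac{t_j}{t}\Bigr)^{k} e^{-(t_j-t)L}\, Q^{(k)}_t f .
\]
Since $e^{-sL}$ is an $L^p_M$-contraction by the Markov property and $t_j/t\le 2^{d_w}$, this gives $\norm{A_j f}_{L^p_M}\lesssim \norm{Q^{(k)}_t f}_{L^p_M}$ uniformly for $t\in I_j$. Raising to the power $q$ and averaging over $I_j$ with respect to $\frac{\mathrm dt}{t}$ bounds $(2^{j\alpha}\norm{A_jf}_{L^p_M})^q$ by a constant times the integral over $I_j$. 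Summing over $j$ gives the claim. For $q=\infty$ I take suprema instead.

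\emph{Continuous $\lesssim$ dyadic.} For $t\in I_j$ the same factorization anchored at $t_{j+1}\le t$ gives
\[
Q^{(k)}_t f = \Bigl(\tfrac{t}{t_{j+1}}\Bigr)^{k} e^{-(t-t_{j+1})L} A_{j+1} f,
\]
so $\norm{Q^{(k)}_t f}_{L^p_M}\lesssim \norm{A_{j+1} f}_{L^p_M}$. This is the direction where one usually needs care, but the semigroup property makes it immediate, because a smaller time always dominates a larger one. Hence the integral over $I_j$ is bounded by $(2^{j\alpha}\norm{A_{j+1}f}_{L^p_M})^q$, and the factor $2^{j\alpha}$ differs from $2^{(j+1)\alpha}$ only by the constant $2^{-\alpha}$. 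Summing over $j$ gives the bound by the dyadic sum; the $j=0$ term is harmless.

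The only subtle point is that this estimate should hold for $f\in\mathcal S'$, not just for $f\in\mathcal S$. Here I would observe that $A_jf$ and $Q^{(k)}_tf$ are well defined, since $t\mapsto Q^{(k)}_t f$ lies in $\mathcal S$ for $t>0$ by the smoothing in Lemma \ref{lem:q_regulariz}, and that the identities above are purely semigroup algebra, so they extend by duality. The independence of the norm from the choice of $k$ (Proposition \ref{prop:independent_b}) ensures that the $k$ used in $A_j$ is admissible. I do not expect a genuine obstacle; the main bookkeeping is tracking constants in the $t_j/t$ ratios.
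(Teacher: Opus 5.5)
Your proposal is correct and follows essentially the same route as the paper. The paper also fixes $t\in(t_{j+1},t_j]$ and writes $A_jf=(t_j/t)^kP_{t_j-t}Q^{(k)}_tf$ and $Q^{(k)}_tf=(t/t_{j+1})^kP_{t-t_{j+1}}A_{j+1}f$; using $L^p_M$-contractivity of $P_s$, this gives $2^{\alpha j}\norm{A_jf}_{L^p_M}\lesssim t^{-\alpha/d_w}\norm{Q^{(k)}_tf}_{L^p_M}\lesssim 2^{\alpha(j+1)}\norm{A_{j+1}f}_{L^p_M}$, and the paper then integrates over dyadic blocks (or takes suprema when $q=\infty$), exactly as you do — your bookkeeping with the constant $\frac{\mathrm dt}{t}$-mass $d_w\log 2$ of each block is, if anything, cleaner than the paper's.
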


\begin{proof}
Let $t \in (0,1]$, choose $j \in \mathbb N$ so that $2^{-d_w(j+1)} < t \le 2^{-d_wj}$, then
\begin{align}\label{ineq:dyadic dw decomposition of time}
  2^{-\alpha} \le t^{-\frac{\alpha}{d_w}} 2^{-\alpha(j+1)} \le 1\text{ for } \alpha \ge 0,\qquad 2^{-\alpha} \ge t^{-\frac{\alpha}{d_w}} 2^{-\alpha(j+1)} \ge 1 \text{ for } \alpha < 0.
\end{align}
Hence,
\begin{align*}
  t^{-\frac{\alpha}{d_w}}\norm{ \pr{tL}^k P_t f}_{L^p_M} &= \pr{\frac{t^{-\alpha /d_w}}{2^{\alpha d_w(j+1)/d_w}}}\pr{\frac{t}{2^{-d_w(j+1)}}}^k 2^{\alpha (j+1)}\norm{\pr{2^{-d_w{(j+1)}}L}^kP_{t-t_{j+1}}P_{t_{j+1}}f}_{L^p_M}\\
  &\overset{\eqref{ineq:dyadic dw decomposition of time}}{\lesssim }\norm{P_{t - t_{j+1}}}_{L^p_M \to L^p_M} 2^{\alpha(j+1)}\norm{\pr{2^{-d_w(j+1)}L}^kP_{t_{j+1}}f}_{L^p_M}\\
  &= 2^{\alpha (j+1)}\norm{A_{j+1}(f)}_{L^p_M}.
\end{align*}
On the other hand,
\begin{align*}
  2^{\alpha j}\norm{A_j(f)}_{L^p_M} &=2^{\alpha j}\norm{\pr{t_j L}^k P_{t_j} f}_{L^p_M}\\
  &= t^{\frac{\alpha}{d_w}}2^{\alpha j}\pr{t^{-1}2^{-jd_w}}^kt^{-\frac{\alpha}{d_w}} \norm{P_{t_j - t}(tL)^k P_tf}_{L^p_M}\\
  &\lesssim \norm{P_{t_j - t}}_{L^p_M \to L^p_M} t^{-\frac{\alpha}{d_w}}\norm{(tL)^k P_t f}_{L^p_M}\\
  &\le  t^{-\frac{\alpha}{d_w}}\norm{(tL)^k P_t f}_{L^p_M}.
\end{align*}
Hence we have
\begin{align}\label{ineq: comparible of time point and dyadic}
  2^{\alpha j}\norm{A_j(f)}_{L^p_M} \lesssim t^{-\frac{\alpha}{d_w}}\norm{(tL)^k P_t f}_{L^p_M}\lesssim 2^{\alpha(j+1)} \norm{A_{j+1} f}_{L^p_M}.
\end{align}
We now prove the inequality for $q\neq \infty$. For the upper bound, we have
\begin{align*}
  \int_0^1 \pr{t^{-\frac{\alpha}{d_w}} \norm{\pr{tL}^kP_t f}_{L^p_M}}^q \frac{\mathrm dt}{t} &= \sum_{j \in \mathbb N} \int_{2^{-d_w(j+1)}}^{2^{-d_w j}}\pr{t^{-\frac{\alpha}{d_w}} \norm{\pr{tL}^kP_t f}_{L^p_M}}^q \frac{\mathrm dt}{t} \\
  &\overset{\eqref{ineq: comparible of time point and dyadic}}{\lesssim} \sum_{j \in \mathbb N} 2^{d_w j}\pr{2^{-d_w j}-2^{-d_w(j+1)}} \pr{2^{\alpha (j+1)} \norm{A_{j+1} (f)}_{L^p_M}}^q\\
  &\lesssim \sum_{j \ge 1} \pr{2^{\alpha (j)} \norm{A_{j} (f)}_{L^p_M}}^q.
\end{align*}
For the lower bound, we have
\begin{align*}
  \int_0^1 \pr{t^{-\frac{\alpha}{d_w}} \norm{\pr{tL}^kP_t f}_{L^p_M}}^q \frac{\mathrm dt}{t} &= \sum_{j \in \mathbb N} \int_{2^{-d_w(j+1)}}^{2^{-d_w j}}\pr{t^{-\frac{\alpha}{d_w}} \norm{\pr{tL}^kP_t f}_{L^p_M}}^q \frac{\mathrm dt}{t} \\
  &\overset{\eqref{ineq: comparible of time point and dyadic}}{\gtrsim}\sum_{j \in \mathbb N} 2^{d_w (j+1)} \pr{2^{-d_w j}-2^{-d_w(j+1)}}\pr{2^{\alpha j}\norm{A_j(f)}_{L^p_M}}^q\\
  &\gtrsim \sum_{j \ge 0} \pr{2^{\alpha j}\norm{A_j(f)}_{L^p_M}}^q.
\end{align*}
Therefore,
\begin{align*}
  \norm{P_1 f}_{L^p_M}+ \pr{\sum_{j \ge 0} \pr{2^{\alpha j}\norm{A_j(f)}_{L^p_M}}^q}^\frac{1}{q} \lesssim \norm{f}_{\mathcal B_{p,q}^\alpha} \lesssim \norm{P_1f}_{L^p_M}+\pr{\sum_{j \ge 1} \pr{2^{\alpha (j)} \norm{A_{j} (f)}_{L^p_M}}^q}^\frac{1}{q},
\end{align*}
which implies the desired inequality.

The inequality for $q=\infty$ follows by taking supremum over $j \in \mathbb N$ in \eqref{ineq: comparible of time point and dyadic}.\end{proof}

\subsection{Duality and Interpolation Inequalities}
%Next, we show the duality relation between Besov spaces and interpolation inequalities of Besov norms is similar to that of the Euclidean case.
\begin{lemma}[Duality]\label{lem:Besov Duality}
Let $\alpha\in \R$, $p,p',q,q'\in[1,+\infty]$ satisfy $\frac{1}{p}+\frac{1}{p'}=\frac{1}{q}+\frac{1}{q'}=1$. Then for any smooth $f,g:M\to \R$, we have $$\abs{\braket{f,g}}:=\abs{\braket{f,g}_{L^2}}\lesssim \norm{f}_{\mathcal B^{\alpha}_{p,q}}\norm{g}_{\mathcal B^{-\alpha}_{p',q'}}.$$
Hence, the pairing $\langle\cdot, \cdot \rangle$ extends to $\mathcal B^\alpha_{p,q}\times \mathcal B^{-\alpha}_{p',q'}$ continuously.
\end{lemma}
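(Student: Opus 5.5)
We plan to use the Calderón reproducing formula (Theorem \ref{thm:crf}) to split $f$ into a low-frequency part and a continuous superposition of high-frequency pieces, and then move the semigroup operators onto $g$ by self-adjointness. Fix $k\in\N$ large enough that $k>|\alpha|/d_w$, so that both Besov norms may be computed with the index $k$ (Proposition \ref{prop:independent_b}). Since $Q^{(k)}_t=(tL)^ke^{-tL}$, we have $Q^{(2k)}_t=2^{2k}Q^{(k)}_{t/2}Q^{(k)}_{t/2}$. For smooth $f$ we therefore write
\begin{align*}
  f=\frac{1}{\gamma_{2k}}\int_0^1 Q^{(2k)}_t f\,\frac{\mathrm dt}{t}+P^{(2k)}_1 f ,
\end{align*}
pair it with $g$, and use the symmetry of the kernels (Proposition \ref{prop:kernel_bound_dt}) to obtain
\begin{align*}
  \langle f,g\rangle=\frac{2^{2k}}{\gamma_{2k}}\int_0^1\big\langle Q^{(k)}_{t/2}f,\,Q^{(k)}_{t/2}g\big\rangle\frac{\mathrm dt}{t}+\big\langle P^{(2k)}_1f,g\big\rangle .
\end{align*}
For smooth $f,g$, convergence in $L^p$ or $\mathcal C_M$ from Theorem \ref{thm:crf} justifies exchanging the pairing and the integral.

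For the high-frequency term, we apply Hölder in space and get $|\langle Q^{(k)}_{t/2}f,Q^{(k)}_{t/2}g\rangle|\le \|Q^{(k)}_{t/2}f\|_{L^p_M}\|Q^{(k)}_{t/2}g\|_{L^{p'}_M}$. We then insert $t^{-\alpha/d_w}\cdot t^{\alpha/d_w}$ and apply Hölder in $L^q(\mathrm dt/t)\times L^{q'}(\mathrm dt/t)$. After the change of variables $s=t/2$, which costs only a constant factor $2^{\pm\alpha/d_w}$ and enlarges the time horizon to $[0,1/2]$, this term is bounded by $\|f\|_{\dot{\mathcal B}^\alpha_{p,q}}\|g\|_{\dot{\mathcal B}^{-\alpha}_{p',q'}}$. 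Proposition \ref{prop: norm equivalent time horizon} makes horizon issues harmless in any case.

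For the low-frequency term, $P^{(2k)}_1=\sum_{m<2k}\frac{1}{m!}L^me^{-L}$, so each summand is $\langle L^m e^{-L}f,g\rangle$. We write $e^{-L}=e^{-L/2}e^{-L/2}$, move one factor $e^{-L/2}L^{m}$ onto $g$, and keep $e^{-L/2}$ on $f$. This gives the bound $\|e^{-L/2}f\|_{L^p_M}\|L^me^{-L/2}g\|_{L^{p'}_M}$. To control these by the full Besov norms we use $e^{-L/2}f=e^{-L/2}P_1 f+(e^{-L/2}-e^{-L})f$. The second piece equals $\int_{1/2}^1 Q^{(1)}_sf\,\frac{\mathrm ds}{s}$, and Proposition \ref{prop:independent_b} lets us trade $Q^{(1)}$ for $Q^{(k)}$ at scales $s\in[1/2,1]$, since on that range $\|Q^{(1)}_s f\|\lesssim\|Q^{(k)}_{s'}f\|+\|P_1 f\|$ by analyticity-type bounds of the form $\|L^je^{-sL}\|\lesssim 1$. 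Alternatively we can argue directly with $\|P_{1/2}f\|_{L^p}\lesssim\|P_1f\|_{L^p}+\sup_{s\in[1/2,1]}\|Q^{(1)}_sf\|_{L^p}$, exactly as in the second part of Lemma \ref{lem:q_regulariz}. Similarly, $\|L^me^{-L/2}g\|_{L^{p'}}$ is bounded by $\|Q^{(m)}_{1/2}g\|_{L^{p'}}$ for $m\ge1$, and by the previous argument for $m=0$; in both cases it is $\lesssim\|g\|_{\mathcal B^{-\alpha}_{p',q'}}$, because at unit scale the weights $t^{\pm\alpha/d_w}$ are harmless. Density of $\mathcal S$ then gives the continuous extension.

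The main obstacle is this low-frequency bookkeeping: making sure that $Q^{(j)}_s$ for mismatched $j$ and $s\in[1/2,1]$ is controlled by the $k$-index Besov norm. We will handle it with the almost-orthogonality identity \eqref{ineq:almost orthogonal} together with the proof of Proposition \ref{prop: norm equivalent time horizon}, which already shows $\|L^kP_1f\|_{L^p}\lesssim\|Q^{(k)}_sf\|_{L^p}$ for $s\in[1/2,1]$. For $j<k$ we will use $L^je^{-sL}=$ (mean-zero part) and Lemma \ref{lem:long time semi-group norm bound} to integrate from infinity, writing $L^je^{-sL}f=\int_s^\infty L^{j+1}e^{-uL}f\,\mathrm du$ on mean-zero functions. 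Iterating this up to $L^k$ reduces everything to $\|L^kP_{1/2}f\|$ plus a multiple of $P_1f$.
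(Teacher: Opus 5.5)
Your proposal is correct and is essentially the paper's proof. Both use Calder\'on's reproducing formula with index $2k$. Both split $Q^{(2k)}_t=2^{2k}Q^{(k)}_{t/2}Q^{(k)}_{t/2}$ and use self-adjointness, H\"older in space and H\"older in $\mathrm dt/t$ for the high-frequency part, and write $e^{-L}=e^{-L/2}e^{-L/2}$ for the low-frequency part. The paper applies the formula to $g$ instead of $f$, which makes no difference.

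Your mean-zero argument via Lemma \ref{lem:long time semi-group norm bound} controls $\|e^{-L/2}f\|_{L^p_M}$ and $\|L^me^{-L/2}g\|_{L^{p'}_M}$ by the index-$k$ Besov norms, and this fills in a step the paper leaves implicit. Two small slips to fix: the identity should read $e^{-L/2}f=P_1f+(e^{-L/2}-e^{-L})f$ (not $e^{-L/2}P_1f+\dots$), and the substitution $s=t/2$ shrinks the horizon to $[0,1/2]$ rather than enlarging it, which is harmless.
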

In light of Lemma \ref{lem:Besov Duality}, for $\alpha \in \R$ and $p,\, q,\, p',\, q' \in [1,\infty]$ satisfies the condition in Lemma \ref{lem:Besov Duality}, we denote $\langle \cdot,\cdot\rangle$ as the pairing between elements in $\mathcal B_{p,q}^\alpha$ and $\mathcal B^{-\alpha}_{p',q'}$ when there is no confusion going forward.
\begin{proof}%[Proof of Lemma \ref{lem:Besov Duality}]
Recall the Carderon's reproducing formula from Theorem \ref{thm:crf}, let $k \in \mathbb N$ with $k > \alpha$, we can write
\begin{align*}
  \braket{f,g} &= \braket{ f,P_1^{(2k)}g} + \int_0^1 \braket{f,Q^{(2k)}_t g}\frac{\mathrm dt}{t} .
\end{align*}
By \cite[Lemma 3.3]{grigor2015heat}, we have
\begin{align*}
  \abs{\braket{f,P_1^{(2k)}g}} &\le   \sum_{\ell = 0}^{2k-1} p_\ell \abs{\braket{f, L^\ell  P_1g}}\\
  &\lesssim\sum_{\ell = 0}^{2k-1} \abs{\braket{e^{-\frac{1}{2}L}f, L^\ell e^{-\frac{1}{2}}g}}\\
  &\lesssim \sum_{\ell = 0}^{2k-1} \norm{e^{-\frac{1}{2}L}f}_{L^p_M} \norm{L^\ell e^{-\frac{1}{2}L}g}_{L^{p'}_M}\\
  &\lesssim \sum_{\ell = 0}^{2k-1} \norm{e^{-\frac{1}{2}L}f}_{L^p_M} \norm{L^\ell e^{-\frac{1}{4}L}}_{L^{p'}_M \to L^{p'}_M}\norm{ e^{-\frac{1}{4}L}g}_{L^{p'}_M}\\
  &\lesssim \norm{f}_{\mathcal B^\alpha_{p,q}} \norm{g}_{\mathcal B^{-\alpha}_{p',q'}}
\end{align*}
For the seminorm part, we have
\begin{align*}
  \abs{\int_0^1 \braket{f,Q_t^{(2k)}g} \frac{\mathrm dt}{t}} &=\abs{ \int_0^1\braket{f,t^{2k} L^{2k} e^{-tL} g} \frac{\mathrm dt}{t}}\\
  &=\abs{ \int_0^1\braket{(tL)^ke^{-\frac{t}{2}L}f,(tL)^k e^{-\frac{t}{2}L} g} \frac{\mathrm dt}{t}}\\
  &\lesssim \int_0^1 \pr{\frac{t}{2}}^{-\frac{\alpha}{d_w}}\norm{Q^{(k)}_{t/2} f}_{L^p_M} \pr{\frac{t}{2}}^\frac{\alpha}{d_w}\norm{Q^{(k)}_{t/2} g}_{L^{p'}_M} \frac{\mathrm dt}{t}\\
  &\le \pr{\int_0^1 \pr{\pr{\frac{t}{2}}^{-\frac{\alpha}{d_w}}\norm{Q^{(k)}_{t/2} f}_{L^p_M}}^q \frac{\mathrm dt}{t}}^\frac{1}{q} \pr{\int_0^1 \pr{\pr{\frac{t}{2}}^{\frac{\alpha}{d_w}}\norm{Q^{(k)}_{t/2} g}_{L^{p'}_M}}^{q'} \frac{\mathrm dt}{t}}^\frac{1}{q'} \\
  &\lesssim \norm{f}_{\mathcal B_{p,q}^\alpha} \norm{g}_{\mathcal B^{-\alpha}_{p',q'}}.
\end{align*}
Using both estimates finishes the proof.
\end{proof}

\begin{lemma}[Interpolation]\label{lem: Besov Interpolation}
Let $p,q\in[1,+\infty]$ and $\alpha_0,\alpha_1\in \R$. For $\eta\in (0,1)$, define $\alpha_\eta=\eta \alpha_0+(1-\eta)\alpha_1$. Then, we have $$\norm{f}_{\mathcal B^{\alpha_\eta}_{p_\eta,q_\eta}}\le \norm{f}_{\mathcal B^{\alpha_0}_{p_0,q_0}}^{\eta}\norm{f}_{\mathcal B^{\alpha_1}_{p_1,q_1}}^{1-\eta}$$
where $p_0,p_1,q_0,q_1\in [1,+\infty]$ satisfy $$\frac{1}{p_\eta}=\frac{\eta}{p_0}+\frac{1-\eta}{p_1},\quad \frac{1}{q_\eta}=\frac{\eta}{q_0}+\frac{1-\eta}{q_1}.$$
\end{lemma}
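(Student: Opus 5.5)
The plan is to prove the interpolation inequality directly from Definition~\ref{def:besov_space}, by applying H\"older's inequality twice: once to the low-frequency term $\norm{P_1 f}_{L^p_M}$ and once to the integral seminorm. I fix $k\in\Z_+$ with $k>\max\{\alpha_0,\alpha_1\}/d_w$, so that by Proposition~\ref{prop:independent_b} all three norms can be computed with the same $Q^{(k)}_t$, at the cost of a constant. Consequently the inequality will hold with ``$\lesssim$''. Getting ``$\le$'' exactly would require taking the norms with this common $k$, or combining the terms suitably; the argument below gives the sharp constant once this convention is adopted.

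The key pointwise-in-$t$ step is the Riesz--Thorin-type log-convexity of $L^p$ norms. For $F_t:=Q^{(k)}_t f$, H\"older's inequality with exponents $p_0/(\eta p_\eta)$ and $p_1/((1-\eta)p_\eta)$ gives
\begin{align*}
\norm{F_t}_{L^{p_\eta}_M}\le \norm{F_t}_{L^{p_0}_M}^{\eta}\norm{F_t}_{L^{p_1}_M}^{1-\eta}.
\end{align*}
Multiplying by $t^{-\alpha_\eta/d_w}=\pr{t^{-\alpha_0/d_w}}^{\eta}\pr{t^{-\alpha_1/d_w}}^{1-\eta}$, I obtain $a_\eta(t)\le a_0(t)^\eta a_1(t)^{1-\eta}$, where $a_i(t):=t^{-\alpha_i/d_w}\norm{F_t}_{L^{p_i}_M}$.

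Next I apply H\"older's inequality in $L^{q}((0,1),\mathrm dt/t)$ with the analogous exponents $q_0/(\eta q_\eta)$ and $q_1/((1-\eta)q_\eta)$. This yields $\norm{a_\eta}_{L^{q_\eta}}\le\norm{a_0}_{L^{q_0}}^\eta\norm{a_1}_{L^{q_1}}^{1-\eta}$, that is,
\begin{align*}
\norm{f}_{\dot{\mathcal B}^{\alpha_\eta}_{p_\eta,q_\eta}}\le \norm{f}_{\dot{\mathcal B}^{\alpha_0}_{p_0,q_0}}^\eta\norm{f}_{\dot{\mathcal B}^{\alpha_1}_{p_1,q_1}}^{1-\eta}.
\end{align*}
The cases $q_i=\infty$ are handled by replacing the integral with a supremum, and the argument goes through unchanged. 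The low-frequency term satisfies the same log-convexity bound $\norm{P_1f}_{L^{p_\eta}_M}\le \norm{P_1f}_{L^{p_0}_M}^\eta\norm{P_1f}_{L^{p_1}_M}^{1-\eta}$.

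To combine the two pieces, I use the elementary inequality $x_0^\eta x_1^{1-\eta}+y_0^\eta y_1^{1-\eta}\le (x_0+y_0)^\eta(x_1+y_1)^{1-\eta}$, which is H\"older's inequality for a sum of two terms. This gives the full inhomogeneous norm bound for $f\in\mathcal S$, and it extends to the completions by density.

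The only delicate point is the choice of $k$ common to all three norms, which is handled by Proposition~\ref{prop:independent_b}. Otherwise the argument is routine.
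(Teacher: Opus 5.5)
Your proposal is correct and follows the paper's proof. Both arguments fix a common large $k$, apply log-convexity of the $L^p_M$ norms to $Q^{(k)}_t f$ for each fixed $t$, and then apply H\"older's inequality in $L^{q}((0,1),\mathrm dt/t)$ with exponents $q_0/(\eta q_\eta)$ and $q_1/((1-\eta)q_\eta)$, which are the paper's $z$ and $z/(z-1)$. Your two-term H\"older inequality for merging the low-frequency part with the seminorm makes explicit a step the paper only asserts.
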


\begin{proof}%[Proof of Lemma \ref{lem: Besov Interpolation}]
Let $k >0$ be sufficiently large and let $z = 1+ \frac{(1-\eta)q_0}{\eta q_1}$, then $\eta z q_\eta = q_0$ and $\frac{z}{z-1}(1-\eta)q_\eta = q_1$. By the $L^p$ interpolation inequality and H\"{o}lder's inequality, we have for any $\eta \in (0,1)$
\begin{align*}
  \left(\int_0^1 \left[t^{-\frac{\alpha_\eta}{d_w}}\norm{Q^{(k)}_t f}_{L^{p_\eta}_M} \right]^{q_\eta} \frac{\mathrm dt}{t}\right)^{\frac{1}{q_\eta}} & \le \left(\int_0^1 \left[t^{-\frac{\alpha_0 \eta}{d_w}}\norm{Q^{(k)}_t f}_{L^{ p_0}_M}^\eta t^{-\frac{\alpha_1 (1-\eta)}{d_w}}\norm{Q^{(k)}_t f}_{L^{ p_1}_M}^{1-\eta} \right]^{q_\eta} \frac{\mathrm dt}{t}\right)^{\frac{1}{q_\eta}}\\
  &\le \left(\int_0^1 t^{-\frac{\alpha_0\eta z q_\eta}{d_w}}\norm{Q_tf}_{L^{p_0}_M}^{\eta q_\eta z}  \frac{\mathrm dt}{t}\right)^\frac{1}{zq_\eta}\\
  &\quad\quad\quad \times \left( \int_0^1 t^{-\frac{(1-\eta)\alpha_1 z q_\eta}{d_w (z-1)}}\norm{Q^{(k)}_t f}_{L^{p_1}_M}^{(1-\eta)q_\eta \frac{z}{z-1}}\right)^\frac{z-1}{z q_\eta}\\
  &=\left(\int_0^1 \left[t^{-\frac{\alpha_0 }{d_w}}\norm{Q^{(k)}_tf}_{L^{p_0}_M} \right]^{q_0}\frac{\mathrm dt}{t} \right)^\frac{\eta}{q_0}\\
  &\quad\quad\quad\times \left(\int_0^1 \left[t^{-\frac{\alpha_1 }{d_w}}\norm{Q^{(k)}_tf}_{L^{p_1}_M} \right]^{q_1}\frac{\mathrm dt}{t} \right)^\frac{1-\eta}{q_1}.
\end{align*}
This implies
\begin{align*}
  \norm{f}_{\mathcal B^{\alpha_\eta}_{p_\eta,q_\eta}} &= \norm{P^{(k)}_1 f}_{L^{p_\eta}_M} + \left(\int_0^1 \left[t^{-\frac{\alpha_\eta}{d_w}}\norm{Q^{(k)}_t f}_{L^{p_\eta}_M} \right]^{q_\eta} \frac{\mathrm dt}{t}\right)^{\frac{1}{q_\eta}} \\
  &\le \norm{f}_{\mathcal B^{\alpha_0}_{p_0,q_0}}^\eta \norm{f}_{\mathcal B^{\alpha_1}_{p_1,q_1}}^{1-\eta}.
\end{align*}

\end{proof}

\subsection{ Embedding Theorems}

We now collect several standard and useful embedding properties of Besov spaces. 
\begin{prop}{\cite[Section 4.2]{liu2016besov}}\label{prop:besov_embed}
Let $\alpha \in \R$ and $p,\, q\in (0,\infty]$,  then $$\mathcal B_{p,q}^\alpha \hookrightarrow \mathcal B^{\alpha - d_h/p}_{\infty}.$$
In addition, for any $\alpha \in \R$, $1 \le p \le q \le +\infty$, the following inequality holds uniformly in $f \in \mathcal B_{p,p,}^\alpha$, $$\norm{f}_{\mathcal B^{\alpha}_{p,p}}\lesssim \norm{f}_{\mathcal B^{\alpha}_{q,q}}.$$
\end{prop}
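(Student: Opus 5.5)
The plan is to prove the two assertions separately. Both reduce, through the semigroup characterization of Definition~\ref{def:besov_space} or the dyadic one of Proposition~\ref{prop:dyadic norm}, to estimates on the operators $Q^{(k)}_t$ and $P_1$. The key issue is the $\ell^q\to\ell^p$ step in the second inequality: I can only close it with an $\varepsilon$ loss of regularity, and I see no way to obtain the lossless form as stated.

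\textbf{First embedding.} Fix $k$ large and write $Q^{(k)}_t=e^{-\frac t2 L}\,2^{k}Q^{(k)}_{t/2}$. Lemma~\ref{lem:hk LpLq interpolation} with $(p,\infty)$ gives, for $t\in(0,1]$,
\[
\norm{Q^{(k)}_tf}_{L^\infty_M}\lesssim t^{-\frac{d_h}{d_wp}}\norm{Q^{(k)}_{t/2}f}_{L^p_M}.
\]
Multiplying by $t^{-(\alpha-d_h/p)/d_w}$ yields
\[
t^{-\frac{\alpha-d_h/p}{d_w}}\norm{Q^{(k)}_tf}_{L^\infty_M}\lesssim (t/2)^{-\frac{\alpha}{d_w}}\norm{Q^{(k)}_{t/2}f}_{L^p_M}.
\]
The right-hand side is bounded by $\sup_s s^{-\alpha/d_w}\norm{Q^{(k)}_sf}_{L^p_M}$. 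That supremum is controlled by the $\mathcal B^\alpha_{p,q}$ norm: in the dyadic picture, $\ell^q\subset\ell^\infty$; for the continuous norm, the comparability \eqref{ineq: comparible of time point and dyadic} lets one average over a dyadic block. The low-frequency part is handled by $\norm{P_1f}_{L^\infty_M}\lesssim\norm{P_{1/2}f}_{L^p_M}$, again from Lemma~\ref{lem:hk LpLq interpolation}, together with Proposition~\ref{prop: norm equivalent time horizon} to move the time horizon. This part is routine.

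\textbf{Second inequality, spatial exponent.} Since $\mu(M)<\infty$, Hölder gives $\norm{g}_{L^p_M}\le\mu(M)^{\frac1p-\frac1q}\norm{g}_{L^q_M}$ for $p\le q$. Applying this to $P_1f$ and to each $Q^{(k)}_tf$, the spatial exponent can be lowered at no cost. It therefore remains to compare
\[
\Bigl(\sum_j (2^{j\alpha}a_j)^p\Bigr)^{1/p}
\qquad\text{with}\qquad
\Bigl(\sum_j (2^{j\alpha}a_j)^q\Bigr)^{1/q},
\]
where $a_j:=\norm{A_jf}_{L^q_M}$ as in Proposition~\ref{prop:dyadic norm}.

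\textbf{Second inequality, summability exponent (main obstacle).} This is the step I expect to be hard, because on the sequence level the inclusion runs the wrong way: $\ell^p\subset\ell^q$, not conversely. My first attempt would be Hölder in $j$ with a geometric weight:
\[
\sum_j(2^{j(\alpha-\varepsilon)}a_j)^p\le\Bigl(\sum_j 2^{-j\varepsilon pr'}\Bigr)^{1/r'}\Bigl(\sum_j(2^{j\alpha}a_j)^q\Bigr)^{p/q},\qquad r=q/p.
\]
This proves $\norm{f}_{\mathcal B^{\alpha-\varepsilon}_{p,p}}\lesssim\norm{f}_{\mathcal B^\alpha_{q,q}}$, so it loses $\varepsilon$ of regularity.

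To remove that loss I would look for extra decay in $a_j$ coming from the heat semigroup, for instance through the almost-orthogonality \eqref{ineq:almost orthogonal}. I do not expect this to succeed. A lacunary sum $\sum_j 2^{-j\alpha}e_j$ on the torus, where $e_j$ has frequency $\sim 2^j$, appears to give bounded $a_j$-weighted $\ell^q$ norm while the weighted $\ell^p$ norm diverges, so the lossless inequality seems genuinely out of reach. In the proof I would therefore write out the $\varepsilon$-loss version. If the stated form is needed downstream, I would check whether each application only ever uses it with a small regularity loss, and if so record it in that form.
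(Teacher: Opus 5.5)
You are right that the second inequality is false, not merely out of reach of your method. Your proof of the first embedding is correct for $p\ge1$ but omits $0<p<1$. The paper gives no argument of its own here (it defers to \cite{liu2016besov}), so your proposal is a self-contained alternative. Below is a sharpened counterexample and a note on the first part.

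\textbf{The second inequality fails for every $1\le p<q\le\infty$.}
\begin{itemize}
\item[(i)] Setting. Take $M=\T^2$ and $L=-\Delta$, which satisfies Assumption~\ref{A:main} with $d_h=d_w=2$ and $\Theta=1$. Put $e_m(x)=\cos(2^m x_1)$. Then $A_je_m=\phi(4^{m-j})e_m$ with $\phi(s)=s^ke^{-s}$. Let $f_N=\sum_{m=1}^N 2^{-m\alpha}c_m e_m\in\mathcal S$.
\item[(ii)] Lower bound. Pairing $A_jf_N$ with $e_j$ and using orthogonality gives $\norm{A_jf_N}_{L^p_M}\gtrsim 2^{-j\alpha}\abs{c_j}$ for every $p\in[1,\infty]$. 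By Proposition~\ref{prop:dyadic norm}, $\norm{f_N}_{\mathcal B^\alpha_{p,p}}\gtrsim(\sum_{j\le N}\abs{c_j}^p)^{1/p}$.
\item[(iii)] Upper bound. We have $2^{j\alpha}\norm{A_jf_N}_{L^q_M}\lesssim\sum_m K(m-j)\abs{c_m}$ with $K(n)=2^{-n\alpha}\phi(4^n)$. This kernel is summable over $n\in\Z$ exactly when $2k>\alpha$, which is the standing requirement $k>\alpha/d_w$. Young's inequality then gives $\norm{f_N}_{\mathcal B^\alpha_{q,q}}\lesssim\norm{c}_{\ell^q}$; the $P_1$ term is trivially bounded.
\item[(iv)] Conclusion. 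Choosing $c_m=m^{-1/p}\in\ell^q\setminus\ell^p$ makes the ratio grow like $(\log N)^{1/p}$.
\end{itemize}
Your heuristic with coefficients exactly $2^{-j\alpha}$ (a Weierstrass-type function) only disproves the case $q=\infty$. The extra factor $c_m$ is needed for $q<\infty$. The pairing argument also avoids any appeal to lacunary $L^p$-equivalences when $p<2$.

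\textbf{What survives, and what the paper uses.} The two halves of your argument are the true statements. First, you can lower the integrability index at a fixed summability index with no loss (H\"older on the finite measure space). Second, you can lower the summability index at an $\varepsilon$ cost in regularity (H\"older in $j$). Your plan to check downstream uses works out: the lossless inequality is never invoked. The only change of summability index in the paper is in the proof of Proposition~\ref{prop:CDI_mollified}, namely $\norm{f}_{\mathcal B^\beta_{1,1}}\lesssim\norm{f}_{\mathcal B^{\beta'}_{1,\infty}}$ for $\beta<\beta'$. This is exactly your $\varepsilon$-loss estimate. Every citation of this proposition (compact embedding, Wick powers, global well-posedness, moment bounds) uses only the first embedding, always with $p\ge1$.

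\textbf{First embedding.} Your argument is correct for $1\le p\le\infty$ and any $d_w$:
\begin{itemize}
\item[(i)] factor $Q^{(k)}_t=2^ke^{-\frac t2L}Q^{(k)}_{t/2}$;
\item[(ii)] apply the $L^p\to L^\infty$ bound of Lemma~\ref{lem:hk LpLq interpolation};
\item[(iii)] use $\ell^q\subset\ell^\infty$, which for $q\ge1$ is the lemma proved right after this proposition.
\end{itemize}
For the low-frequency term, what you actually need is $\norm{P_{1/2}f}_{L^p_M}\lesssim\norm{f}_{\mathcal B^\alpha_{p,q}}$. This comes from the second estimate of Lemma~\ref{lem:q_regulariz}, after lowering $\alpha$ to $\min\{\alpha,0\}$. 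The time-horizon proposition does not supply it.

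The real omission is $0<p<1$, which the statement explicitly includes. Lemma~\ref{lem:hk LpLq interpolation} is only available for $p\ge1$. For $p<1$ the bound $\norm{e^{-sL}g}_{L^\infty_M}\lesssim s^{-d_h/(d_wp)}\norm{g}_{L^p_M}$ fails for general $g$. You would need a Plancherel--P\'olya or Peetre-maximal-function estimate that exploits the fact that $g=Q^{(k)}_{t/2}f$ is itself heat-smoothed. Alternatively, restrict the statement to $p\ge1$, which is all the paper uses.
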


\begin{lemma}\label{lem: besov embedding q to infinity }
Let $\alpha \in \R$ and $p,q \in [1,\infty]$, then the following inequality holds uniformly for $f \in \mathcal B_{p, q}^\alpha$
\begin{align*}
  \norm{f}_{\mathcal B^\alpha_{p,\infty}}\lesssim\norm{f}_{\mathcal B^\alpha_{p,q}} .
\end{align*}
\end{lemma}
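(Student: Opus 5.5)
The claim is the embedding $\mathcal B^\alpha_{p,q}\hookrightarrow \mathcal B^\alpha_{p,\infty}$. Since the low-frequency part $\norm{P_1 f}_{L^p_M}$ appears identically in both norms, only the homogeneous parts need comparing. The plan is to show that the supremum over $t\in(0,1]$ of $t^{-\alpha/d_w}\norm{Q^{(k)}_t f}_{L^p_M}$ is controlled by an $L^q(\mathrm dt/t)$-average of the same quantity over a window of times comparable to $t$. The point is that $t\mapsto \norm{Q^{(k)}_t f}_{L^p_M}$ is essentially monotone at dyadic scales, because of the semigroup property.

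Fix $k>\alpha/d_w$ and $t\in(0,1]$. For $s\in[t/2,t]$ write
\begin{align*}
Q^{(k)}_t f = (t/s)^k\, P_{t-s}\, Q^{(k)}_s f.
\end{align*}
The semigroup $P_{t-s}$ is a contraction on $L^p_M$ (Markov and symmetric, with stochastic completeness \eqref{eq:s_complete}), and $t/s\le 2$. Hence
\begin{align*}
t^{-\alpha/d_w}\norm{Q^{(k)}_t f}_{L^p_M}\le 2^{k+|\alpha|/d_w}\, s^{-\alpha/d_w}\norm{Q^{(k)}_s f}_{L^p_M}, \qquad s\in[t/2,t].
\end{align*}
Now take the $L^q$ average over $s\in[t/2,t]$ against $\mathrm ds/s$, a measure of total mass $\log 2$. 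This gives
\begin{align*}
t^{-\alpha/d_w}\norm{Q^{(k)}_t f}_{L^p_M}\lesssim (\log 2)^{-1/q}\Bigl(\int_{t/2}^{t}\bigl[s^{-\alpha/d_w}\norm{Q^{(k)}_s f}_{L^p_M}\bigr]^q\frac{\mathrm ds}{s}\Bigr)^{1/q}\le C\,\norm{f}_{\dot{\mathcal B}^\alpha_{p,q}}.
\end{align*}
Taking the supremum over $t\in(0,1]$ and adding $\norm{P_1f}_{L^p_M}$ to both sides yields the inequality for $f\in\mathcal S$. It then extends to $\mathcal B^\alpha_{p,q}$ by density, since that space is by definition the closure of $\mathcal S$.

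The only subtlety is making sure the same $k$ is admissible in both norms, and this is handled by Proposition \ref{prop:independent_b}. One could alternatively route the argument through Proposition \ref{prop:dyadic norm}, where the bound becomes the trivial $\ell^q\subset\ell^\infty$ inclusion for the sequence $2^{j\alpha}\norm{A_jf}_{L^p_M}$. However, that proposition is stated only for $q<\infty$ on one side, so the direct argument above is cleaner. I do not expect a real obstacle.
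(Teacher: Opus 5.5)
Your proof is correct, and it takes a genuinely different and more elementary route than the paper.

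The paper's argument runs as follows. It inserts the Calder\'on reproducing formula (Theorem \ref{thm:crf}) with $Q^{(2k)}_s$ and controls $Q^{(k)}_tQ^{(2k)}_s$ by the almost-orthogonality bound \eqref{ineq:almost orthogonal}. It then splits the $s$-integral near and away from $t$ and applies H\"older in $s$ against the kernels $(s/(2t))^{k+\alpha/d_w}$ and $(2t/s)^{k-\alpha/d_w}$. The low-frequency piece $Q^{(k)}_tP^{(2k)}_1f$ is treated separately via Lemma \ref{lem:q_regulariz}. Its final bound is expressed through $Q^{(k)}_{s/2}$ and $\norm{P_{1/2}f}_{L^p_M}$ rather than the defining quantities.

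You replace all of this with the single identity $Q^{(k)}_t=(t/s)^kP_{t-s}Q^{(k)}_s$ together with $L^p_M$-contractivity of $P_{t-s}$. This makes $t^{-\alpha/d_w}\norm{Q^{(k)}_tf}_{L^p_M}$ comparable to its values on $[t/2,t]$, so an $L^q(\mathrm ds/s)$-average over a window of fixed logarithmic length finishes the job. Your version keeps the same $k$ and the same term $\norm{P_1f}_{L^p_M}$ on both sides, and gives the explicit constant $2^{k+|\alpha|/d_w}(\log 2)^{-1/q}$. It needs neither the reproducing formula nor Proposition \ref{prop:independent_b}: the admissibility condition $k>\alpha/d_w$ is the same in both norms, so your remark about that proposition is unnecessary.

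The paper's template is heavier here, but it extends to estimates where no such monotonicity in $t$ is available, such as changing the index $k$ or trading regularity between norms.

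One small correction: Proposition \ref{prop:dyadic norm} does state the $q=\infty$ case, so the $\ell^q\subset\ell^\infty$ route is also available. In fact, the comparison \eqref{ineq: comparible of time point and dyadic} in its proof is exactly your semigroup step in discrete form.
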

\begin{proof}
Suppose  $\alpha \in \R$ and let $k \in \mathbb Z$  so that $k > \alpha/d_w$. By Theorem \ref{thm:crf}, we see
\begin{align*}
  \sup_{t \in (0,1]} t^{-\frac{\alpha}{d_w}}\norm{Q^{(k)}_tf}_{L^p_M} \le \sup_{t \in (0,1]} t^{-\frac{\alpha}{d_w}}\norm{Q^{(k)}_tP^{(2k)}_1 f}_{L^p_M}+\sup_{t \in (0,1]} t^{-\frac{\alpha}{d_w}} \int_0^1 \norm{Q^{(k)}_tQ^{(2k)}_s f}_{L^p_M} \frac{\mathrm ds}{s}.
\end{align*}
We consider the first term on the right hand side. If $\alpha \le 0$, then
\begin{align*}
  \sup_{t \in (0,1]} t^{-\frac{\alpha}{d_w}}\norm{Q^{(k)}_tP^{(2k)}_1 f}_{L^p_M} \lesssim \norm{P_{1/2} f}_{L^p_M}.
\end{align*}
On the other hand, if $\alpha > 0$, then by Lemma \ref{lem:q_regulariz} that
\begin{align*}
  \sup_{t \in (0,1]} t^{-\frac{\alpha}{d_w}}\norm{Q^{(k)}_tP^{(2k)}_1 f}_{L^p_M} \lesssim \norm{P_{1/2} f}_{\mathcal B^0_{p,\infty}} \lesssim \norm{P_{1/2} f}_{L^p_M}.
\end{align*}

For the second term, consider
\begin{align*}
  \int_0^1 \norm{Q^{(k)}_tQ^{(2k)}_s f}_{L^p_M} \frac{\mathrm ds}{s}&= 2^{2k}\int_0^1 \norm{Q_t^{(k)} Q^{(k)}_{s/2}Q^{(k)}_{s/2}f}_{L^p_M} \frac{\mathrm ds}{s}\\
  &\overset{\eqref{ineq:almost orthogonal}}{\lesssim} \int_0^1 \frac{\min\{t,\frac{s}{2}\}^k}{\max\{t,\frac{s}{2}\}^k}\norm{ Q_{s/2}^{(k)} f}_{L^p_M} \frac{\mathrm ds}{s}\\
  &= \pr{\int_0^t + \int_t^1} \frac{\min\{t,\frac{s}{2}\}^k}{\max\{t,\frac{s}{2}\}^k}\norm{ Q_{s/2}^{(k)} f}_{L^p_M} \frac{\mathrm ds}{s}.
\end{align*}
Let us fix a pair of $q,\, q' \in [1,\infty]$ so that $\frac{1}{q} + \frac{1}{q'} = 1$, and  consider first for $\frac{s}{2} \le t$,
\begin{align*}
  \int_0^t\frac{\min\{t,\frac{s}{2}\}^k}{\max\{t,\frac{s}{2}\}^k}\norm{ Q_{s/2}^{(k)} f}_{L^p_M} \frac{\mathrm ds}{s}&\le \int_0^t\pr{\frac{s/2}{t}}^k \norm{Q_{s/2}^{(k)}f}_{L^p_M} \frac{\mathrm ds}{s}\\
  &=t^\frac{\alpha}{d_w}\int_0^t \pr{\frac{s/2}{t}}^{k+\frac{\alpha}{d_w}} s^{-\frac{\alpha}{d_w}} \norm{Q^{(k)}_{s/2}f}_{L^p_M} \frac{\mathrm ds}{s}\\
  &\le t^\frac{\alpha}{d_w} \pr{\int_0^t \pr{\frac{s/2}{t}}^{q'k+q'\frac{\alpha}{d_w}}\frac{\mathrm ds}{s}}^\frac{1}{q'} \pr{\int_0^t  \pr{s^{-\frac{\alpha}{d_w}}\norm{Q_{s/2}^{(k)}}_{L^p_M}}^q\frac{\mathrm ds}{s}}^\frac{1}{q}\\
  &\lesssim t^{\frac{\alpha}{d_w}}\pr{\int_0^t \pr{s^{-\frac{\alpha}{d_w}}\norm{Q_{s/2}^{(k)}}_{L^p_M}}^q\frac{\mathrm ds}{s}}^\frac{1}{q},
\end{align*}
where we used H\"{o}lder's inequality in the third line and the fact that $k +\frac{\alpha}{d_w} > 0$ in the last line.

Similarly, consider for $\frac{s}{2}\ge t$,
\begin{align*}
  \int_t^1 \frac{\min\{t,\frac{s}{2}\}^k}{\max\{t,\frac{s}{2}\}^k}\norm{ Q_{s/2}^{(k)} f}_{L^p_M} \frac{\mathrm ds}{s}&= t^{\frac{\alpha}{d_w}}\int_t^1 \pr{\frac{t}{s/2}}^{k - \frac{\alpha}{d_w}} (s/2)^{-\frac{\alpha}{d_w}}\norm{Q_{s/2}^{(k)}f}_{L^p_M} \frac{\mathrm ds}{s}\\
  &\le t^\frac{\alpha}{d_w} \pr{\int_t^1 \pr{\frac{t}{s/2}}^{q'k-q'\frac{\alpha}{d_w}}\frac{\mathrm ds}{s}}^\frac{1}{q'} \pr{\int_t^1\pr{s^{-\frac{\alpha}{d_w}} \norm{Q^{(k)}_{s/2} f}_{L^p_M}}^q \frac{\mathrm ds}{s}}^\frac{1}{q}\\
  &\lesssim t^\frac{\alpha}{d_w} \pr{\int_t^1\pr{s^{-\frac{\alpha}{d_w}} \norm{Q^{(k)}_{s/2} f}_{L^p_M}}^q \frac{\mathrm ds}{s}}^\frac{1}{q}.
\end{align*}
Therefore,
\begin{align*}
  \sup_{t \in (0,1]} t^{-\frac{\alpha}{d_w}} \int_0^1 \norm{Q^{(k)}_tQ^{(2k)}_s f}_{L^p_M} \frac{\mathrm ds}{s} \lesssim \pr{\int_0^1 \pr{s^{- \frac{\alpha}{d_w}}\norm{Q_{s/2}^{(k)} f}_{L^p_M}}^q \frac{\mathrm ds}{s}}^\frac{1}{q} + \norm{P_{1/2} f}_{L^p_M},
\end{align*}
which implies the desired result.
\end{proof}

From the definition of the Besov norm in Definition \ref{def:besov_space}, it is clear $\mathcal B^\alpha_{p,q} \hookrightarrow \mathcal B^\beta_{p,q}$ for $\beta > \alpha$. When $M=\T^n$, this embedding is compact. The corresponding compact embedding result is not known in our setting, and the following theorem fills this gap.
\begin{thm}\label{thm:compact embedding}
For $p,q \in [1,\infty]$ and $\alpha \in \R$, the embedding $\mathcal B^\beta_{p,q} \hookrightarrow \mathcal B^\alpha_{p,q}$ for $\beta >\alpha$ is compact.
\end{thm}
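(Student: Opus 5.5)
We approximate the identity on $\mathcal B^\beta_{p,q}$ by compact operators and show the convergence is uniform in operator norm into $\mathcal B^\alpha_{p,q}$. Then the embedding is a norm limit of compact operators, hence compact. The natural candidates are the heat operators $P_\varepsilon$, $\varepsilon\in(0,1]$. The proof has two steps:
\begin{enumerate}
\item show that $P_\varepsilon:\mathcal B^\beta_{p,q}\to\mathcal B^\alpha_{p,q}$ is compact for each fixed $\varepsilon>0$;
\item show that $\norm{P_\varepsilon f-f}_{\mathcal B^\alpha_{p,q}}\lesssim \varepsilon^{\delta/d_w}\norm{f}_{\mathcal B^\beta_{p,q}}$ for $\delta:=\min\{\beta-\alpha,1/2\}$.
\end{enumerate}
Since $\mathcal B^{\alpha'}_{p,q}\hookrightarrow\mathcal B^{\alpha}_{p,q}$ for $\alpha'\ge\alpha$, we may assume $\beta-\alpha\in(0,1)$.

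For step (2), we rerun the proof of Lemma~\ref{lem:strong continuity of heat semi group} with the $L^\infty$ sup-norm replaced by the $L^p_M$ norm and $q$-integration in $\tau$. Write $P_\varepsilon f-f=-\int_0^\varepsilon Q^{(1)}_s f\,\frac{\mathrm ds}{s}$ and bound
\[
\tau^{-\alpha/d_w}\norm{Q^{(k)}_\tau(P_\varepsilon f-f)}_{L^p_M}\le \int_0^\varepsilon \tau^{-\alpha/d_w}\norm{Q^{(k)}_\tau Q^{(1)}_s f}_{L^p_M}\frac{\mathrm ds}{s}.
\]
Using \eqref{ineq:almost orthogonal} together with the reproducing formula of Theorem~\ref{thm:crf} (to rewrite $Q^{(1)}_s f$ through $Q^{(k)}$ pieces of $f$), this is controlled by a kernel $K(\tau,s)=\min\{\tau/s,s/\tau\}^{c}(\ldots)$ acting on $s^{-\beta/d_w}\norm{Q^{(k)}_{s}f}_{L^p_M}$. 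The effective weight carries a factor $\varepsilon^{\delta/d_w}$. Schur's test (Hölder in $s$ as in Lemma~\ref{lem: besov embedding q to infinity }) then yields the $L^q(\mathrm d\tau/\tau)$ bound. The low-frequency term $\norm{P_1(P_\varepsilon f-f)}_{L^p_M}$ is handled exactly as in Lemma~\ref{lem:strong continuity of heat semi group}.

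For step (1), factor $P_\varepsilon=P_{\varepsilon/2}P_{\varepsilon/2}$.
\begin{itemize}
\item The map $f\mapsto P_{\varepsilon/2}f$ sends $\mathcal B^\beta_{p,q}$ boundedly into $\mathcal C_M$. Indeed, by Lemma~\ref{lem:q_regulariz} and Proposition~\ref{prop:besov_embed} it lands in $\mathcal B^{\gamma}_{\infty}$ with $\gamma\in(0,\Theta)$, which equals $\mathcal C^\gamma_M$ by Proposition~\ref{prop:holder=besov}. Thus the image of the unit ball is a bounded set in $\mathcal C^\gamma_M$.
\item By Arzelà--Ascoli on the compact space $M$, the unit ball of $\mathcal C^\gamma_M$ is precompact in $\mathcal C_M$.
\item It remains to check that $P_{\varepsilon/2}:\mathcal C_M\to\mathcal B^\alpha_{p,q}$ is bounded. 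This holds because $\norm{Q^{(k)}_\tau P_{\varepsilon/2}g}_{L^p_M}\lesssim \min\{1,(\tau/\varepsilon)^k\}\norm{g}_{L^\infty_M}$, using \eqref{ineq:almost orthogonal}, Proposition~\ref{prop:kernel_bound_dt}, and $\mu(M)<\infty$. For $k>\alpha/d_w$ this is $q$-integrable against $\tau^{-\alpha/d_w}$ on $(0,1]$.
\end{itemize}
Composing a compact map with a bounded one shows $P_\varepsilon$ is compact.

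Combining the two steps, $\iota=\lim_{\varepsilon\to0}P_\varepsilon$ in operator norm from $\mathcal B^\beta_{p,q}$ to $\mathcal B^\alpha_{p,q}$, so $\iota$ is compact. Since both spaces are closures of $\mathcal S$, it suffices to prove all estimates on $\mathcal S$ and extend by density.

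The main obstacle is step (2) for general $q<\infty$. It needs a genuine Schur-type bound uniform in $\varepsilon$, rather than the pointwise sup argument used for $q=\infty$. The exponents must be tracked so that $k$ can be chosen large enough (Proposition~\ref{prop:independent_b}) that both tails of the kernel converge. The case $p=\infty$ in step (1) is harmless since $M$ is compact.
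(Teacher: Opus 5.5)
Your argument is correct, but it is organized differently from the paper's.

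\textbf{The paper's route.} The paper works with the dyadic blocks $A_j f=(t_jL)^kP_{t_j}f$ of Proposition~\ref{prop:dyadic norm}. The high-frequency part ($j\ge J$) of the $\mathcal B^\alpha_{p,q}$-norm is bounded by $2^{J(\alpha-\beta)}\norm{f}_{\mathcal B^\beta_{p,q}}$ just by comparing weights. For each of the finitely many remaining blocks, $A_j(f_n)$ is bounded in $\mathcal C^\delta_M$, so Arzel\`a--Ascoli and a diagonal extraction give a Cauchy subsequence.

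\textbf{Your route.} You show instead that the embedding is an operator-norm limit of the compact operators $P_\varepsilon$. The compactness mechanism is the same in both: heat smoothing into $\mathcal B^\gamma_\infty=\mathcal C^\gamma_M$ with $\gamma<\Theta$, followed by Arzel\`a--Ascoli. The difference is the tail control. Yours is a genuine estimate on $P_\varepsilon-\mathrm{Id}$, namely an extension of Lemma~\ref{lem:strong continuity of heat semi group} from $p=q=\infty$ to all $(p,q)$. The paper's tail bound costs nothing. In exchange, you avoid the dyadic reformulation and the diagonal argument, you treat the low-frequency term $\norm{P_1\,\cdot\,}_{L^p_M}$ explicitly, and you get a quantitative rate.

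\textbf{Step (2) is easier than you fear.} It needs no reproducing formula. Use the identity $Q^{(k)}_\tau Q^{(1)}_s=\frac{\tau^k s}{(\tau+s)^{k+1}}Q^{(k+1)}_{\tau+s}$, with coefficient at most $\min\{s/\tau,(\tau/s)^k\}$. Also $\norm{Q^{(k+1)}_{\tau+s}f}_{L^p_M}\lesssim\norm{Q^{(k+1)}_{\tau\vee s}f}_{L^p_M}$. Then split the $s$-integral in two:
\begin{itemize}
\item The range $s\le\tau$ contributes at most $\min\{1,\varepsilon/\tau\}\,\tau^{(\beta-\alpha)/d_w}\le\varepsilon^{(\beta-\alpha)/d_w}$ times $\tau^{-\beta/d_w}\norm{Q^{(k+1)}_\tau f}_{L^p_M}$.
\item The range $\tau<s\le\varepsilon$ gives $\varepsilon^{(\beta-\alpha)/d_w}$ times a Hardy-type operator with kernel $(\tau/s)^{k-\alpha/d_w}$ on $\{s>\tau\}$, applied to $s^{-\beta/d_w}\norm{Q^{(k+1)}_s f}_{L^p_M}$. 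By Schur's test this operator is bounded on $L^q((0,1],\mathrm dt/t)$.
\end{itemize}
So after your reduction to $\beta-\alpha<1$, you can take $\delta=\beta-\alpha$ directly.
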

\begin{proof}
It is enough to show that any bounded sequence in $\mathcal B^\beta_{p,q}$ admits a Cauchy subsequence in $\mathcal B^\alpha_{p,q}$. Take $J \in \mathbb N$ and recall operator $A_j$ from Proposition \ref{prop:dyadic norm} for $j \in \mathbb N$.  If $q = \infty$, then by Proposition \ref{prop:dyadic norm}, it holds for any $f\in \mathcal B^\beta_{p,\infty}$ that
\begin{equation}\label{ineq:compact sequential infty}
  \begin{aligned}
    \sup_{j \ge J} 2^{\alpha j} \norm{A_j(f)}_{L^p_M} &= \sup_{j \ge J} 2^{j\beta} 2^{j(\alpha-\beta)} \norm{A_j(f) }_{L^p_M}\\
    &\le  2^{J(\alpha-\beta)} \sup_{j \ge 0} 2^{j\beta} \norm{A_j(f)}_{L^p_M} \\
    &\lesssim 2^{J(\alpha -\beta)} \norm{f}_{\mathcal B^{\beta}_{p,\infty}}.
  \end{aligned}
\end{equation}
Similarly, if $q <\infty$, then for any $f \in \mathcal B^\beta_{p,q}$, we have
\begin{equation}\label{ineq:compact sequential p}
  \begin{aligned}
    \sum_{j \ge J} 2^{\alpha jq} \pr{\norm{A_j f}_{L^p_M}}^q& = \sum_{j \ge J} 2^{j(\alpha - \beta)q}2^{\beta jq} \pr{\norm{A_j f}_{L^p_M}}^q \\
    &\le 2^{J(\alpha - \beta)q} \sum_{j \ge J}2^{\beta jq} \norm{A_jf}_{L^p_M}^q\\
    &\lesssim 2^{J(\alpha - \beta)q} \norm{f}_{\mathcal B^\beta_{p,q}}^q.
  \end{aligned}
\end{equation}
Now, take any bounded sequence $\{f_n\}_n\subset \mathcal B^\beta_{p,q}$, which we assume without loss of generality that $$\sup_{n \in \mathbb N} \norm{f_n}_{\mathcal B^\beta_{p,q}} \le 1.$$

By Proposition \ref{prop:besov_embed}, we see $\norm{f}_{\mathcal B^{\beta - d_h/p}_{\infty}}< \infty$, hence by Lemma \ref{lem:q_regulariz}, $\norm{P_t f}_{\mathcal B^a_{\infty} }< \infty$ for any $a >0$ and $t >0$. Recall $\Theta >0$ in \eqref{ineq:holder_kernel}. By proposition \ref{prop:holder=besov}, we see for some $0 < \delta < \Theta$,
\begin{align*}
  \sup_{n \in \mathbb N} \norm{Q^{(k)}_tf_n}_{\mathcal C^\delta_M}  \asymp \sup_{n \in \mathbb N}\norm{Q^{(k)}_tf_n}_{\mathcal B^\delta_{\infty}} < \infty.
\end{align*}
Hence, by Arzela-Ascoli (c.f. \cite[Theorem 4.43]{folland1999real}), for each $j \in \mathbb N$, the sequence $\{A_j(f_n)\}_{n \in \mathbb N}$ admits a convergent subsequence in $\mathcal C_M$, hence in $L^p_M$ for any $p \in [1,\infty]$.

Therefore, one may use the diagonalization argument to see there exists a subsequence $\{g_n\}_{n \in \mathbb N} \subset \{ f_n\}_{n \in \mathbb N}$ so that $A_j(g_n)$ converges in $L^p_M$ (in $\mathcal C_b(M)$ if $p = \infty$) for each $j \in \mathbb N$. In addition,  by either \eqref{ineq:compact sequential infty} or \eqref{ineq:compact sequential p}, for any small $\varepsilon >0$, we may take $J \ge 1$ to be sufficiently large so that $2^{J(\alpha - \beta)} < \varepsilon$, since $\alpha < \beta$. Then for $q \le \infty$, we have
\begin{align*}
  \norm{g_n - g_m}_{\mathcal B^\alpha_{p,q}} &\lesssim \max_{0 \le j \le J} 2^{\alpha j}\norm{A_j(g_n )- A_j(g_m)}_{L^p_M} + \sup_{n,m \in \mathbb N}\pr{\sum_{j \ge J+1} 2^{\alpha j q} \norm{A_j(g_n) - A_j(g_m)}_{L^p_M}^q}^\frac{1}{q}\\
  &\lesssim \max_{0 \le j \le J} 2^{\alpha j}\norm{A_j(g_n )- A_j(g_m)}_{L^p_M}+2^{J(\alpha - \beta)}2\sup_{n\in \mathbb N}  \norm{g_n}_{\mathcal B^\beta_{p,q}}\\
  &\lesssim \max_{0 \le j \le J} \norm{A_j(g_n) - A_j(g_m)}_{L^p_M} + \varepsilon.
\end{align*}
Now take $n,m \to \infty$ to see the first term on the last line goes to zero, hence
\begin{align*}
  \lim_{n,m\to \infty} \norm{g_n - g_m}_{\mathcal B^\alpha_{p,q}} \lesssim \varepsilon,
\end{align*}
for any $\varepsilon >0$. In addition, we note that the hidden multiplicative constant is from Proposition \ref{prop:dyadic norm} and it does not depend on $J$. This implies  the subsequence $\{g_n\}_{\mathbb N}$ is Cauchy in $\mathcal B^\alpha_{p,q}$, as desired.
\end{proof}

\subsection{Connection to the Dirichlet energy}
Here we show a connection between the heat semi-group based Besov norm and the Dirichlet form $\mathcal{E}$. It will be useful for obtaining a global solution to \eqref{eq:phi n formal} in the presence of a singular energy measure. Let us first introduce a different notion of Besov (semi)norm associated with the heat kernel, which was developed in \cite{UConn1}.
\begin{definition}\label{def:Baud besov norm}
For $p \ge 1$ and $\alpha \ge 0$, let $f$ be a measurable function on $M$,  define %the following semi-norm
\begin{align*}
  \norm{f}_{p,\alpha} := \sup_{t>0} t^{-\alpha} \pr{\iint_{M^2} \abs{f(x) - f(y)}^p p_t(x,y) \mu(\mathrm dy) \mathrm (\mathrm dx)}^\frac{1}{p}.
\end{align*}
\end{definition}
\begin{lemma}\label{lem: compare our besov and baudoin besov}
Let $\alpha \ge 0$ and $p \ge 1$, for any measurable function $f: M \to \R$, we have
\begin{align}\label{ineq: compare our besov and baudoin besov}
  \norm{f}_{\dot{\mathcal B}_{p,\infty}^\alpha} \lesssim \norm{f}_{p,\frac{\alpha}{d_w}},
\end{align}
whenever the right hand side is finite.
\end{lemma}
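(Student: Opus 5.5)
We use the cancellation property of the kernel $q_{k,t}$ from Proposition~\ref{prop:kernel_bound_dt} together with the sub-Gaussian moment bound of Lemma~\ref{lem:subg_moment}. This reduces the $L^p$ norm of $Q^{(k)}_t f$ to a heat-kernel-weighted difference integral, which is exactly the quantity controlled by $\norm{f}_{p,\alpha/d_w}$.

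Fix $k\in\N$ with $k>\alpha/d_w$ (by Proposition~\ref{prop:independent_b} this choice is harmless), and let $t\in(0,1]$. By \eqref{eq:locality integral q equals 0},
\begin{align*}
Q^{(k)}_tf(x)=\int_M q_{k,t}(x,y)\bigl(f(y)-f(x)\bigr)\,\mu(\mathrm dy).
\end{align*}
Next we compare $q_{k,t}$ with a heat kernel. The bound \eqref{ineq: q sub gaussian bound} gives $|q_{k,t}(x,y)|\lesssim t^{-d_h/d_w}\Phi(C d(x,y)/t^{1/d_w})$. Comparing with the lower bound in \eqref{ineq:sgu} at a rescaled time $ct$, with $c$ chosen so that the exponent $C$ dominates $C_1 c^{-1/d_w}$, yields
\begin{align*}
|q_{k,t}(x,y)|\lesssim p_{ct}(x,y).
\end{align*}
This is needed for $ct\le1$. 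If $ct>1$, the long-time bound $p\asymp1$ of Lemma~\ref{lem:long time heat kernel bounds} handles it similarly.

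Applying Jensen/H\"older with respect to the probability measure $p_{ct}(x,\cdot)\,\mu(\mathrm dy)$, which is a probability measure by \eqref{eq:s_complete}, we get
\begin{align*}
|Q^{(k)}_tf(x)|^p\lesssim \int_M |f(y)-f(x)|^p\,p_{ct}(x,y)\,\mu(\mathrm dy).
\end{align*}
Integrating in $x$ then gives
\begin{align*}
\norm{Q^{(k)}_tf}_{L^p_M}\lesssim (ct)^{\alpha/d_w}\norm{f}_{p,\alpha/d_w}.
\end{align*}
Multiplying by $t^{-\alpha/d_w}$ and taking the supremum over $t\in(0,1]$ yields \eqref{ineq: compare our besov and baudoin besov}. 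For $p=\infty$ (if intended), the same pointwise bound with a sup replaces Jensen.

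The main obstacle is the kernel domination $|q_{k,t}|\lesssim p_{ct}$. It requires matching the exponential constants of the upper bound for $q_{k,t}$ with those of the lower heat kernel bound, and the time dilation $c$ must be chosen so that the prefactor $(ct)^{-d_h/d_w}$ costs only a constant. This is routine given two-sided estimates. If instead one only wants to use upper bounds, an alternative is to write $Q^{(k)}_t=Q^{(k)}_{t/2}P_{t/2}$ and argue via a semigroup factorization. The direct comparison seems cleanest, however.
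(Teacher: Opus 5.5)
Your proof is correct and follows the paper's argument essentially step for step: the cancellation identity \eqref{eq:locality integral q equals 0}, the domination $|q_{k,t}|\lesssim p_{ct}$ from \eqref{ineq: q sub gaussian bound}, Jensen against the probability measure $p_{ct}(x,\cdot)\,\mu$, then integration in $x$ and a supremum over $t\in(0,1]$. Two remarks: your explicit justification of the kernel domination (taking $c$ large against the lower bound in \eqref{ineq:sgu}, and using Lemma~\ref{lem:long time heat kernel bounds} when $ct>1$) fills in a step the paper leaves implicit, and Lemma~\ref{lem:subg_moment}, which you cite at the start, is never actually used.
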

\begin{proof}
For $k \in \mathbb N$ and $t \in (0,1]$, recall $q_{k,t}: M^2 \to \R $ from  Proposition \ref{prop:kernel_bound_dt}, and let $k > \alpha$. Then for any $p \ge 1$, there is some $c>0$ in dependent of $t \in (0,1]$ so that
\begin{align*}
  \norm{Q_t^{(k)}f}_{L^p_M} &= \pr{\int_M \left|\int_M q_{k,t}(x,y) f(y) \mu(\mathrm dy)\right|^p \mu(\mathrm dx)}^\frac{1}{p}\\
  &\overset{\eqref{eq:locality integral q equals 0}}{=}\pr{\int_M \left|\int_M q_{k,t}(x,y) \pr{f(y) - f(x)} \mu(\mathrm dy) \right|^p\mu(\mathrm dx)}^\frac{1}{p}\\
  &\overset{\eqref{ineq: q sub gaussian bound}}{\lesssim} \pr{\int_M \left[\int_M p_{ct}(x,y) \abs{f(x) - f(y)}\mu(\mathrm dy) \right]^p\mu(\mathrm dx)}^\frac{1}{p}\\
  &\le \pr{\iint_{M^2} p_{ct}(x,y) \abs{f(x) - f(y)}^p \mu(\mathrm dy) \mu(\mathrm dx)}^\frac{1}{p},
\end{align*}
where we used Jensen's inequality in the last line. Therefore, multiplying both side by $t^{-\frac{\alpha}{d_w}}$ and take supremum to see the desired result.
\begin{comment}
    , we get
  \begin{align*}
    \norm{f}_{\dot{\mathcal B}_{p,\infty}^\alpha}&=\sup_{t \in (0,1]}t^{- \frac{\alpha}{d_w}}\norm{Q^{(k)}_t f}_{L^p_M}\\
    &\lesssim \sup_{t>0} (ct)^{-\frac{\alpha}{d_w}}\pr{\iint_{M^2} p_{ct}(x,y) \abs{f(x) - f(y)}^p \mu(\mathrm dy) \mu(\mathrm dx)}^\frac{1}{p}\\
    &= \norm{f}_{p,\frac{\alpha}{d_w}},
  \end{align*}
  which is the desired result.
\end{comment}
\end{proof}

\begin{remark}
    The reverse inequality in the above Lemma is not always true. For when they are true, we refer the reader to \cite[Theorem 1.5]{grigor2015heat} and \cite[Theorem 7.2]{baudoin2024korevaarschoensobolevspacescriticalexponents}.
\end{remark}

The following proposition is a slight modification of \cite[Proposition 4.7]{UConn1}, whose proof can be found in the Appendix. It has the advantage that the absolute value sign inside the  norm is removed.
\begin{prop}\label{prop: mod of baudoin berstein}
Suppose $n \in \mathbb N$, and $p \in \mathbb \N$ is odd and $p > 2 - n$. Then for any $\alpha \ge 0$, we have
\begin{align}\label{ineq: mod of baudoin berstein}
  \norm{f^{2p+n-2}}_{1,\alpha} \le 2 \frac{2p+n-2}{p} \norm{f^{2(p+n - 2)}}_{L^1_M} ^\frac{1}{2}\norm{f^p}_{2,2\alpha}^\frac{1}{2},
\end{align}
whenever the right hand side is finite. In particular,
\begin{align}\label{ineq: mod of baudoin berstein and d form}
  \norm{f^{2p+n - 2}}_{1,\frac{1}{2}} \le \sqrt{8} \frac{2p+n - 2}{p} \norm{f^{2(p+n-2)}}_{L^1_M}^\frac{1}{2}  \mathcal E \left(f^p,f^p \right)^\frac{1}{2},
\end{align}
whenever the right hand side is finite.
\end{prop}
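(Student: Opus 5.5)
The plan is to reduce \eqref{ineq: mod of baudoin berstein} to an elementary pointwise inequality for real numbers, and then apply Cauchy--Schwarz against the heat kernel measure $p_t(x,y)\,\mu(\mathrm dx)\,\mu(\mathrm dy)$. Set $m:=2p+n-2$ and $g:=f^p$. Since $p$ is odd, $u\mapsto u^{1/p}$ is a well-defined increasing bijection of $\R$. Hence $f^m=\varphi(g)$ with $\varphi(u):=\operatorname{sign}(u)\abs{u}^{m/p}$. The hypothesis $p>2-n$ gives $m-p=p+n-2>0$, so $m/p>1$. Therefore $\varphi\in\mathcal C^1(\R)$ with $\varphi'(u)=\frac mp\abs u^{m/p-1}$, which is nondecreasing in $\abs u$.

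By the mean value theorem, for $a,b\in\R$,
\begin{align*}
\abs{a^m-b^m}=\abs{\varphi(a^p)-\varphi(b^p)}\le \frac mp \max\{\abs a,\abs b\}^{m-p}\abs{a^p-b^p}\le \frac mp\pr{\abs a^{m-p}+\abs b^{m-p}}\abs{a^p-b^p}.
\end{align*}
I apply this with $a=f(x)$, $b=f(y)$ and integrate against $p_t(x,y)\,\mu(\mathrm dy)\,\mu(\mathrm dx)$. By Cauchy--Schwarz, the result is bounded by $\frac mp$ times the product of $\pr{\iint(\abs{f(x)}^{m-p}+\abs{f(y)}^{m-p})^2p_t}^{1/2}$ and $\pr{\iint\abs{g(x)-g(y)}^2p_t}^{1/2}$. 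For the first factor I use $(A+B)^2\le 2A^2+2B^2$, the symmetry of $p_t$, and stochastic completeness \eqref{eq:s_complete}. This gives $\iint(\cdots)^2p_t\le 4\norm{f^{2(m-p)}}_{L^1_M}$, and $2(m-p)=2(p+n-2)$, so this factor contributes $2\norm{f^{2(p+n-2)}}_{L^1_M}^{1/2}$.

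Multiplying by $t^{-\alpha}$ and taking the supremum over $t>0$, the second factor becomes $\pr{\sup_t t^{-2\alpha}\iint\abs{g(x)-g(y)}^2p_t}^{1/2}$. I read this as the quantity $\norm{f^p}_{2,2\alpha}^{1/2}$ in the statement, i.e.\ with the normalization of Definition \ref{def:Baud besov norm} in which the $L^2$ energy $\iint\abs{g(x)-g(y)}^2p_t$ is weighted by $t^{-2\alpha}$. Matching the exponent of $t$ in the definition is the one place where care is needed. With that reading, \eqref{ineq: mod of baudoin berstein} follows with constant $2m/p$.

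For \eqref{ineq: mod of baudoin berstein and d form}, take $\alpha=\frac12$. It remains to bound $\sup_{t>0}t^{-1}\iint\abs{g(x)-g(y)}^2p_t(x,y)\,\mu(\mathrm dy)\,\mu(\mathrm dx)$ by $2\mathcal E(g,g)$. Expanding the square and using \eqref{eq:s_complete} and symmetry, this equals $\frac2t\langle g-P_tg,g\rangle_{L^2_M}$. By the spectral theorem, $\frac1t\langle g-P_tg,g\rangle=\int\frac{1-e^{-t\lambda}}{t}\,\mathrm d\langle E_\lambda g,g\rangle$, which is at most $\int\lambda\,\mathrm d\langle E_\lambda g,g\rangle=\mathcal E(g,g)$, since $1-e^{-s}\le s$. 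This produces the extra factor $\sqrt2$, i.e.\ the constant $\sqrt8\,\frac{2p+n-2}{p}$.

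The only delicate points are verifying $m/p\ge1$ so that the derivative bound is monotone (this is exactly where $p>2-n$ is used) and the bookkeeping of the exponent normalization in $\norm{\cdot}_{2,2\alpha}$. Everything else is routine.
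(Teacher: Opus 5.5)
Your proposal is correct and is essentially the paper's proof. Both use the same pointwise bound $\abs{a^m-b^m}\le\frac mp\pr{\abs{a}^{m-p}+\abs{b}^{m-p}}\abs{a^p-b^p}$, then Cauchy--Schwarz against $p_t(x,y)\,\mu(\mathrm dy)\,\mu(\mathrm dx)$ with symmetry and \eqref{eq:s_complete}. Both also identify $\pr{\sup_{t>0}t^{-2\alpha}\iint\abs{f(x)^p-f(y)^p}^2p_t}^{1/2}$ with $\norm{f^p}_{2,2\alpha}^{1/2}$. Under Definition \ref{def:Baud besov norm} that quantity is literally $\norm{f^p}_{2,\alpha}$. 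The paper's proof makes the identical identification, so this is a notational inconsistency in the statement, not a flaw in your argument.

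The differences are cosmetic. Your single mean-value argument replaces the paper's same-sign/opposite-sign case split. Your spectral estimate $\frac1t\langle g-P_tg,g\rangle\le\mathcal E(g,g)$ replaces the citation of \cite[Proposition 4.6]{UConn1}.

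There is one slip. When $n$, hence $m$, is even, $f^m\ne\operatorname{sign}(f^p)\abs{f^p}^{m/p}$ at points where $f<0$. So your first step should be the inequality $\abs{a^m-b^m}\le\abs{\varphi(a^p)-\varphi(b^p)}$, which holds because $\abs{\abs{a}^m-\abs{b}^m}\le\abs{a}^m+\abs{b}^m$. Alternatively, take $\varphi(u)=\abs{u}^{m/p}$ in that case. The resulting bound and the conclusion are unchanged.
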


Recall in the Euclidean setting, many useful inequalities that connects the Besov norms and Sobelev norms are results of Berstein-type inequality. Under the current setting, it is unclear how one might obtain a Berstein-type inequality, since the energy measure who plays the role of derivative is singular with respect to the reference measure $\mu$. Nonetheless, one can still make connections between the Besov and Sobelev norm via the following proposition.

\begin{prop}\label{lemma: Chain rule workaround}
Suppose $n \in \mathbb N$, and $p \in \mathbb \N$ is odd and $p \ge  2 - n$. Then
we have
$$\norm{f^{n+2p-2}}_{\dot{\mathcal B}^{d_w/2}_{1,\infty}}\lesssim \norm{f^{2p+2n-4}}_{L^1_M}^{\frac{1}{2}}\mathcal{E}(f^{p},f^{p})^{\frac{1}{2}},$$
whenever the right hand side is finite.
\end{prop}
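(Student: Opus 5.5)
The plan is to chain Lemma~\ref{lem: compare our besov and baudoin besov} with the Dirichlet-form estimate \eqref{ineq: mod of baudoin berstein and d form} of Proposition~\ref{prop: mod of baudoin berstein}. Applying Lemma~\ref{lem: compare our besov and baudoin besov} with $p=1$ and $\alpha = d_w/2$, so that $\alpha/d_w = 1/2$, gives
\begin{align*}
  \norm{f^{n+2p-2}}_{\dot{\mathcal B}^{d_w/2}_{1,\infty}} \lesssim \norm{f^{2p+n-2}}_{1,\frac12},
\end{align*}
provided the right-hand side is finite. Then \eqref{ineq: mod of baudoin berstein and d form} bounds $\norm{f^{2p+n-2}}_{1,\frac12}$ by $\sqrt 8\,\frac{2p+n-2}{p}\norm{f^{2(p+n-2)}}_{L^1_M}^{1/2}\mathcal E(f^p,f^p)^{1/2}$. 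Since $2(p+n-2)=2p+2n-4$, this is exactly the claimed bound, with $n,p$ fixed so the prefactor is a harmless constant. The order is therefore: first the Dirichlet-form step, which also guarantees finiteness of $\norm{f^{2p+n-2}}_{1,1/2}$ and thus legitimizes the lemma, and then the comparison lemma.

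There are two points to check. First, Lemma~\ref{lem: compare our besov and baudoin besov} uses $q_{k,t}$ with $k>\alpha$, and its proof dominates $|q_{k,t}|$ by $p_{ct}$. This needs only Proposition~\ref{prop:kernel_bound_dt} together with \eqref{ineq:hk_subg_compact_form}, and $t\in(0,1]$ costs nothing since the norm $\norm{\cdot}_{1,1/2}$ takes a sup over all $t>0$. The rescaling $t\mapsto ct$ changes $t^{-1/2}$ only by a constant.

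Second, there is an index mismatch. The statement allows $p\ge 2-n$, while Proposition~\ref{prop: mod of baudoin berstein} is stated for $p>2-n$. Since $p\in\N$ and $n\in\N$, the condition $p>2-n$ already holds whenever $n\ge 2$. The only genuinely new case is $n=1$, $p=1$. There the left side is $\norm{f}_{\dot{\mathcal B}^{d_w/2}_{1,\infty}}$ and the right side is $\norm{1}_{L^1_M}^{1/2}\mathcal E(f,f)^{1/2}=\mu(M)^{1/2}\mathcal E(f,f)^{1/2}$. I would handle this case directly via the classical bound $\iint |f(x)-f(y)|^2p_t(x,y)\,\mu(\mathrm dx)\mu(\mathrm dy)=2\langle f-P_tf,f\rangle\le 2t\,\mathcal E(f,f)$, followed by Cauchy--Schwarz, which gives $\iint|f(x)-f(y)|p_t\le \mu(M)^{1/2}(2t\mathcal E(f,f))^{1/2}$. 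This yields $\norm{f}_{1,1/2}\lesssim \mu(M)^{1/2}\mathcal E(f,f)^{1/2}$, and the comparison lemma then finishes this case.

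The main obstacle, if any, lies inside Proposition~\ref{prop: mod of baudoin berstein}, which is assumed here. The only delicate step is confirming the boundary case and that finiteness hypotheses propagate, which the argument above addresses.
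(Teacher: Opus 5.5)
Your proposal is correct and follows the paper's proof, which is the same two-step chain: Lemma~\ref{lem: compare our besov and baudoin besov} with $p=1$ and $\alpha=d_w/2$, followed by \eqref{ineq: mod of baudoin berstein and d form}. Your separate treatment of the boundary case $n=p=1$ is valid but not actually needed, because the elementary inequality in the appendix proof of Proposition~\ref{prop: mod of baudoin berstein} only uses $2p+n-2\ge p$, i.e.\ $p\ge 2-n$, so that proposition already covers that case.
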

\begin{proof}%[Proof of Proposition \ref{lemma: Chain rule workaround}]
Let $p,\, n \in \N$ and assume $p$ is an odd number and $p \ge n-2$, then by Lemma \ref{lem: compare our besov and baudoin besov} and  Proposition \ref{prop: mod of baudoin berstein}, we see
\begin{align*}
  \norm{f^{2p+n - 2}}_{\dot{\mathcal B}_{1,\infty}^\frac{d_w}{2}} &\overset{\eqref{ineq: compare our besov and baudoin besov}}{\lesssim}\norm{f^{2p+n-2}}_{1, \frac{1}{2}}\overset{\eqref{ineq: mod of baudoin berstein and d form}}{\lesssim} \norm{f^{2(p+n - 2)}}_{L^1_M}^\frac{1}{2} \mathcal E\left(f^p, f^p \right)^\frac{1}{2},
\end{align*}
which is the desired result.
\end{proof}
\noindent The following one direction relation between Besov spaces and Sobelev spaces is useful.
\begin{lemma}\label{lem:half dw and Dirichlet energy}
$\mathcal B^\frac{d_w}{2}_{2,\infty} \subset \mathcal F \cap L^2(M,m)$ and
\begin{align*}
  \norm{f}_{\dot{\mathcal B}^\frac{d_w}{2}_{2,\infty}} \lesssim  \sqrt{\mathcal E(f,f) }.
\end{align*}
\end{lemma}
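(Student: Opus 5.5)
The plan has two parts, matching the two assertions: the norm inequality, and the inclusion $\mathcal B^{d_w/2}_{2,\infty}\subset \mathcal F\cap L^2(M,m)$. Both will be run through the $L^2$ spectral calculus of the self-adjoint operator $L$, since on $L^2_M$ the operators $Q^{(k)}_t$ are functions of $L$.

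\emph{The inequality.} For $f\in\mathcal F$ let $\nu_f$ be the spectral measure of $L$ associated to $f$, so that $\mathcal E(f,f)=\int_0^\infty \lambda\,\nu_f(\mathrm d\lambda)$. Fix $k\ge 1$. Then
\begin{align*}
\norm{Q^{(k)}_t f}_{L^2_M}^2=\int_0^\infty (t\lambda)^{2k}e^{-2t\lambda}\,\nu_f(\mathrm d\lambda)
= t\int_0^\infty (t\lambda)^{2k-1}e^{-2t\lambda}\,\lambda\,\nu_f(\mathrm d\lambda)
\le C_k\, t\,\mathcal E(f,f),
\end{align*}
because $\sup_{u\ge0}u^{2k-1}e^{-2u}<\infty$. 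Multiplying by $t^{-1}=t^{-2\cdot\frac{d_w/2}{d_w}}$ and taking the supremum over $t\in(0,1]$ gives $\norm{f}_{\dot{\mathcal B}^{d_w/2}_{2,\infty}}\lesssim \sqrt{\mathcal E(f,f)}$. By Proposition~\ref{prop:independent_b}, the choice of admissible $k$ is irrelevant.

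\emph{The inclusion.} Let $f\in\mathcal B^{d_w/2}_{2,\infty}$. The $L^2$ part is handled by Theorem~\ref{thm:crf}, which gives
\[
\norm{f}_{L^2_M}\le \norm{P^{(k)}_1f}_{L^2_M}+\int_0^1 \norm{Q^{(k)}_tf}_{L^2_M}\frac{\mathrm dt}{t}\lesssim \norm{P_1 f}_{L^2_M}+\int_0^1 t^{1/2}\frac{\mathrm dt}{t}\,\norm{f}_{\mathcal B^{d_w/2}_{2,\infty}} .
\]
For membership in $\mathcal F$, I would use the characterisation $f\in\mathcal F$ iff $\sup_{t>0}\frac1t\langle f-P_tf,f\rangle<\infty$, that is, iff $\int\lambda\,\nu_f(\mathrm d\lambda)<\infty$. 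To estimate this integral, split the spectrum into dyadic shells $\lambda\in[2^j,2^{j+1})$ and apply the Besov bound at $t=2^{-j}$. Since $(t\lambda)^{2k}e^{-2t\lambda}\asymp 1$ on that shell, this yields $\nu_f([2^j,2^{j+1}))\lesssim 2^{-j}\norm{f}^2_{\mathcal B^{d_w/2}_{2,\infty}}$, and hence
\[
\int_{[2^j,2^{j+1})}\lambda\,\nu_f(\mathrm d\lambda)\lesssim \norm{f}^2_{\mathcal B^{d_w/2}_{2,\infty}}
\]
for each shell.

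\emph{Main obstacle.} The step I expect to be hardest is summing these shell contributions over $j$. A $q=\infty$ Besov bound controls each shell uniformly but gives no summability in $j$, so the sum can diverge logarithmically. The same computation with the $q=2$ norm sums exactly: $\int_0^1 t^{-1}\norm{Q^{(k)}_tf}_{L^2_M}^2\,\frac{\mathrm dt}{t}\asymp\int\lambda\,\nu_f(\mathrm d\lambda)$ for the high-frequency part, so $\mathcal B^{d_w/2}_{2,2}=\mathcal F$ with equivalent norms. My plan is therefore to run the argument at $q=2$, obtaining $\mathcal B^{d_w/2}_{2,2}\subset\mathcal F\cap L^2$ together with the two-sided energy bound. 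I would then use Lemma~\ref{lem: besov embedding q to infinity } to pass to $q=\infty$, which gives $\mathcal F\hookrightarrow \mathcal B^{d_w/2}_{2,\infty}$ with the stated estimate. This yields the displayed inequality as stated, but for the inclusion only the $\mathcal B^{d_w/2}_{2,2}$ version. The inclusion of the full $\mathcal B^{d_w/2}_{2,\infty}$ into $\mathcal F$ is exactly where the dyadic sum above must be controlled, and I do not expect the $\sup$-norm alone to close it. If the statement is to be used with the literal inclusion, I would check whether some additional summability in $j$ of the shell contributions is available from context to close the argument.
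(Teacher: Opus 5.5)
Your proof of the displayed inequality is correct, and it takes a different and shorter route than the paper. The paper goes through the heat-kernel seminorm of Definition~\ref{def:Baud besov norm}. Lemma~\ref{lem: compare our besov and baudoin besov} gives $\norm{f}_{\dot{\mathcal B}^{d_w/2}_{2,\infty}}\lesssim\norm{f}_{2,1/2}$, using $\abs{q_{k,t}}\lesssim p_{ct}$, the vanishing of $\int_M q_{k,t}(x,y)\,\mu(\mathrm dy)$ and Jensen. (The paper writes $\norm{f}_{2,d_w/2}$, but with that lemma's indexing the exponent is $1/2$.) Then \cite[Proposition 4.6]{UConn1} gives $\norm{f}_{2,1/2}^2=\sup_{t>0}\frac{2}{t}\langle f-P_tf,f\rangle\le 2\,\mathcal E(f,f)$.

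Your spectral computation uses no heat-kernel estimates and gives the explicit constant $\sup_{u\ge0}u^{2k-1}e^{-2u}$ for every admissible $k\ge1$. The paper's detour buys an intermediate seminorm that makes sense for every $p$, and it is reused with $p=1$ in Proposition~\ref{lemma: Chain rule workaround}. Your identification $\mathcal B^{d_w/2}_{2,2}=\mathcal F$ with equivalent norms is also correct.

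On the inclusion, your suspicion is right and you should not try to close it. $\mathcal B^{d_w/2}_{2,\infty}\subset\mathcal F$ is false under Assumption~\ref{A:main}, and the paper's proof does not address it. The inequality it proves, together with $\norm{P_1f}_{L^2_M}\le\norm{f}_{L^2_M}$, only gives the reverse embedding $\mathcal F\hookrightarrow\mathcal B^{d_w/2}_{2,\infty}$.

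The divergence you anticipated is realised by a lacunary series. Since $P_t$ has a bounded kernel on a finite measure space, it is Hilbert--Schmidt. Hence $L$ has discrete spectrum $\lambda_j\to\infty$ with eigenfunctions $\phi_j=e^{\lambda_jt}P_t\phi_j$ that are continuous and lie in $\mathcal S$. Choose $\lambda_{j_1}\ge1$ and $\lambda_{j_{m+1}}\ge2\lambda_{j_m}$, and set $f:=\sum_m m^{-1/2}\lambda_{j_m}^{-1/2}\phi_{j_m}$. Then $\mathcal E(f,f)=\sum_m m^{-1}=\infty$, whereas for every $N\ge0$ and $t\in(0,1]$,
\begin{align*}
t^{-1}\norm{Q^{(k)}_t\Bigl(\sum_{m>N}m^{-1/2}\lambda_{j_m}^{-1/2}\phi_{j_m}\Bigr)}_{L^2_M}^2=\sum_{m>N}\frac{(t\lambda_{j_m})^{2k-1}e^{-2t\lambda_{j_m}}}{m}\lesssim\frac{1}{N+1}.
\end{align*}
This holds because $\sum_m u_m^{2k-1}e^{-2u_m}$ is bounded uniformly over sequences with $u_{m+1}\ge 2u_m$. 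The $P_1$ part of the tail is trivially small as well. So the partial sums, which lie in $\mathcal S$, converge to $f$ in $\mathcal B^{d_w/2}_{2,\infty}$: $f$ belongs to the closure-defined space but not to $\mathcal F$.

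What the paper actually needs later (Remark~\ref{rmk:need for mollified} and the continuity in $\mathcal F$ used in Lemma~\ref{lem: MW weak sol}) is $\mathcal B^{d_w/2+\delta}_{2,\infty}\subset\mathcal F$ for some $\delta>0$. Your shell argument proves exactly this. The shell bound improves to $\nu_f([2^j,2^{j+1}))\lesssim 2^{-j(1+2\delta/d_w)}\norm{f}^2_{\mathcal B^{d_w/2+\delta}_{2,\infty}}$, so $\sum_{j\ge0}\int_{[2^j,2^{j+1})}\lambda\,\nu_f(\mathrm d\lambda)$ converges geometrically. The low-frequency part satisfies $\int_{[0,1)}\lambda\,\nu_f(\mathrm d\lambda)\le e^{2}\norm{P_1f}_{L^2_M}^2$.
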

\begin{comment}
\begin{remark}
    Note that the right hand side of the inequality in Lemma \ref{lem:half dw and Dirichlet energy} is precisely the Dirichlet energy (or the Sobelev norm) of $f$. However, we do not know if the reverse inequality holds, and it is natural to inquire if it does.
\end{remark}
\end{comment}

\begin{proof}%[Proof of Lemma \ref{lem:half dw and Dirichlet energy}]
By Lemma \ref{lem: compare our besov and baudoin besov} and \cite[Proposition 4.6]{UConn1}, we see
\begin{align*}
  \norm{f}_{\dot{\mathcal B}_{2,\infty}^\frac{d_w}{2}} \lesssim  \norm{f}_{2, \frac{d_w}{2}} \lesssim \sqrt{ \mathcal E(f,f)}.
\end{align*}
\end{proof}

\noindent The following lemma is a well-known result for strongly local Dirichlet forms, which can be found in \cite[Lemma 3.2.5 and Theorem 3.2.2]{FukushimaDF}.
\begin{prop}
For $p\geq 2$, if $f\in \mathcal{F}\cap\, \mathcal C_M$, then $f^p\in \mathcal{F}$, and
$$\mathcal{E}(f^p,f^p)=(p-1)^2\int_M f^{2(p-1)}d\Gamma(f,f).$$
\end{prop}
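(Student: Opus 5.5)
The plan is to reduce everything to the chain rule for strongly local regular Dirichlet forms, \cite[Theorem~3.2.2]{FukushimaDF}, after a truncation that makes the power map globally Lipschitz. Since $M$ is compact and $f\in\mathcal C_M$, we have $R:=\norm{f}_{\mathcal C_M}<\infty$. Fix $\varphi\in\mathcal C^1(\R)$ with bounded derivative, $\varphi(0)=0$, and $\varphi(s)=s^p$ on $[-R,R]$; for instance, extend $s^p$ linearly outside $[-R-1,R+1]$ and smooth the corners. Then $\varphi\circ f=f^p$ pointwise on $M$.

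\textbf{Step 1 (membership).} Because $\varphi$ is Lipschitz with constant $K:=\sup|\varphi'|$ and $\varphi(0)=0$, the map $K^{-1}\varphi$ is a normal contraction. The Markov property of $(\mathcal E,\mathcal F)$ therefore gives $K^{-1}\varphi\circ f\in\mathcal F$, and hence $f^p=\varphi\circ f\in\mathcal F$. This is \cite[Lemma~3.2.5]{FukushimaDF} in the form we need.

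\textbf{Step 2 (energy measure).} Strong locality gives the chain rule $\mathrm d\Gamma(\varphi(f),\varphi(f))=\varphi'(f)^2\,\mathrm d\Gamma(f,f)$, quoted in Section~\ref{sec:prelim}. We apply it twice, once in each slot, using the symmetry of $\Gamma$. Since $f$ takes values in $[-R,R]$, where $\varphi'(s)=p\,s^{p-1}$, the factor $\varphi'(f)^2$ is a continuous function on $M$ equal to $p^2 f^{2(p-1)}$ everywhere. We do not need to worry about $\Gamma(f,f)$ charging exceptional sets: $f$ is continuous, so $\varphi'(f)$ is defined everywhere and not merely quasi-everywhere.

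\textbf{Step 3 (integration).} Because $1\in\mathcal F$ on compact $M$, we have $\mathcal E(u,u)=\int_M\mathrm d\Gamma(u,u)$ for $u\in\mathcal F$. Applying this with $u=f^p$ and integrating Step 2 gives
\[
\mathcal E(f^p,f^p)=\int_M \varphi'(f)^2\,\mathrm d\Gamma(f,f)=\int_M p^2 f^{2(p-1)}\,\mathrm d\Gamma(f,f).
\]

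\textbf{Main obstacle: the constant.} The routine steps above produce the factor $(\varphi')^2=p^2$, not the stated $(p-1)^2$. For the stated normalization to hold, the power map would have to be $s\mapsto s^{p}$ with derivative $(p-1)s^{p-1}$, which is impossible. Equivalently, the statement would have to read $f^{p-1}$ for the left-hand power (and then the density would be $f^{2(p-2)}$). A sanity check on $M=\T^1$ with $f=\sin$ and $p=2$ gives $\int(2\sin x\cos x)^2\,\mathrm dx=4\int\sin^2\cos^2\,\mathrm dx$, which confirms the factor $p^2=4$ rather than $(p-1)^2=1$. The membership claim and the identity $\mathcal E(f^p,f^p)=c_p\int_M f^{2(p-1)}\,\mathrm d\Gamma(f,f)$ go through exactly as planned, but with $c_p=p^2$. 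The stated $(p-1)^2$ version cannot be proved as written, so I would report the corrected constant. I would also check that the later applications (Propositions~\ref{prop: mod of baudoin berstein} and \ref{lemma: Chain rule workaround}) only use the identity up to a $p$-dependent constant, so that nothing downstream depends on the exact factor.
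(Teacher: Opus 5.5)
Your proof is the paper's own argument: the paper gives no proof beyond citing \cite[Lemma~3.2.5 and Theorem~3.2.2]{FukushimaDF}, which amounts to membership via the Markov property and the identity via the strongly local chain rule, integrated using $1\in\mathcal F$, exactly as in your Steps 1--3. You are also right that the chain rule forces the constant $p^2$, so the stated $(p-1)^2$ is an error in the statement (it already fails for $p=2$, for instance on $\T^2$ with $f(x_1,x_2)=\sin x_1$, which satisfies Assumption~\ref{A:main}); the error is harmless downstream, since the proof of Proposition~\ref{prop:CDI_mollified} only needs $\mathcal E(v,v^{2p-1})=C_p\,\mathcal E(v^p,v^p)$ for some $C_p>0$, and the correct value $C_p=(2p-1)/p^2$ is positive.
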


\noindent We also have the following Hardy-Littlewood-Sobolev inequality.

\begin{lemma}\label{lem: HLS}
Suppose $M$ is compact and the heat kernel satisfies $p_t(x,y)\lesssim t^{-\beta/2}$ for all $t\in (0,1],x,y\in M$ with $\beta\geq 2$.
\begin{itemize}
  \item If $\beta=2$, then for every $1\le q<+\infty$, we have for all $f\in \mathcal{F}$ $$\norm{f}_{L^q(\mu)}\lesssim \mathcal{E}(f,f)^{\frac{1}{2}}+\norm{f}_{L^2_M}.$$
  \item For $\beta>2$, the above estimate holds for all $q\leq q'=\frac{2\beta}{\beta-2}$.
\end{itemize}
\end{lemma}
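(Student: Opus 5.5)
The plan is to derive the Sobolev-type inequality from the ultracontractivity bound $p_t(x,y)\lesssim t^{-\beta/2}$ by the classical Varopoulos argument, so that $\beta$ plays the role of dimension. Compactness is handled by working with $L+1$ in place of $L$. Concretely, set $\mathcal E_1(f,f)=\mathcal E(f,f)+\norm{f}_{L^2_M}^2$ and consider the resolvent-type operator $(1+L)^{-1/2}$. Since $\norm{e^{-tL}}_{L^1_M\to L^\infty_M}\le \sup_{x,y}p_t(x,y)$, the hypothesis together with Lemma~\ref{lem:long time heat kernel bounds} (for $t\ge 1$) gives
\begin{align*}
\norm{e^{-t(1+L)}}_{L^1_M\to L^\infty_M}\lesssim t^{-\beta/2}e^{-t}\qquad\text{for all } t>0.
\end{align*}
By Riesz--Thorin with $\norm{e^{-tL}}_{L^p_M\to L^p_M}\le 1$, this yields $L^p\to L^q$ bounds of order $t^{-\frac{\beta}{2}(\frac1p-\frac1q)}e^{-t}$, as in Lemma~\ref{lem:hk LpLq interpolation}.

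Next write
\begin{align*}
(1+L)^{-1/2}=\Gamma(1/2)^{-1}\int_0^\infty t^{-1/2}e^{-t}e^{-tL}\,\mathrm dt .
\end{align*}
For $\beta>2$, use the $L^2\to L^q$ bound with $\frac{\beta}{2}\bigl(\frac12-\frac1q\bigr)<\frac12$, i.e.\ $q<\frac{2\beta}{\beta-2}$. The integrand norm is then $t^{-1/2-\frac{\beta}{2}(\frac12-\frac1q)}e^{-t}$, which is integrable. Hence $(1+L)^{-1/2}:L^2_M\to L^q_M$ is bounded for every $q<q'$. Applying this to $g=(1+L)^{1/2}f$, and using $\norm{(1+L)^{1/2}f}_{L^2_M}^2=\mathcal E_1(f,f)$ for $f\in\mathcal F$ (spectral theorem, since $\mathcal F=\mathcal D((1+L)^{1/2})$), gives
\begin{align*}
\norm{f}_{L^q_M}\lesssim \mathcal E(f,f)^{1/2}+\norm{f}_{L^2_M}.
\end{align*}
The case $q\le 2$ is trivial from finiteness of $\mu(M)$. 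For $\beta=2$ the exponent is $\frac12-\frac1q<\frac12$ for every finite $q$, which gives the first bullet directly.

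The endpoint $q=q'$ when $\beta>2$ is the main obstacle, since the integral above diverges logarithmically there. For it I would invoke Varopoulos' theorem in the form for semigroups with $\norm{e^{-tA}}_{1\to\infty}\lesssim t^{-\beta/2}$ for all $t>0$, with $A=1+L$ generating a symmetric sub-Markovian semigroup. The theorem gives $\norm{A^{-1/2}g}_{L^{q'}_M}\lesssim\norm{g}_{L^2_M}$ via the Marcinkiewicz interpolation argument of Varopoulos (weak type, then upgraded to strong type through Markovianity; see Coulhon or Varopoulos--Saloff-Coste--Coulhon). The proof is then the same substitution as before. An alternative route is to prove the Nash inequality
\begin{align*}
\norm{f}_{L^2_M}^{2+4/\beta}\lesssim \mathcal E_1(f,f)\norm{f}_{L^1_M}^{4/\beta}
\end{align*}
from the heat kernel bound and then apply the Bakry--Coulhon--Ledoux--Saloff-Coste truncation argument, using the Markov property of $\mathcal E$ to pass from Nash to Sobolev at the critical exponent.
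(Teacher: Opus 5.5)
Your proof is correct. For the endpoint it agrees with the paper; for the rest it takes a more elementary route.

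\textbf{What the paper does.} It gives no self-contained argument. For $\beta>2$ it cites Varopoulos' theorem together with the Nash inequality and compactness, which yields the endpoint $q=q'$ at once. It gets $\beta=2$ by weakening the hypothesis to $p_t\lesssim t^{-(2+\varepsilon)/2}$ on $(0,1]$ and letting $q'=2(2+\varepsilon)/\varepsilon\to\infty$.

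\textbf{What you do differently.} You prove the whole subcritical range $q<q'$ directly from three ingredients: the subordination formula for $(1+L)^{-1/2}$, $L^2\to L^q$ smoothing by Riesz--Thorin, and the identity $\norm{(1+L)^{1/2}f}_{L^2_M}^2=\mathcal E(f,f)+\norm{f}_{L^2_M}^2$. This covers $\beta=2$ with no $\varepsilon$-reduction and needs no Markovian truncation. You fall back on the Varopoulos/Nash--Bakry--Coulhon--Ledoux--Saloff-Coste machinery only at the endpoint, as the paper does.

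\textbf{What your route buys.} The lemma is used only in Proposition~\ref{prop:CDI_mollified}, where $\beta=2d_h/d_w$. There $p$ is chosen large so that the exponent $2+\frac{4(n-k-1)}{2p}$ stays strictly below $q_0=q'$. So your elementary argument already covers everything the paper needs; the heavier endpoint theorem is never actually used.

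\textbf{One small correction.} The bound $\norm{e^{-t(1+L)}}_{L^1_M\to L^\infty_M}\lesssim t^{-\beta/2}e^{-t}$ is false for large $t$, since $p_t(x,y)\to\mu(M)^{-1}$. What holds is $\norm{e^{-tL}}_{L^1_M\to L^\infty_M}\le\norm{e^{-L}}_{L^1_M\to L^\infty_M}$ for $t\ge1$, by the semigroup property and $L^\infty$-contractivity; you do not need Lemma~\ref{lem:long time heat kernel bounds} for this. Consequently $\norm{e^{-t(1+L)}}_{L^1_M\to L^\infty_M}\lesssim t^{-\beta/2}$ for all $t>0$. Likewise, $\norm{e^{-tL}}_{L^2_M\to L^q_M}$ is only $\lesssim 1$ for $t\ge 1$, not decaying. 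These weaker bounds are all that your subordination integral and Varopoulos' hypothesis require, so the argument goes through unchanged.
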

\begin{proof}
The second item follows from \cite{VAROPOULOS1985HLS}, the Nash inequality, and the compactness hypothesis. See also \cite[Theorem 1.1]{UConn1} for a more general result. The first item follows from the second item by applying the obvious inequality $t^{-1}\leq t^{-\frac{2+\varepsilon}{2}}$ for all $t\in(0,1]$ and then making $\varepsilon$ small enough so that $q\leq \frac{4}{\varepsilon}<\frac{2(2+\varepsilon)}{\varepsilon}$ for any arbitrarily large $q$.
\end{proof}

\section{Multiplication of distributions and Schauder estimates}\label{sec:pp_estimates}

\subsection{Multiplicative inequality}

In this section, we construct the product decomposition and prove Theorem \ref{thm:besov_product}.
\begin{comment}
\begin{thm}[Paraproduct Estimate]\label{thm:besov_product}
There exists a decomposition of product $f \cdot g$ that agrees with the point-wise multiplication when $f,g \in \mathcal C_M$, so that 
$\alpha>0$,  $\beta \in \left(-\Theta, \Theta\right)\backslash\{0\}$ and  $\alpha+ \beta>0$
\begin{align*}
  \norm{f\cdot g}_{\mathcal B^{\alpha \wedge \beta}_{\infty}} \lesssim \norm{f}_{\mathcal B^\alpha_{\infty}} \cdot \norm{g}_{\mathcal B^\beta_{\infty}},
\end{align*}
 uniformly in $f \in \mathcal  B^\alpha_{\infty},\, g \in \mathcal B^\beta_{\infty}$
\end{thm}
\end{comment}
\noindent Our overall strategy follows \cite[Section 3]{bailleul2016heat}, but the singularity of the energy measure in our setting forces %substantial
certain modifications. We discuss these first, both to motivate our choice of product decomposition and to indicate where new ingredients are required.

When the heat kernel satisfies \eqref{ineq:sgu} with $d_w = 2$ and $\Theta = 1$ in \eqref{ineq:holder_kernel} --- the framework of \cite{bailleul2016heat} --- it is classical that $\mathcal{D}(L)$ forms an algebra and the energy measure formally defined as 
$$
\Gamma(f,g) := -\tfrac{1}{2}\bigl(L(fg) - fL(g) - gL(f)\bigr),
\qquad f,g \in \mathcal{D}(L),
$$
is absolutely continuous with respect to $\mu$, with Radon--Nikodym density, i.e. the \textit{carr\'e du champ operator}. In that regime, the product estimates of \cite{bailleul2016heat} rest on a point-wise bound of the form
\begin{align}\label{ineq:BB core restriction}
    t\,\Gamma(P_tf, P_tf)(x) 
    \;\lesssim\; \int_M k_t(x,y)\,|f(y)|^2\,\mu(\mathrm{d}y),
\end{align}
where $k_t$ satisfies Gaussian upper estimates.

Under Assumption \ref{A:main}, $d_w$ may exceed $2$ and $\Gamma(f,f)$ need not be absolutely continuous with respect to $\mu$, so \eqref{ineq:BB core restriction} is no longer available as a starting point. In fact, by \cite{kajino2020singularity}, $d_w>2$ implies $\mathrm d\Gamma(f,f)$ is singular with respect to the reference measure $\mu$ on $M$. We instead exploit a feature internal to the decomposition: in the resonant term \eqref{eq:resonant term}, every occurrence of $\Gamma$ is further composed with $P$- and $Q$-operators. This additional smoothing --- which plays no role in the argument of \cite{bailleul2016heat} --- already suffices to control the 
resonant term directly at the level of the measure $\Gamma(P_tf,P_tf)$ via a Caccioppoli-type estimate (Theorem \ref{thm:gamma_infty_estimates}), bypassing the need for a point-wise carr\'e du champ operator.

\noindent The following results will be used to justify the agreement of our product decomposition with point-wise multiplication. 

\begin{lemma}\label{lem: Dform to Qop}
	For any $g \in \mathcal F \cap \mathcal C_M$, $t>0$ and $n = 0,1,2,\dots$, we have the following point-wise identity,
	\begin{align}
		t\mathcal E(q_{n,t}(x,\cdot), g) = Q^{(n+1)}_t g(x),\qquad x \in M.
	\end{align}
\end{lemma}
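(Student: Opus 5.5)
The identity should follow from the relation \eqref{eq:dform and generator} between $\mathcal E$ and $L$, applied with the first argument $u:=q_{n,t}(x,\cdot)$, once we know that $u\in\mathcal D_2(L)$. Fix $x\in M$, $t>0$ and $n\ge 0$. By Proposition \ref{prop:kernel_bound_dt} (and the case $n=0$, where $q_{0,t}=p_t$), we have $q_{n,t}(x,\cdot)=t^n\partial_t^n p_t(x,\cdot)$. By the semigroup property this equals $t^n L^n P_{t/2}\, p_{t/2}(x,\cdot)$. Concretely, $p_t(x,\cdot)=P_{t/2}[p_{t/2}(x,\cdot)]$, and $p_{t/2}(x,\cdot)$ is bounded by \eqref{ineq:sgu}, hence lies in $L^2_M$. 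Since $P_{t/2}$ maps $L^2_M$ into $\mathcal D_2(L^m)$ for every $m$ by the spectral theorem, $u$ belongs to $\mathcal D_2(L)\subset\mathcal F$. Moreover,
\[
Lu = t^n L^{n+1} P_{t/2}\,p_{t/2}(x,\cdot) = t^{-1} q_{n+1,t}(x,\cdot),
\]
as an $L^2_M$ identity, again using $t^{m}\partial_t^{m}p_t=q_{m,t}$ from Proposition \ref{prop:kernel_bound_dt}.

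Given this, \eqref{eq:dform and generator} together with symmetry of $\mathcal E$ yields, for $g\in\mathcal F\cap\mathcal C_M$,
\[
t\,\mathcal E(q_{n,t}(x,\cdot),g) = t\langle Lu,g\rangle_{L^2_M} = \int_M q_{n+1,t}(x,y)g(y)\,\mu(\mathrm dy).
\]
The right-hand side is $Q^{(n+1)}_t g(x)$ because $q_{n+1,t}$ is the kernel of $Q^{(n+1)}_t$. Note that $Q^{(n+1)}_t=(tL)^{n+1}e^{-tL}$, and by the definition $q_{n,t}$ is the density of $Q^{(n)}_t$. So the whole identity reduces to "$t L$ applied to the kernel of $Q^{(n)}_t$ is the kernel of $Q^{(n+1)}_t$", in the $y$-variable.

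The main obstacle is making the kernel identities hold \emph{pointwise in $x$} rather than for $\mu$-a.e.\ $x$, since the $L^2$ identity $Lu=t^{-1}q_{n+1,t}(x,\cdot)$ needs the kernel representation of $L^{n+1}P_t$ at a fixed $x$. I would handle this as follows. For $\varphi\in L^2_M$ write $\langle q_{n,t}(x,\cdot),\varphi\rangle = Q^{(n)}_t\varphi(x)$, which holds for every $x$ by continuity of the kernels (Proposition \ref{prop:kernel_bound_dt} gives Hölder continuity in $x$) and the strong Feller property. Then compute $\langle u, L\varphi\rangle$ for $\varphi\in\mathcal D_2(L)$: it equals $t^{n}(L^nP_tL\varphi)(x)=t^{-1}Q^{(n+1)}_t\varphi(x)$, again pointwise by continuity. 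This identifies $u$ in the domain of $L^*=L$ with $Lu=t^{-1}q_{n+1,t}(x,\cdot)$, and avoids any a.e.\ ambiguity. The uniform bounds \eqref{ineq: q sub gaussian bound} justify all the integrability needed.
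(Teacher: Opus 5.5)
Your proof is correct, and it takes a different route from the paper's.

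The paper starts from the heat-kernel approximation $2\mathcal E(f,g)=\lim_{\varepsilon\downarrow0}\varepsilon^{-1}\iint (f(y)-f(z))(g(y)-g(z))p_\varepsilon(y,z)\,\mu(\mathrm dy)\mu(\mathrm dz)$. Using Fubini, stochastic completeness and the semigroup property, it rewrites this as $2\varepsilon^{-1}\bigl(Q^{(n)}_tg(x)-P_\varepsilon Q^{(n)}_tg(x)\bigr)$. Letting $\varepsilon\downarrow0$ then places the generator on $Q^{(n)}_tg$.

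You instead put $L$ on the kernel in the $y$-variable. By duality against $\varphi\in\mathcal D_2(L)$ and self-adjointness, you show $q_{n,t}(x,\cdot)\in\mathcal D_2(L)$ with $L\,q_{n,t}(x,\cdot)=t^{-1}q_{n+1,t}(x,\cdot)$. You then read the identity off \eqref{eq:dform and generator}.

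What your route buys:
\begin{itemize}
\item It needs no limit in $\varepsilon$.
\item More importantly, it supplies a step the paper uses silently. The approximation formula identifies the limit with $\mathcal E(q_{n,t}(x,\cdot),g)$ only if $q_{n,t}(x,\cdot)\in\mathcal F$, which the paper never checks. Your duality argument establishes exactly this.
\end{itemize}
In exchange, the paper's computation stays within the kernel/double-integral calculus used elsewhere for energy measures, and avoids the abstract characterization $\mathcal D(L^*)=\mathcal D_2(L)$.

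One small remark: the detour in your first paragraph through $q_{n,t}=t^n\partial_t^np_t$ is unnecessary. It is also sign-sensitive, since $t^n\partial_t^n e^{-tL}=(-1)^n(tL)^ne^{-tL}$. The duality argument in your last paragraph, which uses continuity of both sides in $x$ and full support of $\mu$, is self-contained and is the one to keep.
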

\begin{proof}
	Recall the formula
	\begin{align}
		2\mathcal E(f,g) = \lim_{\varepsilon \downarrow 0}\frac{1}{\varepsilon} \iint_{M^2} \pr{f(y) - f(z)}\pr{g(y) - g(z)}p_\varepsilon(y,z) \mu(\mathrm dy)\mu(\mathrm dx). 
	\end{align}
	Now consider by Fubini's and semi-group properties that
	\begin{align*}
		&\varepsilon^{-1}\iint_{M^2} \pr{q_{n,t}(x,y) - q_{n,t}(x,z)}\pr{g(y) - g(z)}p_\varepsilon(y,z) \mu(\mathrm dy)\mu(\mathrm dz)\\
		=&\varepsilon^{-1}\iint_{M^2}q_{n,t}(x,y)g(y)-q_{n,t}(x,y) g(z) - q_{n,t}(x,z)g(y)+q_{n,t}(x,z)g(z) p_\varepsilon(y,z)\mu(\mathrm dy) \mu(\mathrm dz)\\
		=&\varepsilon^{-1}\iint_{M^2} q_{n,t}(x,y) g(y) p_\varepsilon(y,z) \mu(\mathrm dz) \mu(\mathrm dy)-\iint_{M^2} q_{n,t}(x,y) g(z) p_\varepsilon(y,z) \mu(\mathrm dy)\mu(\mathrm dz)  \\
		&\qquad +\varepsilon^{-1}\iint_{M^2} q_{n,t}(x,z) g(z) p_\varepsilon(y,z)\mu(\mathrm dy) \mu(\mathrm dz) -\iint_{M^2} q_{n,t}(x,z) g(y) p_\varepsilon(y,z) \mu(\mathrm dz) \mu(\mathrm d y)\\
		=&2\varepsilon^{-1}\pr{(tL)^n e^{-tL} g(x)- e^{-\varepsilon L} (tL)^n e^{-tL} g(x)}.
	\end{align*}
	Sending $\varepsilon \downarrow 0$ to see the above expression converges in sup norm to $L Q^{(n)}_t g(x)$. Therefore, for $x \in M$,
	\begin{align*}
		t\mathcal E(q_{n,t}(x,\cdot), g) &= \lim_{\varepsilon \downarrow 0}\varepsilon^{-1}\pr{(tL)^n e^{-tL} g(x)- e^{-\varepsilon L} (tL)^n e^{-tL} g(x)}\\
		&=tLQ^{(n)}_tg(x)\\
		&= Q^{(n+1)}_t g(x),
	\end{align*}
    which is the desired result.
\end{proof}
\noindent The following corollary is a direct consequence of Lemma \ref{lem: Dform to Qop} and \eqref{eq:energy measure def}.
\begin{corollary}\label{cor:op applied to energy measure point wise}
	For $f,g \in \mathcal F \cap \mathcal C_M$ and $k \in \Z_+$, the following point-wise in $(t,x) \in (0,\infty) \times M$,
	\begin{align*}
		Q^{(k)}_t\Gamma(f,g)(x):= \int_M q_{k,t}(x,\cdot)\mathrm d\Gamma(f,g) = \mathcal E(fq_{k,t}(x,\cdot),g)+\mathcal E\pr{gq_{k,t}(x,\cdot),f} - \mathcal E(fg, q_{k,t}(x,\cdot)).
	\end{align*}
	Furthermore, if in addition $f,g \in \mathcal D_2(L)$, then 
	\begin{align*}
		Q^{(k)}_t\Gamma\pr{f,g}(x)&= \int_M fq_{k,t}(x,\cdot) \pr{Lg}\mathrm d\mu +\int_M gq_{k,t}(x,\cdot) \pr{Lf}\mathrm d\mu - t^{-1}Q_t^{(k+1)}\pr{f\cdot g}(x)\\
        &= Q^{(k)}_t \pr{f \cdot \pr{Lg}}(x)+Q^{(k)}_t \pr{g \cdot \pr{Lf}}(x) - t^{-1}Q_t^{(k+1)}\pr{f\cdot g}(x).
	\end{align*}
\end{corollary}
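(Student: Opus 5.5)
The first identity is the definition \eqref{eq:energy measure def}, tested against $\varphi = q_{k,t}(x,\cdot)$, and the second identity follows by rewriting each Dirichlet-form term through the generator and applying Lemma~\ref{lem: Dform to Qop} to the last term.

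For the first formula, fix $(t,x)$ and set $\varphi := q_{k,t}(x,\cdot)$.
\begin{itemize}
\item For $k\ge 1$, $\varphi = t^k\partial_t^k p_t(x,\cdot) = (tL)^k p_t(x,\cdot)$ is the kernel of $Q^{(k)}_t$ from Proposition~\ref{prop:kernel_bound_dt}.
\item By symmetry and the semigroup property, $\varphi = Q^{(k)}_{t/2}\,p_{t/2}(x,\cdot)$ lies in $\mathcal D_2(L)\subset\mathcal F$.
\item By the H\"older bound \eqref{ineq: q holder continuous} it is continuous, so $\varphi\in\mathcal F\cap\mathcal C_M$ is an admissible test function in \eqref{eq:energy measure def}.
\end{itemize}
Plugging it in gives
\[
\int_M q_{k,t}(x,\cdot)\,\mathrm d\Gamma(f,g)=\tfrac12\bigl(\mathcal E(\varphi f,g)+\mathcal E(\varphi g,f)-\mathcal E(\varphi,fg)\bigr).
\]
The stated formula has no factor $\tfrac12$, so I would double-check the normalisation: either $\Gamma$ there is meant to be $2\times$ the energy measure, or the factor is absorbed; the argument is otherwise identical. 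One needs $fg\in\mathcal F$, and this holds because $\mathcal F\cap L^\infty_M$ is an algebra.

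For the second formula, assume additionally $f,g\in\mathcal D_2(L)$.
\begin{itemize}
\item Since $\varphi f\in\mathcal F$, \eqref{eq:dform and generator} gives $\mathcal E(\varphi f,g)=\langle Lg,\varphi f\rangle=\int_M f\,q_{k,t}(x,\cdot)\,(Lg)\,\mathrm d\mu$.
\item Symmetrically, $\mathcal E(\varphi g,f)=\int_M g\,q_{k,t}(x,\cdot)\,(Lf)\,\mathrm d\mu$.
\item By definition of the kernel, these integrals are $Q^{(k)}_t(f\,Lg)(x)$ and $Q^{(k)}_t(g\,Lf)(x)$; here $f\,Lg\in L^2_M$ because $f$ is bounded.
\item For the last term, $fg\in\mathcal F\cap\mathcal C_M$, so Lemma~\ref{lem: Dform to Qop} with $n=k$ gives $\mathcal E(q_{k,t}(x,\cdot),fg)=t^{-1}Q^{(k+1)}_t(fg)(x)$.
\end{itemize}

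The main subtlety is the case $k=0$. There $q_{0,t}=p_t$, and Lemma~\ref{lem: Dform to Qop} with $n=0$ still applies. The remaining steps are checking membership of $\varphi$ in $\mathcal F\cap\mathcal C_M$ and that the identities hold pointwise in $x$ rather than only almost everywhere. Pointwise validity follows because every expression involved is continuous in $x$, by the kernel H\"older bounds.
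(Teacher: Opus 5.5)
Your proposal is correct and follows the paper's proof: the paper also just tests \eqref{eq:energy measure def} against $q_{k,t}(x,\cdot)$, then rewrites the terms via the generator and Lemma~\ref{lem: Dform to Qop}. You were right to check the normalization: under \eqref{eq:energy measure def}, both right-hand sides in the statement should carry a factor $\tfrac12$ (an identity leaves nothing to absorb), and the paper's own product decomposition uses exactly this $\tfrac12$-normalized version, which is why the term $-2tQ^{(b-1)}_t\Gamma(P^{(b)}_tf,P^{(b)}_tg)$ appears there.
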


\subsubsection{Product Decomposition}\label{sec:sub_para_decomposition}
We follow strictly the decomposition in \cite{bailleul2016heat}. %However, in each step of the decomposition, we give justifications for point-wise equality.
Given two functions $f,g\in \mathcal C_M$, denote
\begin{align*}
\Delta_{-1}^{(b)}(f,g):= P_1^{(b)}\left(P_1^{(b)}f \cdot P^{(b)}_1 g \right).
\end{align*}
By the Calderon's reproducing formula (Theorem \ref{thm:crf}), the following equality holds in $\mathcal C_M$ for $b \in \N$:
\begin{align*}
fg &= \lim_{t \to 0} P^{(b)}_t \left(P^{(b)}_tf\cdot P^{(b)}_tg \right) = - \int_0^1 \partial_t \left( P^{(b)}_t \left(P^{(b)}_tf\cdot P^{(b)}_tg \right)\right)\frac{\mathrm dt}{t}+\Delta_{-1} (f,g)\\
&= \frac{1}{\gamma_b}\int_0^1 P_t^{(b)}\left( Q^{(b)}_tf \cdot P^{(b)}_t g\right) + P^{(b)}_t \left(P^{(b)}_t f \cdot Q^{(b)}_tg \right)+ Q^{(b)}_t\left(P_t^{(b)} f\cdot P^{(b)}_t g \right) \frac{\mathrm dt}{t} \\
&\quad +\Delta_{-1}(f,g)\\
&:=\frac{1}{\gamma_b }\int_0^1 (1) + (2)+ (3)\frac{\mathrm dt}{t} +  \Delta_{-1}^{(b)} \pr{f,g}.
\end{align*}
By Corollary \ref{cor:op applied to energy measure point wise}, the following equality holds point-wise:
\begin{align*}
(3) =  Q^{(b)}_t \left(P_t^{(b)}f \cdot P_t^{(b)}g \right) &= Q^{(b - 1)}_t\left((tL) P_t^{(b)} f\cdot P_t^{(b)} g\right) +  Q^{(b - 1)}_t\left(P_t^{(b)} f\cdot (tL) P_t^{(b)} g\right)\\
&\quad\quad\quad -2  tQ^{(b- 1)}_t\Gamma\left(P_t^{(b)} f,P_t^{(b)} g \right) \\
&:=B_g^{(b)}(f) + B_f^{(b)}(g) +R^{(b)}(f,g).
\end{align*}
Thus, we can decompose the product by
\begin{align*}
fg = \int_0^1\left\{(1) +B_g^{(b)}(f) \right\}+\left\{(2)+ B_f^{(b)}(g)\right\} + R^{(b)}(f,g) \frac{\mathrm dt}{t} +  \Delta_{-1}^{(b)}(f,g).
\end{align*}
By Corollary \ref{cor:op applied to energy measure point wise} again, we have the point-wise identity
\begin{align*}
(1) = P^{(b)}_t\left(Q_t^{(b)}f \cdot P^{(b)}_t g\right) &= P_t^{(b)}\left( tL Q^{(b - 1)}_t f \cdot P^{(b)}_t g\right)\\
&=\left\{2P^{(b)}_t t \Gamma\left(Q_t^{(b-1)}f , P_t^{(b)}g \right) -P^{(b)}_t\left(Q_t^{(b-1)}f \cdot tL \left( P_t^{(b)}g\right)\right)\right\}\\
&\quad \quad\quad +tL P^{(b)}_t \left( Q_t^{(b- 1)}f \cdot P^{(b)}_t g \right)\\
&:= S^{(b)}(f,g) + A_g^{(b)}(f)
\end{align*}
where $S^{(b)}(f,g)$ is the difference in brackets. Similarly,
\begin{align*}
(2) = A_f^{(b)}(g) + S^{(b)}(g,f).
\end{align*}

\begin{definition}\label{def:para_product}
Given an integer $b \ge 2$ and $f,g \in \mathcal C_M$, we define their \textbf{para-product}, parametrized by $b \in \N$, by the formula
\begin{align*}
  \Pi_g^{(b)}(f) &= \frac{1}{\gamma_b} \int_0^t \left\{ A_g^{(b)}(f)+B_g^{(b)}(f)\right\} \frac{\mathrm dt}{t}\\
  &=\frac{1}{\gamma_b} \int_0^1 tL P_t^{(b)}\left(Q_t^{(b-1)}f \cdot P_t^{(b)}g \right)+ Q^{(b - 1)}_t \left( tL P_t^{(b)}f \cdot P_t^{(b)} g\right)\frac{\mathrm dt}{t}.
\end{align*}
\end{definition}
We will prove the well-definition of the para-product by bounding its operator norm later. With this notation, Calderon's reproducing formula for $f,g \in \mathcal C_M$ becomes the point-wise identity
\begin{equation}\label{Calderon Paraproduct Formula}
fg = \Pi_g^{(b)}(f) + \Pi_f^{(b)}(g) +\Pi^{(b)}(f,g) + \Delta_{-1}^{(b)}(f,g),
\end{equation}
with the ``low-frequency tail"
\begin{align*}
\Delta_{-1}^{(b)}(f,g):= P_1^{(b)}\left(P^{(1)}_1f \cdot P_1^{(b)} g \right)
\end{align*}
and the ``resonant term"
\begin{equation}\label{eq:resonant term}
    \begin{aligned}
    \Pi^{(b)}(f,g)&:= \frac{1}{\gamma_b}\int_0^1 \left\{S(f,g)+S(g,f)+R(f,g) \right\} \frac{\mathrm dt}{t}\\
    &=\frac{1}{\gamma_b}\int_0^1\left\{2P^{(b)}_t t \Gamma\left(Q_t^{(b-1)}f , P_t^{(b)}g \right) -P^{(b)}_t\left(Q_t^{(b-1)}f \cdot tL \left( P_t^{(b)}g\right)\right) \right\} \frac{\mathrm dt}{t}\\
    &\quad + \frac{1}{\gamma_b}\int_0^1\left\{2P^{(b)}_t t \Gamma\left(Q_t^{(b-1)}g , P_t^{(b)}f \right) -P^{(b)}_t\left(Q_t^{(b-1)}g \cdot(tL) \left( P_t^{(b)}f\right)\right) \right\} \frac{\mathrm dt}{t}\\
    &\quad - \frac{2}{\gamma_b}\int_0^1 Q^{(b - 1)}_t t \Gamma\left( P^{(b)}_t f,P^{(b)}_t g\right)\frac{\mathrm dt}{t}.
    \end{aligned}
\end{equation}

\subsubsection{Energy measure and Besov norms}\label{sec: singular energy and implications}
In this subsection, we state a relation between energy measures and Besov norms, which will be essential for controlling the resonant term. 

\noindent We will need the following version of the Cacciopoli inequality, whose proof can be found in the Appendix. 

\begin{lemma}\label{lem:cacciopoli}
There exists $C_s>0$ so that for all $r>0$ and ball $B$ of radius $r$ and $f \in \mathcal D_2(L)$, 
\begin{align*}
    \Gamma(f,f)\pr{B} \le r^{d_w} \norm{Lf}_{L^2_{ B}}^2+ \frac{C_s}{r^{d_w}} \mathrm{osc}^2_{B}(f),
\end{align*}
where  
\begin{align*}
    \mathrm{osc}^2_{B}\pr{f}:= \norm{\frac{1}{\mu(B)} \int_B f(\cdot) - f(y)\mu(\mathrm dy) }_{L^2_B}^2.
\end{align*}
\end{lemma}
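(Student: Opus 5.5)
The plan is the standard energy-test-function argument for strongly local forms, with the cutoff controlled by the cutoff Sobolev inequality, which is available under \eqref{ineq:sgu}. Write $B=B(x_0,r)$ and $u:=f-c$, where $c:=\mu(B)^{-1}\int_B f\,\mathrm d\mu$. Then $Lu=Lf$, $\Gamma(u,u)=\Gamma(f,f)$ (strong locality kills constants), and $\norm{u}_{L^2_B}^2=\mathrm{osc}^2_B(f)$. So it suffices to prove
\[
\Gamma(u,u)(B)\le r^{d_w}\norm{Lu}_{L^2_B}^2+C_s r^{-d_w}\norm{u}_{L^2_B}^2 .
\]

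For a cutoff $\varphi\in\mathcal F\cap\mathcal C_M$ with $0\le\varphi\le 1$ and $\mathrm{supp}\,\varphi\subset \overline B$, I would apply \eqref{eq:energy measure def} with test function $\varphi^2$ and then the Leibniz and chain rules. This gives the identity
\[
\int\varphi^2\,\mathrm d\Gamma(u,u)=\int_B\varphi^2 u\,Lu\,\mathrm d\mu-2\int\varphi u\,\mathrm d\Gamma(\varphi,u).
\]
The first term is bounded by $\tfrac12 r^{d_w}\norm{Lu}^2_{L^2_B}+\tfrac12 r^{-d_w}\norm{u}^2_{L^2_B}$ (Young). For the second term I use \eqref{ineq:cs for energy measure}, which bounds it by $\tfrac12\int\varphi^2\mathrm d\Gamma(u,u)+2\int u^2\,\mathrm d\Gamma(\varphi,\varphi)$; the first part is absorbed into the left-hand side. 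The remaining term $\int u^2\,\mathrm d\Gamma(\varphi,\varphi)$ is where the sub-Gaussian geometry enters. Since $\Gamma(\varphi,\varphi)$ is singular with respect to $\mu$, no pointwise gradient bound is available. Instead, I invoke the cutoff Sobolev inequality $\mathrm{CS}(d_w)$, which is equivalent to \eqref{ineq:sgu} together with \eqref{ineq:a_reg} by Andres--Barlow and Grigor'yan--Hu--Lau. It provides, for $B(x_0,s)\subset B(x_0,s+\delta)$, a cutoff $\varphi$ equal to $1$ on $B(x_0,s)$ and supported in $B(x_0,s+\delta)$ with
\[
\int u^2\,\mathrm d\Gamma(\varphi,\varphi)\le \tfrac18\int \varphi^2\,\mathrm d\Gamma(u,u)+C\delta^{-d_w}\int_{B(x_0,s+\delta)}u^2\,\mathrm d\mu .
\]
After one more absorption, this yields the key bound
\[
\Gamma(u,u)(B(x_0,s))\lesssim r^{d_w}\norm{Lu}^2_{L^2_B}+\bigl(r^{-d_w}+\delta^{-d_w}\bigr)\norm{u}^2_{L^2_B}.
\]

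The main obstacle is removing the loss near $\partial B$, that is, passing from $s=r-\delta$ to the whole ball with $\delta$ comparable to $r$, since $\delta^{-d_w}$ blows up as $\delta\to0$. I would try first a Whitney-type covering of $B$ by balls $B_i$ whose doubles $2B_i$ lie in $B$, with radii $\rho_i$ comparable to $\mathrm{dist}(B_i,\partial B)$. On each $B_i$ the key bound holds with $\rho_i$ in place of $r$. Summing with bounded overlap gives $r^{d_w}\norm{Lu}^2$ for the first term, but a weighted term $\sum_i\rho_i^{-d_w}\int_{2B_i}u^2$ for the second. To control this weighted term by $r^{-d_w}\norm{u}^2_{L^2_B}$, I would subtract the local mean on each $2B_i$ and use a Poincar\'e inequality (also a consequence of \eqref{ineq:sgu}), hoping the resulting energy terms are absorbed via an iteration in the spirit of the Widman hole-filling trick. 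If this boundary absorption fails, the fallback is to read $\Gamma(f,f)(B)$ with the constant $C_s$ allowed to absorb a fixed enlargement. By \eqref{ineq:a_reg} and compactness this costs only constants in the applications of Section~\ref{sec:pp_estimates}, where the lemma is always summed over a bounded-overlap covering of $M$ at scale $t^{1/d_w}$.
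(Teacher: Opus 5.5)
Your core computation is the paper's: test against $\varphi^2u$, split with Young and the Cauchy--Schwarz inequality \eqref{ineq:cs for energy measure}, absorb, and bound $\int u^2\,\mathrm d\Gamma(\varphi,\varphi)$ by $\mathrm{CS}(d_w)$. The only difference is where the cutoff sits. The paper takes the $\mathrm{CS}(d_w)$ cutoff for $B\subset 2B$ (that is, $R=r$) and subtracts $f_{2B}$. What its argument actually delivers is therefore
\begin{align*}
  \Gamma(f,f)(B)\lesssim r^{d_w}\norm{Lf}_{L^2_{2B}}^2+r^{-d_w}\norm{f-f_{2B}}_{L^2_{2B}}^2 .
\end{align*}
This is your key bound applied to the ball $2B$ with $s=\delta=r$, i.e.\ exactly your fallback. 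The paper's proof ends with this enlarged bound even though the lemma is stated with $B$ throughout. The enlarged version suffices for Theorem~\ref{thm:gamma_infty_estimates}: replacing $B_j^{t,x}$ by $2B_j^{t,x}=B_{j-1}^{t,x}$ on the right-hand side multiplies each dyadic term only by a fixed constant, and the series still converge.

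The genuine problem is your main plan of removing the boundary loss by a Whitney covering and hole filling. It cannot work, because the inequality with $B$ on both sides is false. Take $M=\T^2$ and $L=-\Delta$ (so $d_h=d_w=2$, $\Theta=1$), a small ball $B=B(x_0,r)$, and $f\in\mathcal C^\infty(\T^2)$ equal to $\rho^k\cos(k\theta)$ in polar coordinates around $x_0$ on $B(x_0,2r)$. Then $f$ is harmonic on $B$, so $\norm{Lf}_{L^2_B}=0$ and $f_B=0$, while
\begin{align*}
  \Gamma(f,f)(B)=\int_B\abs{\nabla f}^2\,\mathrm d\mu=\pi k\,r^{2k},
  \qquad
  r^{-2}\,\mathrm{osc}^2_B(f)=r^{-2}\int_B f^2\,\mathrm d\mu=\frac{\pi\, r^{2k}}{2k+2}.
\end{align*}
The stated inequality would thus force $C_s\ge 2k(k+1)$ for every $k$. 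The energy of such $f$ concentrates near $\partial B$ and is invisible to $\norm{f-f_B}_{L^2_B}$. On $2B$ the $L^2$ mass gains a factor $2^{2k+2}$, which is why the enlarged version is true.

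For the same reason, the enlargement cannot be ``absorbed into $C_s$'' as your last sentence suggests; it has to appear in the domains of integration on the right-hand side. So drop the Whitney step, state the lemma with $2B$ (or any $\lambda B$, $\lambda>1$) on the right, and prove it as in your fallback.
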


\begin{thm}\label{thm:gamma_infty_estimates}
    Assume $d_h +2\Theta- d_w>0$. Suppose $\alpha < \Theta$ and $U,V \in \{Q^{(k)} : k \in \Z_+\}$. 
    Then it holds uniformly in $f \in \Binf{\alpha}$ that 
    \begin{align*}
        t\,\norm{V_t \Gamma(U_t f, U_t f)}_{L^\infty_M} 
        \lesssim t^{\frac{2\alpha}{d_w}} \norm{f}_{\Binf{\alpha}}^2.
    \end{align*}
\end{thm}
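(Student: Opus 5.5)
The plan is to test the measure $\Gamma(U_tf,U_tf)$ against the kernel of $V_t$ and to control its mass on balls of the natural radius $r:=t^{1/d_w}$ by the Caccioppoli inequality (Lemma~\ref{lem:cacciopoli}). This avoids any pointwise carr\'e du champ. Write $g:=U_tf$ and let $v_t$ denote the kernel of $V_t$ ($v_t=p_t$ if $V=Q^{(0)}$, $v_t=q_{k,t}$ otherwise). Then
\begin{align*}
  \abs{V_t\Gamma(g,g)(x)} \le \int_M \abs{v_t(x,y)}\,\mathrm d\Gamma(g,g)(y)
  \lesssim \sum_{j\ge 0} t^{-\frac{d_h}{d_w}}e^{-c2^{j\frac{d_w}{d_w-1}}}\,\Gamma(g,g)\bigl(A_j(x)\bigr),
\end{align*}
where the bound on $v_t$ comes from \eqref{ineq:sgu} or \eqref{ineq: q sub gaussian bound}, and $A_0=B(x,r)$, $A_j=B(x,2^jr)\setminus B(x,2^{j-1}r)$. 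By \eqref{ineq:a_reg}, each $A_j$ is covered by $\lesssim 2^{jd_h}$ balls $B$ of radius $r$. The exponential weight absorbs this polynomial count. It therefore suffices to prove, uniformly in balls $B$ of radius $r$ and in $t\in(0,1]$,
\begin{align*}
  t\cdot t^{-\frac{d_h}{d_w}}\,\Gamma(g,g)(B)\lesssim t^{\frac{2\alpha}{d_w}}\norm{f}_{\Binf{\alpha}}^2 .
\end{align*}
For $t\ge1$ I would argue separately: $\Gamma(g,g)(M)=\mathcal E(g,g)=\langle Lg,g\rangle$, and $g=U_tf$ is smoothed at unit scale, so the crude global bound together with the exponential decay $e^{-ct}$ in Proposition~\ref{prop:kernel_bound_dt} suffices. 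If needed, one first reduces to $t\le1$ via Proposition~\ref{prop: norm equivalent time horizon}.

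The first Caccioppoli term is $r^{d_w}\norm{Lg}_{L^2_B}^2\le t\,\mu(B)\,\norm{LU_tf}_{L^\infty_M}^2$. Since $LU_t=t^{-1}Q^{(k+1)}_t$, the definition of the Besov norm (Proposition~\ref{prop:independent_b}) gives $\norm{LU_tf}_{L^\infty_M}\lesssim t^{-1}t^{\alpha/d_w}\norm{f}_{\Binf{\alpha}}$. Multiplying by $t\cdot t^{-d_h/d_w}$ and using $\mu(B)\asymp r^{d_h}=t^{d_h/d_w}$, this term contributes $t^{2\alpha/d_w}\norm f^2$, as required.

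The second term, $C_s r^{-d_w}\mathrm{osc}_B^2(g)\le C_s t^{-1}\mu(B)\sup_{y,y'\in B}\abs{g(y)-g(y')}^2$, needs an oscillation bound of order $t^{\alpha/d_w}$ at scale $r$. When $U=Q^{(k)}$ with $k\ge1$, or when $\alpha<0$, I would split $U_t=c\,U'_{t/2}U''_{t/2}$ and apply \eqref{ineq:holder_kernel} or \eqref{ineq: q holder continuous} to the outer kernel. This gives
\begin{align*}
  \abs{g(y)-g(y')}\lesssim \Bigl(\frac{d(y,y')}{r}\Bigr)^{\Theta}\,\norm{U''_{t/2}f}_{L^\infty_M}\lesssim t^{\frac{\alpha}{d_w}}\norm f_{\Binf{\alpha}},
\end{align*}
where the last step uses the Besov definition for $k\ge1$ and Lemma~\ref{lem:q_regulariz} for $P_{t/2}$ when $\alpha<0$. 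When $U=P$ and $\alpha\in(0,\Theta)$, I would instead use Proposition~\ref{prop:holder=besov} together with the boundedness of $P_t$ on $\Binf{\alpha}$ to get $P_tf\in\mathcal C^\alpha_M$, hence $\mathrm{osc}\lesssim r^{\alpha}\norm f$. In each case the prefactors $t\cdot t^{-d_h/d_w}\cdot t^{-1}\cdot t^{d_h/d_w}$ cancel, leaving $t^{2\alpha/d_w}\norm f^2$.

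I expect the main obstacle to be the endpoint bookkeeping. One issue is the case $U=P$ with $\alpha=0$, or with $\alpha$ near $\Theta$, where logarithmic losses can appear. Another is locating exactly where the hypothesis $d_h+2\Theta-d_w>0$ is needed. My guess is that it enters when $\mathrm{osc}_B$ is controlled only through the $\Theta$-H\"older modulus of the kernel: the contributions of sub-balls at scale $\rho<r$ then produce sums of the form $\sum_i (\rho/r)^{d_h+2\Theta-d_w}$, which converge only under this condition. I would first try to avoid sub-scales altogether, working only at scale $r$ as above, and invoke the condition only if the Caccioppoli constant forces a finer Whitney-type covering.
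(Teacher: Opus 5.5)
Your argument is correct apart from one endpoint, discussed below, and it takes a genuinely different route from the paper.

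\textbf{How the two routes differ.} The paper splits $M$ into the shells of the nested balls $B(x,2^{-j}t^{1/d_w})$, $j\in\Z$. It bounds the $\Gamma$-mass of each shell by the mass of the full ball, and applies Lemma~\ref{lem:cacciopoli} at every radius $2^{-j}t^{1/d_w}$. For $j\to+\infty$ the oscillation term then carries the factor $2^{-j(d_h+2\Theta-d_w)}$. Summing these factors is exactly where $d_h+2\Theta-d_w>0$ is used, so your guess about the source of the hypothesis is right.

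Your observation removes that small-scale sum altogether. The kernel of $V_t$ is already bounded by $t^{-d_h/d_w}$ on the whole ball $B(x,t^{1/d_w})$. So Caccioppoli is needed only at the single scale $r=t^{1/d_w}$, with the outer annuli covered by $\lesssim 2^{jd_h}$ balls of radius $r$ and the count absorbed by the sub-Gaussian factor. No Whitney-type refinement is needed, since $C_s$ does not depend on the scale, and the hypothesis drops out. It is in any case automatic under Assumption~\ref{A:main}, since $d_h\ge d_w$. Your separate treatment of $t\ge 1$ is optional: the paper proves and uses the bound only for $t\in(0,1]$.

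\textbf{The oscillation bound and the gap at $\alpha=0$.} Where the paper's proof does better is the oscillation bound, which it obtains for all $\alpha<\Theta$ at once. Write $P_tf=P_2f+\int_t^2Q^{(1)}_sf\,\frac{\mathrm ds}{s}$ and apply \eqref{ineq:holder_kernel} at scale $s$. This gives
\[
\mathrm{osc}^\infty_{B(x,r)}(P_tf)\lesssim t^{\frac{\Theta}{d_w}}\norm{P_1f}_{L^\infty_M}+\int_t^2\pr{\frac{t}{s}}^{\frac{\Theta}{d_w}}s^{\frac{\alpha}{d_w}}\,\frac{\mathrm ds}{s}\,\norm{f}_{\Binf{\alpha}}\lesssim t^{\frac{\alpha}{d_w}}\norm{f}_{\Binf{\alpha}},
\]
where the last step uses only $\alpha<\Theta$. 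There is no logarithmic loss, and for $\alpha$ near $\Theta$ only the constant degenerates.

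This closes the case you leave open, $U=P$ with $\alpha=0$. There Proposition~\ref{prop:holder=besov} is unavailable, since it needs $\alpha\in(0,\Theta)$, and the sup-norm bound from Lemma~\ref{lem:q_regulariz} loses a factor $\log(1/t)$. As written, your case analysis does not prove the statement at this endpoint, which the statement includes. Plugging the multiscale estimate above into your scale-$r$ Caccioppoli argument handles every $\alpha<\Theta$ uniformly.

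For $U=Q^{(k)}$ with $k\ge 1$, or for $\alpha<0$, you do not need the H\"older modulus of the kernel at all. At scale $r$ no smallness factor is required, and $\mathrm{osc}^\infty_B(g)\le 2\norm{g}_{L^\infty_M}\lesssim t^{\alpha/d_w}\norm{f}_{\Binf{\alpha}}$ already suffices.
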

\begin{comment}
{\issue (what are you trying to say with this remark? I don't understand. If you are comparing to the method that was discarded, the reader will not know...)\begin{remark}
        A further benefit of Theorem \ref{thm:gamma_infty_estimates} is 
    that it provides a single estimate valid for all H\"older regularity 
    exponents $\Theta \in (0,1]$ of the heat kernel. The argument of 
    \cite{bailleul2016heat} is tailored to the case $\Theta = 1$, and 
    auxiliary estimates are invoked to handle $\Theta < 1$. By 
    contrast, Theorem \ref{thm:gamma_infty_estimates} treats the full 
    range $\Theta \in (0,1]$, which is essential in the 
    sub-Gaussian setting where $\Theta < 1$ is the generic situation.
\end{remark}}
\end{comment}
\begin{proof}
    For simplicity, we will assume $U_t =V_t = P_t$ for $t >0$. The proof for the other cases are verbatim. For $(t,x) \in (0,\infty) \times M$ and $j \in \N$, denote $B^{t,x}_j:= B(x,2^{-j}t^\frac{1}{d_w})$. 

   \noindent Recall $\Phi$ in Assumption \ref{A:main}, by Lemma \ref{lem:cacciopoli}, we have
    \begin{align*}
        tP_t\Gamma\pr{P_tf,P_tf}(x) &= t \int_M p_t(x,y)\Gamma\pr{P_tf,P_tf}(\mathrm dy)\\
        &= t\sum_{j = -\infty}^\infty \int_{B^{t,x}_j\backslash B^{t,x}_{j+1}}p_t(x,y)\Gamma\pr{P_tf,P_tf}(\mathrm dy)\\
        &\overset{\eqref{ineq:sgu} }{\lesssim}  t\sum_{j = -\infty}^\infty t^{-\frac{d_h}{d_w}}\Phi\pr{2^{-j}}\Gamma\pr{P_tf,P_tf}\pr{B^{t,x}_j}\\
        &\lesssim t\sum_{j = -\infty}^\infty t^{-\frac{d_h}{d_w}} \pr{2^{-j} t^\frac{1}{d_w}}^{d_w}\Phi\pr{2^{-j}} \norm{LP_tf}_{L^2_{B_j^{t,x}}}^2\\
        &\qquad \qquad+ t\sum_{j =-\infty}^\infty t^{-\frac{d_h}{d_w}} \pr{2^{-j}t^\frac{1}{d_w}}^{-d_w} \Phi\pr{2^{-j}}\mathrm{osc}_{B^{t,x}_j}^2\pr{P_t f}\\
        &=: \mathrm{\Rnum{1}} + \mathrm{\Rnum{2}}.
    \end{align*}
    For \Rnum{1}, it holds uniformly in $x \in M$ and $t \in (0,1]$ that 
    \begin{align*}
        \mathrm{\Rnum{1}}&=t^{-\frac{d_h}{d_w}} \sum_{j = -\infty}^\infty 2^{-jd_w}\Phi\pr{2^{-j}}\norm{Q^{(1)}_tf}_{L^2_{B^{t,x}_j}}^2\\
        &\lesssim t^{-\frac{d_h}{d_w}}\sum_{j = -\infty}^\infty 2^{-jd_w} \Phi\pr{2^{-j}}\mu\pr{B^{t,x}_j} \norm{Q^{(1)}_tf}_{L^\infty_M}^2\\
        &\overset{\eqref{ineq:a_reg}    }{ \lesssim} t^{-\frac{d_h}{d_w}}\sum_{j = -\infty}^\infty \Phi\pr{2^{-j}}2^{-jd_w} 2^{-jd_h}t^{\frac{d_h}{d_w}} t^\frac{\alpha}{d_w} \norm{f}_{\dot{\mathcal B}^\alpha_\infty}^2\\
        &\lesssim t^{\frac{2\alpha}{d_w}} \norm{f}_{\dot{\mathcal B}^\alpha_\infty}^2.
    \end{align*}
    For $g \in \mathcal C_M$ and an open set $B \subset M$, define
    \begin{align*}
        \mathrm{osc}^\infty_B(g):= \sup_{x,y \in B} \abs{f(x) - f(y)}.
    \end{align*}
    Then by H\"older,
    \begin{align*}
        \mathrm{osc}_{B}^2(P_tf) \le \mu(B) \mathrm{osc}_B^\infty (P_tf)^2. 
    \end{align*}
    By Calderon's reproducing formula, we thus have for $j \ge 0$
    \begin{align*}
        \mathrm{osc}_{B_j^{t,x}}^2\pr{P_tf}&\overset{\eqref{ineq:a_reg}  }{\lesssim}2^{-jd_h}t^{\frac{d_h}{d_w}}\mathrm{osc}_{B_j^{t,x}}^\infty\pr{P_tf}^2\\
        &\le 2^{-jd_h  }t^\frac{d_h}{d_w}\pr{\mathrm{osc}_{B^{t,x}_j}^\infty\pr{P_2f}+\int_t^2 \mathrm{osc}_{B^{t,x}_j}^\infty\pr{Q^{(1)}_sf} \frac{\mathrm ds}{s}}^2\\
        &\overset{\eqref{ineq:holder_kernel}}{\lesssim}2^{-jd_h}t^\frac{d_h}{d_w}\pr{\pr{2^{-j\Theta}t^\frac{\Theta}{d_w}}\norm{P_1f}_{L^\infty_M} +\int_t^2 \pr{\frac{t}{s}}^\frac{\Theta}{d_w}2^{-j\Theta}\norm{Q^{(1)}_{s/2}f}_{L^\infty_M}\frac{\mathrm ds}{s}}^2\\
        &\lesssim 2^{-j(d_h+2\Theta)}t^{\frac{2\Theta+d_h}{d_w}}\pr{\norm{P_1f}+ \int_t^2 s^{\frac{-\Theta+\alpha}{d_w}} \frac{\mathrm ds}{s}\norm{f}_{\dot{\mathcal B}_\infty^\alpha} }^2\\
        &\lesssim 2^{-j(d_h+2\Theta)} t^{\frac{2\Theta+d_h}{d_w}} t^\frac{2\alpha - 2\Theta}{d_w} \norm{f}_{\Binf{\alpha}}^2\\
        &=2^{-j(d_h+2\Theta)} t^{\frac{d_h+2\alpha}{d_w}}\norm{f}_{\Binf{\alpha}}^2,
    \end{align*}
    where we used the fact that $\alpha< \Theta$ and $t \in (0,1]$ in the fifth line. Similarly, for $j \le 0$ we decompose the integral as 
    \begin{align*}
        \int_t^2\mathrm{osc}_{B_{j}^{t,x}}^\infty\pr{Q_s^{(1)}f} \frac{\mathrm ds}{s} &=\pr{\int_t^{2^{-j}t^{1/d_w}} + \int_{2^{-j}t^{1/d_w}}^2} \mathrm{osc}_{B_j^{t,x}}\pr{Q^{(1)}_s f} \frac{\mathrm{d}s }{s}\\
        &\lesssim \int_t^{2^{-j}t^{1/d_w}} 2^{-j\Theta} \pr{\frac{t}{s}}^\frac{\Theta}{d_w} s^\frac{\alpha}{d_w} \norm{f}_{\Binf{\alpha}} \frac{\mathrm ds}{s} + \int_{2^{-j}t^{1/d_w}}^2 2^{-j\Theta}\pr{\frac{t}{s}   }^\frac{\Theta}{d_w} s^\frac{\alpha}{d_w} \norm{f}_{\Binf{\alpha}} \frac{\mathrm  ds}{s}\\
        &= \int_t^{2} 2^{-j\Theta}\pr{\frac{t}{s}}^\frac{\Theta}{d_w}s^\frac{\alpha  }{d_w} \frac{\mathrm ds}{s} \norm{f}_{\Binf{\alpha}}\\
        &\lesssim t^\frac{\alpha}{d_w} \norm{f}_{\Binf{\alpha}}.
    \end{align*}
    Hence, for $j \le -1$, we have
    \begin{align*}
        \mathrm{osc}_{B_j^{t,x}}^2\pr{P_tf} \lesssim 2^{-j\pr{d_h+2\Theta}}t^\frac{d_h+2\alpha}{d_w}\norm{f}_{\Binf{\alpha}}^2.
    \end{align*}
    Insert this into \Rnum{2} to see 
    \begin{align*}
        \mathrm{\Rnum{2}}&\lesssim t\sum_{j = -\infty}^\infty t^{-\frac{d_h}{d_w}-1} \Phi\pr{2^{-j}}2^{j(d_h+2\Theta - d_w)}t^{\frac{d_h+2\alpha}{d_w}}\norm{f}_{\Binf{\alpha}}^2\\
        &=t^\frac{2\alpha}{d_w} \norm{f}_{\Binf{\alpha}}^2\sum_{j = -\infty}^\infty \Phi\pr{2^{-j}}2^{-j(d_h+2\Theta-d_w)}\\
        &\lesssim t^\frac{2\alpha}{d_w} \norm{f}_{\Binf{\alpha}}^2,
    \end{align*}
    where we used the assumption $d_h+2\Theta-d_w>0$ in the last line. Collect both estimates for the desired result.
\end{proof}

Note that the proof of \ref{thm:gamma_infty_estimates} requires us to control the oscillation of $Q^{(k)}f$ uniformly on small balls, which depends on $\Theta$. This is the source of the $\Theta$ hypothesis in Theorem \ref{thm:besov_product}.

\begin{comment}
\noindent The follow Proposition can be viewed as a converse to Theorem \ref{thm:gamma_infty_estimates}, whose proof can be found in the Appendix~\ref{sec:appendix}.
\begin{prop}\label{prop:sharpness}
    Let $\alpha \in (0,1]$, $k \in \Z_+$, and suppose there exists $C_\alpha>0$ and $f \in \Binf{\alpha}$ so that for all $t \in (0,1]$,
    \begin{align}\label{ineq:sharpness condition}
        \norm{Q^{(1)}_t\Gamma \pr{P_tf,P_tf}}_{L^\infty_M} +\norm{P_t\Gamma\pr{P_tf,Q_t^{(1)}f}}_{L^\infty_M} \le C_\alpha t^{\frac{2\alpha}{d_w}}\norm{f}_{\Binf{\alpha}}^2.
    \end{align}
    Then $P_tf\in \mathcal C_M^\alpha$. In particular, $\alpha\le \Theta$. 
\end{prop}
\begin{remark}
    Note that Proposition \ref{prop:sharpness} shows that under the current method of decompositions, the $\Theta$ restriction is sharp. 
\end{remark}
\end{comment}

\subsubsection{Product estimates}\label{sec:sub_pp_estimates}
The goal here is to prove Theorem \ref{thm:besov_product}. We will estimate each piece of the decomposition \eqref{Calderon Paraproduct Formula}, which when collected will imply the inequality in Theorem \ref{thm:besov_product}.

\begin{prop}\label{prop:pe_lf}
Let $\alpha,\beta \in \R$, for every positive $\gamma$ and  $b \in \N$, it holds uniformly for $f\in \Binf{\alpha},g\in\Binf{\beta}$ that
\begin{align*}
  \norm{\Delta_{-1}^{(b)}(f,g)}_{\mathcal B^\gamma_{\infty}} \lesssim \norm{f}_{\mathcal B^\alpha_{\infty}} \norm{g}_{\mathcal B^\beta_{\infty}}.
\end{align*}
\end{prop}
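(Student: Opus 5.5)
The plan is to exploit the fact that $\Delta_{-1}^{(b)}(f,g)=P_1^{(b)}\bigl(P_1^{(b)}f\cdot P_1^{(b)}g\bigr)$ (I will treat the inner operator on $f$ uniformly, whether it is $P^{(1)}_1$ or $P^{(b)}_1$) only involves heat operators at time scale $1$. These are infinitely smoothing, so every Besov norm of the output is controlled by sup norms of the inputs. I proceed in three steps.

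\textbf{Step 1: bound the inner factors in $L^\infty_M$.} Since $P^{(b)}_1=\sum_{m=0}^{b-1}\frac{1}{m!}L^me^{-L}$, I aim for
\[
\norm{P^{(b)}_1f}_{L^\infty_M}\lesssim\norm{f}_{\Binf{\alpha}} .
\]
For $\alpha<d_w$ this is exactly Proposition \ref{prop:Pa1_besov_bound}. For larger $\alpha$ I use the trivial embedding $\Binf{\alpha}\hookrightarrow\Binf{\sigma}$ for any $\sigma<\alpha$, and pick $\sigma<d_w$. The same argument gives $\norm{P^{(b)}_1g}_{L^\infty_M}\lesssim\norm{g}_{\Binf{\beta}}$. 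Writing $h:=P^{(b)}_1f\cdot P^{(b)}_1g$, I then get $\norm{h}_{L^\infty_M}\lesssim\norm{f}_{\Binf{\alpha}}\norm{g}_{\Binf{\beta}}$.

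\textbf{Step 2: show $P_1^{(b)}$ maps $L^\infty_M$ into $\Binf{\gamma}$ for every $\gamma>0$.} Each summand has the form $L^me^{-L}h$, so it suffices to treat these.
\begin{itemize}
\item For the low-frequency part, $\norm{P_1L^me^{-L}h}_{L^\infty_M}=\norm{L^me^{-2L}h}_{L^\infty_M}\lesssim\norm{h}_{L^\infty_M}$ by Proposition \ref{prop:kernel_bound_dt}.
\item For the homogeneous part, take $k>\gamma/d_w$ and $t\in(0,1]$. I write
\[
Q^{(k)}_tL^me^{-L}h=t^k\,e^{-tL}\,L^{k+m}e^{-L}h .
\]
Then $\norm{Q^{(k)}_tL^me^{-L}h}_{L^\infty_M}\le t^k\norm{L^{k+m}e^{-L}h}_{L^\infty_M}\lesssim t^k\norm{h}_{L^\infty_M}$, using that $e^{-tL}$ is an $L^\infty$-contraction and Proposition \ref{prop:kernel_bound_dt} once more.
\item Since $k\ge\gamma/d_w$ and $t\le1$, I have $t^{k-\gamma/d_w}\le1$, so $\sup_{t\in(0,1]}t^{-\gamma/d_w}\norm{Q^{(k)}_t(\cdot)}_{L^\infty_M}\lesssim\norm{h}_{L^\infty_M}$.
\end{itemize}
An alternative for this step is Lemma \ref{lem:q_regulariz} with $\sigma=0$, but the direct computation above is cleaner.

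\textbf{Step 3: make sense of the operator on general Besov elements.} The definition was given for $f,g\in\mathcal C_M$, and both bounds above are continuous in the respective norms. So $\Delta_{-1}^{(b)}$ extends by density from $\mathcal S$ to $\Binf{\alpha}\times\Binf{\beta}$ with the same estimate.

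The only mildly delicate point is Step 1 when $\alpha\ge d_w$, which the embedding to a smaller index handles. I also need to make sure Proposition \ref{prop:Pa1_besov_bound} is applied with the same $b$ as appears in the decomposition. Otherwise this is a routine consequence of the smoothing bounds already established.
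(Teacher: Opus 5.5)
Your proposal is correct and follows essentially the paper's argument: bound $P^{(b)}_1 f$ and $P^{(b)}_1 g$ in $L^\infty_M$ via Proposition \ref{prop:Pa1_besov_bound}, then write $Q^{(k)}_t P^{(b)}_1 = t^k e^{-tL}L^kP^{(b)}_1$ and use that $L^kP^{(b)}_1$ is bounded on $L^\infty_M$. Your version is slightly more careful than the paper's in three places, all of which the paper passes over: you also bound the low-frequency part $\norm{P_1\,\cdot}_{L^\infty_M}$ of the norm, you use the correct threshold $k\ge\gamma/d_w$, and you handle $\alpha\ge d_w$ or $\beta\ge d_w$ by embedding into a lower-index space.
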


\begin{proof}%[Proof of Proposition \ref{prop:pe_lf}]
Let $a \ge \gamma$ and consider for $s \in (0,1]$,
\begin{align*}
  Q^{(a)}_s\Delta_{-1}^{(b)}(f,g) = Q^{(a)}_s P_1^{(b)}\left(P_1^{(b)}f \cdot P^{(b)}_1 g \right).
\end{align*}
Observe by Lemma \ref{lem:q_regulariz} that $P_1^{(b)}f \in L^\infty_M$, and  $Q^{(a)}_s P_1^{(b)} = s^a e^{-sL} L^a P_1^{(b)}$. Hence we obtain by Proposition \ref{prop:Pa1_besov_bound} that
\begin{align*}
  \norm{Q_s^{(b)} \Delta_{-1}(f,g)}_{L^\infty_M} & = s^a\norm{e^{-sL}L^aP_1^{(b)} \left(P^{(b)}_1f \cdot P^{(b)}_1 g \right)}_{L^\infty_M} \\
  &\lesssim s^a \norm{L^a P_1^{(b)}}_{L^\infty_M \to L^\infty_M} \norm{P_1^{(b)} f\cdot P_1^{(b)} g}_{L^\infty_M}\\
  &\lesssim s^a \norm{P_1^{(b)} f}_{L^\infty_M} \norm{P_1^{(b)}g}_{L^\infty_M}\\
  &\lesssim s^\gamma \norm{f}_{\mathcal B^\alpha_{\infty}}\norm{g}_{\mathcal B^\beta_{\infty}}.
\end{align*}
Where we used the fact $s \in (0,1]$ and $a \ge  \gamma$ in the last inequality.
\end{proof}

\begin{prop}\label{prop:pe_para}
Let $b \in \N$ with $b \ge 2$, suppose  $\alpha \in (-d_w, d_w)$, %and $f \in \mathcal B^\alpha_{\infty}$. 
\begin{itemize}
  \item Uniformly in $g \in L^\infty_M$ and  $f \in \mathcal B^\alpha_{\infty}$
    \begin{align*}
      \norm{\Pi^{(b)}_g(f)}_{\mathcal B^\alpha_{\infty}} \lesssim \norm{g}_{L^\infty_M} \norm{f}_{\mathcal B^\alpha_{\infty}}.
    \end{align*}
  \item Uniformly in $g \in \mathcal  B^\beta_{\infty}$ and $f \in \mathcal B^\alpha_{\infty}$ with $\beta <0$ and $\alpha + \beta \in (-d_w ,d_w)$,
    \begin{align*}
      \norm{\Pi^{(b)}_g(f)}_{\mathcal B^{\alpha+\beta}_{\infty}} \lesssim \norm{g}_{\mathcal B^\beta_{\infty}} \norm{f}_{\mathcal B^\alpha_{\infty}}.
    \end{align*}
\end{itemize}
\end{prop}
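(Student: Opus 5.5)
We estimate $\sup_{s\in(0,1]} s^{-\alpha/d_w}\norm{Q^{(a)}_s \Pi^{(b)}_g(f)}_{L^\infty_M}$ for a fixed large $a\in\N$ (legitimate by Proposition~\ref{prop:independent_b}). The low-frequency part $\norm{P_1\Pi^{(b)}_g(f)}_{L^\infty_M}$ is handled the same way, with $Q^{(a)}_s$ replaced by $P_1$. By Definition~\ref{def:para_product} the paraproduct is $\gamma_b^{-1}\int_0^1 (A_t+B_t)\,\frac{\mathrm dt}{t}$, where
\begin{align*}
A_t &:= tLP^{(b)}_t\bigl(Q^{(b-1)}_t f\cdot P^{(b)}_t g\bigr), &
B_t &:= Q^{(b-1)}_t\bigl(tLP^{(b)}_tf\cdot P^{(b)}_tg\bigr).
\end{align*}
The first step is the pointwise size of the inner products, uniformly in $t\in(0,1]$. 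Since $b\ge2$, both $Q^{(b-1)}_t$ and $tLP^{(b)}_t=\sum_{m<b}\frac{t}{m!}L^{m+1}e^{-tL}$ are finite combinations of operators $t^{-j}Q^{(j')}_t$ with $j'\ge1$. Hence $\norm{Q^{(b-1)}_tf}_{L^\infty_M}+\norm{tLP^{(b)}_tf}_{L^\infty_M}\lesssim t^{\alpha/d_w}\norm{f}_{\Binf{\alpha}}$. If $b-1>\alpha/d_w$ fails, we first switch the index using Proposition~\ref{prop:independent_b}, or rewrite via the semigroup property. For the other factor, $\norm{P^{(b)}_t g}_{L^\infty_M}\lesssim\norm{g}_{L^\infty_M}$ in the first bullet. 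In the second bullet ($\beta<0$), Lemma~\ref{lem:q_regulariz} applied to each $L^mP_t$ gives $\norm{P^{(b)}_tg}_{L^\infty_M}\lesssim t^{\beta/d_w}\norm{g}_{\Binf{\beta}}$. The terms with $m\ge1$ are controlled by $t^{-m}\norm{Q^{(m)}_tg}\lesssim t^{\beta/d_w-m}$, which is worse. To avoid this, I would use $b$ only through $P^{(b)}_t$ with the $L^m$ weighted as in \eqref{eq:pq operators}, i.e.\ interpret $L^m$ as $t^{m}L^m$ up to harmless constants, as in \cite{bailleul2016heat}. So each inner product has sup-norm $\lesssim t^{\gamma/d_w}$, with $\gamma=\alpha$ or $\gamma=\alpha+\beta$ respectively.

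The second step exploits the outer operators, which both carry at least one factor of cancellation (a $Q$-type operator at scale $t$). For $B_t$ this is exact: $Q^{(a)}_sQ^{(b-1)}_t$ is bounded by \eqref{ineq:almost orthogonal} with gain $\min\{(t/s)^{b-1},(s/t)^{a}\}$. For $A_t$, write $tLP^{(b)}_t$ as a sum of $Q^{(m+1)}_t$ and again apply \eqref{ineq:almost orthogonal}, gaining $\min\{(t/s)^{1},(s/t)^a\}$. The $L^\infty\to L^\infty$ boundedness of $Q^{(j)}_{t\vee s}$ comes from Proposition~\ref{prop:kernel_bound_dt}. This yields
\begin{align*}
s^{-\gamma/d_w}\norm{Q^{(a)}_s\Pi^{(b)}_g(f)}_{L^\infty_M}\lesssim \int_0^1 \min\Bigl\{\bigl(\tfrac{t}{s}\bigr)^{1},\bigl(\tfrac{s}{t}\bigr)^{a}\Bigr\}\Bigl(\tfrac{t}{s}\Bigr)^{\gamma/d_w}\frac{\mathrm dt}{t}\times(\text{norms}).
\end{align*}
The integral converges uniformly in $s$ precisely when $-d_w<\gamma<a\,d_w$. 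The small-$t$ side needs $1+\gamma/d_w>0$, which is the hypothesis $\gamma>-d_w$. The large-$t$ side needs $a>\gamma/d_w$, which we arrange by choosing $a$ large.

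The main obstacle is the regime $\gamma$ close to $d_w$. Here the gain on the $t\le s$ side is only the single power $t/s$ coming from $tL$ in $A_t$, and for $B_t$ it is the power $b-1\ge1$. This explains the restriction $\gamma<d_w$ in the statement, because the convergence $\int_0^s (t/s)^{1-\gamma/d_w}\,\mathrm dt/t<\infty$ fails otherwise. I would handle it by checking carefully that the $t\le s$ side uses the cancellation of the outer $tL$ (respectively $Q^{(b-1)}_t$). In the $t\ge s$ side, the outer $Q^{(a)}_s$ supplies the $(s/t)^a$ decay. The $P_1$ low-frequency part is controlled by $\norm{P_1Q^{(j)}_t}_{L^\infty\to L^\infty}\lesssim t^{j}$, which yields a convergent integral since $\gamma>-d_w$. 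Collecting both bullets gives the proposition.
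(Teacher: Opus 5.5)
Your proposal is correct and follows essentially the paper's proof: apply a $Q$-operator at scale $s$ to $\Pi^{(b)}_g(f)$, split the $t$-integral at $t=s$, use \eqref{ineq:almost orthogonal} to gain $t/s$ from the outer $tL$ (resp.\ $Q^{(b-1)}_t$) when $t\le s$ and a power of $s/t$ from the outer operator when $t\ge s$, and feed in the inner bounds $t^{\alpha/d_w}$ and $t^{\beta/d_w}$. Your use of $Q^{(a)}_s$ with large $a$ instead of $Q^{(b-1)}_s$, your reading of $P^{(b)}_t$ with $(tL)^m$ weights, and your treatment of the $P_1$-part (which the paper omits) are all fine; the one thing to correct is your closing explanation of $\gamma<d_w$: the small-$t$ integrand is $(t/s)^{1+\gamma/d_w}$, which only needs $\gamma>-d_w$, whereas the upper restriction is really used in your Step 1, since $tLP^{(b)}_tf$ contains $Q^{(1)}_tf$, which is of exact order $t$ for smooth nonconstant $f$, so the bound $\norm{tLP^{(b)}_tf}_{L^\infty_M}\lesssim t^{\alpha/d_w}\norm{f}_{\Binf{\alpha}}$ genuinely requires $\alpha<d_w$ and cannot be rescued by switching indices or by the semigroup property.
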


\begin{proof}
Recall that
\begin{align*}
  \Pi_g^{(b)}(f)=\frac{1}{\gamma_b} \int_0^1 tLP_t^{(b)}\left(Q_t^{(b-1)}f \cdot P_t^{(b)}g \right)+ Q^{(b - 1)}_t \left( tL P_t^{(b)}f \cdot P_t^{(b)} g\right)\frac{\mathrm dt}{t}.
\end{align*}
Consider $Q^{(b - 1)}_s \Pi_g^{(b)}(f)$, and split the above integral into two intervals $(0,s)$ and $[s,1]$. For $s \le t$, we use
\begin{align*}
  Q^{(b-1)}_s (tL) P_t^{(b)} = \left(\frac{s}{t} \right)^{b - 1}(tL)^bP_t^{(b)}e^{-sL};\quad Q^{(b - 1)}_s Q^{(b - 1)}_t = \left( \frac{s}{t}\right)^{b - 1}Q^{2(b - 1)}_t e^{-sL}.
\end{align*}
For $t \le s$, we use
\begin{align*}
  Q^{(b - 1)}_s (tL) P_t^{(b)} = \left(\frac{t}{s} \right) Q^{(b)}_s P_t^{(b)}; \quad Q_s^{(b-1)}Q_t^{(b-1)} = \left(\frac{t}{s} \right) Q^{(b)}_s Q^{(b - 2)}_t.
\end{align*}
By the boundedness in $L^\infty_M \to L^\infty_M$ norm of $Q,P$ operators, we have
\begin{equation}\label{ineq: para product decomposition master}
  \begin{aligned}
    \norm{Q_s^{(b - 1)}\Pi^{(b)}_g(f)}_{L^\infty_M}\lesssim \int_0^s \left(\frac{t}{s} \right) \left(\norm{Q_t^{(b - 1)}f}_{L^\infty_M}\norm{P_t^{(b)}g}_{L^\infty_M}+\norm{tL P_t^{(b)} f}_{L^\infty_M}\norm{P_t^{(b)}g}_{L^\infty_M}\right)\frac{\mathrm dt}{t}\\
    +\int_s^1 \left(\frac{s}{t} \right)^{b - 1}\left(\norm{Q_t^{(b - 1)}f}_{L^\infty_M}\norm{P_t^{(b)}g}_{L^\infty_M}+\norm{tL P_t^{(b)} f}_{L^\infty_M}\norm{P_t^{(b)}g}_{L^\infty_M}\right)\frac{\mathrm dt}{t}.
  \end{aligned}
\end{equation}

Since $0< \alpha < d_w$ and $f \in \mathcal B^\alpha_{\infty}$, Proposition \ref{prop:independent_b} gives us
\begin{align}\label{ineq: para product decomposition f inequal }
  \norm{Q_t^{(b - 1)} f}_{L^\infty_M}+ \norm{(tL)^1 P_t^{(b)}f}_{L^\infty_M}\lesssim t^\frac{\alpha}{d_w} \norm{f}_{\mathcal B^\alpha_{\infty}}.
\end{align}
For $g\in L^\infty_{M}$,
\begin{align}\label{ineq: para product decomposition g inequal infty}
  \norm{P_t^{(b)} g}_{L^\infty_M}\lesssim \norm{g}_{L^\infty_M},\quad \text{ uniformly in }t \in (0,1].
\end{align}
If $g \in {\mathcal B}^\beta_{\infty}$ for some $\beta < 0$, then
\begin{equation}\label{ineq: para product decomposition g beta}
  \begin{aligned}
    \norm{P_t^{(b)}g}_{L^\infty_M}&\le    \int_t^1 \norm{Q^{(b)}_u g}_{L^\infty_M}\frac{\mathrm du}{u}+ \norm{P_1^{(b)}g}_{L^\infty_M}\\
    &\lesssim \left( \int_t^1 u^\frac{\beta}{d_w} \frac{\mathrm du}{u}+1\right) \norm{g}_{\mathcal B^\beta_{\infty}}\\
    &\lesssim t^\frac{\beta}{d_w}\norm{g}_{\mathcal B^\beta_{\infty}}.
  \end{aligned}
\end{equation}

Therefore, if $g \in L^\infty_M$, since $\alpha > -  d_w$, we may insert \eqref{ineq: para product decomposition f inequal } and \eqref{ineq: para product decomposition g inequal infty} into \eqref{ineq: para product decomposition master} to see
\begin{align*}
  \norm{Q^{(b- 1)}_t \Pi_g^{(b)}(f)}_{L^\infty_M} &\lesssim \left(\int_0^s \left(\frac{t}{s} \right) t^\frac{\alpha}{d_w} \frac{\mathrm dt}{t}+ \int_s^1 \left( \frac{s}{t}\right)^{b - 1} t^\frac{\alpha}{d_w} \frac{\mathrm dt}{t} \right)\norm{f}_{\mathcal B^\alpha_{\infty}} \norm{g}_{L^\infty_M}\\
  &= \pr{s^{-1}\int_0^st^{1 + \frac{\alpha}{d_w}} \frac{\mathrm dt}{t} + s^{b-1}\int_s^1 t^{\frac{\alpha}{d_w} - (b-1)}\frac{\mathrm{d}t   }{t}}\norm{f}_{\mathcal B^\alpha_{\infty}} \norm{g}_{L^\infty_M}\\
  &\lesssim \left( s^\frac{\alpha}{d_w} + s^{b - 1}+ s^{\frac{\alpha}{d_w}} \right)\norm{f}_{\mathcal B^\alpha_{\infty}} \norm{g}_{L^\infty_M}\\
  &\lesssim s^\frac{\alpha}{d_w} \norm{f}_{\mathcal B^\alpha_{\infty}}\norm{g}_{L^\infty_M},
\end{align*}
where we used the fact that $1 + \frac{\alpha}{d_w} >0$ in the third line, and the fact that $b - 1 \ge \frac{\alpha}{d_w}$ for $b \ge 2$, $1 \ge \frac{\alpha}{d_w}$ in the last line.

If $g \in {\mathcal B}^\beta_{\infty}$ with $\alpha+ \beta \in (- d_w,  d_w)$, we see by inserting \eqref{ineq: para product decomposition f inequal } and \eqref{ineq: para product decomposition g beta} into \eqref{ineq: para product decomposition master}, we see that
\begin{align*}
  \norm{Q^{(b - 1)}_s \Pi^{(b)}_g(f)}_{L^\infty_M}&\lesssim \left(\int_0^s \left(\frac{t}{s} \right) ^1 t^\frac{\alpha+\beta}{d_w} \frac{\mathrm dt}{t}+ \int_s^1 \left( \frac{s}{t}\right)^{b - 1} t^\frac{\alpha+\beta}{d_w} \frac{\mathrm dt}{t}\right)\norm{f}_{\mathcal B^\alpha_{\infty}}\norm{g}_{\mathcal B^\beta_{\infty}}\\
  &\lesssim \left( s^\frac{\alpha+\beta}{d_w} +s^{b - 1}+ s^\frac{\alpha+\beta}{d_w} \right)\norm{f}_{\mathcal B^\alpha_{\infty}}\norm{g}_{\mathcal B^\beta_{\infty}}\\
  &\lesssim s^\frac{\alpha+\beta}{d_w} \norm{f}_{\mathcal B^\alpha_{\infty}}\norm{g}_{\mathcal B^\beta_{\infty}}.
\end{align*}
This holds since $b - 1 \ge 1 > \frac{\alpha+\beta}{d_w} > - 1$. This holds uniformly for $s \in (0,1]$, which concludes the proof.
\end{proof}

\begin{prop}\label{prop:pe_re}
Suppose $b \in \N$, then for any $\alpha, \beta \in (-\infty,\Theta)$ with $\alpha+\beta>0$,   the following inequality holds uniformly  in $f \in \mathcal B^\alpha_{\infty}$ and $g \in \mathcal B^\beta_{\infty}$,
\begin{align*}
  \norm{\Pi^{(b)}(f,g)}_{\mathcal B^{\alpha+\beta}_{\infty}} \lesssim \norm{f}_{\mathcal B^\alpha_{\infty}} \norm{g}_{\mathcal B^\beta_{\infty}}.
\end{align*}
\end{prop}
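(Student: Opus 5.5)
We estimate $\norm{Q^{(a)}_s\Pi^{(b)}(f,g)}_{L^\infty_M}$ for $s\in(0,1]$ and a fixed large $a$, aiming for the bound $s^{(\alpha+\beta)/d_w}\norm{f}_{\Binf{\alpha}}\norm{g}_{\Binf{\beta}}$. The low-frequency part $\norm{P_1\Pi^{(b)}(f,g)}_{L^\infty_M}$ comes out of the same computation at $s=1$. By density and the pointwise identity \eqref{Calderon Paraproduct Formula}, we may take $f,g\in\mathcal C_M$ throughout and extend by continuity at the end.

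We split the resonant term \eqref{eq:resonant term} into the three kinds of integrand at scale $t$:
\begin{enumerate}
\item[(a)] the product terms $P^{(b)}_t\bigl(Q^{(b-1)}_tf\cdot tLP^{(b)}_tg\bigr)$ and their symmetric counterparts;
\item[(b)] the energy terms $P^{(b)}_t\, t\,\Gamma\bigl(Q^{(b-1)}_tf,P^{(b)}_tg\bigr)$ and symmetric;
\item[(c)] the term $Q^{(b-1)}_t\, t\,\Gamma\bigl(P^{(b)}_tf,P^{(b)}_tg\bigr)$.
\end{enumerate}

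The first step is a pointwise bound of size $t^{(\alpha+\beta)/d_w}$ for each integrand. For (a), both factors are of $Q$-type: $\norm{Q^{(b-1)}_tf}_{L^\infty_M}\lesssim t^{\alpha/d_w}\norm{f}_{\Binf\alpha}$, and $tLP^{(b)}_t$ is a finite combination of $Q^{(m)}_t$ with $m\ge1$, giving $t^{\beta/d_w}\norm{g}_{\Binf\beta}$. Note that $\beta$ may be negative here; positivity of $\alpha+\beta$ is only needed later.

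For (b) and (c), we use the Cauchy--Schwarz inequality \eqref{ineq:cs for energy measure} with weight equal to the (positive part of the) kernel of the outer operator. This reduces the problem to
\[
t\,\bigl|V_t\Gamma(U_tf,U_t'g)\bigr|(x)\le\bigl(t\,\tilde V_t\Gamma(U_tf,U_tf)(x)\bigr)^{1/2}\bigl(t\,\tilde V_t\Gamma(U'_tg,U'_tg)(x)\bigr)^{1/2},
\]
where $\tilde V_t$ has kernel $|v_t(x,\cdot)|$, which by Proposition \ref{prop:kernel_bound_dt} is dominated by $p_{ct}$. Each factor is then controlled by Theorem \ref{thm:gamma_infty_estimates}, applied with $\alpha<\Theta$ and $\beta<\Theta$ respectively, giving $t^{\alpha/d_w}\norm{f}$ and $t^{\beta/d_w}\norm{g}$.

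Two technical points arise here, and I expect them to be the main obstacle. First, the theorem is stated for $V_t\in\{Q^{(k)}\}$ and $U_t$ a single $Q^{(k)}$, whereas here $P^{(b)}_t$ is a finite sum and the outer kernel must be replaced by $p_{ct}$. The proof of Theorem \ref{thm:gamma_infty_estimates} only used the sub-Gaussian upper bound of the outer kernel, so it extends verbatim. Second, for negative regularity $\beta<0$ the oscillation estimate in that proof must still give $t^{\beta/d_w}$; checking the integral $\int_t^2 (t/s)^{\Theta/d_w}s^{\beta/d_w}\,ds/s$ shows it does, since $\beta<\Theta$. The theorem's standing hypothesis $d_h+2\Theta-d_w>0$ holds automatically because $d_h\ge d_w$.

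The second step is the scale integration. For $t\ge s$, write $Q^{(a)}_sP^{(b)}_t=e^{-sL}Q^{(a)}_s(\cdots)$ and keep $Q^{(a)}_s$ bounded on $L^\infty_M$, so the contribution is at most
\[
\int_s^1 t^{(\alpha+\beta)/d_w}\,\frac{dt}{t}\lesssim s^{(\alpha+\beta)/d_w}\quad\text{(times a logarithm if needed)}.
\]
This is not yet enough for the upper range, so we need decay in $s/t$. To get it, we exploit that $P^{(b)}_t$ and $Q^{(b-1)}_t$ appear outside the resonant products, transferring powers of $L$ from $Q^{(a)}_s$ onto the outer operator via almost orthogonality \eqref{ineq:almost orthogonal}. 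Since $P^{(b)}_t$ carries no $L$-factor, this only yields a factor $(s/t)^0$, so we instead bound $\norm{Q^{(a)}_s X}\le\norm{X}$ and integrate $t^{(\alpha+\beta)/d_w}$ over $t\in[s,1]$. This converges to $s^{(\alpha+\beta)/d_w}$ precisely because $\alpha+\beta>0$, and this is where that hypothesis enters.

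For $t<s$, we use $Q^{(a)}_sP^{(b)}_t$ with a factor $(t/s)^{0}$ only, so we need a different bound. Here we use $\norm{Q^{(a)}_sX}_{L^\infty_M}\lesssim\norm{X}_{L^\infty_M}$ together with $\int_0^s t^{(\alpha+\beta)/d_w}\,dt/t\lesssim s^{(\alpha+\beta)/d_w}$, which again uses $\alpha+\beta>0$. So both ranges close with the crude $L^\infty$ bound, with the constant $a$ irrelevant. Summing the pieces and adding the $P_1$ term gives the claim.
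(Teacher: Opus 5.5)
Your pointwise bounds for the three types of integrands are correct and match the paper: the Cauchy--Schwarz reduction to Theorem~\ref{thm:gamma_infty_estimates} with an outer kernel dominated by $p_{ct}$ works, and so does the observation that $d_h+2\Theta-d_w>0$ is automatic. Your treatment of the range $t<s$ is also correct. The gap is the range $t\in[s,1]$. With $\lambda:=(\alpha+\beta)/d_w>0$ one has $\int_s^1 t^{\lambda}\,\frac{\mathrm dt}{t}=\lambda^{-1}(1-s^{\lambda})$, which tends to $\lambda^{-1}$ as $s\downarrow 0$, so it is \emph{not} $\lesssim s^{\lambda}$. Positivity of $\alpha+\beta$ is what makes the lower range $\int_0^s t^\lambda\,\frac{\mathrm dt}{t}\lesssim s^\lambda$ close. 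It is also exactly what makes the upper range fail under the crude bound $\norm{Q^{(a)}_sX}_{L^\infty_M}\lesssim\norm{X}_{L^\infty_M}$. As written, your argument only gives $\sup_{s}\norm{Q^{(a)}_s\Pi^{(b)}(f,g)}_{L^\infty_M}\lesssim\norm{f}_{\Binf{\alpha}}\norm{g}_{\Binf{\beta}}$, i.e.\ a $\mathcal B^0_\infty$ bound. Your conclusion that both ranges close ``with the constant $a$ irrelevant'' is therefore false.

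The missing idea is that decay in $s/t$ for $t\ge s$ does not need an $L$-factor in the inner operator. It comes from the $L$-powers of the outer $Q^{(a)}_s$, which are absorbed by the heat semigroup contained in $P^{(b)}_t$ and $Q^{(b-1)}_t$. Explicitly, $Q^{(a)}_sP^{(b)}_t=(s/t)^a\,e^{-sL}(tL)^aP^{(b)}_t$ and $Q^{(a)}_sQ^{(b-1)}_t=(s/t)^a\,e^{-sL}(tL)^aQ^{(b-1)}_t$. This is just \eqref{ineq:almost orthogonal}: its factor $\min\{(t/s)^{k_2},(s/t)^{k_1}\}$ equals $(s/t)^{k_1}$ for $t\ge s$ even when $k_2=0$, so you used the wrong branch. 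The operators $(tL)^aP^{(b)}_t$ and $(tL)^aQ^{(b-1)}_t$ are finite combinations of $Q^{(m)}_t$ whose kernels are dominated by $p_{ct}$ (Proposition~\ref{prop:kernel_bound_dt}). Hence your step-one bound applies to them verbatim, and $e^{-sL}$ is a contraction on $L^\infty_M$. This yields $\int_s^1 (s/t)^a t^{\lambda}\,\frac{\mathrm dt}{t}\le (a-\lambda)^{-1}s^{\lambda}$, which requires $a>\lambda$. The paper does exactly this with $a=b-1$, using $b-1\ge 1\ge 2\Theta/d_w>\lambda$. With this replacement for $t\ge s$, the rest of your outline goes through.
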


\begin{proof}
Recall
\begin{align*}
  \Pi^{(b)}(f,g)
  &=\frac{1}{\gamma_b}\int_0^1\left\{2P^{(b)}_t t \Gamma\left(Q_t^{(b-1)}f , P_t^{(b)}g \right) -P^{(b)}_t\left(Q_t^{(b-1)}f \cdot tL \left( P_t^{(b)}g\right)\right) \right\} \frac{\mathrm dt}{t}\\
  &\quad + \frac{1}{\gamma_b}\int_0^1\left\{2P^{(b)}_t t \Gamma\left(Q_t^{(b-1)}g , P_t^{(b)}f \right) -P^{(b)}_t\left(Q_t^{(b-1)}g \cdot tL \left( P_t^{(b)}f\right)\right) \right\} \frac{\mathrm dt}{t}\\
  &\quad - \frac{2}{\gamma_b}\int_0^1 Q^{(b - 1)}_t t \Gamma\left( P^{(b)}_t f,P^{(b)}_t g\right)\frac{\mathrm dt}{t}.
\end{align*}
For $s \in (0,1]$ we consider $Q^{(b - 1)}_s \Pi^{(b)}(f,g)$. We again split the integrals on $[0,1]$ into $[0,s]$ and $[s,1]$ and call them $\mathrm I$ and $\mathrm{\Rnum{2}}$ respectively.

\noindent First consider $\mathrm I$, the case $t  \le s$. We see that 
\begin{align*}
  \norm{Q^{(b - 1)}_tf}_{L^\infty_M}+ \norm{tL P_t^{(b)}f}_{L^\infty_M}\lesssim t^\frac{\alpha}{d_w}\norm{f}_{\mathcal B^\alpha_{\infty}}.
\end{align*}
For terms that involve $\Gamma$, since $\max\{\alpha,\beta\}< \Theta$, we see by Theorem \ref{thm:gamma_infty_estimates}, that for $U_t,V_t,W_t \in \{Q^{(b-1)}_t, P_t^{(b)}, tLP_t^{(b)}\}$  that 
\begin{align*}
    \norm{tV_t \Gamma\left(U_t f,W_t g\right)}_{L^\infty_M} &\overset{\eqref{ineq:cs for energy measure}}{\le}\norm{\pr{t\int_M \abs{V}_t(\cdot, y) \Gamma\pr{U_tf,U_tf }(\mathrm d y)}^\frac{1}{2}}_{L^\infty_M} \norm{\pr{t\int_M \abs{V}_t(\cdot, y) \Gamma\pr{W_tg,W_tg }(\mathrm d y)}^\frac{1}{2}}_{L^\infty_M}\\
    &\lesssim t^\frac{\alpha+\beta}{d_w} \norm{f}_{\Binf{\alpha}}\norm{g}_{\Binf{\alpha}},
\end{align*}
where $\abs{V}_t(x,y)$ denotes the absolute value of the integral kernel of the operator $V_t$ with respect to $\mu$.
Indeed, the proof of Theorem \ref{thm:gamma_infty_estimates} requires only the upper bound in \eqref{ineq:sgu}.
Hence, for  $0 \le t \le s\le 1$
\begin{align*}
  \norm{Q^{(b - 1)}_s (\mathrm I)}_{L^\infty_M}&\lesssim \int_0^t t^\frac{\alpha+\beta }{d_w} \frac{\mathrm dt}{t} \norm{f}_{\mathcal B^\alpha_{\infty}}\norm{g}_{\mathcal B^\beta_{\infty}}\\
  &\lesssim s^\frac{\alpha+\beta }{d_w}\norm{f}_{\mathcal B^\alpha_{\infty}} \norm{g}_{\mathcal B^\beta_{\infty}}.
\end{align*}
Where we used the fact that $\alpha+\beta >0$.

\noindent For the second part, we observe that for $t>s$,
\begin{align*}
  Q^{(b - 1)}_s P_t^{(b)} = \left(\frac{s}{t} \right)^{b - 1} e^{-sL} (tL)^{b - 1}P_t^{(b)}; \quad Q^{(b-1)}_s Q^{(b - 1)}_t = \left( \frac{s}{t}\right)^{b - 1} Q^{2(b - 1)}_te^{-sL}.
\end{align*}
Thus we get for the second part
\begin{align*}
  \norm{Q^{(b - 1)}_s (\mathrm{\Rnum{2}})} &\lesssim \left(\int_s^1 \left( \frac{s}{t} \right)^{b - 1}t^\frac{\alpha +\beta }{d_w} \frac{\mathrm dt}{t}\right)\norm{f}_{\mathcal B^\alpha_{\infty}} \norm{g}_{\mathcal B^\beta_{\infty}}\\
  &\lesssim \left( s^{b - 1} + s^\frac{\alpha+\beta }{d_w}\right)\norm{f}_{\mathcal B^\alpha_{\infty}} \norm{g}_{\mathcal B^\beta_{\infty}}\\
  &\lesssim s^\frac{\alpha+\beta }{d_w}\norm{f}_{\mathcal B^\alpha_{\infty}} \norm{g}_{\mathcal B^\beta_{\infty}},
\end{align*}
where we used the fact that $b - 1 >1 \ge \frac{2\Theta}{d_w}> \frac{\alpha+\beta}{d_w}$ and $s \in (0,1]$.
\end{proof}

\noindent The proof of the following elementary lemma can be found in the Appendix.
\begin{lemma}\label{lem:besov_norm_compare}
Let $u \in {\mathcal S}$ and $\alpha,\beta \in \R$, the following inequalities hold,
\begin{align*}
  \norm{u}_{\mathcal B^\alpha_{\infty}} &\lesssim \norm{u}_{\mathcal B^\beta_{\infty}} ,\qquad \text{  for } \alpha \le \beta\\
  \norm{u}_{\mathcal B^\alpha_{\infty}} &\lesssim \norm{u}_{L^\infty_M},\qquad \text{ for } \alpha \le 0.\\
  \norm{u}_{L^\infty_M} &\lesssim \norm{u}_{\mathcal B^\alpha_{\infty}} ,\qquad\text{ for } \alpha >0.
\end{align*}
\end{lemma}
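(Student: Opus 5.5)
We prove the three inequalities of Lemma~\ref{lem:besov_norm_compare} separately. Throughout we fix an integer $k$ large enough for every exponent involved, using Proposition~\ref{prop:independent_b} to choose the same $k$ in all the norms.

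\textbf{Monotonicity in $\alpha$.} For $\alpha\le\beta$ the low-frequency parts of the two norms coincide, namely $\norm{P_1u}_{L^\infty_M}$. For the homogeneous part, we have $t^{-\alpha/d_w}=t^{(\beta-\alpha)/d_w}t^{-\beta/d_w}\le t^{-\beta/d_w}$ for $t\in(0,1]$. Hence
\[
\sup_{t\in(0,1]} t^{-\alpha/d_w}\norm{Q^{(k)}_t u}_{L^\infty_M}\le \sup_{t\in(0,1]}t^{-\beta/d_w}\norm{Q^{(k)}_tu}_{L^\infty_M},
\]
and the first inequality follows.

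\textbf{The bound $\norm{u}_{\mathcal B^\alpha_\infty}\lesssim\norm{u}_{L^\infty_M}$ for $\alpha\le0$.} Markovianity gives $\norm{P_1u}_{L^\infty_M}\le\norm{u}_{L^\infty_M}$. Proposition~\ref{prop:kernel_bound_dt} gives $\norm{Q^{(k)}_tu}_{L^\infty_M}\lesssim\norm{u}_{L^\infty_M}$ uniformly in $t$. Since $\alpha\le0$, we have $t^{-\alpha/d_w}\le1$ on $(0,1]$, so the supremum defining the homogeneous part is bounded by $\norm{u}_{L^\infty_M}$.

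\textbf{The bound $\norm{u}_{L^\infty_M}\lesssim\norm{u}_{\mathcal B^\alpha_\infty}$ for $\alpha>0$.} Since $u\in\mathcal S\subset\mathcal C_M$, Calderón's reproducing formula (Theorem~\ref{thm:crf}) holds in $\mathcal C_M$:
\[
u=\frac1{\gamma_k}\int_0^1 Q^{(k)}_tu\,\frac{\mathrm dt}{t}+P^{(k)}_1u .
\]
\begin{itemize}
\item \emph{Integral term.} Its sup norm is at most $\int_0^1 t^{\alpha/d_w}\frac{\mathrm dt}{t}\,\norm{u}_{\dot{\mathcal B}^\alpha_\infty}$. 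This integral converges precisely because $\alpha>0$, which is the only point where the sign of $\alpha$ enters.
\item \emph{Low-frequency term.} Proposition~\ref{prop:Pa1_besov_bound} gives $\norm{P^{(k)}_1u}_{L^\infty_M}\lesssim\norm{u}_{\mathcal B^\sigma_\infty}$ for any $\sigma<d_w$. If $\alpha<d_w$, take $\sigma=\alpha$. If $\alpha\ge d_w$, take $\sigma=1$ and reduce to the first inequality of the lemma.
\end{itemize}
The alternative route for the low-frequency term is direct: write $P^{(k)}_1u=\sum_{m<k}\frac1{m!}L^mP_{1/2}P_{1/2}u$ and use $\norm{L^mP_{1/2}}_{L^\infty_M\to L^\infty_M}<\infty$ (Proposition~\ref{prop:kernel_bound_dt}). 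This leaves $\norm{P_{1/2}u}_{L^\infty_M}$, which is not immediately the $P_1$ term of the norm. That mismatch is why we cite Proposition~\ref{prop:Pa1_besov_bound} rather than argue directly.

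The main obstacle in the whole lemma is this low-frequency comparison. Once it is handled by Proposition~\ref{prop:Pa1_besov_bound}, the remaining steps are routine.
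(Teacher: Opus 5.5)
Your proof is correct. Parts (1) and (2) match the paper's argument; part (3) takes a different route.

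\textbf{The paper's route for part (3).} It picks $\beta\in(0,\alpha\wedge\Theta)$ and chains
$\norm{u}_{L^\infty_M}\le\norm{u}_{\mathcal C^\beta_M}\lesssim\norm{u}_{\Binf{\beta}}\lesssim\norm{u}_{\Binf{\alpha}}$.
This uses the H\"older--Besov equivalence of Proposition~\ref{prop:holder=besov} together with part (1). Given earlier results it is a one-liner. However, it detours through H\"older spaces and so formally brings in the kernel exponent $\Theta$, which plays no real role in an $L^\infty$ embedding.

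\textbf{Your route.} You apply Calder\'on's formula (Theorem~\ref{thm:crf}) directly with a large $k$. You then control the low-frequency term $P^{(k)}_1u$ by Proposition~\ref{prop:Pa1_besov_bound}. This shows that $\Binf{\alpha}\hookrightarrow L^\infty_M$ rests only on the integrability of $t^{\alpha/d_w-1}$ at $0$.

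\textbf{Relation to the paper's proof.} The sup-norm half of the proof of Proposition~\ref{prop:holder=besov} is your argument with $k=1$. Since $P^{(1)}_1=P_1$ and $\gamma_1=1$, the formula reads $u=P_1u+\int_0^1Q^{(1)}_tu\,\frac{\mathrm dt}{t}$. With this choice the ``mismatch'' with $\norm{P_{1/2}u}_{L^\infty_M}$ that you worried about never arises. The cost is that $k=1$ is an admissible building block only for $\alpha<d_w$, so larger $\alpha$ must be reduced via part (1).

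\textbf{Minor simplification.} Your case split on whether $\alpha<d_w$ is unnecessary. Proposition~\ref{prop:Pa1_besov_bound} holds for every $\sigma<d_w$, so taking $\sigma=0$ and applying part (1) handles all $\alpha>0$ at once.
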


\begin{comment}
\begin{cor}\label{cor:besov_product}
    For $\alpha, \beta \in \left(-d_w, \frac{d_w}{2}\wedge 2\Theta\right)$ and $\alpha+ \beta \in (0, d_w)$, one can find a decomposition of $f\cdot g$ so that
    \begin{align*}
        \norm{f\cdot g}_{\alpha \wedge \beta} \lesssim \norm{f}_\alpha \cdot \norm{g}_\beta,\quad \text{ if  }f \in B^\alpha,\, g \in B^\beta.
    \end{align*}

\end{cor}
\end{comment}

\begin{comment}
\begin{theorem}\label{thm: para_product_in_body}
    Suppose $\alpha>\beta$ are real numbers so that $\alpha >0$, $\beta \in (-\Theta,\Theta) \backslash \{0\}$ and $\alpha+\beta >0$. Then there exists $1 \in (0,\frac{2\Theta}{d_w})$ so that the decomposition in \eqref{Calderon Paraproduct Formula} of $fg$ for $f \in \mathcal B^\alpha_{\infty}$ and $g \in \mathcal B^\beta_{\infty}$ with the following estimates holds uniformly in $f$ and $g$,
    \begin{align*}
        \norm{fg}_{\mathcal B^\beta_{\infty}} \lesssim \norm{f}_{\mathcal B^\alpha_{\infty}}\norm{g}_{\mathcal B^\beta_{\infty}}.
    \end{align*}
\end{theorem}
\end{comment}
\begin{proof}[Proof of Theorem \ref{thm:besov_product}]
Assume $\alpha>\beta$ are real numbers so that $\alpha >0$, $\beta \in\left(-\Theta, \Theta\right)$  and $\alpha+ \beta >0$. Then there exists $\sigma \in (0,\Theta)$ so that $\sigma+\beta >0$, Set $\gamma:= \min\{\alpha,\sigma\} \in (0,\Theta)$.

\noindent Consider for sufficiently large $b \in \mathbb N$ that
\begin{align*}
  \norm{f \cdot g}_{\mathcal B^\beta_{\infty}} \le \norm{\Delta_{-1}^{(b)}(f,g)}_{\mathcal B^\beta_{\infty}} + \norm{\Pi^{(b)}_f(g)}_{\mathcal B^\beta_{\infty}} + \norm{\Pi_g^{(b)}(f)}_{\mathcal B^\beta_{\infty}} + \norm{\Pi^{(b)}(f,g)}_{\mathcal B^\beta_{\infty}}.
\end{align*}
 By Proposition \ref{prop:pe_lf} we see $$\norm{\Delta_{-1}^{(b)}(f,g)}_{\mathcal B^\beta_{\infty}} \lesssim \norm{f}_{\mathcal B^\alpha_{\infty}} \norm{g}_{\mathcal B^\beta_{\infty}}.$$
Since $\beta < \Theta$, and $\alpha >0$, we see by Proposition \ref{prop:pe_para} and Lemma \ref{lem:besov_norm_compare} that
\begin{align*}
  \norm{\Pi_f^{(b)}(g)}_{\mathcal B^\beta_{\infty}} \lesssim \norm{f}_{L^\infty_M}\norm{g}_{\mathcal B^\beta_{\infty}} \lesssim\norm{f}_{\mathcal B^{\alpha}_{\infty}}\cdot \norm{g}_{\mathcal B^\beta_{\infty}}.
\end{align*}
Similarly, since $\gamma < \Theta$, we see if $\beta >0$, it holds by Proposition \ref{prop:pe_para} and Lemma \ref{lem:besov_norm_compare} that
\begin{align*}
  \norm{\Pi^{(b)}_g(f)}_{\mathcal B^\beta_{\infty}} \lesssim \norm{g}_{L^\infty_M} \norm{f}_{\mathcal B^\gamma_{\infty}} \lesssim \norm{g}_{\mathcal B^\beta_{\infty}} \norm{f}_{\mathcal B^\alpha_{\infty}} .
\end{align*}
On the other hand, if $\beta <0$, we see
\begin{align*}
  \norm{\Pi^{(b)}_g(f)}_{\mathcal B^\beta_{\infty}} \lesssim\norm{\Pi^{(b)}_g(f)}_{\mathcal B^{\gamma+\beta}_{\infty}} \lesssim\norm{g}_{\mathcal B^\beta_{\infty}} \norm{f}_{\mathcal B^\gamma_{\infty}} \lesssim \norm{g}_{\mathcal B^\beta_{\infty}} \norm{f}_{\mathcal B^\alpha_{\infty}}.
\end{align*}
Finally, since $\gamma+\beta >0$, we see from Lemma \ref{lem:besov_norm_compare} and Proposition \ref{prop:pe_re} that
\begin{align*}
  \norm{\Pi^{(b)}(f,g)}_{\mathcal B^{\beta}_{\infty}} \lesssim\norm{\Pi^{(b)}(f,g)}_{\mathcal B^{\beta+\gamma}_{\infty}} \lesssim \norm{f}_{\mathcal B^\gamma_{\infty}} \norm{g}_{\mathcal B^\beta_{\infty}} \lesssim \norm{f}_{\mathcal B^\alpha_{\infty}} \norm{g}_{\mathcal B^\beta_{\infty}}.
\end{align*}
Collecting all terms to see the desired inequality.
\end{proof}

\subsection{Schauder estimate}\label{sec:s_estimates}
In this section, we prove a  Schauder-type estimate. For any normed space $(\mathcal B,\norm{\cdot}_{\mathcal B})$, we denote by $\mathcal C_T^\alpha \mathcal B$ the space of $\alpha$-H\"older continuous functions from $[0,T]$ to $\mathcal B$, equipped with its natural norm. We write $\mathcal C_T \mathcal B := \mathcal C_T^0 \mathcal B$.
 We  define the \textit{resolution operator} ${\mathcal R}^{(k)}$ for $k \in \Z_+$ via  the formula
\begin{align}\label{eq:resolution operator}
{\mathcal R}^{(k)}(v)_t := \int_0^t P^{(k)}_{t-s} v(s)\mathrm ds;
\end{align}

\begin{prop}[Schauder Estimate]\label{prop: schauder}
Let $\beta \in \R$, $p \in [1,\infty]$ and $T>0$ be arbitrary but fixed. if $v \in {\mathcal C}_T \mathcal B^\beta_{p,\infty}$, ${\mathcal R}^{(k)}(v)$ belongs to ${\mathcal C}_T \mathcal B^{\beta +d_w}_{p,\infty}$. Furthermore, for all $\eta \ge 1$, we have
\begin{align*}
  \norm{{\mathcal R}^{(k)}(v)_t}_{\mathcal B^{\beta + d_w/\eta}_{p,\infty}} \lesssim (t+t^\frac{\eta-1}{\eta}) \sup_{s \in [0,t]} \norm{v(s)}_{\mathcal B^\beta_{p,\infty}},\quad \text{ for all } t \in [0,T].
\end{align*}
Moreover, if $\alpha \in \R$ with $-d_w < \beta -\alpha < 0$, then
\begin{align*}
  \norm{{\mathcal R}^{(k)}(v)}_{{\mathcal C}^\frac{\beta- \alpha +d_w}{d_w}_T\mathcal B^\alpha_{p,\infty}} \lesssim \norm{v}_{{\mathcal C}_T\mathcal B^\beta_{p,\infty}}.
\end{align*}
\end{prop}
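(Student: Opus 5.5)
We would prove the three assertions of Proposition~\ref{prop: schauder} by working with the semigroup Besov norm directly. Fix $a\in\N$ large, in particular $a>(\beta+d_w)/d_w+1$. By Proposition~\ref{prop:independent_b}, it suffices to bound $\norm{P_1\mathcal R^{(k)}(v)_t}_{L^p_M}$ and $\sup_{\tau\in(0,1]}\tau^{-\frac{\beta+d_w/\eta}{d_w}}\norm{Q^{(a)}_\tau\mathcal R^{(k)}(v)_t}_{L^p_M}$. Write $P^{(k)}_{t-s}=\sum_{m<k}\frac1{m!}L^me^{-(t-s)L}$. For $m\ge1$ the factor $L^m$ is absorbed into $Q^{(a)}_\tau$ as $\tau^{-m}Q^{(a+m)}_\tau$. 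To keep the bookkeeping clean, I would first treat $k=1$, i.e.\ the term $e^{-(t-s)L}$. The $m\ge1$ terms are handled identically, with a worse power of $\tau$ that is compensated by choosing $a$ larger. If this fails, I would read $P^{(k)}$ as in the Duhamel usage, where only $m=0$ matters.

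\textbf{Main estimate.} The key identity is
\[
Q^{(a)}_\tau e^{-rL}=\Bigl(\tfrac{\tau}{\tau+r}\Bigr)^{a}Q^{(a)}_{\tau+r}.
\]
Combined with Proposition~\ref{prop: norm equivalent time horizon}, which lets us use $\tau+r$ beyond $1$, this gives
\[
\norm{Q^{(a)}_\tau e^{-rL}v(s)}_{L^p_M}\lesssim \tau^a(\tau+r)^{\frac{\beta}{d_w}-a}\norm{v(s)}_{\mathcal B^\beta_{p,\infty}}.
\]
Integrating over $r=t-s\in[0,t]$, we get
\[
\int_0^t\tau^a(\tau+r)^{\frac{\beta}{d_w}-a}\,\mathrm dr\lesssim \tau^{\frac{\beta}{d_w}+1}\wedge\Bigl(t\,\tau^{\frac{\beta}{d_w}}\Bigr),
\]
because $a-\beta/d_w>1$. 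Interpolating, $\min(\tau,t)\le \tau^{1/\eta}t^{1-1/\eta}$, which yields the bound $\tau^{\frac{\beta}{d_w}+\frac1\eta}\,t^{\frac{\eta-1}{\eta}}$. This is the $\dot{\mathcal B}^{\beta+d_w/\eta}$ seminorm bound. The low-frequency part $\norm{P_1\mathcal R(v)_t}_{L^p_M}\le t\sup_s\norm{P_1v(s)}_{L^p_M}$ gives the $t$ term. Taking $\eta=1$ gives regularity $\beta+d_w$ pointwise in $t$. Continuity in time in $\mathcal B^{\beta+d_w}$ follows from the H\"older estimate below at a slightly higher index together with strong continuity (Lemma~\ref{lem:strong continuity of heat semi group}) and a density argument; I would only claim this with a small loss if needed.

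\textbf{Time regularity.} For $0\le u<t\le T$ we decompose
\[
\mathcal R(v)_t-\mathcal R(v)_u=(P_{t-u}-1)\mathcal R(v)_u+\int_u^tP_{t-s}v(s)\,\mathrm ds.
\]
Set $\theta:=(\beta-\alpha+d_w)/d_w\in(0,1)$. For the first term, Lemma~\ref{lem:strong continuity of heat semi group} with $\delta=\beta+d_w-\alpha\in(0,d_w)$ gives a factor $(t-u)^{\delta/d_w}=(t-u)^{\theta}$. Here the lemma is stated for $\delta\in(0,1)$, so I would reprove it for $\delta<d_w$ via $P_r-1=-\int_0^r Q^{(1)}_\rho\,\frac{\mathrm d\rho}{\rho}$, which works since $\delta/d_w<1$. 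This is then multiplied by $\norm{\mathcal R(v)_u}_{\mathcal B^{\beta+d_w}}$ from the first part. For the second term, we apply the main estimate on an interval of length $t-u$ with $\eta$ chosen so that $d_w/\eta=\alpha-\beta$; this is admissible since $0<\alpha-\beta<d_w$. This gives $(t-u)^{1-1/\eta}=(t-u)^{\theta}$, modulo the harmless linear term when $t-u\le T$. The step I expect to be the main obstacle is the edge case: making the strong-continuity estimate valid for all $\delta<d_w$, and handling the $m\ge1$ pieces of $P^{(k)}$ near $s=t$, where $L^m$ has no smoothing. If the latter fails, we restrict to $k=1$.
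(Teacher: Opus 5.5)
Your bound on $Q^{(a)}_\tau\mathcal R(v)_t$ is the paper's argument in a tidier form. The paper obtains the same kernel estimate $\norm{Q^{(c)}_\tau P^{(b)}_{t-s}v(s)}_{L^p_M}\lesssim\bigl(\tfrac{\tau}{\tau+t-s}\bigr)^{c}(\tau+t-s)^{\beta/d_w}\norm{v(s)}_{\mathcal B^\beta_{p,\infty}}$ by separating $t-s\le\tau$ from $t-s>\tau$, and then applies H\"older's inequality in $s$ with exponent $\eta$. You read the same estimate off $Q^{(a)}_\tau e^{-rL}=(\tfrac{\tau}{\tau+r})^aQ^{(a)}_{\tau+r}$ and interpolate with $\min(\tau,t)\le\tau^{1/\eta}t^{1-1/\eta}$.

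The time regularity is where you genuinely differ. You factor $\mathcal R(v)_t-\mathcal R(v)_u=(P_{t-u}-1)\mathcal R(v)_u+\int_u^tP_{t-s}v(s)\,\mathrm ds$ and recycle the first part: at index $\beta+d_w$ for the first term, and with $d_w/\eta=\alpha-\beta$ for the second. You combine this with Lemma~\ref{lem:strong continuity of heat semi group}, whose extension to $\delta<d_w$ via $P_r-1=-\int_0^rQ^{(1)}_\rho\frac{\mathrm d\rho}{\rho}$ is indeed routine. The paper instead writes $P^{(b)}_{t-r}-P^{(b)}_{s-r}$ as a time integral of $z\partial_zP^{(b)}_z\propto Q^{(b)}_z$, bounds each piece by Lemma~\ref{lem:q_regulariz}, and integrates $\int_0^s\int_s^t(z-r)^{(\beta-\alpha)/d_w-1}\,\mathrm dz\,\mathrm dr\lesssim(t-s)^{(\beta-\alpha+d_w)/d_w}$. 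Your route is more modular; the paper's never uses the semigroup law.

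Your density argument for continuity of $t\mapsto\mathcal R(v)_t$ in $\mathcal B^{\beta+d_w}_{p,\infty}$ is sound: approximate $v$ by $P_{1/n}v$, apply the H\"older bound from index $\beta+\epsilon$, and pass to the limit with the uniform $\eta=1$ bound. It supplies a point the paper does not spell out.

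What does not hold up is your treatment of $k\ge2$. Read literally, $P^{(k)}_r=\sum_{m<k}\frac{1}{m!}L^me^{-rL}$ gives $Q^{(a)}_\tau L^me^{-rL}=\frac{\tau^a}{(\tau+r)^{a+m}}Q^{(a+m)}_{\tau+r}$. The range $r\lesssim\tau$ then produces $\tau^{\beta/d_w+1-m}$ whatever $a$ is, so enlarging $a$ compensates nothing. In fact the proposition is then false: for $v$ constant in time, $\int_0^tLe^{-(t-s)L}v\,\mathrm ds=v-e^{-tL}v$ gains no regularity.

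The definition in \eqref{eq:pq operators} is a misprint for $P^{(k)}_r=\sum_{m<k}\frac{1}{m!}(rL)^me^{-rL}$. This is what the paper actually uses: uniform $L^p$-boundedness of $P^{(b)}_{t-s}$, the identity $r\partial_rP^{(k)}_r=-\frac{1}{(k-1)!}Q^{(k)}_r$, and Theorem~\ref{thm:crf}. With this normalization, $Q^{(a)}_\tau(rL)^me^{-rL}=\frac{\tau^ar^m}{(\tau+r)^{a+m}}Q^{(a+m)}_{\tau+r}$ and $r^m\le(\tau+r)^m$. So every piece obeys your bound with no loss, and your first part holds for all $k$.

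Your time-regularity step, however, rests on the semigroup law $P_{t-s}=P_{t-u}P_{u-s}$, which has no analogue for $P^{(k)}$ when $k\ge2$. There you need the paper's derivative identity instead. For $k=1$, the case used later in the paper, your proof is complete.
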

\begin{proof}
We fix $b \in \Z_+$, and denote $\mathcal R:= \mathcal R^{(b)}$ for notational convenience. Let $c \ge \frac{|\beta|}{d_w}+1$ be an integer and $\tau \in (0,1]$. Then
\begin{align*}
  Q^{(c)}_\tau {\mathcal R}(v)_t = \int_0^t Q^{(c)}_\tau P^{(b)}_{t-s} v(s)ds.
\end{align*}
For $0<t-s$ and $\tau >0$, we have
\begin{equation}\label{ineq: shauder step 1}
  \begin{aligned}
    \norm{Q^{(c)}_\tau P^{(b)}_{t-s} v(s)}_{L^p_M} = \norm{P^{(b)}_{t-s}Q^{(c)}_\tau  v(s)}_{L^p_M}\le \norm{P_{t-s}^{(b)}}_{L^p_M \to L^p_M}\norm{Q^{(c)}_\tau  v(s)}_{L^p_M}\lesssim \tau^\frac{\beta}{d_w} \norm{v(s)}_{\mathcal B^\beta_{p,\infty}}.
  \end{aligned}
\end{equation}
But  for $0<t-s \le \tau$, we see that $\frac{1}{2} \le\frac{\tau}{t-s+\tau} < 1$ and hence
\begin{align*}
  \tau^\frac{\beta}{d_w} = \left( t-s +\tau\right)^\frac{\beta}{d_w}\left( \frac{\tau}{t-s+\tau}\right)^\frac{\beta}{d_w}\lesssim\left( t-s +\tau\right)^\frac{\beta}{d_w}\left( \frac{\tau}{t-s+\tau}\right)^c.
\end{align*}
Thus
\begin{equation}\label{ineq: schauder step 2}
  \begin{aligned}
    \norm{Q^{(c)}_\tau P^{(b)}_{t-s} v(s)}_{L^p_M} &\overset{\eqref{ineq: shauder step 1}}{\lesssim} \tau^\frac{\beta}{d_w} \norm{v(s)}_{\mathcal B^\beta_{p,\infty}} \\
    &\lesssim\left(\frac{\tau}{\tau + t-s}\right)^c \left(\tau + t-s \right)^\frac{\beta}{d_w} \norm{v(s)}_{\mathcal B^\beta_{p,\infty}}.
  \end{aligned}
\end{equation}

For the case $t - s > \tau$,
\begin{align*}
  \norm{Q^{(c)}_\tau P^{(b)}_{t-s} v(s)}_{L^p_M}&=\pr{\frac{\tau}{t-s}}^c \norm{P_{\tau}(t-s)^c L^c P^{(b)}_{t-s}v(s)}_{L^p_M}  \\
  & \le \pr{\frac{\tau}{t-s}}^c\norm{(t-s)^c L^c P_{t-s}^{(c)} v(s)}_{L^p_M} \\
  &\le \pr{\frac{\tau}{t-s}}^c\sum_{k = 0}^{c - 1} \frac{1}{k!} \norm{Q^{(k+c)}_{t-s}v(s)}_{L^p_M}\\
  &\le\pr{\frac{\tau}{t-s}}^c \sum_{k = 0}^{c-1}\frac{1}{k!} (t-s)^\frac{\beta}{d_w} \sup_{\tau \in (0,t-s]}\tau^{-\frac{\beta}{d_w}} \norm{Q_{\tau}^{(k+c)} v(s)}_{L^p_M}\\
  &\lesssim \pr{\frac{\tau}{t-s}}^c(t-s)^\frac{\beta}{d_w} \norm{v(s)}_{\mathcal B^\beta_{p,\infty}}.
\end{align*}
Note for $t -s \ge \tau$ implies $\frac{\tau}{t-s +\tau} \ge \frac{1}{2}\frac{\tau}{t-s}$, and
\begin{align*}
  \begin{cases}
    \pr{t-s}^\frac{\beta}{d_w} \le \pr{t-s+\tau}^\frac{\beta}{d_w},&\beta \ge 0\\
    (t-s)^\frac{\beta}{d_w} = 2^{- \frac{\beta}{d_w}} \pr{2(t-s)}^{\frac{\beta}{d_w}} \lesssim (t-s+\tau)^\frac{\beta}{d_w},& \beta \le 0
  \end{cases}
\end{align*}
Hence for $t-s\ge \tau$, it holds that
\begin{equation}\label{ineq: schauder step 3}
  \begin{aligned}
    \norm{Q^{(c)}_\tau P_{t-s}^{(b)} v(s)}_{L^p_M} \lesssim \frac{\tau}{t-s+\tau}(t-s+\tau)^\frac{\beta}{d_w} \norm{v(s)}_{\mathcal B^\beta_{p,\infty}}.
  \end{aligned}
\end{equation}
Combining \eqref{ineq: schauder step 2} and \eqref{ineq: schauder step 3}, we see that for $t-s>0$ and $\tau \in (0,1]$,
\begin{align*}
  \norm{Q^{(c)}_\tau P_{t-s}^{(b)} v(s)}_{L^p_M}  \lesssim \left(\frac{\tau}{t - s+ \tau} \right)^c (t-s+\tau)^\frac{\beta}{d_w} \norm{v(s)}_{\mathcal B^\beta_{p,\infty}}.
\end{align*}
Thus for $\eta \ge 1$, and $\eta' = \frac{\eta}{\eta -1}$ (assume $1/0 = \infty$), we may choose $c$ large so that
\begin{align*}
  \int_0^t \norm{Q^{(c)}_\tau P^{(b)}_{t-s}v(s)}_{L^p_M} ds &\lesssim \int_0^t \left(\frac{\tau}{t - s+ \tau} \right)^c (t-s+\tau)^\frac{\beta}{d_w} ds \sup_{s \in [0,t]}\norm{v(s)}_{\mathcal B^\beta_{p,\infty}}\\
  &\le t^\frac{1}{\eta'} \left( \int_0^t \tau^{c\eta} (t-s+\tau)^{\frac{\beta\eta}{d_w} - \eta c}\right)^\frac{1}{\eta}   \sup_{s \in [0,t]}\norm{v(s)}_{\mathcal B^\beta_{p,\infty}}  \\
  &\lesssim t^\frac{1}{\eta'} \left( \tau^{c\eta} (t+\tau)^{\frac{\eta \beta +d_w}{d_w} - c\eta} + \tau^\frac{\beta \eta +d_w}{d_w}\right)^\frac{1}{\eta} \sup_{s \in [0,t]}\norm{v(s)}_{\mathcal B^\beta_{p,\infty}}    \\
  &\le t^\frac{1}{\eta'} \tau^\frac{\beta + d_w/\eta}{d_w} \sup_{s \in [0,t]}\norm{v(s)}_{\mathcal B^\beta_{p,\infty}},
\end{align*}
where we used the fact that for large $c$, $\frac{\beta\eta+d_w}{d_w} - c\eta <0$ implies $(t+\tau)^{\frac{\beta\eta+d_w}{d_w}-c\eta} \le \tau^{\frac{\beta\eta+d_w}{d_w}-c}$ in the last inequality. Hence
\begin{align*}
  \sup_{\tau \in (0,1]} \tau^{-\frac{\beta +d_w/\eta}{d_w}}\norm{Q^{(c)}_\tau {\mathcal R}(v)_t}_{L^p_M} \lesssim  t^\frac{1}{\eta'}\sup_{s \in [0,t]} \norm{v(s)}_{\mathcal B^\beta_{p,\infty}}.
\end{align*}
Also,
\begin{align*}
  \norm{P_1 {\mathcal R}(v)_t}_{L^p_M} \le \int_0^t \norm{P^{(b)}_{t-s} P_1v(s)}_{L^p_M} ds \lesssim T\sup_{s \in [0,t]} \norm{v(s)}_{\mathcal B^\beta_{p,\infty}}.
\end{align*}
This concludes the proof of the first part of the proposition.

For the second part, note that for for $0 \le s < t \le T$,
\begin{align}\label{eq: shauder 2 step 1}
  {\mathcal R}(v)_t - {\mathcal R}(v)_s = \int_0^s \left(P_{t - r}^{(b)} - P_{s - r}^{(b)}\right) v(r)dr + \int_s^t P^{(b)}_{t-r} v(r) dr
\end{align}
Observe also
\begin{align*}
  \int_0^s \left(P_{t - r}^{(b)} - P_{s - r}^{(b)}\right) v(r)dr &= \int_0^s \int_{s}^t  \partial_z P^{(b)}_{z - r}dz v(r) dr\\
  &= \frac{1}{\gamma_b}\int_0^s \int_s^t Q^{(b)}_{z-r} \frac{dz}{z - r} v(r) dr.
\end{align*}
Hence, by Lemma \ref{lem:q_regulariz}, we have for $\alpha - \beta >0$
and sufficiently large $k \in \mathbb N$, it holds for $h \in (0,1]$  that
\begin{equation*}
  \begin{aligned}
    h^{-\frac{\alpha}{d_w}}\norm{Q^{(k)}_h\int_0^s \left(P^{(b)}_{t-r} - P^{(b)}_{s-r} \right) v(r)dr}_{L^p_M}&\lesssim \int_0^s \int_s^t h^{-\frac{\alpha}{d_w}}\norm{Q_h^{(k)}Q_{z-r}^{(b)} v(r)}_{L^p_M} \frac{dz}{z - r} dr\\
    &\lesssim \sup_{s \in [0,t]} \norm{v(s)}_{\mathcal B^\beta_{p,\infty}} \int_0^s \int_s^t (z-r)^{\frac{\beta-\alpha}{d_w} - 1} dz\,dr.
  \end{aligned}
\end{equation*}
Since  $\alpha - \beta >0$ 
and $\frac{\beta - \alpha}{d_w}>-1$, we see
\begin{align*}
  \int_0^s \int_s^t (z-r)^{\frac{\beta - \alpha}{d_w} - 1} dz\,dr &= \frac{\beta - \alpha}{d_w}\int_0^s (t-r)^\frac{\beta-\alpha}{d_w}-(s - r)^\frac{\beta-\alpha}{d_w} dr\\
  &\lesssim \left|t^\frac{\beta-\alpha+d_w}{d_w}   - s^\frac{\beta-\alpha+d_w}{d_w}\right| + (t-s)^\frac{\beta-\alpha+d_w}{d_w}\\
  &\lesssim (t-s)^\frac{\beta-\alpha+d_w}{d_w},
\end{align*}
where the last inequality is due to the fact that $\frac{\beta - \alpha +d_w}{d_w} \in (0,1]$.
Hence
\begin{align}\label{eq: shauder 2 step 2}
  h^{-\frac{\alpha}{d_w}}\norm{Q^{(k)}_h\int_0^s \left(P^{(b)}_{t-r} - P^{(b)}_{s-r} \right) v(r)dr}_{L^p_M} \lesssim \pr{t-s}^\frac{\beta - \alpha+d_w}{d_w} \sup_{s \in (0,t]} \norm{v(s)}_{\mathcal B^\beta_{p,\infty}}
\end{align}

Finally, we see again by Lemma \ref{lem:q_regulariz} and the fact that $-d_w < \beta - \alpha < 0$ that
\begin{equation}\label{eq: shauder 2 step 3}
  \begin{aligned}
    \int_s^t h^{-\frac{\alpha}{d_w}}\norm{Q^{(k)}_h P_{t-r}^{(b)} v(r)}_{L^p_M} dr &\lesssim \int_s^t \sum_{\ell = 0}^b h^{-\frac{\alpha}{d_w}}\norm{Q^{(k)}_h Q^{(\ell)}_{t-r} v(r)}_{L^p_M} \mathrm dr \\
    &\lesssim \sup_{r \in [0,t]} \norm{v(r)}_{\mathcal B^\beta_{p,\infty}}\int_s^t (t-r)^{\frac{\beta - \alpha}{d_w}} \mathrm dr\\
    &\lesssim \sup_{r \in [0,t]} \norm{v(r)}_{\mathcal B^\beta_{p,\infty}} (t-s)^\frac{\beta-\alpha+d_w}{d_w}.
  \end{aligned}
\end{equation}
Plug \eqref{eq: shauder 2 step 2} and \eqref{eq: shauder 2 step 3} into \eqref{eq: shauder 2 step 1} to see for sufficiently large $k \in \N$,
\begin{align}\label{eq: shauder 2 step 4}
  \sup_{0\le s < t\le T}\frac{\sup_{\tau \in (0,1]}\norm{Q^{(k)}_\tau \pr{ \mathcal R(v)_t -\mathcal R(v)_s}}_{L^p_M}}{\pr{t-s}^{\frac{\beta - \alpha +d_w}{d_w}}} \lesssim \norm{v}_{\mathcal C_T \mathcal B^\beta_{p,\infty}}.
\end{align}
Now observe that
\begin{align*}
  \norm{P_1 \int_0^s \pr{P_{t-s}^{(b)} - P_{s-t}^{(b)}} v(r) \mathrm ds}_{L^p_M}&\le  \int_0^s \int_s^t \norm{Q^{(b)}_{z-r} P_1 v(r)}_{L^p_M} \frac{\mathrm dz}{z - r} \mathrm dr\\
  &\le \int_0^s \int_s^t \norm{Q_{z-r}^{(b)}}_{L^p_M \to L^p_M} \norm{P_1v(r)}_{L^p_M} \frac{\mathrm dz}{z - r}\mathrm dr\\
  &\lesssim \int_0^s \int_s^t (z-r)^{-1}\mathrm dz \mathrm dr \sup_{r \in [0,t]}\norm{v(r)}_{\mathcal B^\beta_{p,\infty}}.
\end{align*}
Also note that for any $\varepsilon >0$, the following inequality holds for $0\le s <t \le T$:
\begin{align*}
  \abs{\int_0^s \int_s^t (z-r)^{-1}\mathrm dz \mathrm dr} &=\abs{\int_0^s \ln\pr{t-r} - \ln(s - r) \mathrm dr} \\
  &=\abs{(t-s)\ln(t-s)} + \abs{t\ln (t) - s \ln(s)} \\
  &\lesssim \abs{t-s}^{1-\frac{\varepsilon}{d_w}}.
\end{align*}
Hence we can pick $0<\varepsilon= \alpha - \beta$ so that
\begin{align*}
  \norm{P_1 \int_0^s \pr{P_{t-s}^{(b)} - P_{s-t}^{(b)}} v(r) \mathrm ds}_{L^p_M} \lesssim \sup_{r \in [0,t]} \norm{v(r)}_{\mathcal B^\beta_{p,\infty}} \abs{t-s}^{\frac{d_w - \varepsilon}{d_w}} \le \sup_{r \in [0,t]} \norm{v(r)}_{\mathcal B^\beta_{p,\infty}} \abs{t-s}^{\frac{d_w - \alpha +\beta }{d_w}}.
\end{align*}
Similarly,
\begin{align*}
  \norm{P_1\int_s^t P^{(b)}_{t-r} v(r)\mathrm dr}_{L^p_M} &\le \int_s^t \norm{P_{t-r}^{(b)}}_{L^p_M \to L^p_M}\norm{P_1 v(r)}_{L^p_M} \mathrm dr \lesssim \abs{t-s}.
\end{align*}
Therefore, we see by \eqref{eq: shauder 2 step 4} that
\begin{align*}
  \norm{\mathcal R(v)}_{\mathcal C_T^\frac{\beta - \alpha +d_w}{d_w}\mathcal B^\alpha_{p,\infty}}&= \sup_{t \in [0,T]} \norm{\mathcal R(v)_t}_{\mathcal B_{p,\infty}^\alpha} + \sup_{0\le s < t\le T}\frac{\norm{\mathcal R(v)_t - \mathcal R(v)_s}_{\mathcal B^\alpha_{p,\infty}}}{(t-s)^\frac{\beta - \alpha+d_w}{d_w}}\\
  &\lesssim \norm{v}_{\mathcal C_T\mathcal B^\beta_{p,\infty}},
\end{align*}
which is the desired result.
\end{proof}

\section{The Edwards-Wilkinson Equation and its Wick powers}\label{sec: EW and Wick}

We have now developed all the analytic machinery needed to rigorously solve \eqref{eq:phi n formal} in the renormalized sense of \cite{DPD,MW17,TW18}. In this section,  we first introduce $\xi$, the space-time white noise on general metric measure spaces and $Y$, the solution to the Edwards-Wilkinson equation \eqref{eq:she mass}. As noted in the introduction, if $d_h < d_w$, the solution is a.s. a continuous function on $M$, but can only be understood as distributions if $d_h \ge d_w$. Finally, we will  construct the Wick powers of $Y$ and compute their regularity indices, while making the renormalization terms explicit.

\subsection{Spacetime White Noise and It\^o-Wiener Chaos Expansion}

Let $\xi$ be a space-time white noise on $\R \times M$ (with respect to the reference measure $\mu$) on some probability space  $(\Omega,\mathcal{F}_\Omega,\P)$, which we fix from now on. That is, 
$$
\left\{\xi \left(f \right); f \in L^2(\R \times M, \mathrm dt \times \mu):=\mathcal{H}\right\}
$$
is a collection of centered Gaussian random variables so that for each $f,g \in L^2(\R \times M, dt\times \mu)$,
\begin{align*}
\E \left[\xi(f) \xi(g) \right] = \int_\R \int_M f(t,x) g(t,x) \mu(\mathrm dx)\mathrm dt
\end{align*}
We set
\begin{align*}
\widetilde {\mathcal F}_t:=\sigma \pr{\xi(\phi): \phi|_{(t,\infty) \times M}\equiv 0,\, \phi \in L^2\pr{\R\times M, \mathrm dt\times \mu}}, \qquad t\in \R,
\end{align*}
and denote $(\mathcal F_t)_{t > -\infty}$ the usual augmentation of the filtration $(\widetilde{\mathcal F}_t)_{t > - \infty}.$

We now assume that $\mathcal{F}_\Omega$ is the $\sigma-$algebra generated by $\set{\xi(f)}_{f\in\mathcal{H}}$. %\textcolor{red}{(do we need this? can $\mathcal F_\Omega$ be larger? We need its augmentation at least)} {\color{blue}If you want to work with filtrations or something, you can do that, it will not affect what we do.}. 
Under this assumption, it is well known (see, for example \cite{nualart2006malliavin,holden1996stochastic}) that we have the orthogonal decomposition $$L^2(\Omega,\mathcal{F}_\Omega,\P)=\bigoplus_{k\geq 0}^\infty \mathcal{H}^{k},$$
where $\mathcal{H}^k$ is the $k-$th homogeneous Wiener chaos. More precisely, $\mathcal{H}^0=\R$, $\mathcal{H}^1=\set{\xi(\phi):\phi\in \mathcal{H}}$ and$$\mathcal{H}^{k}=\set{I_k(\phi):\phi\in \mathcal{H}^{\otimes_{\text{sym}}k}}.$$
Here $\mathcal{H}^{\otimes_{\text{sym}}k}$ denotes the set of square integrable, symmetric kernels $\phi:(\R\times M)^k\to \R$ and $I_k$ is the $k-$fold iterated stochastic integral (see \cite[Section 1.1.1, 1.1.2]{nualart2006malliavin} and \cite{holden1996stochastic}). We recall the following useful estimate from \cite{nelson1973free} (see also \cite[Theorem 1.4.1]{nualart2006malliavin}).
\begin{prop}[Nelson's Estimate]\label{prop: Nelson estimate}
For any $k\geq 2$, $p\geq 2$, and $X\in \mathcal{H}^k$, we have $$\norm{X}_{L^p_\Omega}^2\lesssim \norm{X}_{L^2_\Omega}^2.$$
\end{prop}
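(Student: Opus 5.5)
The estimate is Gaussian hypercontractivity, and I would prove it that way, reducing to the Ornstein--Uhlenbeck semigroup on $L^2(\Omega,\mathcal F_\Omega,\P)$. (The constant depends on $k$ and $p$; it is $(p-1)^{k}$.)

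\emph{Step 1: the Ornstein--Uhlenbeck semigroup.} Using the chaos decomposition $L^2(\Omega,\mathcal F_\Omega,\P)=\bigoplus_{k\ge0}\mathcal H^k$ recalled above, define for $t\ge 0$
\[
T_t X := \sum_{k\ge 0} e^{-kt}J_k X,
\]
where $J_k$ is the orthogonal projection onto $\mathcal H^k$. By construction $T_t$ is a contraction on $L^2_\Omega$, and it acts on $\mathcal H^k$ as multiplication by $e^{-kt}$.

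\emph{Step 2: Mehler's formula and hypercontractivity.} I would identify $T_t$ with Mehler's formula
\[
T_tX(\omega)=\E'\bigl[X\bigl(e^{-t}\omega+\sqrt{1-e^{-2t}}\,\omega'\bigr)\bigr],
\]
with $\omega'$ an independent copy of the noise. Since $\mathcal H$ is separable, it suffices to check this on the exponential functionals $\exp(\xi(h)-\frac12\norm{h}^2)$, whose span is dense and whose chaos expansion is explicit. From Mehler's formula, $T_t$ is Markov and symmetric. Nelson's hypercontractivity theorem then gives
\[
\norm{T_tX}_{L^q_\Omega}\le\norm{X}_{L^2_\Omega}
\qquad\text{whenever } q-1\le e^{2t}.
\]
This in turn follows from Gross's logarithmic Sobolev inequality for the Gaussian measure. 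That inequality is dimension-free, so it passes from finite-dimensional Gaussians, where it is classical, to the infinite-dimensional setting by cylindrical approximation.

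\emph{Step 3: conclusion.} Take $X\in\mathcal H^k$ and choose $t=\tfrac12\log(p-1)$, so that $q=p$ is allowed in Step 2. Then
\[
e^{-kt}\norm{X}_{L^p_\Omega}=\norm{T_tX}_{L^p_\Omega}\le\norm{X}_{L^2_\Omega},
\]
that is, $\norm{X}_{L^p_\Omega}\le (p-1)^{k/2}\norm{X}_{L^2_\Omega}$. Squaring both sides gives the claim.

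The main obstacle is Step 2, the hypercontractivity itself, whose proof relies on the log-Sobolev inequality. Since this is a classical result (Nelson; Nualart, Theorem 1.4.1), in the paper I would cite it rather than reprove it. Only the identification of $T_t$ with the chaos multiplier needs checking in our space-time white noise setting, and that check is abstract: it uses only the Gaussian Hilbert space $\mathcal H=L^2(\R\times M)$ and none of the geometry of $M$.
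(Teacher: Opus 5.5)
Your argument is correct and matches the paper, which gives no proof of its own but cites Nelson and \cite[Theorem 1.4.1]{nualart2006malliavin}. That theorem is precisely the hypercontractivity of the Ornstein--Uhlenbeck semigroup, and restricting it to the $k$-th chaos with $e^{2t}=p-1$ gives $\norm{X}_{L^p_\Omega}\le (p-1)^{k/2}\norm{X}_{L^2_\Omega}$, exactly as in your Step 3.
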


\subsection{The Edwards-Wilkinson Equation}

We denote $Y$ as the solution of the Edwards-Wilkinson equation on $\R_+ \times M$ with appropriate initial condition $Y_0$, which we will specify later,
\begin{equation}\label{eq: EW eqn}
\begin{cases}
  \partial_t Y =-(L +1 )Y + \xi\\
  Y(0,\cdot) = Y_0.
\end{cases}
\end{equation}
For $d_h \ge d_w$ (which we assumed), $Y(t):=Y(t,\cdot)$ must be  understood as a distribution on $M$ and is not point-wise defined for $x \in M$. Motivated by distribution theory and Duhamel's principle, we define the solution $Y$ of \eqref{eq: EW eqn} as follows: for every test function $\varphi \in \mathcal S$ and $t>0$,
\begin{align}\label{eq: def of solution to EW}
Y(t,\varphi):= \left \langle e^{-t}P_tY_0, \varphi \right \rangle + \int_0^t \int _M \left \langle \varphi, e^{-(t-s)}p_{t-s}(y,\cdot) \right \rangle\xi(\mathrm dy, \mathrm ds).
\end{align}

An important case is when  $Y_0:= \int_{-\infty}^0 e^{-s}p_{-s}(\cdot,y)\xi(\mathrm dy, \mathrm ds)$, then solution to \eqref{eq: EW eqn} is a stationary (in time) Gaussian field with covariance function
\begin{align*}
G(x,y):= 2\int_0^\infty e^{-t} p_t(x,y) \mathrm ds,\qquad x,y \in M.
\end{align*}
That is, for any $\varphi,\psi \in \mathcal S$ and $t \ge 0$, $Y_t(\varphi)$ is a mean zero Gaussian random variable and
\begin{align*}
\E\left[ Y \pr{t,\varphi} Y\pr{t,\psi} \right] = \iint_{M^2} \varphi(x)G(x,y) \psi(y) \mu(\mathrm dy)\mu(\mathrm dx).
\end{align*}

\subsection{Wick Powers}
Recall that given any $\sigma \in \R$ and $n \in \mathbb N$, the $n$-th Hermite polynomial with variance $\sigma^2$ on $\R$ can be inductively defined by setting $H_0(x,\sigma^2) = 1$, and for $n \ge 1$,
\begin{align*}
H_n(x,\sigma^2) = xH_{n-1}(x,\sigma^2) - \sigma^2 \frac{\mathrm d}{\mathrm dx}H_{n-1}(x,\sigma^2).
\end{align*}
For the rest of this paper, we will denote
\begin{align}\label{eq:alpha_0}
\alpha_0:= \frac{d_h - d_w}{2} \ge 0.
\end{align}
and
\begin{align}\label{eq:n0}
n_0 = \max\left\{n \in \N: n < \frac{d_h}{d_h - d_w}\right\}.
\end{align}
Denote $\nu$ as the law of the stationary solution to \eqref{eq: EW eqn}, i.e. 
\begin{align}\label{def:GFF measure}
    \nu := \mathrm {Law}\pr{\int_{-\infty}^0 e^{s(L+1)} \xi\pr{\mathrm ds}}
\end{align}
We denote
\begin{align}\label{eq:green function}
G_\varepsilon(x,y):= 2 \int_0^\infty e^{-t} p_{t +2\varepsilon}(x,y) \mathrm dt,\qquad x,y \in M,
\end{align}
and $C_\varepsilon(x):= G_\varepsilon(x,x)$ for $x \in M$, which will be the counter term in the Wick renormalization. By elementary computation using \eqref{ineq:sgu}, we see it holds uniformly in $x \in M$ and  $\varepsilon\in(0,1]$ that
\begin{align*}
    C_\varepsilon(x) \asymp \begin{cases}
        1+\log\pr{\varepsilon^{-1}},&\text{ for }d_h = d_w\\
        \varepsilon^{-\frac{d_h - d_w}{d_w}},& \text{ for }d_h >d_w.
    \end{cases}
\end{align*}
Observe that it is spatially dependent rather than a constant. For the rest of this section, we shall fix an arbitrary $\alpha >\alpha_0$.
\begin{definition}\label{def:wick renormaliztion of distributions}
Let $X \in \mathcal C(\R_+; \Binf{-\alpha})$ be a time-dependent distribution (on $M$) and $\varepsilon>0$, we define its $\varepsilon$-approximate Wick power of $n$-th order to be
\begin{align*}
  X^{:n:}_\varepsilon(t,x):=H_n\pr{X_\varepsilon(t,x), C_\varepsilon(x)},\qquad (t,x) \in \R_+\times M,
\end{align*}
where $X_\varepsilon(t,x):= P_\varepsilon(X_t)(x)$ for $(t,x) \in \R_+\times M$ and $H_n$ as in the $n$-th Hermit polynomial. 

\noindent Let $\mathfrak{A}\subset \mathcal C(\R_+;\Binf{-\alpha})$ be the subset of all time-dependent distributions $X$ so that the collection of distributions
\begin{align*}
  \underline{\mathbf X}_{n_0}:=\pr{X, X^{:2:},\dots,X^{:n_0:}} := \lim_{\varepsilon\downarrow 0}\pr{X_\varepsilon^{:k:}}_{k = 1}^{n_0}\in \bigoplus_{k = 1}^{n_0} \mathcal C\pr{\R_+; \Binf{-\alpha k}}.
\end{align*}
\end{definition}

\noindent The proof of the following lemma can be found in the Appendix.% This is shown in the following lemma, whose  proof can be found in the Appendix.
\begin{lemma}\label{lem: holder regularity of mollifed y}
Let $n \in \N$, $\varepsilon >0$ and $Y$ be solution to \eqref{eq: EW eqn} with $Y_0\in \Binf{-\alpha}$ for all sufficiently small $\delta_1,\delta_2 >0$,
\begin{align*}
  Y_\varepsilon^{:n:} \in \mathcal C^{\delta_1}\left(\R_+; \mathcal C^{\delta_2}_M\right)
\end{align*}
\end{lemma}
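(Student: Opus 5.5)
Since $Y_\varepsilon^{:n:}(t,x)=H_n(Y_\varepsilon(t,x),C_\varepsilon(x))$ is a fixed polynomial in $Y_\varepsilon$ and $C_\varepsilon$, the plan is to prove H\"older regularity of the two inputs and then use that products and sums of bounded H\"older functions remain H\"older. We work on a bounded interval $[0,T]$ and interpret the claim pathwise almost surely, or equivalently in $L^p_\Omega$ and then upgrade.

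\emph{The counterterm.} $C_\varepsilon$ is deterministic and time independent, so only spatial regularity is needed. Write $C_\varepsilon(x)=2\int_0^\infty e^{-t}p_{t+2\varepsilon}(x,x)\,\mathrm dt$. For $d(x,x')\le \varepsilon^{1/d_w}$, split
\[
p_{t+2\varepsilon}(x,x)-p_{t+2\varepsilon}(x',x')
\]
into two differences, each in one variable, using the symmetry of $p$. Then apply Lemma \ref{lem:long time heat kernel bounds}. This gives a bound of order $(d(x,x')/\varepsilon^{1/d_w})^\Theta\,\varepsilon^{-d_h/d_w}$ times an integrable factor in $t$. For $d(x,x')>\varepsilon^{1/d_w}$, boundedness of $C_\varepsilon$ suffices. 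Hence $C_\varepsilon\in\mathcal C^{\delta_2}_M$ for every $\delta_2\le\Theta$, with an $\varepsilon$-dependent constant, which is harmless here.

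\emph{The field $Y_\varepsilon$.} By \eqref{eq: def of solution to EW},
\[
Y_\varepsilon(t,x)=e^{-t}P_{t+\varepsilon}Y_0(x)+\int_0^t\!\!\int_M e^{-(t-s)}p_{t-s+\varepsilon}(x,y)\,\xi(\mathrm dy,\mathrm ds).
\]

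\emph{Initial-data term.} Since $P_\varepsilon Y_0\in\Binf{\gamma}$ for all $\gamma>0$ by Lemma \ref{lem:q_regulariz}, Proposition \ref{prop:holder=besov} gives spatial $\mathcal C^{\delta_2}$ regularity, uniformly in $t$. For time regularity, $\norm{P_tg-P_sg}_{L^\infty_M}\lesssim|t-s|^{\lambda/d_w}\norm{g}_{\Binf{\lambda}}$ by Lemma \ref{lem:strong continuity of heat semi group}. Applying this to $g=P_{s+\varepsilon}Y_0$, and combining with the Lipschitz factor $e^{-t}$, gives joint H\"older continuity in the space $\mathcal C^{\delta_1}([0,T];\mathcal C^{\delta_2}_M)$. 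The spatial H\"older seminorm of the increment is handled the same way, applying the strong continuity bound in $\Binf{\sigma-\delta}$.

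\emph{Stochastic term.} This is a first-chaos Gaussian field, and I plan to use Kolmogorov's continuity criterion on $[0,T]\times M$. Volume regularity \eqref{ineq:a_reg} makes $M$ effectively $d_h$-dimensional, so a Kolmogorov/Garsia--Rodemich--Rumsey type criterion applies on compact doubling spaces. We need second-moment bounds on increments. The spatial increment is
\[
\E|Z(t,x)-Z(t,x')|^2=\int_0^t e^{-2r}\int_M |p_{r+\varepsilon}(x,y)-p_{r+\varepsilon}(x',y)|^2\mu(\mathrm dy)\,\mathrm dr .
\]
Because $r+\varepsilon\ge\varepsilon$, the bound \eqref{ineq:holder_kernel}, together with Lemma \ref{lem:long time heat kernel bounds} for large times, gives $\lesssim_\varepsilon d(x,x')^{2\Theta}$. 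For the time increment $s<t$, split into the integral over $[s,t]$, which is $\lesssim_\varepsilon|t-s|$ by sup bounds on the kernel, plus a term involving $\norm{(e^{-(t-s)}P_{t-s}-I)p_{r+\varepsilon}(x,\cdot)}_{L^2}$. Since $p_{r+\varepsilon}(x,\cdot)\in\Binf{\lambda}$ uniformly, Lemma \ref{lem:strong continuity of heat semi group} bounds this by $|t-s|^{\lambda/d_w}$. By Gaussianity (Proposition \ref{prop: Nelson estimate}), $p$-th moments of increments are then bounded by $(|t-s|^{\theta_1}+d(x,x')^{\theta_2})^{p/2}$. Taking $p$ large in Kolmogorov yields a version that is H\"older in $(t,x)$ jointly, hence in $\mathcal C^{\delta_1}([0,T];\mathcal C^{\delta_2}_M)$ for small $\delta_1,\delta_2$. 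The constants blow up as $\varepsilon\to0$, which is allowed.

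\emph{Main obstacle and conclusion.} The delicate step is the Kolmogorov criterion on a general metric measure space with mixed space-time exponents. I would handle it by using the parabolic metric $\rho((t,x),(s,y))=|t-s|^{1/d_w}+d(x,y)$, under which $[0,T]\times M$ is Ahlfors regular of dimension $d_h+d_w$, and applying the chaining/GRR criterion there. Joint H\"older continuity in $\rho$ gives $\sup_x$-H\"older regularity in time and uniform-in-$t$ H\"older regularity in space. This is weaker than the $\mathcal C^{\delta_1}_t\mathcal C^{\delta_2}_x$ norm, which also requires time H\"older continuity of the spatial seminorm. I would recover that by interpolation: apply Kolmogorov to the two-parameter increment field $Z(t,x)-Z(t,x')-Z(s,x)+Z(s,x')$, whose variance is bounded by the geometric mean of the two bounds above, and then reduce $\delta_1,\delta_2$. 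Finally, $H_n$ is a polynomial, and $\mathcal C^{\delta_1}([0,T];\mathcal C^{\delta_2}_M)$ is an algebra because $\mathcal C^{\delta_2}_M$ is. This gives the lemma on every $[0,T]$, hence on $\R_+$ locally.
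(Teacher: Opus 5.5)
Your proposal is correct and follows essentially the same route as the paper. You reduce, via the Hermite polynomial and the algebra property of H\"older spaces, to H\"older regularity of $C_\varepsilon$ (from the kernel H\"older bound) and of $Y_\varepsilon=e^{-t}P_{t+\varepsilon}Y_0+\widetilde Y_\varepsilon$, treating the initial-data part with the heat-regularization lemmas and the stochastic convolution with second-moment increment bounds plus Kolmogorov on the metric space. The one place you do more than needed is the mixed $\mathcal C^{\delta_1}_t\mathcal C^{\delta_2}_x$ norm: no second Kolmogorov argument on the rectangular-increment field is required. Joint H\"older continuity already gives a rectangular-increment bound of $2C\min(|t-s|^{2\delta_1},d(x,x')^{2\delta_2})\le 2C|t-s|^{\delta_1}d(x,x')^{\delta_2}$, and this is exactly the paper's ``sequence of elementary inequalities''.
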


To construct renormalized Wick powers of $Y$, we follow the approach outlined in \cite{chandra2025non}, starting with the following definition.
\begin{definition}[{\cite[Definition 3.11]{chandra2025non}}] \label{def:c set}
Denote by $\mathfrak{C}$ the set of all $Y_0 \in\Binf{-\alpha}$ so that the stochastic processes $\{Y^{:k:}_t\}_{t \in [0,1]}$ for $k = 1,\dots, n_0$ converges in $\mathcal C([0,1];\Binf{-\alpha k})$ in $L^q_\Omega$ for every $q \in [1,\infty)$ and $\P$-almost surely as $\varepsilon \downarrow 0$.
\end{definition}

The following theorem is the main result of this subsection which shows the Wick power defined above is meaningful and has a limit in an appropriate Besov space. In the following theorem, we use the convention that $\frac{a}{0} = \infty$ for $a >0$.
\begin{theorem}\label{thm:Wick Power Convergence}
Let   $Y_0\sim \nu$, then
$
\underline{\mathbf Y}_{n_0}^\varepsilon:= \pr{Y^{:k:}_\varepsilon}_{k = 1}^{n_0}
$ converges in $ \bigoplus_{k = 1}^{n_0} \mathcal C\pr{[0,T];\Binf{-\alpha k }}$ for each $T>0$. More specifically, there exists $\underline{\mathbf Y}_{n_0}=(Y, Y^{:2:},\dots, Y^{:n_0:}) \in \bigoplus_{k = 1}^{n_0}\mathcal C\pr{\R_+;\Binf{-\alpha k }}$  so that for each $T>0$ and $p \ge 1$,
\begin{align*}
  \lim_{\varepsilon\downarrow 0} \mathbb E_{Y_0 \sim \nu} \left[\norm{Y_\varepsilon^{:n:}-Y^{:n:}}_{\mathcal C_T^\gamma \mathcal B^{-\alpha n}_\infty}^p\right] = 0,\qquad 1 \le n \le n_0,
\end{align*}
for all sufficiently small $\gamma>0$. In addition,  $\nu(\mathfrak{C}) = 1$.
\end{theorem}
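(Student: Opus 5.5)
The plan is the standard Da Prato--Debussche/Wiener-chaos argument: second-moment bounds on Littlewood--Paley blocks, hypercontractivity, then a Kolmogorov-type criterion in time. Since $Y_0\sim\nu$, the process $Y$ is stationary with covariance $\E[Y(t,x)Y(s,y)]$ given by the kernel $\int_0^\infty e^{-(|t-s|+2r)}p_{|t-s|+2r}(x,y)\,\mathrm dr$, up to constants. Hence $Y_\varepsilon(t,x)=P_\varepsilon Y_t(x)$ is a centered Gaussian field with covariance
\[
\E[Y_\varepsilon(t,x)Y_\varepsilon(s,y)]=\tfrac12\int_0^\infty e^{-(|t-s|+2r)}p_{|t-s|+2r+2\varepsilon}(x,y)\,\mathrm dr=:G^{t-s}_\varepsilon(x,y),
\]
and $G^0_\varepsilon(x,x)$ is (up to the normalisation of $C_\varepsilon$) the variance. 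Therefore $Y^{:n:}_\varepsilon(t,x)=H_n(Y_\varepsilon(t,x),C_\varepsilon(x))$ lies in the $n$-th homogeneous chaos, and Wick's theorem gives
\[
\E[Y^{:n:}_\varepsilon(t,x)Y^{:n:}_{\varepsilon'}(s,y)]=n!\,G^{t-s}_{\varepsilon,\varepsilon'}(x,y)^n,
\]
with the mixed kernel $p_{\cdots+\varepsilon+\varepsilon'}$. I would first check carefully that the Hermite variance $C_\varepsilon(x)$ matches the pointwise variance exactly, since the counterterm is spatially dependent.

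The key analytic input is a pointwise kernel bound from \eqref{ineq:sgu} and Lemma~\ref{lem:subg_moment}. Write $\rho=d(x,y)$. Then
\[
G^{u}_{\varepsilon}(x,y)\lesssim (\rho+u^{1/d_w}+\varepsilon^{1/d_w})^{-(d_h-d_w)}
\]
for $d_h>d_w$, with a logarithm when $d_h=d_w$. Raising to the $n$-th power gives $|G|^n\lesssim \rho^{-n(d_h-d_w)}$. This is integrable with respect to $\mu(\mathrm dy)$ near the diagonal by \eqref{ineq:a_reg} exactly when $n(d_h-d_w)<d_h$, i.e.\ $n\le n_0$; this is where $n_0$ enters.

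Next I would estimate Besov blocks via Proposition~\ref{prop:dyadic norm}. For $A_j=(t_jL)^kP_{t_j}$ with kernel $q_{k,t_j}$, I will bound
\[
\E|A_jY^{:n:}_\varepsilon(t)(x)|^2=n!\iint q_{k,t_j}(x,y)q_{k,t_j}(x,z)G(y,z)^n\,\mu(\mathrm dy)\mu(\mathrm dz).
\]
The plan is to use the cancellation $\int q=0$ from \eqref{eq:locality integral q equals 0} only mildly. Bounding crudely by \eqref{ineq: q sub gaussian bound}, $\iint |q||q|\rho^{-n(d_h-d_w)}\lesssim t_j^{-n(d_h-d_w)/d_w}=2^{2jn\alpha_0}$. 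This yields $\E|A_jY^{:n:}|^2\lesssim 2^{2jn\alpha_0}$.

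For the difference $Y^{:n:}_\varepsilon-Y^{:n:}_{\varepsilon'}$, I will use $|a^n-b^n|\le n|a-b|(|a|+|b|)^{n-1}$ together with a H\"older-in-$\varepsilon$ bound on the kernel. That bound is $|G_\varepsilon-G_{\varepsilon'}|\lesssim|\varepsilon-\varepsilon'|^{\kappa}\rho^{-(d_h-d_w)-d_w\kappa}$, obtained from $\partial_t p_t$ estimates (Proposition~\ref{prop:kernel_bound_dt}). It gives an extra factor $\varepsilon^{\kappa}2^{jd_w\kappa}$ at small cost $\kappa$. Similarly, for time increments I will use $|G^{u}-G^{u'}|\lesssim |u-u'|^{\kappa}\rho^{-(d_h-d_w)-d_w\kappa}$, again via $\partial_t p_t$. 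Nelson's estimate (Proposition~\ref{prop: Nelson estimate}) then upgrades all of these to $L^p_\Omega$.

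To pass to $L^\infty_M$ and sup over $j$, I would use the embedding $\mathcal B^{\beta}_{p,p}\hookrightarrow\mathcal B^{\beta-d_h/p}_\infty$ (Proposition~\ref{prop:besov_embed}), with $p$ large and $\beta=-\alpha n+d_h/p$. Summing $\sum_j 2^{jp(\beta+n\alpha_0+\kappa d_w)}<\infty$ works since $\alpha>\alpha_0$. The low-frequency term $P_1$ is handled the same way. Kolmogorov's continuity criterion in $t$ then gives convergence in $L^p_\Omega(\mathcal C^\gamma_T\mathcal B^{-\alpha n}_\infty)$ for small $\gamma$, with rate $\varepsilon^{\kappa}$. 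Taking $\varepsilon=2^{-m}$ and applying Borel--Cantelli gives almost sure convergence on $[0,1]$. Monotone interpolation between dyadic $\varepsilon$ (or the same moment bounds along arbitrary sequences) handles general $\varepsilon\downarrow0$. Conditioning on $Y_0$ then shows that $\nu$-a.e.\ $Y_0$ lies in $\mathfrak C$: the $L^q_\Omega$ and a.s.\ convergence hold for $\nu$-a.e.\ initial datum by Fubini.

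I expect the main obstacle to be the $\varepsilon$-increment kernel bound near the diagonal. There the time-derivative of $p$ must be controlled uniformly for small $\rho$ while keeping integrability $n(d_h-d_w)+d_w\kappa<d_h$. This is resolved by choosing $\kappa$ small, using the strict inequality defining $n_0$.
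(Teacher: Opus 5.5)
Your plan is essentially the paper's proof. Both use Wick's theorem to reduce everything to $n!\,\mathcal K_\varepsilon^n$, H\"older increments of the kernel in $t$ and in $\varepsilon$ paid for by an extra factor $d(y_1,y_2)^{-\delta}$, the domination $|q_{k,u}|\lesssim p_{cu}$, Nelson's estimate, bounds in $\mathcal B^{-\alpha n}_{2p,2p}$, Kolmogorov, and the embedding of Proposition~\ref{prop:besov_embed}. Both also work under the same constraint $n(d_h-d_w)+\delta<d_h$.

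Your technical variations are harmless:
\begin{itemize}
\item You integrate $d(y,z)^{-n(d_h-d_w)-\delta}$ against $|q_{k,u}(x,y)|\,|q_{k,u}(x,z)|$ directly using \eqref{ineq:a_reg}. The paper instead writes this power as $\int_0^1 s^{-((n-1)(d_h-d_w)+\delta)/d_w}p_s(y,z)\,\mathrm ds$ (Lemma~\ref{lem:greens function estimates}) and then uses Chapman--Kolmogorov and Lemma~\ref{lem:imcomplete beta function bounds}.
\item You obtain $\varepsilon$-increments from the $\partial_t p_t$ bounds of Proposition~\ref{prop:kernel_bound_dt}. The paper uses the identity $\mathcal K_{\varepsilon'}(t,t,\cdot,\cdot)=e^{2(\varepsilon'-\varepsilon)}\mathcal K_\varepsilon(t,t+2(\varepsilon'-\varepsilon),\cdot,\cdot)$ to reduce them to time increments.
\end{itemize}
Your insistence that $C_\varepsilon(x)$ equal the exact variance $\int_0^\infty e^{-2r}p_{2r+2\varepsilon}(x,x)\,\mathrm dr$ is well placed, because the chaos identity $\E[Y^{:n:}_\varepsilon Y^{:n:}_{\varepsilon'}]=n!\,(\cdot)^n$ needs it exactly. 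Note that your displayed covariance carries a stray factor $\tfrac12$, and the constant in \eqref{eq:green function} does not match this variance either.

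The one step that fails as written is the passage from dyadic $\varepsilon=2^{-m}$ to $\varepsilon\downarrow0$ almost surely. There is no monotonicity of $\varepsilon\mapsto Y^{:n:}_\varepsilon$ to interpolate with. Moment bounds along arbitrary sequences only give almost sure convergence along each fixed fast sequence. Definition~\ref{def:c set}, and hence $\nu(\mathfrak C)=1$, requires almost sure convergence along the continuum.

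The paper's remedy uses only ingredients you already have:
\begin{itemize}
\item Set $Z:=Y^{:n:}_\varepsilon-Y^{:n:}_{\varepsilon'}$. Taking geometric means of your $t$-increment and $\varepsilon$-increment bounds gives $\E\bigl[\norm{Z_t-Z_{t'}}^{2p}_{\mathcal B^{-\alpha' n}_{2p,2p}}\bigr]^{1/2p}\lesssim|\varepsilon-\varepsilon'|^{\kappa'}|t-t'|^{\kappa'}$. This mixed bound is also what you need to get a rate in the $\mathcal C^\gamma_T$ norm rather than pointwise in $t$.
\item Apply Kolmogorov first in $t$ and then in $\varepsilon$. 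This yields $\E\bigl[\sup_{0<\varepsilon<\varepsilon'\le1}|\varepsilon'-\varepsilon|^{-\gamma'}\norm{Y^{:n:}_\varepsilon-Y^{:n:}_{\varepsilon'}}^{2p}_{\mathcal C^\gamma_T\mathcal B^{-\alpha n}_\infty}\bigr]<\infty$ for every $p$.
\item Since $\varepsilon\mapsto Y^{:n:}_\varepsilon$ is already continuous for $\varepsilon>0$, the modification is the process itself. This gives almost sure and $L^q_\Omega$ Cauchy behaviour as $\varepsilon\downarrow0$.
\end{itemize}
After that, your conditioning-plus-Fubini argument gives $\nu(\mathfrak C)=1$ as you describe.
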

%\begin{remark}
Note that $Y^{:1:} = Y$ in Theorem \ref{thm:Wick Power Convergence}. In light of Theorem \ref{thm:Wick Power Convergence}, we \textit{define the Wick powers} of $Y$ as the limit of $Y^{:n:}_\varepsilon$ as $\varepsilon \downarrow 0$. We will need a few lemmas to prove it.
%\end{remark}
\begin{remark}
Let $Y_0 \in \Binf{-\alpha}$ so that $Y \in \mathcal C(\R_+; \Binf{-\alpha})$ is the solution to \eqref{eq: EW eqn}, then  $Y^{:n:}_\varepsilon$ is equal to the projection of $Y^n_\varepsilon$ onto $\mathcal{H}^n$ (c.f. \cite[Exercise 1.1.1]{nualart2006malliavin}).
\end{remark}
\noindent The following Lemma is a consequence of semi-group property.
\begin{lemma}\label{lem: q_ku decomposition}
For $u>0$ and $k \in \N$, we have
\begin{align*}
  q_{k,u}(x,y) = \int_M q_{k,u/2}(x,z) p_{u/2}(z,y) \mu(\mathrm dz).
\end{align*}
\end{lemma}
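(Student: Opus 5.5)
The identity will come from the semigroup property $P_u = P_{u/2}P_{u/2}$, differentiated $k$ times in the time variable. The plan is to first prove an exact kernel identity for the unweighted derivatives $\partial_s^k p_s$, and then attach the time weights that appear in the definition of $q_{k,\cdot}$ in Proposition~\ref{prop:kernel_bound_dt}. I flag now that this weighting step is where the main difficulty lies: with the literal normalization $q_{k,t}=t^k\partial_t^k p_t$ the two sides differ by a constant factor.

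\textbf{Step 1 (exact identity for unweighted derivatives).} Fix $x,y\in M$ and $u>0$. By the Chapman--Kolmogorov identity, for every $s\in(0,u)$,
\begin{align*}
  p_u(x,y)=\int_M p_s(x,z)\,p_{u-s}(z,y)\,\mu(\mathrm dz).
\end{align*}
I would differentiate $k$ times in $u$ while holding $u-s=u/2$ frozen, i.e.\ differentiate only in the first time slot. The justification is that $t\mapsto p_t(x,z)$ is smooth for $t>0$. The bound \eqref{ineq: q sub gaussian bound} gives $|\partial_t^k p_t(x,z)|\lesssim t^{-k-d_h/d_w}$ uniformly in $z$ for $t$ in compact subsets of $(0,\infty)$, and $\int_M p_{u/2}(z,y)\,\mu(\mathrm dz)=1$ by \eqref{eq:s_complete}. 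Hence dominated convergence allows differentiation under the integral. Since only the first factor carries the $u$-dependence in $p_u=P_sP_{u-s}$, and $\partial_u p_u$ equals $-Lp_u$ whichever slot is differentiated, I obtain the exact identity
\begin{align*}
  \pr{\partial_t^k p_t}\big|_{t=u}(x,y)=\int_M \pr{\partial_s^k p_s}\big|_{s=u/2}(x,z)\,p_{u/2}(z,y)\,\mu(\mathrm dz).
\end{align*}
At the operator level this is simply $L^ke^{-uL}=\pr{L^ke^{-\frac u2 L}}e^{-\frac u2 L}$.

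\textbf{Step 2 (attaching the weights).} Multiply both sides by $u^k$. The left side becomes $u^k\partial_u^kp_u=q_{k,u}(x,y)$. On the right side, the first factor becomes $u^k\,\partial_s^k p_s|_{s=u/2}$. This is the kernel of $u^kL^ke^{-\frac u2L}$, the $k$-th order $Q$-type kernel at time $u/2$ carrying the weight of the outer time $u$. Read with this weight, the statement of the lemma is exactly Step~1 multiplied by $u^k$, and there is nothing further to prove.

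\textbf{Main obstacle: the normalization.} The step I expect to be the real issue is reconciling this weight with the literal formula $q_{k,t}=t^k\partial_t^kp_t$ evaluated at $t=u/2$, which carries $(u/2)^k$ rather than $u^k$. Under that reading the right side of the lemma equals $2^{-k}q_{k,u}(x,y)$, so the displayed identity holds exactly when $k=0$, and for $k\ge1$ only once the prefactor of $q_{k,u/2}$ is interpreted as $u^k$. I would therefore write the proof with the convention from Step~2 stated explicitly, namely that $q_{k,u/2}$ denotes $u^k\partial_s^kp_s|_{s=u/2}$, and record the exact identity obtained in Step~1. I would also check every later use of the lemma in the Wick power estimates. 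By \eqref{ineq: q sub gaussian bound} and \eqref{ineq: q holder continuous}, the first factor satisfies the same sub-Gaussian and H\"older bounds as $q_{k,u/2}$ up to the fixed constant $2^k$. Since those later uses only invoke such bounds, the constant there affects implicit constants only, not exponents.
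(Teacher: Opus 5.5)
Your argument is the semigroup (Chapman--Kolmogorov) argument the paper has in mind. Your Step~1 identity, the kernel form of $L^k e^{-uL}=(L^k e^{-uL/2})e^{-uL/2}$, is correct and adequately justified.

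Your diagnosis of the normalization is also right. Since $Q^{(k)}_{u/2}P_{u/2}=2^{-k}Q^{(k)}_u$, the displayed identity is off by exactly $2^k$. Because $\N$ excludes $0$ in this paper, it in fact fails for every admissible $k$, not just generically. The true statement is $q_{k,u}(x,y)=2^k\int_M q_{k,u/2}(x,z)\,p_{u/2}(z,y)\,\mu(\mathrm dz)$. This is the form the paper itself uses in the appendix proof of Proposition~\ref{prop:kernel_bound_dt}, where it writes $Q^{(k)}_t=2^k(tL/2)^kP_{t/2}P_{t/2}$.

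Rather than redefining $q_{k,u/2}$ to carry the weight $u^k$, which silently changes a symbol already fixed in Proposition~\ref{prop:kernel_bound_dt}, just prove and record this corrected identity. As you note, only kernel bounds are used downstream, so the constant is immaterial.
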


\begin{lemma}\label{lem:greens function estimates}
Denote $r_0:= \mathrm{diam}(M)$
\begin{itemize}
  \item Suppose $a> d_w - d_h$. The following inequality holds uniformly in  $y_1,\, y_2 \in M$ with $y_1 \neq y_2$,
    \begin{align*}
      \int_0^\infty s^{-a/d_w} e^{-s}p_s(y_1,y_2) \mathrm ds \lesssim d(y_1,y_2)^{d_w - d_h -a}.
    \end{align*}
  \item Suppose $a = d_w - d_h$. For any $\varepsilon >0$, the following inequality holds uniformly in $t \in (0,T]$ and $y_1,y_2 \in M$ with $d(y_1, y_2) >0$,
    \begin{align*}
      \int_0^\infty t^{-a/d_w} e^{-t}p_t(y_1,y_2) \mathrm ds \asymp 1+\log\pr{d(y_1,y_2)^{-1}}.
    \end{align*}
  \item For any $a \in \R$, the following inequality holds uniformly in $y_1,y_2 \in M$ with $d(y_1,y_2) >0$,
    \begin{align*}
      d(y_1,y_2)^{d_w - d_h - a} \lesssim  \int_0^1 t^{-a/d_w} p_t(y_1,y_2) \mathrm dt \lesssim \int_0^\infty t^{-a/d_w} e^{-t}p_t(y_1,y_2) \mathrm ds
    \end{align*}
\end{itemize}
\end{lemma}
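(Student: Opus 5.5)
The plan is to integrate the two-sided sub-Gaussian bound \eqref{ineq:hk_subg_compact_form} (for $t\ge1$, Lemma~\ref{lem:long time heat kernel bounds} gives $p_t\asymp 1$) against $s^{-a/d_w}$ in time, splitting at the natural scale $s\asymp r^{d_w}$, where $r:=d(y_1,y_2)\le r_0$. For $s\le 1$ the integrand is bounded by $s^{-(a+d_h)/d_w}\Phi(C_2 r/s^{1/d_w})$. For $s\ge1$ we use $p_s\lesssim1$ together with the factor $e^{-s}$.

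For the first item ($a>d_w-d_h$), split $\int_0^\infty$ into $\int_0^{r^{d_w}}$, $\int_{r^{d_w}}^1$ and $\int_1^\infty$.
\begin{itemize}
\item On $(0,r^{d_w}]$, substitute $u=r/s^{1/d_w}\ge1$. This gives $s=r^{d_w}u^{-d_w}$ and $\mathrm ds/s=d_w\,\mathrm du/u$, so the integral equals $r^{d_w-d_h-a}\int_1^\infty u^{a+d_h-d_w}\Phi(C_2u)\,\mathrm du/u$. This is finite because $\Phi$ decays stretched-exponentially.
\item On $(r^{d_w},1]$, bound $\Phi$ by $1$. Since $(a+d_h)/d_w>1$ by hypothesis, the integral is at most a constant times $\int_{r^{d_w}}^\infty s^{-(a+d_h)/d_w}\,\mathrm ds\lesssim r^{d_w-d_h-a}$.
\item On $[1,\infty)$, the integral is $O(1)$. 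This is $\lesssim r^{d_w-d_h-a}$ because the exponent $d_w-d_h-a$ is negative and $r\le r_0$.
\end{itemize}

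For the second item ($a=d_w-d_h$, so $(a+d_h)/d_w=1$), the same split works.
\begin{itemize}
\item On $(0,r^{d_w}]$, the rescaled integral is $\int_1^\infty\Phi(C_2u)\,\mathrm du/u\asymp1$.
\item On $[r^{d_w},1]$, use $\Phi(C r/s^{1/d_w})\asymp1$ for $s\ge r^{d_w}$, valid for both $C_1$ and $C_2$. The integral is then $\asymp\int_{r^{d_w}}^1 \mathrm ds/s=d_w\log(1/r)$.
\item On $[1,\infty)$, $p_s\asymp1$ gives a contribution $\asymp1$.
\end{itemize}
Adding these yields $\asymp1+\log(r^{-1})$. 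The reading of the variable $t$ in the statement is immaterial; it is the integration variable. When $r\ge1$ the middle piece is empty, and the result is consistent because $\log(1/r)$ is bounded there.

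For the third item, the second inequality is immediate: on $[0,1]$ we have $e^{-s}\ge e^{-1}$ and the integrand is nonnegative. For the lower bound, first treat $r\le 1/2$. Restrict to $s\in[r^{d_w},(2r)^{d_w}]\subset(0,1]$ and use the lower bound in \eqref{ineq:sgu}. There $\Phi(C_1 r/s^{1/d_w})\ge\Phi(C_1)>0$, and $s\asymp r^{d_w}$, so the integrand is $\gtrsim r^{-a-d_h}$ on an interval of length $\asymp r^{d_w}$. This gives $r^{d_w-d_h-a}$, with constants depending on $a$ but valid for any sign of $a$. For $r\in[1/2,r_0]$, use instead $s\in[1/2,1]$ and $\Phi(C_1 r_0 2^{1/d_w})>0$ to get a positive lower bound; this is $\asymp r^{d_w-d_h-a}$ since $r$ is bounded above and below.

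The only mildly delicate point is the first item. There one needs the convergence of $\int_{r^{d_w}}^\infty s^{-(a+d_h)/d_w}\,\mathrm ds$, which is exactly where $a>d_w-d_h$ enters, and one needs compactness ($r\le r_0$) to absorb the long-time piece.
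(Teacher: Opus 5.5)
Your proposal is correct and follows essentially the paper's route: integrate the two-sided sub-Gaussian bounds in time at the scale $s\asymp r^{d_w}$ via $u=r s^{-1/d_w}$. The paper instead makes one change of variables over the whole short-time range, and for the third item lower-bounds by the tail $u\ge r_0$ rather than your window $[r^{d_w},(2r)^{d_w}]$; these differences are cosmetic. One caveat, shared with the paper (whose computation actually produces $1+\log(r_0/r)$): for $r\in[1,r_0]$ the upper bound in the second item needs $1+\log(1/r)$ to stay bounded away from $0$, not merely bounded. So that item holds as stated only when $r_0<e$, and otherwise its right-hand side should be read as $1+\log(r_0/r)$.
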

\begin{proof}
Let $y_1,y_2 \in M$ and denote $r := d(y_1,y_2)$. Suppose first that $a > d_w - d_h$. Using the change of variable with $u = rs^{-\frac{1}{d_w}}$, we see that
\begin{align*}
  \int_0^\infty s^{-a/d_w}e^{-s} p_s(y_1,y_2) \mathrm ds &\overset{\eqref{ineq:sgu}}{\asymp} \int_0^{r_0}  s^{-(d_h+a)/d_w} \exp\left( -c \left(r / s^{1/d_w} \right)^\frac{d_w}{d_w-1}\right)\mathrm ds + 1\\
  &\lesssim  r^{d_w - d_h - a} \int_{r\cdot r_0^{-\frac{1}{d_w}}}^\infty u^{d_h +a - d_w } \exp \left(- c u^\frac{d_w}{d_w - 1}\right)\frac{\mathrm du}{u}+1\\
  &\lesssim r^{d_w - d_h - a} \int_{0}^\infty u^{d_h +a - d_w } \exp \left(- c u^\frac{d_w}{d_w - 1}\right)\frac{\mathrm du}{u} +1\\
  &\lesssim r^{d_w - d_h -a},
\end{align*}
where we've used the fact that $d_w - d_h - a<0$ and $0<r\le r_0<\infty$.

Suppose now $a =  d_w - d_h$, then by another change of variable, we see for any $T>0$ and $\varepsilon>0$, the following inequality holds uniformly in $t \in (0,T]$ and $y_1,y_2 \in M$ with $d(y_1,y_2)>0$:
\begin{align*}
  \int_0^\infty s^{-a/d_w}e^{-s} p_s(y_1,y_2) \mathrm ds &\lesssim \pr{\int_0^{r^{d_w}} +\int_{r^{d_w}}^{r_0^{d_w}}} s^{-1} \exp \pr{-c \pr{r/s^\frac{1}{d_w}}^{\frac{d_w}{d_w-1}}}\mathrm ds +1\\
  &\lesssim \int_1^\infty \exp \pr{-cu^\frac{d_w}{d_w - 1}} \frac{du}{u} + \int_{r/r_0}^{1} \frac{\mathrm du}{u} + 1\\
  &\lesssim 1+ \log\pr{\frac{1}{r}}.
\end{align*}
For the last inequality, we have
\begin{align*}
  \int_0^\infty t^{-a/d_w}e^{-t} p_t(y_1,y_2) \mathrm dt &\gtrsim \int_0^1 t^{-a/d_w} p_t(y_1,y_2) \mathrm dt \\
  &\overset{\eqref{ineq:sgu}}{\gtrsim} \int_0^1 t^{-(a+d_h)/d_w}\exp\left(- C \left( \frac{r}{t^\frac{1}{d_w}}\right)^\frac{d_w}{d_w - 1} \right) dt\\
  &\gtrsim r^{d_w - d_h - a} \int_r^\infty t^{d_h +a-d_w - 1} \exp\left( -C t^\frac{d_w}{d_w - 1}\right)\mathrm dt\\
  &\gtrsim r^{d_w - d_h -a} \int_{r_0}^\infty t^{d_h +a-d_w - 1} \exp \left( -C t^\frac{d_w}{d_w - 1}\right)\mathrm dt\\
  &\gtrsim r^{d_w - d_h - a}.
\end{align*}
Furthermore, if $a = d_w - d_h$, then
\begin{align*}
  \int_0^\infty t^{-a/d_w}e^{-t} p_t(y_1,y_2) \mathrm dt &\gtrsim \int_0^{r_0} t^{-a/d_w}p_t(y_1,y_2) \mathrm dt\\
  &\gtrsim \int_{r/r_0}^\infty \exp \pr{-ct^\frac{d_w}{d_w - 1}}\mathrm dt\\
  &\gtrsim \int_{r/r_0}^1 \frac{\mathrm dt}{t} + \int_1^\infty \exp\pr{-ct^\frac{d_w}{d_w -1}}\mathrm dt\\
  &\gtrsim 1+ \log\pr{\frac{1}{r}}.
\end{align*}
\end{proof}

\begin{lemma}\label{lem: continuity of time integral of heat kernel}
For $\varepsilon >0$, $i = 1,2$ and $(t_i,y_i) \in \R_+ \times M$, let $$\mathcal K_\varepsilon(t_1,t_2,y_1,y_2) := \int_{-\infty}^{t_1 \wedge t_2}e^{-(t_1+t_2-2s)}p_{t_1+t_2+2\varepsilon - 2s}(y_1,y_2) \mathrm ds.$$
Then it holds  uniformly for $y_1\neq y_2 \in M$, $0 \le t_1,t_2 $ and $\varepsilon \in [0,1)$ that
\begin{align}\label{ineq: cal K bounds}
  \mathcal K_\varepsilon(t_1,t_2,y_1,y_2) \lesssim
  \begin{cases}
    d(y_1,y_2)^{d_w - d_h} &\text{ if }d_w < d_h\\
    1+\log\pr{\frac{r_0}{r}} &\text{ if }d_w = d_h,
  \end{cases}
\end{align}
where $r_0:= \mathrm{diam}(M)$.
For  $\delta \in (0,d_w]$, it holds uniformly for $y_1,y_2 \in M$, $\varepsilon,\varepsilon' \in (0,1]$ and $0 \le t_1,t_2 $ that
\begin{equation}
  \begin{aligned}\label{ineq: cal k time holder}
    \left|\mathcal K_\varepsilon(t_1,t_1,y_1,y_2) - \mathcal K_\varepsilon(t_1,t_2,y_1,y_2) \right| &\lesssim d(y_1,y_2)^{d_w-d_h - \delta }|t_1 - t_2|^\frac{\delta}{d_w},\\
    \abs{\mathcal K_\varepsilon(t_1,t_1,y_1,y_2) - \mathcal K_{\varepsilon'}(t_1,t_1,y_1,y_2)} &\lesssim d(y_1,y_2)^{d_w - d_h - \delta}\abs{\varepsilon' - \varepsilon }^\frac{\delta}{d_w}.
  \end{aligned}
\end{equation}
\end{lemma}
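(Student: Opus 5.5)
We need to prove Lemma \ref{lem: continuity of time integral of heat kernel}: a bound on $\mathcal K_\varepsilon$, plus Hölder continuity in time and in $\varepsilon$.

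\textbf{Uniform bound.} Substitute $u = t_1+t_2+2\varepsilon-2s$. Since $s\le t_1\wedge t_2$, we have $u \ge |t_1-t_2|+2\varepsilon \ge 0$. Then
\[
\mathcal K_\varepsilon(t_1,t_2,y_1,y_2)=\tfrac12 e^{2\varepsilon}\int_{|t_1-t_2|+2\varepsilon}^\infty e^{-u}p_u(y_1,y_2)\,\mathrm du \le \tfrac{e^{2}}{2}\int_0^\infty e^{-u}p_u(y_1,y_2)\,\mathrm du .
\]
By positivity of $p_u$ the integral may be extended to $(0,\infty)$. Lemma \ref{lem:greens function estimates} with $a=0$ then gives \eqref{ineq: cal K bounds}: the first bullet applies when $d_w<d_h$, since $0>d_w-d_h$, and the second bullet applies when $d_h=d_w$.

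\textbf{Time Hölder bound.} In the same variables, the difference is
\[
\tfrac12 e^{2\varepsilon}\int_{2\varepsilon}^{2\varepsilon+|t_1-t_2|} e^{-u}p_u(y_1,y_2)\,\mathrm du .
\]
On this interval we bound $1 \le \bigl(|t_1-t_2|/u\bigr)^{\delta/d_w}$, using $u\le 2\varepsilon+|t_1-t_2|$ and $u \ge |t_1-t_2|$. Strictly, on $[2\varepsilon, 2\varepsilon+h]$ with $h=|t_1-t_2|$ one only has $u \le 2\varepsilon+h$, so the cleaner route is to write $1 \le (h/(u-2\varepsilon))^{\delta/d_w}$, which is awkward near $u=2\varepsilon$.

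Better: bound the integral over an interval of length $h$ directly by $h^{\delta/d_w}\int_0^\infty u^{-\delta/d_w}e^{-u}p_u\,\mathrm du$. For $u\ge h$ this follows from $1\le (h/u)^{\delta/d_w}$ failing the wrong way, so instead split into two cases. If $u\le h$, use $1\le (h/u)^{\delta/d_w}$. If $u>h$, note the interval $[2\varepsilon,2\varepsilon+h]$ has length $h$ and $e^{-u}p_u\lesssim u^{-\delta/d_w}\cdot u^{\delta/d_w}p_u$.

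The cleanest path is an interpolation. On one hand the difference is at most $\int_{2\varepsilon}^{2\varepsilon+h} p_u\,\mathrm du \le h\sup_{u} e^{-u}p_u(y_1,y_2)$. By \eqref{ineq:sgu}, $\sup_u u^{-d_h/d_w}\Phi\bigl(c\,d(y_1,y_2)/u^{1/d_w}\bigr)\lesssim d^{-d_h}$, so this is $\lesssim h\,d^{-d_h}$. On the other hand the difference is $\lesssim d^{d_w-d_h}$ by the first step. Taking the minimum, raising the first bound to the power $\delta/d_w$ and the second to $1-\delta/d_w$, gives
\[
\min\{h\,d^{-d_h},\, d^{d_w-d_h}\} \le \bigl(h\,d^{-d_h}\bigr)^{\delta/d_w}\bigl(d^{d_w-d_h}\bigr)^{1-\delta/d_w} = h^{\delta/d_w}\,d^{d_w-d_h-\delta},
\]
for $\delta\in(0,d_w]$.

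When $d_h=d_w$, the second bound is logarithmic. We absorb it by $1+\log(r_0/d)\lesssim d^{-\eta}$ for arbitrarily small $\eta>0$, and this $\eta$ is reabsorbed through the choice of $\delta$. This borderline case is the main obstacle; I would handle it by slightly perturbing the exponents, at worst losing an arbitrarily small amount of regularity.

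\textbf{Hölder bound in $\varepsilon$.} Here
\[
\mathcal K_\varepsilon(t_1,t_1,\cdot)-\mathcal K_{\varepsilon'}(t_1,t_1,\cdot)=\tfrac12\Bigl(e^{2\varepsilon}\int_{2\varepsilon}^\infty-e^{2\varepsilon'}\int_{2\varepsilon'}^\infty\Bigr)e^{-u}p_u\,\mathrm du .
\]
The integral over the window between $2\varepsilon$ and $2\varepsilon'$ is treated exactly as in the time case, with $h=2|\varepsilon-\varepsilon'|$. The prefactor difference $|e^{2\varepsilon}-e^{2\varepsilon'}|\lesssim|\varepsilon-\varepsilon'|$ multiplies the uniform bound $d^{d_w-d_h}$. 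Since $|\varepsilon-\varepsilon'|\le 1$ and $d\le r_0$, this is $\lesssim |\varepsilon-\varepsilon'|^{\delta/d_w}d^{d_w-d_h-\delta}$, which completes the argument.
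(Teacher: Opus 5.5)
Your uniform bound and your decomposition of the $\varepsilon$-difference (window term plus prefactor term) match the paper's. For $d_h>d_w$, your final interpolation in the time step is correct. With $h=|t_1-t_2|$ and $d=d(y_1,y_2)$,
\[
\min\{h\,d^{-d_h},\,d^{d_w-d_h}\}\le (h\,d^{-d_h})^{\delta/d_w}(d^{d_w-d_h})^{1-\delta/d_w}=h^{\delta/d_w}d^{d_w-d_h-\delta},
\]
and this is a legitimate alternative to the paper's route. The earlier attempts in that paragraph are wrong and should be deleted, e.g.\ $1\le (h/u)^{\delta/d_w}$ ``using $u\ge h$''.

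The genuine gap is the borderline case $d_h=d_w$ with $\delta\in(0,d_w)$, which the lemma explicitly covers. There your two inputs are $h\,d^{-d_w}$ and $1+\log(r_0/d)$, and no perturbation of exponents extracts $h^{\delta/d_w}d^{-\delta}$ from them. Take $h=d^{d_w}\ell$ with $\ell=1+\log(r_0/d)$. Both bounds are then of size $\ell$, while the target is $\ell^{\delta/d_w}$, so the ratio $\ell^{1-\delta/d_w}$ blows up as $d\to 0$. Since $\delta$ is fixed in the statement and $h\ge 0$ varies independently of $d$, ``reabsorbing $\eta$ through the choice of $\delta$'' is not available. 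You would only prove a weaker statement with an $\eta$-loss. The underlying problem is that the logarithmic bound comes from integrating over all of $(0,\infty)$, so it is not adapted to the window scale $h$.

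The fix, which is what the paper does, is to interpolate pointwise in $u$ before integrating. Put $v=d\,u^{-1/d_w}$. For $u\in(0,1]$, \eqref{ineq:sgu} gives
\[
u^{-d_h/d_w}\Phi\pr{c\,d\,u^{-1/d_w}}=d^{d_w-d_h-\delta}\,u^{\delta/d_w-1}\,v^{d_h-d_w+\delta}\Phi(cv)\lesssim d^{d_w-d_h-\delta}\,u^{\delta/d_w-1}.
\]
For $u\ge 1$ the same bound on $e^{-u}p_u(y_1,y_2)$ holds trivially, since $d\le r_0$. Because $u\mapsto u^{\delta/d_w-1}$ is nonincreasing, its integral over any window of length $h$ is at most $\int_0^h u^{\delta/d_w-1}\,\mathrm du=\frac{d_w}{\delta}h^{\delta/d_w}$. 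The paper phrases this via $e^{-a}\lesssim a^{-p}$ with $p=(d_h-d_w+\delta)\frac{d_w-1}{d_w}$. This handles $d_h=d_w$ and $d_h>d_w$ simultaneously, with no logarithm. It also repairs the window term in your $\varepsilon$-step. Your prefactor term there is fine as written, using $1+\log(r_0/d)\lesssim d^{-\delta}$ and $|\varepsilon-\varepsilon'|\le|\varepsilon-\varepsilon'|^{\delta/d_w}$.
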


\begin{proof}
Let us denote $r := d(y_1,y_2)$, we see by Lemma \ref{lem:greens function estimates} that uniformly in $\varepsilon \in[0,1]$, $0<t_1,t_2$ and $y_1,y_2 \in M$ with $d(y_1,y_2) >0$,
\begin{align*}
  \mathcal K_\varepsilon(t_1,t_2,,y_1,y_2) = \frac{e^{2\varepsilon}}{2}\int_{|t_1 - t_2|+2\varepsilon}^{\infty}e^{-s} p_{s}(y_1,y_2) ds
  \lesssim \int_{0}^{\infty}e^{-s} p_{s}(y_1,y_2) ds
  \lesssim
  \begin{cases}
    r^{d_w - d_h},&\text{ if }d_h>d_w\\
    1+\log\pr{\frac{r_0}{r}},&\text{ if }d_h = d_w
  \end{cases}
  ,
\end{align*}
which gives \eqref{ineq: cal K bounds}.

For \eqref{ineq: cal k time holder},  let $\delta\in (0,d_w)$ and $\eta_\delta := (d_h - d_w+\delta) \frac{d_w - 1}{d_w}$. Observe that by a change of variable the following inequality holds uniformly in $\varepsilon \in(0,1]$, $0<t_1,t_2 $ and $y_1\neq y_2 \in M$:
\begin{align*}
  \left|\mathcal K_\varepsilon(t_1,t_1,y_1,y_2) - \mathcal K_\varepsilon(t_1,t_2,y_1,y_2) \right| &= \frac{e^{2\varepsilon}}{2} \int_{2\varepsilon}^{\abs{t_1-t_2}+2\varepsilon} e^{-s}p_s(y_1,y_2) \mathrm ds \\
  &\lesssim \int_{2\varepsilon}^{\abs{t_1-t_2}+2\varepsilon} s^{-\frac{d_h}{d_w}}\exp \pr{-cr^\frac{d_w}{d_w-1}s^{- \frac{1}{d_w - 1}}}\mathrm ds\\
  &\lesssim r^{-\frac{d_w}{d_w - 1}\eta_\delta} \int_{2\varepsilon}^{\abs{t_1-t_2}+2\varepsilon} s^{-\frac{d_h}{d_w}}s^{\frac{1}{d_w-1}\eta_\delta} \mathrm ds\\
  &=r^{d_w - d_h - \delta}\int_{2\varepsilon}^{\abs{t_1-t_2   } + 2\varepsilon} s^\frac{\delta}{d_w} \frac{\mathrm ds}{s}\\
  &\lesssim r^{d_w - d_h - \delta}\int_{0}^{\abs{t_1-t_2   } } s^\frac{\delta}{d_w} \frac{\mathrm ds}{s}\\
  &\lesssim r^{d_w - d_h - \delta} \abs{t_1-t_2}^\frac{\delta}{d_w}.
\end{align*}
Where we used the elementary fact that for every $p>0$, $\exp \left(-a \right) \lesssim a^{-p}$ uniformly in $a >0$ in the second and third line. Finally, we observe that for $1\ge \varepsilon' > \varepsilon >0$, and $t_2 := t_1+2(\varepsilon'-\varepsilon)$,
\begin{align*}
  \mathcal K_{\varepsilon'}(t_1,t_1,y_1,y_2) = e^{2(\varepsilon'-\varepsilon)}\int_{-\infty}^{t_1\wedge t_2} e^{-(t_1+t_2-2s )}p_{t_1+t_2 +2\varepsilon}(y_1,y_2) \mathrm ds =e^{2(\varepsilon'-\varepsilon)} \mathcal K_{\varepsilon}(t_1,t_2, y_1,y_2).
\end{align*}
Therefore, by the first inequality in \eqref{ineq: cal k time holder}, it holds uniformly in $t_1\ge 0$, $y_1,y_2 \in M$ and $\varepsilon, \varepsilon' \in (0,1]$ that
\begin{align*}
  \abs{\mathcal K_\varepsilon(t_1,t_1,y_1,y_2) - \mathcal K_{\varepsilon'}(t_1,t_1,y_1,y_2)}&\le \abs{\mathcal K_\varepsilon(t_1,t_1,y_1,y_2) - \mathcal K_{\varepsilon}(t_1,t_2,y_1,y_2)}\\
  &\qquad+\abs{\mathcal K_\varepsilon(t_1,t_2, y_1,y_2)} \abs{e^{2(\varepsilon' - \varepsilon)} -1}\\
  &\lesssim d(y_1,y_2)^{d_w - d_h - \delta}\abs{\varepsilon' - \varepsilon}^\frac{\delta}{d_w} + \abs{\varepsilon' - \varepsilon}\abs{\mathcal K_\varepsilon(t_1,t_2, y_1,y_2)}\\
  &\overset{\eqref{ineq: cal K bounds}     }{\lesssim} d(y_1,y_2)^{d_w - d_h - \delta}\abs{\varepsilon' - \varepsilon}^\frac{\delta}{d_w},
\end{align*}
where we used the fact that $d_w>\delta>0$ and $1+\log{r_0/r} \lesssim r^{- \delta}$ uniformly in $0<r\le r_0$.
\end{proof}

We will also need the following elementary result, whose proof can be found in the Appendix.
\begin{lemma}\label{lem:imcomplete beta function bounds}
Let $a \in [0,1)$ and $b \in \R$, it holds uniformly in $0<u\le 1$ that
\begin{align*}
  \mathrm I(u):=\int_0^1 s^{-a}(u+s)^{-b} \mathrm ds \asymp
  \begin{cases}
    1,& \text{for } a+b<1\\
    1+\log(1/u), &\text{for } a+b = 1\\
    u^{1-a-b},&\text{for }a+b>1
  \end{cases}
\end{align*}
\end{lemma}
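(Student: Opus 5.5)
The plan is to split the unit interval at the scale $s=u$ and compare the integrand with a pure power of $s$ on each piece. On $[0,u]$ we have $u\le u+s\le 2u$, so $(u+s)^{-b}\asymp u^{-b}$ uniformly, whatever the sign of $b$. This gives
\begin{align*}
\int_0^u s^{-a}(u+s)^{-b}\,\mathrm ds \asymp u^{-b}\int_0^u s^{-a}\,\mathrm ds = \frac{u^{1-a-b}}{1-a},
\end{align*}
which is finite because $a<1$. On $[u,1]$ we have $s\le u+s\le 2s$, so $(u+s)^{-b}\asymp s^{-b}$, and hence
\begin{align*}
\int_u^1 s^{-a}(u+s)^{-b}\,\mathrm ds\asymp\int_u^1 s^{-a-b}\,\mathrm ds.
\end{align*}
Both comparisons hold uniformly in $u\in(0,1]$ because they are two-sided with constants depending only on $b$. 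Therefore $\mathrm I(u)\asymp u^{1-a-b}+\int_u^1 s^{-a-b}\,\mathrm ds$, and it remains to evaluate this sum in three cases according to $\kappa:=1-a-b$.

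If $\kappa>0$, then $\int_u^1 s^{\kappa-1}\,\mathrm ds=(1-u^{\kappa})/\kappa$. Adding $u^{\kappa}\le1$ gives a quantity between $\min(1,1/\kappa)$ and $1+1/\kappa$, so it is $\asymp1$; the lower bound uses $u^\kappa+(1-u^\kappa)/\kappa\ge\min(1,1/\kappa)$.

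If $\kappa=0$, the second integral equals $\log(1/u)$ and the first term equals $1$, which gives $1+\log(1/u)$.

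If $\kappa<0$, then $\int_u^1 s^{\kappa-1}\,\mathrm ds=(u^{\kappa}-1)/|\kappa|\le u^\kappa/|\kappa|$, while the first term contributes exactly $u^{\kappa}$. The sum is therefore squeezed between $u^\kappa$ and $(1+1/|\kappa|)u^\kappa$.

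There is no real obstacle. The only care needed is to check that the comparison $(u+s)^{-b}\asymp u^{-b}$ on $[0,u]$ (respectively $\asymp s^{-b}$ on $[u,1]$) holds for negative $b$ as well as positive $b$; it does, with constant $2^{|b|}$. It is also worth noting that the hypothesis $a<1$ is exactly what makes the piece near $s=0$ integrable.
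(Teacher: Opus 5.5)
Your proposal is correct and follows the paper's proof essentially verbatim: split at $s=u$, compare $(u+s)^{-b}$ with $u^{-b}$ on $[0,u]$ and with $s^{-b}$ on $[u,1]$, then sum the two pieces case by case according to the sign of $1-a-b$. Your handling of negative $b$ (constant $2^{|b|}$) and of the lower bounds in each case is, if anything, slightly more careful than the paper's write-up.
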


\begin{lemma}\label{lem: holder besov space of wick product}
There exists a sufficiently small $\delta>0$ so that for each $p \in \N$,  the following inequality holds uniformly in $\varepsilon \in (0,1]$ and $t_1,t_2 \ge 0$:
\begin{align*}
  \mathbb E_{Y_0 \sim \nu} \left[\norm{Y_\varepsilon^{:n:}(t_1) - Y_\varepsilon^{:n:}(t_2)}_{\mathcal B^{-\alpha n}_{2p,2p}}^{2p} \right]^\frac{1}{2p}\lesssim \abs{t_1-t_2}^\frac{\delta}{2d_w}.
\end{align*}
Consequently, for any $T>0$,
\begin{align*}
  \sup_{ \varepsilon \in (0,1]}\E_{Y_0 \sim \nu} \left[\norm{Y_\varepsilon^{:n:}}_{\mathcal C_T \mathcal B^{-\alpha n}_{2p,2p}}^{2p}\right] <\infty.
\end{align*}
\end{lemma}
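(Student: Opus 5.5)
We reduce everything to second moments of the heat-kernel pairings $Q^{(k)}_s Y^{:n:}_\varepsilon(t,x)$, upgrade to $2p$-th moments by Nelson's estimate (Proposition \ref{prop: Nelson estimate}), and then integrate in $x$ and $s$ to recover the $\mathcal B^{-\alpha n}_{2p,2p}$ norm.

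\textbf{Reduction.} Fix $k\in\N$ large and write $Z:=Y^{:n:}_\varepsilon(t_1)-Y^{:n:}_\varepsilon(t_2)$. Since $Y_0\sim\nu$, the process $Y$ is stationary Gaussian and $Y^{:n:}_\varepsilon(t)$ lies in the $n$-th homogeneous chaos. By Wick's theorem,
\begin{align*}
\E\big[Y^{:n:}_\varepsilon(t_i,y_1)Y^{:n:}_\varepsilon(t_j,y_2)\big]=n!\,\mathcal K_\varepsilon(t_i,t_j,y_1,y_2)^n,
\end{align*}
where we use that $Y_\varepsilon$ has covariance $\mathcal K_\varepsilon$ up to a constant. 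For each $(s,x)$, the quantity $Q^{(k)}_sZ(x)$ is again in the $n$-th chaos, so Nelson's estimate gives
\begin{align*}
\E|Q^{(k)}_sZ(x)|^{2p}\lesssim\big(\E|Q^{(k)}_sZ(x)|^2\big)^p .
\end{align*}
Fubini then bounds $\E\|Z\|^{2p}_{\mathcal B^{-\alpha n}_{2p,2p}}$ by
\begin{align*}
\int_0^1 s^{\frac{2p\alpha n}{d_w}}\int_M \big(\E|Q^{(k)}_sZ(x)|^2\big)^p\,\mu(\mathrm dx)\frac{\mathrm ds}{s},
\end{align*}
plus the analogous low-frequency term with $P_1$.

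\textbf{Second moment.} We write
\begin{align*}
\E|Q^{(k)}_sZ(x)|^2=n!\iint q_{k,s}(x,y_1)q_{k,s}(x,y_2)\,D(y_1,y_2)\,\mu(\mathrm dy_1)\mu(\mathrm dy_2),
\end{align*}
where
\begin{align*}
D=2\mathcal K_\varepsilon(t_1,t_1,\cdot)^n-2\mathcal K_\varepsilon(t_1,t_2,\cdot)^n .
\end{align*}
Here we use stationarity, which gives $\mathcal K_\varepsilon(t_1,t_1,\cdot,\cdot)=\mathcal K_\varepsilon(t_2,t_2,\cdot,\cdot)$. Factor $a^n-b^n=(a-b)\sum a^{j}b^{n-1-j}$ and combine \eqref{ineq: cal K bounds} with the first line of \eqref{ineq: cal k time holder}. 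With $r=d(y_1,y_2)$ this gives
\begin{align*}
|D|\lesssim r^{-(n-1)(d_h-d_w)-(d_h-d_w)-\delta}\,|t_1-t_2|^{\delta/d_w}=r^{-2\alpha_0 n-\delta}|t_1-t_2|^{\delta/d_w},
\end{align*}
with a logarithmic correction when $d_h=d_w$, which is absorbed by an arbitrarily small power. It then remains to bound
\begin{align*}
\iint|q_{k,s}(x,y_1)||q_{k,s}(x,y_2)|\,d(y_1,y_2)^{-\kappa}\,\mu(\mathrm dy_1)\mu(\mathrm dy_2),\qquad \kappa:=2\alpha_0 n+\delta .
\end{align*}
By \eqref{ineq: q sub gaussian bound} and Lemma \ref{lem:subg_moment}, the kernels are dominated by $p_{cs}$. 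Integrating $y_2$ against $p_{cs}(x,\cdot)$ with the singular weight, via a dyadic decomposition and \eqref{ineq:a_reg}, yields $\lesssim s^{-\kappa/d_w}$, provided $\kappa<d_h$. The integral in $y_1$ is then bounded since $\int p_{cs}\,\mathrm d\mu=1$.

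\textbf{Main obstacle.} The step requiring care is this integrability condition. We need
\begin{align*}
2\alpha_0 n+\delta=n(d_h-d_w)+\delta<d_h,
\end{align*}
which is exactly $n\le n_0$ from \eqref{eq:n0} once $\delta$ is small; this is where the smallness of $\delta$ is chosen. Collecting, we get
\begin{align*}
\E|Q^{(k)}_sZ(x)|^2\lesssim s^{-\kappa/d_w}|t_1-t_2|^{\delta/d_w}.
\end{align*}
Multiplying by $s^{2\alpha n/d_w}$ and using $\alpha>\alpha_0$, which gives $2\alpha n-\kappa>0$ for $\delta<2n(\alpha-\alpha_0)$, the $\frac{\mathrm ds}{s}$ integral converges. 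Taking $p$-th powers and then the $1/2p$ root gives the rate $|t_1-t_2|^{\delta/(2d_w)}$. The $P_1$ term is handled identically, using $p_1$ in place of $q_{k,s}$.

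\textbf{Consequence.} Applying the same computation with $\mathcal K_\varepsilon(t,t,\cdot)^n$ alone bounds $\sup_t\E\|Y^{:n:}_\varepsilon(t)\|^{2p}$ uniformly in $\varepsilon$. The Hölder increment bound together with Kolmogorov's continuity criterion in the Banach space $\mathcal B^{-\alpha n}_{2p,2p}$, with $p$ large so that $p\delta/d_w>1$ (lower $p$ follow by Jensen), then gives the uniform $\mathcal C_T$ moment bound.
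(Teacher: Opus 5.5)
Your proposal is correct and follows the paper's proof essentially step by step. The shared steps are: Nelson's estimate to reduce to second moments of $Q^{(k)}_s Z(x)$; Wick's theorem combined with the $\mathcal K_\varepsilon$ bounds of Lemma \ref{lem: continuity of time integral of heat kernel}; the same integrability condition $n(d_h-d_w)+\delta<d_h$ (i.e.\ $n\le n_0$); the balance of the weight $s^{2\alpha n/d_w}$ against the second-moment bound $s^{-(2\alpha_0 n+\delta)/d_w}$; and Kolmogorov's criterion for large $p$. The only real difference is how you bound $\iint p_{cs}(x,y_1)p_{cs}(x,y_2)\,d(y_1,y_2)^{-\kappa}$. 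You use a dyadic shell decomposition with \eqref{ineq:a_reg}. The paper instead writes $d(y_1,y_2)^{-\kappa}$ as a time integral of the heat kernel (Lemma \ref{lem:greens function estimates}), collapses it via Chapman--Kolmogorov to $\int_0^1 s^{-a}p_{s+2cu}(x,x)\,\mathrm ds$, and applies Lemma \ref{lem:imcomplete beta function bounds}; both routes give the same exponent under the same condition.
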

\begin{proof}
For $Y_0 := \int_{-\infty}^0e^{s}p_{0-s}(\cdot,y) \xi\pr{\mathrm dy,\mathrm ds}$, $Y_\varepsilon(s,\cdot)$ is a random field given by
\begin{align*}
  Y_\varepsilon(s,x)= \int_{-\infty}^t \int_M e^{-(t-s)} p_{t-s+\varepsilon}\pr{x, y} \xi\pr{\mathrm dy,\mathrm ds },\qquad (t,x) \in \R_+\times M.
\end{align*}
By Fubini and Proposition \ref{prop: Nelson estimate}, we see for each $p' \in \mathbb N$ and and $k = 0,1,\dots$, the following inequality holds uniformly for $t_1,t_2 , u> 0$:
\begin{equation}\label{ineq:time reg wick power step 1}
  \begin{aligned}
    &\E\left[\norm{Q^{(k)}_u \left(Y_\varepsilon^{:n:}(t_1) -Y_\varepsilon^{:n:}(t_2)  \right)}_{L^{2p'}_M}^{2p'}\right]\\ =& \int_M \norm{\int_M q_{k,u}(x,y)\left(Y_\varepsilon^{:n:}(t_1,y) -Y_\varepsilon^{:n:}(t_2,y)  \right)\mu(\mathrm dy)}_{L^{2p'}_\Omega}^{2p'} \mu(\mathrm dx)\\
    \lesssim& \int_M \E\left[\left(\int_M q_{k,u}(x,y)\left(Y_\varepsilon^{:n:}(t_1,y) -Y_\varepsilon^{:n:}(t_2,y)  \right)\mu(\mathrm dy)\right)^2\right]^{p'} \mu(\mathrm dx).
  \end{aligned}
\end{equation}

By Wick's theorem, we see for $t_1,t_2 >0$ and $y_1,y_2 \in M$,
\begin{align*}
  \E\left[Y_\varepsilon^{:n:}(t_1,y_1)Y^{:n:}_\varepsilon(t_2,y_2) \right] &= n!\E\left[Y_\varepsilon(t_1,y_1)Y_\varepsilon(t_2,y_2) \right]^n\\
  &=n! \left( \int_0^{t_1 \wedge t_2} \int_M  p_{t_1 - s + \varepsilon}(y_1,z) p_{t_2 - s+\varepsilon}(y_2,z) \mu(\mathrm dz)\mathrm ds\right)^n\\
  &=n! \left(\int_{-\infty}^{t_1 \wedge t_2}e^{-\pr{t_1+t_2 - 2s}}p_{t_1+t_2 - 2s +2\varepsilon}(y_1,y_2) \mathrm ds\right)^n\\
  &=n! \mathcal K_\varepsilon(t_1,t_2,y_1,y_2)^n,
\end{align*}
where $\mathcal K_\varepsilon$ is as in Lemma \ref{lem: continuity of time integral of heat kernel}. By Fubini,
\begin{equation}\label{eq: holder besov step 1}
  \begin{aligned}
    &\quad\,\,\E\left[\left(\int_M q_{k,u}(x,y)\left(Y_\varepsilon^{:n:}(t_1,y) -Y_\varepsilon^{:n:}(t_2,y)  \right)\mu(\mathrm dy)\right)^2\right]\\
    &=\int_{M}\int_M q_{k,u}(x,y_1)q_{k,u}(x,y_2) \bigg(\E\left[Y_\varepsilon^{:n:}(t_1,y_1) Y_\varepsilon^{:n:}(t_1,y_2) \right] - \E\left[Y_\varepsilon^{:n:}(t_2,y_1) Y_\varepsilon^{:n:}(t_1,y_2) \right]\\
    &\quad\quad\quad\quad\quad\quad\quad+\E\left[Y_\varepsilon^{:n:}(t_2,y_1) Y_\varepsilon^{:n:}(t_2,y_2) \right] - \E\left[Y_\varepsilon^{:n:}(t_1,y_1) Y_\varepsilon^{:n:}(t_2,y_2) \right]\bigg) \mu(\mathrm dy_1)\mu(\mathrm dy_2)\\
    &=n!\int_{M}\int_M q_{k,u}(x,y_1) q_{k,u}(x,y_2) \bigg(\mathcal K_\varepsilon(t_1,t_1,y_1,y_2)^n - \mathcal K_\varepsilon(t_2,t_1,y_1,y_2)^n\\
    &\quad\quad\quad\quad\quad\quad\quad\quad\quad\quad\quad\quad\quad\quad\quad+ \mathcal K_\varepsilon(t_2,t_2,y_1,y_2)^n  - \mathcal K_{\varepsilon}(t_1,t_2,y_1,y_2)^n\bigg)\mu(\mathrm dy_1)\mu(\mathrm dy_2).
  \end{aligned}
\end{equation}

Since $d_h \ge d_w$ and $(M,d)$ is a compact metric space, Lemma \ref{lem: continuity of time integral of heat kernel} gives for any $\delta \in (0,d_w]$, we have uniformly in $0 \le t_1,\, t_2\le T$, $y_1,\, y_2 \in M$ and $\varepsilon \in (0,1]$
\begin{align*}
  \abs{\mathcal K_\varepsilon(t_1,t_1,y_1,y_2)^n - \mathcal K_\varepsilon(t_2,t_1,y_1,y_2)^n}&\lesssim  \abs{\mathcal K_\varepsilon(t_1,t_1,y_1,y_2) - \mathcal K_\varepsilon(t_2,t_1,y_1,y_2)}\\
  &\qquad\qquad\times \pr{\sum_{k = 1}^n\abs{\mathcal K_\varepsilon(t_1,t_1,y_1,y_2)^{k}\mathcal K_\varepsilon(t_1,t_1,y_1,y_2)^{n-k-1}}}\\
  &\overset{\eqref{ineq: cal k time holder}}{\lesssim} \abs{t_1 - t_2}^\frac{\delta}{d_w} d(y_1,y_2)^{d_w - d_h - \delta}\pr{\sup_{t_1,t_2 >0,\atop y_1\neq y_2 \in M} \mathcal K_\varepsilon(t_1,t_2,y_1,y_2)}^{n-1}\\
  &\overset{\eqref{ineq: cal K bounds}}{\lesssim } \abs{t_1-t_2}^\frac{\delta}{d_w}d(y_1,y_2)^{n(d_w - d_h) - \delta},
\end{align*}
where we used the fact that $(1+\log(r_0/r))^n \lesssim r^{-\delta}$ uniformly in $0<r\le r_0< \infty$.

By Proposition \ref{prop:kernel_bound_dt}, there is some $c>0$ so that uniformly in $(t,x,y) \in (0,\infty)\times M\times M$,
\begin{align*}
  \abs{q_{k,u}(x,y)} \lesssim p_{ct}(x,y).
\end{align*}
Now let $\delta>0$ be sufficiently small so that $(n-1) (d_h - d_w) + \delta < d_w$. Inserting above inequalities into \eqref{eq: holder besov step 1} and use Lemma \ref{lem:greens function estimates} to replace $d(y_1,y_2)^{-n(d_w-d_h) - \delta}$ by the time integral of the heat kernel to see
\begin{align*}
  &\E\left[\left(\int_M q_{k,u}(x,y)\left(Y_\varepsilon^{:n:}(t_1,y) -Y_\varepsilon^{:n:}(t_2,y)  \right)\mu(\mathrm dy)\right)^2\right]\\\lesssim&\abs{t_1 - t_2}^\frac{\delta}{d_w}\int_M \int_M p_{cu}(x,y_1)p_{cu}(x,y_2)d(y_1,y_2)^{n(d_w - d_h) - \delta}\mu(\mathrm dy_1) \mu(\mathrm dy_2)\\
  \lesssim &\abs{t_1 - t_2}^\frac{\delta}{d_w} \int_M \int_M p_{cu}(x,y_1)p_{cu}(x,y_2)\int_0^1 s^{-\frac{(n-1)(d_h - d_w) + \delta}{d_w}} p_s(y_1,y_2) \mathrm ds \mu(\mathrm dy_1) \mu(\mathrm d y_2)\\
  =&\abs{t_1 - t_2}^\frac{\delta}{d_w} \int_0^1 s^{- \frac{(n-1)(d_h-d_w)+\delta}{d_w}}p_{s+2cu}(x,x) \mathrm ds\\
  \lesssim &\abs{t_1 - t_2}^\frac{\delta}{d_w} \int_0^1 s^{- \frac{(n-1)(d_h-d_w) +\delta}{d_w}}(s+u)^{-\frac{d_h}{d_w}}\mathrm ds\\
  \lesssim& \abs{t_1-t_2}^\frac{\delta}{d_w} u^{\frac{ -n(d_h - d_w) - \delta}{d_w}},
\end{align*}
where we used Lemma \ref{lem:imcomplete beta function bounds} and the fact that for $\delta >0$ and $d_h \ge d_w \ge 1$,
\begin{align*}
  1 < \frac{d_h +\delta }{d_w}< \frac{(n-1)(d_h - d_w) +\delta +d_h}{d_w}.
\end{align*}
Inserting this into \eqref{ineq:time reg wick power step 1} and use the compactness of $M$ to see for each $p' \in \mathbb N$, we have
\begin{align*}
  \E\left[\norm{Q^{(k)}_u \left(Y_\varepsilon^{:n:}(t_1) -Y_\varepsilon^{:n:}(t_2)  \right)}_{L^{2p'}_M}^{2p'}\right]^\frac{1}{2p'} \lesssim |t_1 - t_2|^\frac{\delta}{2d_w} u^{- \frac{n(d_h - d_w) +\delta}{2d_w}}.
\end{align*}

Now let $\alpha>\alpha_0$, select $\delta>0$ so that $\alpha > \alpha_0 +\delta $, we see by Minkowski's inequality, it holds uniformly for $t_1,t_2 \ge0$ and $\varepsilon \in (0,1]$ that
\begin{align*}
  \E\left[\norm{Y_\varepsilon^{:n:}(t_1) - Y_\varepsilon^{:n:}(t_2)}_{\mathcal B^{-\alpha n}_{2p',2p'}}^{2p'}\right]^\frac{1}{2p'}&\le \mathbb E \left[\norm{P_1 Y_\varepsilon^{:n:}(t_1) - P_1 Y_\varepsilon^{:n:}(t_2)}_{L^{2p'}_M} ^{2p'}\right]^\frac{1}{2p'} \\
  &\qquad + \pr{\int_0^1 \pr{u^{\frac{\alpha n}{d_w}}\norm{\norm{Q^{(k)}_u \pr{Y_\varepsilon^{:n:}(t_1) -Y_\varepsilon^{:n:}(t_2) }(\cdot)}_{L^{2p}_\Omega} }_{L^{2p'}_M}}^{2p'} \frac{\mathrm ds}{s}}^\frac{1}{2p}\\
  &\lesssim\abs{t_1-t_2}^\frac{\delta}{2d_w} \pr{1+ \pr{\int_0^1 \pr{u^\frac{\alpha n}{d_w}u^{- \frac{\alpha_0 n +\delta/2}{d_w}}}^{2p'} \frac{\mathrm du}{u}}^\frac{1}{2p'}}\\
  &\lesssim \abs{t_1-t_2 }^\frac{\delta}{2d_w}.
\end{align*}
Therefore, let $p'$ be sufficiently large and use  Kolmogorov's continuity test to see for each $T>0$, there exists $C = C(\gamma,\alpha, p',T)>0$ so that
\begin{align*}
  \E\left[ \norm{Y_\varepsilon^{:n:}}_{\mathcal C^\gamma_T \mathcal B^{-\alpha n}_{2p',2p'}} ^{2p'}\right] \lesssim   C,
\end{align*}
for all $\gamma \in \left(0, \frac{\delta}{2d_w} - \frac{1}{2p'}\right)$, as desired.
\end{proof}

%We are now ready to prove Theorem \ref{thm:Wick Power Convergence}.
\begin{proof}[Proof of Theorem \ref{thm:Wick Power Convergence}]
We will again assume without loss of generality that
\begin{align*}
  Y_0 = \int_{-\infty}^0 \int_Me^{s}p_s(\cdot, y) \xi\pr{\mathrm dy,\mathrm ds}
\end{align*}
Let $n \in \N$, $\varepsilon,\varepsilon' \in (0,1]$ and assume without loss of generality that $\varepsilon' > \varepsilon$, denote for $(t,y) \in \R_+ \times M$ that $\widetilde Y^{(n)}_{\varepsilon,\varepsilon'}(t,y): = Y^{:n:}_\varepsilon(t,y) - Y^{:n:}_{\varepsilon'}(t,y)$. For $y_1,y_2 \in M$, denote
\begin{align*}
  \Delta^{(n)}_{\varepsilon,\varepsilon'}(t,y_1,y_2):= Y_{\varepsilon}^{:n:}(t,y_1)Y_{\varepsilon'}^{:n:}(t,y_2).
\end{align*}
Note that by Wick's Theorem, for any $\varepsilon,\varepsilon' \in (0,1]$, $t\ge 0$ and $y_1,y_2 \in M$,
\begin{align*}
  \E \left[ \Delta^{\pr{n}}_{\varepsilon, \varepsilon'}(t,y_1,y_2)\right]  = n!\pr{\int_{-\infty}^t e^{-(t-s)}p_{2(t-s) +\varepsilon'+\varepsilon}\mathrm ds}^n = n! \pr{\mathcal K_{\frac{\varepsilon+\varepsilon'}{2}}(t,t,y_1,y_2)}^n,
\end{align*}
and
\begin{align*}
  \widetilde Y^{(n)}_{\varepsilon,\varepsilon'}(t_1,y) - \widetilde Y^{(n)}_{\varepsilon,\varepsilon'}(t_2,y) = Y^{:n:}_{\varepsilon}(t_1,y) - Y^{:n:}_{\varepsilon'}(t_1,y) - Y^{:n:}_{\varepsilon}(t_2,y) + Y^{:n:}_{\varepsilon'}(t_2,y).
\end{align*}
Let $\delta \in (0,d_w)$, it holds uniformly in $t \ge 0$, $\varepsilon, \varepsilon' \in (0,1]$ and $y_1,y_2 \in M$ that
\begin{align*}
  &\abs{\E\bigg [\Delta^{(n)}_{\varepsilon,\varepsilon}(t,y_1,y_2)-2\Delta^{(n)}_{\varepsilon,\varepsilon'}(t,y_1,y_2)+\Delta^{(n)}_{\varepsilon',\varepsilon'}(t,y_1,y_2) \bigg]}\\
  =&n!\abs{ \mathcal K_\varepsilon(t,t,y_1,y_2)^n - 2\mathcal K_{\frac{\varepsilon+\varepsilon'}{2}}(t,t,y_1,y_2)^n +\mathcal K_{\varepsilon'}(t,t,y_1,y_2)^n}\\
  \lesssim&\abs{\mathcal K_{\varepsilon}(t,t,y_1,y_2) - \mathcal K_{\frac{\varepsilon+\varepsilon'}{2}}(t,t,y_1,y_2)}\abs{\sum_{k = 0}^{n-1} \mathcal K_{\varepsilon}(t,t,y_1,y_2)^k \mathcal K_{\frac{\varepsilon+\varepsilon'}{2}}(t,t,y_1,y_2)^{n-1-k}}\\
  &\qquad+ \abs{\mathcal K_{\varepsilon'}(t,t,y_1,y_2) - \mathcal K_{\frac{\varepsilon+\varepsilon'}{2}}(t,t,y_1,y_2)}\abs{\sum_{k = 0}^{n-1} \mathcal K_{\varepsilon'}(t,t,y_1,y_2)^k \mathcal K_{\frac{\varepsilon+\varepsilon'}{2}}(t,t,y_1,y_2)^{n-1-k}}\\
  &\lesssim d(y_1,y_2)^{n(d_w - d_h) - \delta}\abs{\varepsilon - \varepsilon'}^\frac{\delta}{d_w},
\end{align*}
where we used Lemma \ref{lem: continuity of time integral of heat kernel} in the last line.

Hence, for any $k = 0,1,\dots$, we may follow the proof of Lemma \ref{lem: holder besov space of wick product} to see it holds uniformly for $u, \varepsilon, \varepsilon' \in (0,1]$ and $(t,x) \in \R_+\times M$ that
\begin{align*}
  \E\left[\left(Q^{(k)}_u  \pr{\widetilde Y^{:n:}_{\varepsilon,\varepsilon'}(t)}(x) \right)^2 \right]
  =& \int_{M^2}q_{k,u}(x,y_1) q_{k,u}(x,y_2) \E\bigg [\Delta^{(n)}_{\varepsilon,\varepsilon}(t,y_1,y_2)\\
  &\qquad-2\Delta^{(n)}_{\varepsilon,\varepsilon'}(t,y_1,y_2)+\Delta^{(n)}_{\varepsilon',\varepsilon'}(t,y_1,y_2) \bigg] \mu(\mathrm dy_1) \mu(\mathrm dy_2)\\
  &\lesssim \iint_{M^2} p_{cu}(x,y_1)p_{cu}(x,y_2) d(y_1,y_2)^{n(d_w - d_h) - \delta} \abs{\varepsilon - \varepsilon'}^\frac{\delta}{d_w}\mu(\mathrm dy_1) \mu(\mathrm dy_2)\\
  &\lesssim \iint_{M^2}p_{cu}(x,y_1)p_{cu}(x,y_2) \int_0^1s^{-\frac{(n-1)(d_h - d_w) +\delta}{d_w}}p_s(y_1,y_2) \mathrm ds  \mu(y_1) \mu(y_2)\\
  &=\abs{\varepsilon-\varepsilon'}^\frac{\delta}{d_w}\int_0^1 s^{-\frac{(n-1)(d_h - d_w) +\delta}{d_w}} p_{2cu +s}\mathrm ds \\
  &\lesssim \abs{\varepsilon - \varepsilon'}^\frac{\delta}{d_w} u^{-\frac{n(d_h - d_w) +\delta   }{d_w}},
\end{align*}
where we used Lemma \ref{lem:greens function estimates} in the second line and Lemma \ref{lem:imcomplete beta function bounds} in the last line. Hence, by Proposition \ref{prop: Nelson estimate}, we see for each $p \ge 1$ and $\delta \in (0, d_w)$ and $k = 0,1,\dots, $ it holds uniformly in $(t,x) \in \R_+\times M$ that
\begin{align*}
  \norm{Q^{(k)}_u\pr{\widetilde Y_{\varepsilon,\varepsilon'}^{(n)}(t)}(x)}_{L^{2p}_\Omega} \lesssim \abs{\varepsilon - \varepsilon'}^\frac{\delta}{2d_w} u^{-\frac{\alpha_0 n +\delta/2}{d_w}}.
\end{align*}
Hence, for any $\alpha' > \alpha_0$, we may select a sufficiently small $\delta >0$ so that $\alpha' > \alpha_0+\delta$ By Fubini's, Minkowski's inequality and Proposition \ref{prop: Nelson estimate}, we see it holds uniformly in $\varepsilon, \varepsilon' \in (0,1]$ and $t \ge 0$ that
\begin{align*}
  \norm{\norm{\widetilde Y_{\varepsilon,\varepsilon'}^{(n)}(t)}_{\mathcal B^{-\alpha' n}_{2p,2p}}}_{L^{2p}_\Omega}&\le \norm{ \norm{P_1\pr{\widetilde Y_{\varepsilon,\varepsilon'}^{(n)}(t)}(\cdot)}_{L^{2p}_\Omega} }_{L^{2p}_M}\\
  &\qquad \qquad+ \pr{\int_0^1 \pr{u^{\frac{\alpha' n }{d_w}}\norm{\norm{\pr{Q^{(k)}_u\pr{\widetilde Y_{\varepsilon,\varepsilon'}^{(n)}(t)}(\cdot)}}_{L^{2p}_\Omega}}_{L^{2p}_M} }^{2p}\frac{\mathrm du}{u}}^\frac{1}{2p}\\
  &\lesssim \abs{\varepsilon - \varepsilon'}^\frac{\delta}{2d_w}\pr{1+\pr{\int_0^1 \pr{u^{\frac{\alpha' n}{d_w} - \frac{\alpha_0 n - \delta/2}{2d_w}}}^{2p} \frac{\mathrm du}{u}}^\frac{1}{2p}}\lesssim \abs{\varepsilon - \varepsilon'}^\frac{\delta}{2d_w}.
\end{align*}
Therefore, uniformly $\varepsilon,\varepsilon' \in (0,1]$ and $t,t' \ge 0$,
\begin{align*}
  \norm{\norm{\widetilde Y^{(n)}_{\varepsilon,\varepsilon'}(t) -\widetilde Y^{(n)}_{\varepsilon,\varepsilon'}(t')}_{\mathcal B^{-\alpha' n}_{2p,2p}}}_{L^{2p}_\Omega} &= \norm{ \norm{\pr{Y^{:n:}_\varepsilon(t) - Y^{:n:}_{\varepsilon'}(t)}- \pr{Y^{:n:}_\varepsilon(t') - Y^{:n:}_{\varepsilon'}(t')}}_{\mathcal B^{-\alpha' n}_{2p,2p}}}_{L^{2p}_\Omega}\\
  &\le 2\sup_{t \ge0}\norm{\norm{\widetilde Y^{(n)}_{\epsilon,\varepsilon'}(t)}_{\mathcal B^{-\alpha' n}_{2p,2p}}}_{L^{2p}_{\Omega}}\lesssim \abs{\varepsilon - \varepsilon'}^\frac{\delta}{2d_w}.
\end{align*}

On the other hand, as we see in Lemma \ref{lem: holder besov space of wick product},
\begin{align*}
  \norm{\norm{\widetilde Y^{(n)}_{\varepsilon,\varepsilon'}(t) -\widetilde Y^{(n)}_{\varepsilon,\varepsilon'}(t')}_{\mathcal B^{-\alpha' n}_{2p,2p}}}_{L^{2p}_\Omega} &=\norm{ \norm{\pr{Y^{:n:}_\varepsilon(t) - Y^{:n:}_{\varepsilon'}(t)}- \pr{Y^{:n:}_\varepsilon(t') - Y^{:n:}_{\varepsilon'}(t')}}_{\mathcal B^{-\alpha' n}_{2p,2p}}}_{L^{2p}_\Omega}\\
  &\le \norm{\norm{Y^{:n:}_\varepsilon(t) - Y^{:n:}_{\varepsilon}(t')}_{\mathcal B^{-\alpha' n}_{2p,2p}}}_{L^{2p}_\Omega}+\norm{\norm{Y^{:n:}_{\varepsilon'}(t) - Y^{:n:}_{\varepsilon'}(t')}_{\mathcal B^{-\alpha' n}_{2p,2p}}}_{L^{2p}_\Omega}\\
  &\lesssim \abs{t-t'}^\frac{\delta}{2d_w}.
\end{align*}
This implies, for each $\alpha' > \alpha_0$, there exists a sufficiently small $\delta >0$ so that uniformly in $t,t' \ge 0$ and $\varepsilon, \varepsilon' \in (0,1]$,
\begin{align*}
  \norm{\norm{\widetilde Y^{(n)}_{\varepsilon,\varepsilon'}(t) -\widetilde Y^{(n)}_{\varepsilon,\varepsilon'}(t')}_{\mathcal B^{-\alpha' n}_{2p,2p}}}_{L^{2p}_\Omega} \lesssim \abs{\varepsilon - \varepsilon'}^\frac{\delta}{4d_w}\abs{t - t'}^\frac{\delta}{4d_w}.
\end{align*}
We may pick $p \ge 1$ sufficiently large, and  apply Kolmogorov's continuity test (first to time variable and then to $\varepsilon\in (0,1]$) to see there exists $\gamma,\gamma'>0$, so that for each $T>0$
\begin{align*}
  \norm{\sup_{0< \varepsilon< \varepsilon' \le 1}\abs{\varepsilon' - \varepsilon}^{-\gamma'}\norm{Y_\varepsilon^{:n:} - Y_{\varepsilon'}^{:n:}}_{\mathcal C^\gamma_T \mathcal B^{-\alpha' n}_{2p,2p}}}_{L^{2p}_\Omega} < \infty.
\end{align*}
Finally, we may choose $\alpha' \in (\alpha_0,\alpha)$, and choose $p\ge 1$ sufficiently large so that $\alpha'+d_h/p< \alpha$. Then by Proposition \ref{prop:besov_embed}, we see
\begin{align*}
  \norm{\sup_{0< \varepsilon< \varepsilon' \le 1}\abs{\varepsilon' - \varepsilon}^{-\gamma'}\norm{Y_\varepsilon^{:n:} - Y_{\varepsilon'}^{:n:}}_{\mathcal C^\gamma_T \Binf{-n \alpha}}}_{L^{2p}_\Omega}&\lesssim \norm{\sup_{0< \varepsilon< \varepsilon' \le 1}\abs{\varepsilon' - \varepsilon}^{-\gamma'}\norm{Y_\varepsilon^{:n:} - Y_{\varepsilon'}^{:n:}}_{\mathcal C^\gamma_T \Binf{-n \alpha' - d_h/2p}}}_{L^{2p}_\Omega}\\
  &\lesssim\norm{\sup_{0< \varepsilon< \varepsilon' \le 1}\abs{\varepsilon' - \varepsilon}^{-\gamma'}\norm{Y_\varepsilon^{:n:} - Y_{\varepsilon'}^{:n:}}_{\mathcal C^\gamma_T \mathcal B^{-\alpha' n}_{2p,2p}}}_{L^{2p}_\Omega}< \infty.
\end{align*}
In particular, this implies for each $p \ge 1$ and $T>0$,
\begin{align*}
  \mathbb E_{Y_0 \sim \nu}\left[ \E \left[\pr{\sup_{0< \varepsilon< \varepsilon' \le 1}\abs{\varepsilon' - \varepsilon}^{-\gamma'}\norm{Y_\varepsilon^{:n:} - Y_{\varepsilon'}^{:n:}}_{\mathcal C^\gamma_T \Binf{-n \alpha}} }^{2p}|Y_0\right]\right],
\end{align*}
which gives the desired result.
\end{proof}

\noindent We list some consequences of Theorem \ref{thm:Wick Power Convergence}. Recall the binomial identity for Hermite polynomials: for $n\in \N$,
\begin{align}\label{eq:binomial hermit}
H_n(x+y, \sigma^2) = \sum_{k = 0}^n\binom{n}{k}x^k H_{n - k}(y,\sigma^2),\qquad x,y,\sigma \in \R.
\end{align}
The lemma below is similar to that of \cite[Lemma 3.6]{chandra2025non}.
\begin{lemma}\label{lem:wick power stability}
Suppose $1 \le n \le n_0$ and $\alpha>\alpha_0$ satisfies $\alpha(n-1) < \Theta$, where $\Theta\in (0,1]$ is as in \eqref{ineq:holder_kernel}. Let $Y$ be solution to \eqref{eq: EW eqn} with initial condition $Y_0\in \mathfrak{C}$ and  $\gamma \in (\alpha (n-1), \Theta)$. Then for any $v \in \mathcal C([0,1]; \Binf{\gamma})$,
\begin{align*}
  \pr{\pr{v_t+Y_t}^{:n:}}_{t \in [0,1]}= \pr{\sum_{k = 0}^n \binom{n}{k}v^k_t Y^{:n - k:}_t}_{t \in [0,1]} \in \mathcal C\pr{[0,1];\Binf{-\alpha n}}.
\end{align*}
Moreover, almost surely, the map
\begin{align*}
  \mathcal C \pr{[0,1];\Binf{\gamma}}\ni v \mapsto \pr{v+Y}^{:n:}
\end{align*}
is locally Lipschitz and polynomial of degree $n$.
\end{lemma}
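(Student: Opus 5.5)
The plan is to establish the identity at the level of the $\varepsilon$-regularisations, then pass to the limit using Theorem~\ref{thm:besov_product} and the definition of $\mathfrak C$.

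\textbf{Step 1 (regularised identity).} For $\varepsilon>0$ put $v_\varepsilon:=P_\varepsilon v$. Since $Y_\varepsilon=P_\varepsilon Y$, the binomial identity \eqref{eq:binomial hermit} holds pointwise in $(t,x)$ with variance $C_\varepsilon(x)$:
\begin{align*}
  H_n\bigl(v_\varepsilon+Y_\varepsilon,C_\varepsilon\bigr)=\sum_{k=0}^n\binom{n}{k}v_\varepsilon^k\,Y_\varepsilon^{:n-k:}.
\end{align*}
The left side is the natural approximation $(v+Y)^{:n:}_\varepsilon$. Every product on the right is a pointwise product of continuous functions, by Lemma~\ref{lem: holder regularity of mollifed y} and the strong Feller property. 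It therefore agrees with the paraproduct decomposition \eqref{Calderon Paraproduct Formula}.

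\textbf{Step 2 (convergence of each term).} Fix $k\in\{0,\dots,n\}$.
\begin{itemize}
  \item The term $k=n$ is $v^n$. Since $\gamma\in(0,\Theta)$, Proposition~\ref{prop:holder=besov} identifies $\Binf{\gamma}$ with $\mathcal C^\gamma_M$, which is an algebra. So $v^k\in\mathcal C([0,1];\Binf{\gamma})$ with $\norm{v^k}_{\Binf{\gamma}}\lesssim\norm{v}_{\Binf{\gamma}}^k$.
  \item For $k<n$, I apply Theorem~\ref{thm:besov_product} with $f=v^k\in\Binf{\gamma}$ and $g=Y^{:n-k:}\in\Binf{-\alpha(n-k)}$. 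Here $\beta=-\alpha(n-k)$, and $\gamma+\beta>0$ because $\alpha(n-k)\le\alpha(n-1)<\gamma$. Also $|\beta|<\Theta$ because $\alpha(n-1)<\Theta$. This gives $\norm{v^kY^{:n-k:}}_{\Binf{-\alpha(n-k)}}\lesssim\norm{v}_{\Binf{\gamma}}^k\norm{Y^{:n-k:}}_{\Binf{-\alpha(n-k)}}$, and the right side embeds into $\Binf{-\alpha n}$ by Lemma~\ref{lem:besov_norm_compare}.
  \item Bilinearity of the bound gives convergence as $\varepsilon\downarrow0$. Write
  \begin{align*}
    v_\varepsilon^kY^{:n-k:}_\varepsilon-v^kY^{:n-k:}=(v_\varepsilon^k-v^k)Y^{:n-k:}_\varepsilon+v^k\bigl(Y^{:n-k:}_\varepsilon-Y^{:n-k:}\bigr).
  \end{align*}
  The second piece vanishes because $Y_0\in\mathfrak C$ (Definition~\ref{def:c set}) gives $Y^{:j:}_\varepsilon\to Y^{:j:}$ in $\mathcal C([0,1];\Binf{-\alpha j})$ almost surely.
\end{itemize}

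\textbf{Step 3 (main obstacle: $v_\varepsilon^k\to v^k$).} The first piece in Step 2 requires $v_\varepsilon^k\to v^k$ in $\mathcal C([0,1];\Binf{\gamma'})$. By Lemma~\ref{lem:strong continuity of heat semi group}, $P_\varepsilon v\to v$ in $\Binf{\gamma'}$ for $\gamma'<\gamma$, uniformly on $[0,1]$ by compactness of $v([0,1])$. The loss of regularity is harmless: I choose $\gamma'\in(\alpha(n-1),\gamma)$, which is possible because the interval is open. The algebra property from Step 2 then transfers this to $v_\varepsilon^k\to v^k$.

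\textbf{Continuity and the Lipschitz property.} Continuity in $t$ follows from the same bilinear bound together with the time continuity of $v$ and of $Y^{:j:}$. Finally, the map $v\mapsto(v+Y)^{:n:}$ is the polynomial $\sum_k\binom nk v^kY^{:n-k:}$ of degree $n$. Local Lipschitz continuity follows from
\begin{align*}
  v^k-w^k=(v-w)\sum_{j}v^jw^{k-1-j}
\end{align*}
and the same product estimates, with constant depending on $\sup_t\norm{v}_{\Binf{\gamma}}$, $\sup_t\norm{w}_{\Binf{\gamma}}$, and $\max_j\norm{Y^{:j:}}_{\mathcal C\Binf{-\alpha j}}$, which is almost surely finite.
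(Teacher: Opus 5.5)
Your argument is correct and is essentially the paper's own proof. The paper cites exactly the binomial identity \eqref{eq:binomial hermit}, the algebra property of $\Binf{\gamma}$ from Proposition~\ref{prop:holder=besov}, and Theorem~\ref{thm:besov_product}, and your $\varepsilon$-regularisation step usefully makes explicit that the sum is the limit of $H_n(P_\varepsilon(v+Y),C_\varepsilon)$, which the paper leaves implicit. The only slip is that your check $\alpha(n-k)\le\alpha(n-1)<\gamma$ fails for $k=0$, but that term is just $Y^{:n:}$ and needs no product estimate.
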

\begin{proof}
This follows from Theorem \ref{thm:besov_product} and Proposition \ref{prop:holder=besov}, where $\Binf{\gamma}$ is a Banach algebra and \eqref{eq:binomial hermit}.
\end{proof}

As in \cite[Lemma 3.13]{chandra2025non}, we show $\mathfrak{C}$ is stable under addition of sufficiently smooth functions. We denote for $\Psi_0 \in \Binf{-\alpha}$ that
\begin{align*}
\llbracket \Psi_0 \rrbracket_p:= \max_{k = 1,\dots,n_0}\E_{Y_0 =\Psi_0} \left[\norm{Y^{:k:}}_{\mathcal C_1\Binf{-\alpha k}} ^p\right]^\frac{1}{p}
\end{align*}
\begin{lemma}\label{lem:stability of c}
Take $\alpha>\alpha_0$ and $1\le n \le n_0$ be as in Lemma \ref{lem:wick power stability}. Let $\Psi_0 \in \mathfrak{C}$ and $v_0 \in \Binf{\gamma}$ with $\gamma > \alpha(n-1)$. Then $\Psi_0+v_0 \in \mathfrak{C}$ and
\begin{align*}
  \llbracket \Psi_0+v_0\rrbracket_p \lesssim \pr{1+\llbracket \Psi_0\rrbracket_p}\pr{1+\norm{v_0}_{\Binf{\gamma}}}^{n_0}.
\end{align*}
\end{lemma}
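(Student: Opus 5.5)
The plan is to mimic \cite[Lemma 3.13]{chandra2025non}, using linearity of the Edwards--Wilkinson flow. Let $Y$ be the solution of \eqref{eq: EW eqn} started from $\Psi_0$ and $\widetilde Y$ the solution started from $\Psi_0+v_0$. Then, by \eqref{eq: def of solution to EW}, for every $t\ge 0$
\[
\widetilde Y_t = Y_t + w_t,\qquad w_t := e^{-t}P_t v_0 ,
\]
and $w$ is deterministic. We show that $w\in\mathcal C([0,1];\Binf{\gamma})$ with $\sup_{t\le1}\norm{w_t}_{\Binf{\gamma}}\lesssim\norm{v_0}_{\Binf{\gamma}}$. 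The bound follows from the $L^\infty$-boundedness of $P_t$ commuting with $Q^{(k)}_\tau$. Continuity in time follows from the strong continuity in Lemma \ref{lem:strong continuity of heat semi group}. Without loss of generality we may take $\gamma<\Theta$ (shrinking $\gamma$ slightly if necessary), so that Lemma \ref{lem:wick power stability} applies to $w$.

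Next, identify the mollified Wick powers of $\widetilde Y$. With the same counterterm $C_\varepsilon(x)$ (it does not depend on the initial datum), the binomial identity \eqref{eq:binomial hermit} applied pointwise gives
\[
\widetilde Y^{:k:}_\varepsilon=H_k\pr{Y_\varepsilon+P_\varepsilon w,C_\varepsilon}=\sum_{j=0}^k\binom{k}{j}(P_\varepsilon w)^jY_\varepsilon^{:k-j:}.
\]
We then pass to the limit $\varepsilon\downarrow0$ term by term. Since $\Psi_0\in\mathfrak C$, we have $Y^{:m:}_\varepsilon\to Y^{:m:}$ in $\mathcal C([0,1];\Binf{-\alpha m})$ in every $L^q_\Omega$ and almost surely. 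By Lemma \ref{lem:strong continuity of heat semi group}, $P_\varepsilon w\to w$ in $\mathcal C([0,1];\Binf{\gamma'})$ for any $\gamma'<\gamma$; this needs uniformity in $t$, which follows from the bound $\norm{P_\varepsilon f-f}_{\Binf{\gamma'}}\lesssim\varepsilon^{(\gamma-\gamma')/d_w}\norm{f}_{\Binf{\gamma}}$. Choose $\gamma'\in(\alpha(n-1),\gamma)$. Since $\Binf{\gamma'}$ is an algebra by Proposition \ref{prop:holder=besov}, the powers $(P_\varepsilon w)^j$ also converge there. Theorem \ref{thm:besov_product} then shows that each product $(P_\varepsilon w)^jY^{:k-j:}_\varepsilon$ converges in $\mathcal C([0,1];\Binf{-\alpha(k-j)})\hookrightarrow\mathcal C([0,1];\Binf{-\alpha k})$, with bound
\[
\norm{(P_\varepsilon w)^jY^{:k-j:}_\varepsilon}\lesssim \norm{w}^j_{\mathcal C_1\Binf{\gamma}}\norm{Y^{:k-j:}_\varepsilon}_{\mathcal C_1\Binf{-\alpha(k-j)}}.
\]
This is justified because $\alpha(k-j)\le\alpha(n-1)<\gamma'<\Theta$, so the regularity pair satisfies $\beta=-\alpha(k-j)\in(-\Theta,0)$ and $\gamma'+\beta>0$. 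The case $j=k$ is a smooth function; the case $j=0$ is just $Y^{:k:}$. Convergence holds almost surely and in $L^q_\Omega$, since the product is bilinear and continuous and $w$ is deterministic. Hence $\Psi_0+v_0\in\mathfrak C$.

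For the moment bound, take $\mathcal C_1\Binf{-\alpha k}$ norms of the limiting identity and then $L^p_\Omega$ norms:
\[
\llbracket\Psi_0+v_0\rrbracket_p\lesssim\max_k\sum_j\norm{v_0}^j_{\Binf{\gamma}}\,\E\left[\norm{Y^{:k-j:}}^p\right]^{1/p}\lesssim(1+\llbracket\Psi_0\rrbracket_p)(1+\norm{v_0}_{\Binf{\gamma}})^{n_0}.
\]
The $j=k$ term is deterministic and bounded by $\norm{v_0}^k$, which is where the factor $1+\llbracket\Psi_0\rrbracket_p$ comes from.

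The main obstacle is the regularity bookkeeping for $k$ up to $n_0$ rather than $n$. The product estimate requires $\alpha(k-j)<\Theta$, and this is available only for $k-j\le n-1$; the $k\le n$ case is exactly Lemma \ref{lem:wick power stability}. For larger $k$ I would restrict the claim to the indices covered by the hypothesis of Lemma \ref{lem:wick power stability} (as in \cite{chandra2025non}), or else note that those Wick powers are the ones used later. A second delicate point is the convergence $P_\varepsilon w\to w$ uniformly in time in a slightly weaker Hölder-type space, which is why we sacrifice a little regularity, passing from $\gamma$ to $\gamma'$.
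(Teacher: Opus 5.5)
Your argument is essentially the paper's, with more detail: the paper notes, via Lemma \ref{lem:q_regulariz}, that $t\mapsto e^{-(1+L)t}v_0$ lies in $\mathcal C([0,1];\Binf{\beta})$ for $\beta\in(\alpha(n-1),\gamma)$ and then invokes Lemma \ref{lem:wick power stability}, whose content you unpack at the mollified level, making explicit the $\varepsilon\downarrow 0$ convergence (almost sure and in $L^q_\Omega$) that membership in $\mathfrak C$ requires. The limitation you flag is genuine and affects the paper's proof equally: Lemma \ref{lem:wick power stability} only reaches Wick orders $k\le n$, so for $n<k\le n_0$ the cross terms $w^jY^{:k-j:}$ with $k-j\ge n$ are not covered by Theorem \ref{thm:besov_product} under the sole hypothesis $\gamma>\alpha(n-1)$, and neither argument establishes those components of Definition \ref{def:c set} or of $\llbracket\cdot\rrbracket_p$.
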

\begin{proof}
 By Lemma \ref{lem:q_regulariz}, the map $t \mapsto e^{-(1+L)t}v_0$ is in $\mathcal C\pr{[0,1];\Binf{\beta}}$ for any $\beta \in (\alpha(n-1), \gamma)$. Hence the desired result follows from  Lemma \ref{lem:wick power stability}.
\end{proof}

\begin{lemma}\label{lem:uniform unit interval moments of wick powers}
Under the settings of Lemma \ref{lem:wick power stability}, let $\Psi_0\in \mathfrak C$ and let $Y$ solve \eqref{eq: EW eqn} with $Y(0,\cdot)=\Psi_0$. Then, for every $k\in\{1,\dots,n\}$ and every $q\in[1,\infty)$,
\[
  \sup_{t\ge 0}\E_{Y_0=\Psi_0}\left[\norm{Y^{:k:}}_{\mathcal C([t,t+1];\Binf{-\alpha k})}^q\right]<\infty.
\]
\end{lemma}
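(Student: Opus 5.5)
The plan is to split the Wick powers of $Y$ started from $\Psi_0$ into a stationary part and a smooth correction, and then use time-stationarity of the stationary part. Fix $k\le n$ and $q\ge 1$.

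\emph{Decomposition.} Realise the equation on a probability space that also carries an independent stationary solution $\tilde Y$ with $\tilde Y_0\sim\nu$. Then
\[
Y_t=\tilde Y_t+e^{-t(1+L)}(\Psi_0-\tilde Y_0)=:\tilde Y_t+w_t .
\]
Directly handling $\Psi_0-\tilde Y_0$ runs into trouble, because $w$ is not smooth at small times. It is better to use the Markov property at integer times. Write $t=m+s$ with $s\in[0,1)$. The interval $[t,t+1]$ lies inside $[m,m+2]$. For $m\ge1$, restart the linear equation at time $m-1$ and write
\[
Y_{m-1+r}=\tilde Y_{m-1+r}+e^{-r(1+L)}(Y_{m-1}-\tilde Y_{m-1}),\qquad r\in[1,3].
\]

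\emph{Smoothing of the correction.} On $r\ge 1$ the correction is smooth. By Lemma \ref{lem:q_regulariz} and the exponential damping $e^{-r}$, together with Lemma \ref{lem:long time semi-group norm bound}, for any $\gamma\in(\alpha(n-1),\Theta)$ we get
\[
\sup_{r\in[1,3]}\norm{e^{-r(1+L)}h}_{\Binf{\gamma}}\lesssim \norm{h}_{\Binf{-\alpha}} .
\]
We apply this with $h=Y_{m-1}-\tilde Y_{m-1}$, whose $L^q_\Omega$ norm in $\Binf{-\alpha}$ we need to bound uniformly in $m$. Since $Y_{m-1}-\tilde Y_{m-1}=e^{-(m-1)(1+L)}(\Psi_0-\tilde Y_0)$, its $\Binf{-\alpha}$ norm is at most $\norm{\Psi_0}_{\Binf{-\alpha}}+\norm{\tilde Y_0}_{\Binf{-\alpha}}$. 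Here we use that $e^{-r(1+L)}$ is a contraction on $\Binf{-\alpha}$, which holds because it commutes with $Q^{(k)}$ and $P_1$. By Theorem \ref{thm:Wick Power Convergence}, $\tilde Y_0$ has all moments in $\Binf{-\alpha}$. Hence $w$ on $[m,m+2]$ is bounded in $\mathcal C([m,m+2];\Binf{\gamma})$ with moments uniform in $m$.

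\emph{Binomial expansion and stationarity.} By the binomial identity \eqref{eq:binomial hermit} and Lemma \ref{lem:wick power stability}, which is justified by approximation at the level $\varepsilon>0$ and passing to the limit,
\[
Y^{:k:}=\sum_{j=0}^k\binom kj w^j\tilde Y^{:k-j:}\quad\text{on }[m,m+2].
\]
Theorem \ref{thm:besov_product} gives
\[
\norm{Y^{:k:}}_{\mathcal C([m,m+2];\Binf{-\alpha k})}\lesssim \sum_j \norm{w}^j_{\mathcal C\Binf{\gamma}}\,\norm{\tilde Y^{:k-j:}}_{\mathcal C([m,m+2];\Binf{-\alpha(k-j)})}.
\]
By stationarity of $\tilde Y$, the law of $(\tilde Y^{:i:})_{i}$ on $[m,m+2]$ equals its law on $[0,2]$. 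The latter has all moments by Theorem \ref{thm:Wick Power Convergence} with $T=2$. Applying Hölder in $\Omega$ then bounds the expectation uniformly in $m\ge1$. The case $m=0$, i.e.\ $t<1$, is covered by $[0,2]$, using $\Psi_0\in\mathfrak C$ directly on $[0,1]$ (Definition \ref{def:c set}) and the restart argument on $[1,2]$.

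\emph{Main obstacle.} The step I expect to be hardest is justifying the identity of Wick powers under the coupling. The counterterm $C_\varepsilon$ is the stationary variance, so Hermite polynomials of $P_\varepsilon(\tilde Y+w)$ expand exactly via \eqref{eq:binomial hermit}. One must check that the limit defining $Y^{:k:}$ for $Y_0=\Psi_0$ coincides almost surely with this expansion, given that the convergence in $\mathfrak C$ is only stated on $[0,1]$. I would handle this using the convergence $P_\varepsilon w\to w$ in $\Binf{\gamma'}$ for $\gamma'<\gamma$ (Lemma \ref{lem:strong continuity of heat semi group}) together with continuity of the products.
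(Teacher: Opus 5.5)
Your proposal is correct and follows the paper's proof. The paper also couples $Y$ with the stationary solution $\widetilde Y$, taking $\widetilde Y_0\sim\nu$ independent of $\xi$ and driving $\widetilde Y$ with the same noise, as your identity for $w$ requires. It then bounds $e^{-(1+L)s}(\Psi_0-\widetilde Y_0)$ in $\Binf{\gamma}$ for $s\ge1$ by $\norm{\Psi_0-\widetilde Y_0}_{\Binf{-\alpha}}$, expands $Y^{:k:}$ via \eqref{eq:binomial hermit} and Theorem \ref{thm:besov_product}, and concludes by stationarity and H\"older, with $t<1$ handled through $\Psi_0\in\mathfrak C$ on $[0,1]$ plus the $[1,2]$ estimate. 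Your ``restart at time $m-1$'' is only the semigroup identity $e^{-(m-1+r)(1+L)}=e^{-r(1+L)}e^{-(m-1)(1+L)}$, so no Markov property is needed and it coincides with the paper's $v^{(t)}$ for $t\ge1$; your $\varepsilon$-level justification of the binomial identity for $Y^{:k:}$ beyond $[0,1]$ makes explicit a step the paper leaves implicit.
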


\begin{proof}
Fix $k\in\{1,\dots,n\}$ and $q\in[1,\infty)$. Let $\widetilde Y_0\sim \nu$ be independent of $\xi$, and let $\widetilde Y$ be the solution to \eqref{eq: EW eqn} driven by the same noise $\xi$ with initial condition $\widetilde Y(0,\cdot)=\widetilde Y_0$. For each $t\ge 0$ and $s\in[0,1]$, set
\begin{align*}
  v^{(t)}_s:=e^{-(1+L)(t+s)}(\Psi_0-\widetilde Y_0),
\end{align*}
so that $Y(t+s)=\widetilde Y(t+s)+v^{(t)}_s$.

Choose $\gamma\in(\alpha(n-1),\Theta)$. By Lemma \ref{lem:q_regulariz}, we have for $t\ge 1$,
\begin{align*}
  \norm{v^{(t)}}_{\mathcal C_1\Binf{\gamma}}\lesssim \norm{\Psi_0-\widetilde Y_0}_{\Binf{-\alpha}}.
\end{align*}
Hence for every $r\ge 1$,
\begin{align*}
  \sup_{t\ge 1}\E\left[\norm{v^{(t)}}_{\mathcal C_1\Binf{\gamma}}^{rq}\right]<\infty,
\end{align*}
where $\E$ is taken over $(\xi,\widetilde Y_0)$.

By Theorem \ref{thm:Wick Power Convergence}, $\nu(\mathfrak C)=1$ and for each $m\in\{1,\dots,n\}$ and $r\ge 1$,
\begin{align*}
  \E\left[\norm{\widetilde Y^{:m:}}_{\mathcal C_1\Binf{-\alpha m}}^{rq}\right]<\infty.
\end{align*}
Since $\widetilde Y$ is stationary, this implies
\begin{align*}
  \sup_{t\ge 0}\E\left[\norm{\widetilde Y^{:m:}}_{\mathcal C([t,t+1];\Binf{-\alpha m})}^{rq}\right]<\infty,
  \qquad 1\le m\le n.
\end{align*}

Now apply \eqref{eq:binomial hermit} and Theorem \ref{thm:besov_product} on the interval $[t,t+1]$
\begin{align*}
  \norm{Y^{:k:}}_{\mathcal C([t,t+1];\Binf{-\alpha k})}
  \lesssim \sum_{j=0}^k \norm{v^{(t)}}_{\mathcal C_1\Binf{\gamma}}^j\norm{\widetilde Y^{:k-j:}}_{\mathcal C([t,t+1];\Binf{-\alpha(k-j)})}.
\end{align*}
Therefore, by H\"older's inequality and the previous bounds,
\begin{align*}
  \sup_{t\ge 1}\E\left[\norm{Y^{:k:}}_{\mathcal C([t,t+1];\Binf{-\alpha k})}^{q}\right]<\infty.
\end{align*}

For $t\in[0,1]$, we have
\begin{align*}
  \norm{Y^{:k:}}_{\mathcal C([t,t+1];\Binf{-\alpha k})}
  \le \norm{Y^{:k:}}_{\mathcal C_1\Binf{-\alpha k}} + \norm{Y^{:k:}}_{\mathcal C([1,2];\Binf{-\alpha k})}.
\end{align*}
The first term has finite $q$-th moment by Definition \ref{def:c set} since $\Psi_0\in\mathfrak C$, and the second term is bounded by the estimate for $t\ge 1$ (take $t=1$). This proves the lemma.
\end{proof}

\section{Solution theory for the remainder equation}\label{sec:Solution Theory}
We now move on to solve the remainder equation \eqref{eq:formal eq for v intro}. We will show that for appropriate initial conditions, the local (in time) solution to \eqref{eq:formal eq for v intro} exists under \eqref{ineq: dbd condition}. If in addition \eqref{ineq:global solution condition} holds, then the solution to the remainder equation exists for all $t>0$.  
For appropriate $v_0$, the solution of \eqref{eq:formal eq for v intro} is understood as the solution to the integral equation
\begin{align}\label{eq:remainder mild form}
    v_t = e^{-t}P_tv_0 - \sum_{k = 0}^n\binom{n}{k}\int_0^t e^{-(t-s)}P_{t-s}\pr{Y^{:k:}(s)v(s)^{n-k}}\mathrm ds.
\end{align}
For obtaining global solution, we will also need to consider the following mollified remainder equation (see Remark \ref{rmk:need for mollified}):
\begin{align}\label{eq: DPD decomposition mollified}
\begin{cases}
  \partial_t v_\varepsilon &= -(L+1) v_\varepsilon - \sum_{k = 0}^n \binom{n}{k} Y^{:k:}_\varepsilon v^{n-k}_\varepsilon\\
  v_\varepsilon(0,\cdot) &=P_\varepsilon\pr{v_0},
\end{cases}
\end{align}
which is understood in the mild form
\begin{align*}
    v_\varepsilon(t)= e^{-t} P_{\varepsilon+t} v_0 - \sum_{k = 0}^n \binom{n}{k}\int_0^t e^{-(t-s)} P_{t-s} \pr{Y_\varepsilon^{:k:}(s)v_\varepsilon(s)^{n-k}}\mathrm ds.
\end{align*}

 \subsection{Local well-posedness of the remainder equation}\label{sec:local sol}

We consider first a slightly more general setting than \eqref{eq:formal eq for v intro}. Recall $\alpha_0= (d_h - d_w)/2$ as in \eqref{eq:alpha_0} and $n_0$ as in \eqref{eq:n0}. Condition \eqref{ineq: dbd condition} is equivalent to the following:  $1 \le n \le n_0$, and there exists $\alpha>\alpha_0$ so that %and let us fix $\alpha >\alpha_0$ and $1 \le n \le n_0$ for now so that
\begin{align}\label{ineq:regularity condition general alpha}
\alpha(n-1) <\pr{d_w - \alpha n}\wedge \Theta,
\end{align}
where $\Theta\in (0,1]$ is as in \eqref{ineq:holder_kernel}. In this subsection, we fix such $\alpha>\alpha_0$.  For $T\ge 1$, consider the Banach space of time-dependent distributions
\begin{align*}
\mathcal G_T := \bigoplus_{k = 1}^n \mathcal C\pr{[0,T]; \Besovp{-\alpha k}{\infty}}
\end{align*}
equipped with the norm
\begin{align*}
\norm{\mathbf Y}_{\mathcal G_T}:= \max_{k = 1,\dots, n}\sup_{t \in [0,T]} \norm{Y_{t}^{(k)}}_{\Besovp{-\alpha k}{\infty}},\qquad \mathbf Y = \pr{Y^{(1)},\dots, Y^{(n)}} \in \mathcal G_T.
\end{align*}
and set $\mathcal G:= \mathcal G_1$.
Consider the equation
\begin{align}\label{eq:DPD_general}
\partial_t v = -(L+1) v + \sum_{k = 0}^n a_k v^{n-k} Y^{(k)},
\end{align}
where $a_{k} \in \R$ for $k = 0,\dots n$. We will also let
\begin{align}\label{ineq:regularity of solution restriction general alpha}
\gamma \in (\alpha (n-1), d_w - \alpha n)\cap  (0,\Theta).
\end{align}
The following proposition is similar to \cite[Proposition 3.2]{chandra2025non}.
\begin{prop}\label{prop:local_solution_with_weights}
Let $\gamma>0$ be as in \eqref{ineq:regularity of solution restriction general alpha}, $\eta \in (- \frac{d_w}{n}, \gamma]\backslash\{0\}$, and $K >2$. Denote
\begin{align*}
  B_K:= \left\{\pr{v_0, \mathbf Y} \in \Besovp{\eta}{\infty}\times \mathcal G: \norm{v_0}_{\Besovp{\eta}{\infty}}+\norm{\mathbf{Y}}_{\mathcal G} \le K  \right\}.
\end{align*}
Then there exists $c> 0$, $\kappa >0$, depending only on $\alpha, \eta, n,\gamma$, so that, for $T^*= cK^{-1/\kappa} \in (0,1)$, \eqref{eq:DPD_general} with initial value $v_0$ admits a unique solution in the Banach space $\mathcal S$ of functions $v \in \mathcal C \pr{[0,T^*], \Besovp{\eta}{\infty}}$, for which
\begin{align}\label{eq:weighted time norm}
  \norm{v}_{\mathcal S_{T^*}}:= \sup_{t \in (0,T^*]}\left\{\norm{v_t}_{\Besovp{\eta}{\infty} } + t^{\frac{\gamma - \eta}{d_w}}\norm{v_t}_{\Besovp{\gamma}{\infty}} +t^{-\frac{\eta\wedge0}{d_w}}\norm{v_t}_{L^\infty_M}  \right\}<\infty.
\end{align}
Moreover, the solution map
\begin{align*}
  B_K \ni \pr{v_0, \mathbf Y} \mapsto v \in \mathcal S
\end{align*}
is Lipschitz and $\norm{v}_\mathcal S \le C\pr{\norm{v_0}_{\Besovp{\eta}{\infty}} +1}$, where $C>0$ depends only on $\alpha, \eta, n,\gamma$.
\end{prop}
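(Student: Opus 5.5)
The plan is a standard Banach fixed-point argument for the mild formulation
\[
\mathcal M(v)_t := e^{-t}P_t v_0 + \sum_{k=0}^n a_k \int_0^t e^{-(t-s)}P_{t-s}\bigl(v_s^{n-k}Y^{(k)}_s\bigr)\,\mathrm ds ,
\]
on a closed ball of $\mathcal S_{T}$ of radius $R\simeq C(\norm{v_0}_{\Besovp{\eta}{\infty}}+1)$, with $T$ small. The convention is $v^{n}Y^{(0)}=v^n$.

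\textbf{Step 1 (free part).} Bound $e^{-t}P_tv_0$ in $\mathcal S_T$. Lemma~\ref{lem:q_regulariz} gives
\[
\norm{P_tv_0}_{\Besovp{\gamma}{\infty}}\lesssim t^{-(\gamma-\eta)/d_w}\norm{v_0}_{\Besovp{\eta}{\infty}} .
\]
For $\eta<0$ it also gives $\norm{P_tv_0}_{L^\infty_M}\lesssim t^{\eta/d_w}\norm{v_0}_{\Besovp{\eta}{\infty}}$. For $\eta>0$ we use Lemma~\ref{lem:besov_norm_compare} instead. Together these control all three pieces of \eqref{eq:weighted time norm}.

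\textbf{Step 2 (nonlinear terms).} For $v\in\mathcal S_T$, estimate each product $v_s^{n-k}Y^{(k)}_s$ using Theorem~\ref{thm:besov_product}.
\begin{itemize}
\item Since $\gamma<\Theta$, Proposition~\ref{prop:holder=besov} makes $\Besovp{\gamma}{\infty}$ an algebra. Hence $\norm{v_s^{n-k}}_{\Besovp{\gamma}{\infty}}\lesssim \norm{v_s}_{\Besovp{\gamma}{\infty}}^{n-k}\lesssim s^{-(n-k)(\gamma-\eta)/d_w}R^{n-k}$.
\item For $k\ge1$, pair with $Y^{(k)}\in\Besovp{-\alpha k}{\infty}$. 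This needs $\gamma-\alpha k>0$ and $\alpha k<\Theta$ for $k\le n-1$, both guaranteed by \eqref{ineq:regularity condition general alpha} and \eqref{ineq:regularity of solution restriction general alpha}. The result is a $\Besovp{-\alpha k}{\infty}$ bound by $s^{-(n-k)(\gamma-\eta)/d_w}R^{n-k}K$.
\item The term $k=n$ is just $Y^{(n)}$.
\item The term $k=0$ is handled in $L^\infty_M$ via the $L^\infty$ piece of the norm, giving a bound $s^{n(\eta\wedge 0)/d_w}R^n$.
\end{itemize}
Then apply the Schauder-type smoothing, as in Proposition~\ref{prop: schauder} but keeping the time singularity inside the integral (rerun its proof with $\norm{v(s)}$ replaced by a weight $s^{-\theta}$). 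This gains $d_w$ minus $\epsilon$ derivatives, at the price of a factor $(t-s)^{-(\gamma+\alpha k)/d_w}$. Since $\gamma+\alpha n<d_w$, the integrals
\[
\int_0^t (t-s)^{-(\gamma+\alpha k)/d_w}\, s^{-(n-k)(\gamma-\eta)/d_w}\,\mathrm ds
\]
are Beta integrals. They are bounded by $t^{\kappa'}$ times the weight required in $\mathcal S_T$, provided $n(\gamma-\eta)<d_w$ and $\gamma+\alpha n<d_w$. The first condition is where $\eta>-d_w/n$ enters; if necessary, reduce $\gamma$ within \eqref{ineq:regularity of solution restriction general alpha}. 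The $\Besovp{\eta}{\infty}$ and $L^\infty$ components are obtained the same way, using Lemma~\ref{lem:besov_norm_compare}.

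\textbf{Step 3 (closing the argument).} Collecting the bounds,
\[
\norm{\mathcal M(v)}_{\mathcal S_T}\le C\norm{v_0}_{\Besovp{\eta}{\infty}}+CT^{\kappa}(1+K)(1+R)^n .
\]
The map is polynomial in $v$, so by multilinearity the same estimates give
\[
\norm{\mathcal M(v)-\mathcal M(w)}_{\mathcal S_T}\le CT^\kappa K(1+R)^{n-1}\norm{v-w}_{\mathcal S_T}.
\]
Choosing $T^*=cK^{-1/\kappa}$ makes $\mathcal M$ a contraction of the ball, which gives existence and uniqueness. Uniqueness in the whole of $\mathcal S_{T^*}$ follows by the usual continuation argument. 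Lipschitz dependence on $(v_0,\mathbf Y)$ follows from the same multilinear bounds, now also linear in $\mathbf Y$. Continuity in time into $\Besovp{\eta}{\infty}$ uses Lemma~\ref{lem:strong continuity of heat semi group}.

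\textbf{Main obstacle.} The hard part is the bookkeeping of exponents in Step 2. Two things must be checked for every $k$ simultaneously: the time singularities $s^{-(n-k)(\gamma-\eta)/d_w}$ must stay integrable against the Schauder kernel, and the hypotheses of Theorem~\ref{thm:besov_product} ($\alpha k<\Theta$, $\gamma>\alpha k$, $\beta\neq0$) must hold. A further technical point is the weighted-in-time version of Proposition~\ref{prop: schauder}, which is stated only with a sup norm. I would reprove it directly from the bound $\norm{Q^{(c)}_\tau P_{t-s}f}\lesssim (\tau/(t-s+\tau))^c(t-s+\tau)^{\beta/d_w}\norm{f}_{\Besovp{\beta}{\infty}}$ obtained in its proof.
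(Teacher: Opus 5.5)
\textbf{Verdict.} Your overall architecture matches the paper: a fixed point in the weighted space $\mathcal S_T$, the free term handled via Lemma~\ref{lem:q_regulariz}, and contraction and Lipschitz dependence by multilinearity. However, Step~2 does not close in the stated range of parameters.

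\textbf{Where it fails.} For $1\le k\le n-1$ you bound $\norm{v_s^{n-k}}_{\mathcal B^{\gamma}_\infty}\lesssim\norm{v_s}_{\mathcal B^{\gamma}_\infty}^{n-k}$. This charges every factor with the full time weight and produces the singularity $s^{-(n-k)(\gamma-\eta)/d_w}$. After smoothing with $(t-s)^{-(\gamma+\alpha k)/d_w}$ and multiplying by $t^{(\gamma-\eta)/d_w}$, you need both $(n-k)(\gamma-\eta)<d_w$ and $1-\frac{\gamma+\alpha k}{d_w}-(n-k-1)\frac{\gamma-\eta}{d_w}>0$.

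Take $k=1$ and $\eta=-\alpha$, which is the case actually used later (Remark~\ref{rmk:local solution alpha initial condition}). Both conditions then reduce to $(n-1)(\gamma+\alpha)<d_w$. Since $\gamma>\alpha(n-1)$, this forces $n(n-1)\alpha<d_w$. By contrast, \eqref{ineq:regularity condition general alpha} only gives $(2n-1)\alpha<d_w$, which is strictly weaker for $n\ge3$.

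A concrete case: $n=3$, $d_w=2$, $\Theta=1$, $\alpha=0.38$, $d_h=2.7$ satisfies all hypotheses. Then $\gamma>0.76$, while integrability of the $k=1$ term needs $\gamma<0.62$.

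Neither of your remedies helps. The condition $\eta>-d_w/n$ only gives $-n\eta<d_w$, not $n(\gamma-\eta)<d_w$. And $\gamma$ cannot be lowered below $\alpha(n-1)$, since that bound is what makes the products $v^{n-k}Y^{(k)}$ well defined; besides, $\gamma$ is fixed in the statement.

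\textbf{The missing idea.} Spend on $v^{n-k}$ only the regularity the product requires, and put the remaining factors on the $L^\infty$ part of the norm.
\begin{itemize}
\item Estimate $v_s^{n-k}$ in $\mathcal B^{\alpha k+\varepsilon}_\infty$. This is enough for Theorem~\ref{thm:besov_product} against $Y^{(k)}\in\mathcal B^{-\alpha k}_\infty$.
\item By Proposition~\ref{prop:holder=besov} and the Leibniz rule for H\"older norms, $\norm{v^{n-k}}_{\mathcal B^{\alpha k+\varepsilon}_\infty}\lesssim\norm{v}_{\mathcal B^{\alpha k+\varepsilon}_\infty}\norm{v}_{L^\infty_M}^{n-k-1}$.
\item By Lemma~\ref{lem: Besov Interpolation}, $\norm{v}_{\mathcal B^{\alpha k+\varepsilon}_\infty}\lesssim\norm{v}_{\mathcal B^{\gamma}_\infty}^{\theta}\norm{v}_{L^\infty_M}^{1-\theta}$ with $\theta=(\alpha k+\varepsilon)/\gamma\le 1$. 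This is where $\gamma>\alpha(n-1)$ is used.
\end{itemize}
The weights in $\mathcal S_T$ then give the singularity $s^{-\Xi_k}$ with $\Xi_k=\theta\frac{\gamma-\eta}{d_w}-\frac{\eta\wedge0}{d_w}(n-k-\theta)$. For $\eta<0$ this equals $\frac{\alpha k+\varepsilon-(n-k)\eta}{d_w}$, which no longer involves $\gamma$. The resulting time exponents are $1-\frac{2(\alpha k+\varepsilon)}{d_w}+\frac{(n-k-1)\eta}{d_w}$. They are linear in $k$ and positive for all $0\le k\le n-1$: at $k=0$ by $\eta>-d_w/n$, and at $k=n-1$ by $(2n-1)\alpha<d_w$.

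You already treat $k=0$ this way, with all factors in $L^\infty$; extending the same idea to $k\ge1$ is what closes the argument. Incidentally, you do not need a weighted Schauder estimate. Applying Lemma~\ref{lem:q_regulariz} inside the time integral suffices, which is what the paper does.
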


\begin{proof}
In this proof, we will denote $\mathcal S_T$ for each $T\in(0,1]$ the Banach space of functions $v \in \mathcal C\pr{[0,T]; \Binf{\eta}}$ with the norm
\begin{align*}
  \norm{v}_{\mathcal S_T}:= \sup_{t \in (0,T]}\left\{\norm{v_t}_{\Besovp{\eta}{\infty} } + t^{\frac{\gamma - \eta}{d_w}}\norm{v_t}_{\Besovp{\gamma}{\infty}} +t^{-\frac{\eta\wedge0}{d_w}}\norm{v_t}_{L^\infty_M}  \right\}.
\end{align*}
Consider the solution map of \eqref{eq:DPD_general} acting on $v \in \mathcal S_T$:
\begin{align*}
  \mathcal S_T \ni v \mapsto \mathbf \Phi(v):= \pr{ e^{-t}P_tv_0 +\int_0^t e^{-(t-s)}P_{t-s} Y^{(n)}_s \mathrm ds + \sum_{k = 0}^{n-1}\int_0^t e^{-(t-s)}P_{t-s}\pr{v^{n-k}_sY^{(k)}_s} \mathrm ds}_{t \ge 0}.
\end{align*}
We will first show $\mathbf\Phi: \mathcal S \to \mathcal S$. Consider first
\begin{align}\label{ineq:solution_map_general_bound}
  \norm{\mathbf \Phi(v)_t}_{\Binf{\gamma}}\lesssim \norm{P_tv_0}_{\Binf{\gamma}} + \int_0^t \norm{P_{t-s} \Psi^{(n)}_s}_{\Binf{\gamma}}\mathrm ds + \sum_{k = 0}^{n-1}\int_0^t \norm{P_{t-s}\pr{v_s^{n-k}Y^{(k)}_s}}_{\Binf{\gamma}} \mathrm d s.
\end{align}
Consider first by Lemma \ref{lem:q_regulariz} that uniformly in $t \in (0,1]$,
\begin{align*}
  \int_0^t \norm{P_{t-s} Y^{(n)}_s}_{\Binf{\gamma}}\mathrm ds \lesssim \int_0^t\pr{t-s}^{-\frac{\gamma+n \alpha}{d_w}}\norm{Y^\prk{n}_s}_{\Binf{-\alpha n}}\mathrm ds \overset{\eqref{ineq:regularity of solution restriction general alpha}}{\lesssim} K.
\end{align*}
and
\begin{align*}
  \norm{P_tv_0}_{\Binf{\gamma}} \lesssim t^{\frac{\eta - \gamma}{d_w}}\norm{v_0}_{\Binf{\eta}}.
\end{align*}
Let $\varepsilon>0$ be sufficiently small by Proposition \ref{prop:holder=besov} and  Lemma \ref{lem: Besov Interpolation}, we see from  \eqref{ineq:regularity of solution restriction general alpha} that for each $k = 0,\dots , n-1$,
\begin{align*}
  \norm{v_s^{n-k}}_{\Binf{\alpha k+\varepsilon}}&\lesssim \norm{v_s}_{\Binf{\alpha k+\varepsilon}} \norm{v_s}_{L^\infty_M}^{n-k-1}\le \norm{v_s}_{\Binf{\gamma}}^\frac{\alpha k+\varepsilon}{\gamma} \norm{v_s}_{L^\infty_M}^{n-k-\frac{\alpha k +\varepsilon}{\gamma}}.
\end{align*}
Hence, by Lemma \ref{lem:q_regulariz} and Theorem \ref{thm:besov_product}
\begin{align*}
  \int_0^t P_{t-s}\norm{v^{n-k}_s Y^\prk{k}_s}_{\Binf{\gamma}} \mathrm ds &\lesssim \int_0^t \pr{t-s}^{- \frac{\gamma+ \alpha k}{d_w}}  \norm{v_s^{n-k}}_{\Binf{\alpha k+\varepsilon}}\norm{Y^\prk{k} }_{\Binf{- \alpha k }}\mathrm ds\\
  &\le K\int_0^t \pr{t-s}^{- \frac{\gamma+ \alpha k}{d_w}}  \norm{v_s}_{\Binf{\gamma}}^\frac{\alpha k+\varepsilon}{\gamma} \norm{v_s}_{L^\infty_M}^{n-k-\frac{\alpha k +\varepsilon}{\gamma}}\mathrm ds\\
  &\le K\norm{v}_{\mathcal S_T}^{n-k} \int_0^t\pr{t-s}^{-\frac{\gamma+\alpha k }{d_w}}s^{-\Xi_k}\mathrm ds,
\end{align*}
where
\begin{align*}
  \Xi_k = {\frac{\alpha k+\varepsilon}{\gamma}\frac{ \gamma- \eta }{d_w} - \frac{\eta\wedge 0}{d_w}\pr{n-k - \frac{\alpha k +\varepsilon}{\gamma}}}.
\end{align*}
When $\eta \ge 0$, we see for all $k = 0,\dots, n-1$ and sufficiently small $\varepsilon>0$,  $\Xi_k = \frac{\gamma - \eta}{\gamma } \frac{\alpha k +\varepsilon}{d_w}<1$  by \eqref{ineq:regularity condition general alpha} and $\Xi_k = 0$ when $\eta = \gamma>0$. For $-\frac{d_w}{n} < \eta < 0$,
\begin{align*}
  \Xi_k = \frac{\alpha k +\varepsilon}{d_w}  -  \frac{\eta}{d_w}(n-k),
\end{align*}
which is linear in $k = 0,\dots, n-1$. Hence $\max_{k = 0,\dots,n-1} \Xi_k = \max\{\Xi_0,\Xi_{n-1}\}$.
Note $\Xi_0 = \frac{\varepsilon}{d_w}  - \frac{n\eta}{d_w}<1$ for sufficiently small $\varepsilon >0$, and
\begin{align*}
  \Xi_{n-1} = \frac{\alpha (n-1) +\varepsilon}{d_w} - \frac{\eta}{d_w} \le \frac{n-1}{2n-1} + \frac{\varepsilon}{d_w} < 1,
\end{align*}
for all $n \ge 1$ and sufficiently small $\varepsilon >0$. Hence uniformly in $t \in (0,1]$,
\begin{align*}
  t^{\frac{\gamma - \eta}{d_w}}\int_0^t \norm{P_{t-s}\pr{v^{n-k}_s Y^\prk{k}_s}}_{\Binf{\gamma}} \mathrm ds \lesssim \norm{v}_{\mathcal S_T}^{n-k}  K t^{1+ \frac{\gamma - \eta}{d_w}-\pr{\frac{\alpha k +\varepsilon+\gamma}{d_w}+\Xi_k}}.
\end{align*}
Consider first when $0 \le \eta \le \gamma$, we may pick a sufficiently small $\varepsilon>0$ so that for all $k = 0,\dots, n-1$
\begin{align*}
  1+ \frac{\gamma - \eta}{d_w}-\frac{\alpha k +\varepsilon+\gamma}{d_w}-\Xi_k&=1+ \frac{\gamma - \eta}{d_w}-\frac{\alpha k +\varepsilon+\gamma}{d_w}-\frac{\alpha k+\varepsilon}{\gamma} \frac{\gamma - \eta}{d_w}\\
  &\overset{\eqref{ineq:regularity of solution restriction general alpha}}{> } 1- \frac{\alpha k +\varepsilon +\gamma}{d_w}\overset{\eqref{ineq:regularity of solution restriction general alpha} }{>}0.
\end{align*}
For $- \frac{d_w}{n}< \eta <0$, the exponent is then
\begin{align*}
  \Xi'_k:= &1+ \frac{\gamma - \eta}{d_w} - \frac{\alpha k +\varepsilon +\gamma}{d_w} - \frac{\alpha k +\varepsilon}{\gamma}\frac{\gamma - \eta}{d_w} + \frac{\eta}{d_w} \pr{n-k- \frac{\alpha k +\varepsilon}{\gamma}}\\
  =& 1 - \frac{\eta}{d_w}-\frac{\alpha k+\varepsilon}{d_w}- \frac{\alpha k +\varepsilon}{d_w} +  \frac{\alpha k+\varepsilon}{\gamma}\frac{\eta}{d_w} + \frac{\eta}{d_w}(n-k) - \frac{\eta}{d_w} \frac{\alpha k +\varepsilon}{\gamma}\\
  =&1 - \frac{\eta}{d_w} - 2\frac{\alpha k +\varepsilon}{d_w} + \frac{\eta}{d_w}(n-k),
\end{align*}
which is linear in $k = 0,\dots, n-1$. Since $- \frac{d_w}{n}< \eta$, we see for sufficiently small $\varepsilon >0$,
\begin{align*}
  \Xi'_0 = 1- \frac{\eta}{d_w}(n-1) - \frac{2\varepsilon}{d_w}>0.
\end{align*}
On the other hand, since $\alpha < \frac{d_w}{2n-1}$ from \eqref{ineq:regularity of solution restriction general alpha}, we see for all sufficiently small $\varepsilon >0$,
\begin{align*}
  \Xi'_{n-1} = 1 - \frac{2\alpha (n-1)}{d_w} - \frac{2\varepsilon}{d_w}>1-2(n-1) \frac{1}{2n-1} - \frac{2\varepsilon}{d_w} > 0.
\end{align*}
Denote
$$\kappa':=  \max\left\{\Xi'_{k}, 1-\frac{\alpha n}{d_w}:0 \le k \le n-1\right\},$$
then by collecting all terms, we see that
\begin{align}
  \sup_{t \in (0,T]}  t^{\frac{\gamma - \eta}{d_w}} \norm{v_t}_{\Binf{\gamma}} \lesssim KT^{\kappa'}\pr{\sum_{k = 1}^{n}\norm{v}_{\mathcal S_T}^k}.
\end{align}
Now observe that if $\eta >0$, then $\norm{\mathbf \Phi(v)_t}_{L^\infty_M} \lesssim \norm{\mathbf \Phi(v)_t}_{\Binf{\eta}}$ uniformly in $t\ge0$, hence we will consider the case for $- \frac{d_w}{n}< \eta\le 0$.
Let us now consider
\begin{align*}
  \norm{\mathbf \Phi\pr{v}_t}_{L^\infty_M} &\lesssim \int_0^t \norm{P_{t-s}Y^\prk{n}_s}_{L^\infty_M} \mathrm ds +\sum_{k = 0}^{n-1} \int_0^t \norm{P_{t-s} \pr{v_s^{n-k} Y^\prk{k}_s }}_{L^\infty_M} \mathrm ds.
\end{align*}
By Lemma \ref{lem:q_regulariz},
\begin{align*}
  \int_0^t \norm{P_{t-s}Y^\prk{n}_s}_{L^\infty_M} \mathrm ds \lesssim \int_0^t \pr{t-s}^{- \frac{\alpha n}{d_w}}\mathrm ds K \lesssim K t^{\kappa'}.
\end{align*}
By a similar argument, we see for $k = 0,\dots, n-1$, it holds uniformly in $t \in (0,1]$ that
\begin{align*}
  t^{- \frac{\eta}{d_w}}\int_0^t\norm{P_{t-s} \pr{v_s^{n-k}Y^\prk{k}_s}}_{L^\infty_M}\mathrm ds \lesssim K \norm{v}_{\mathcal S_T}^{n-k}t^{- \frac{\eta}{d_w}}\int_0^t \pr{t-s}^{- \frac{\alpha k }{d_w}}s^{-\Xi_k} \mathrm ds \lesssim K t^{\kappa'}\norm{v}_{\mathcal S_T}^{n-k}.
\end{align*}
Finally, since $\eta \neq 0$, $t^{-\frac{\eta}{d_w}}\norm{P_tv_0}_{L^\infty_M} \lesssim \norm{v_0}_{\Binf{\eta}} \le K.$ Collecting all the terms to see for so me $C_0\ge1$,
\begin{align}\label{ineq:solution operator norm bound local solution}
  \norm{\mathbf \Phi(v)}_{\mathcal S_T}
  \le C_0 \norm{v_0}_{\Binf{\eta}} + T^{\kappa'} C_0 K \pr{\sum_{k = 1}^n\norm{v}_{\mathcal S_T}^{k}}
  \le C_0 K + T^{\kappa'} C_0 K \pr{\sum_{k = 1}^n\norm{v}_{\mathcal S_T}^{k}}.
\end{align}
We may fix $\ell_0 \ge 2$ and let $R_0:= \ell_0C_0K \ge 2$, $\kappa :=\kappa' /n$, and set
$$
T^*:=\pr{\pr{\frac{\ell_0-1}{n}}^\frac{1}{n} \ell_0 C_0 K}^{- \frac{1}{\kappa}}  \le \pr{\frac{R_0/(C_0K)-1}{\sum_{k = 1}^n R_0^k}}^{\frac{1}{\kappa}}.
$$
Then \eqref{ineq:solution operator norm bound local solution}, along with Proposition \ref{prop: schauder} implies  $\mathbf \Phi$ maps the closed ball of radius $R_0$ centered at $0$ in $\mathcal S_{T^*}$ to itself. By a similar argument, one can show $\mathbf \Phi$ is a contraction on this ball for possibly smaller $T^*$.

Finally, to see the solution map is Lipschitz, we let $(v_0,\mathbf Y), (w_0, \widetilde{ \mathbf Y}) \in B_K$, and let $v,w \in \mathcal S_{T^*}$ be solution to \eqref{eq:DPD_general} with initial condition $v_0,w_0$ and coefficients $\mathbf Y,\widetilde{\mathbf Y}$ respectively. Then by construction, $\norm{v}_{\mathcal S_{T^*}}, \norm{w}_{\mathcal S_{T^*}} \le R_0$. Now consider
\begin{align*}
  \norm{v_t - w_t}_{\Binf{\eta}} &\lesssim \norm{P_t\pr{v_0 - w_0}}_{\Binf{\eta}  } + \int_0^t \norm{P_{t-s} \pr{v_s^n -w^n_s}}_{\Binf{\eta}}\mathrm ds  + \sum_{k = 1}^{n} \int_0^t \norm{P_{t-s}\widetilde{Y}_s^{(k)}\pr{v_s^{n-k} - w_s^{n-k}}}_{\Binf{\eta}}\mathrm ds\\
  &\qquad \qquad +\sum_{k = 1}^n \int_0^t \norm{P_{t-s}w_s \pr{\widetilde Y^{(k)}_s - Y^{(k)}_s}}_{\Binf{\eta}}\mathrm ds,
\end{align*}
and use similar decomposition for $\norm{v_t - w_t}_{L^\infty_M}$. A similar argument shows that the solution map is Lipschitz on the ball of radius $R_0$ in $\mathcal S_{T^*}$.
\end{proof}

\noindent As a consequence of Proposition \ref{prop:local_solution_with_weights}, for any $v_0 \in \Binf{-\alpha}$, there exits a unique local solution to the equations  \eqref{eq:formal eq for v intro} and \eqref{eq: DPD decomposition mollified}. Recall $\mathfrak{C}$ as in Definition \ref{def:c set}.

\begin{lemma}\label{lem:local solution to DPD and convergence of approximation}
Let $\alpha>\alpha_0$ and $2\le n \le n_0$ satisfies \eqref{ineq:regularity condition general alpha}, take $\gamma >0$ be as in \eqref{ineq:regularity of solution restriction general alpha} and $\eta$ be as in Proposition \ref{prop:local_solution_with_weights}. Suppose $Y$ is solution to \eqref{eq: EW eqn} with initial condition $Y_0 \in \mathfrak{C}$. Then for each $v_0 \in \Binf{-\eta}$,
\begin{itemize}
  \item There exists a (random) $T^*>0$ depending only on $\max_{k = 1,\dots,n}\sup_{t \in [0,1]}\norm{Y^{:k:}_t}_{\Binf{-\alpha k }}$ and $\norm{v_0}_{\Binf{\eta}}$, so that equation \eqref{eq:formal eq for v intro} admits a unique solution, $v$, in $\mathcal S_{T^*}$ as in \eqref{eq:weighted time norm}.
  \item For each $\varepsilon \in (0,1]$, there exist (random) $T^*_\varepsilon\ge T^*>0$, so that equation \eqref{eq: DPD decomposition mollified} admits a unique solution, $v_\varepsilon$, in $\mathcal S_{T^*_\varepsilon}$.
\end{itemize}
Furthermore,
\begin{align*}
  \lim_{\varepsilon\downarrow 0} \norm{v- v_\varepsilon}_{\mathcal S_{T^*\wedge T^*_\varepsilon}} = 0.
\end{align*}
\end{lemma}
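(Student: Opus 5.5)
The plan is to read both equations as instances of the general equation \eqref{eq:DPD_general}, with $a_k=-\binom{n}{k}$, and then apply Proposition \ref{prop:local_solution_with_weights} to the two different enhanced data. After that, the convergence $v_\varepsilon\to v$ will follow from the Lipschitz continuity of the solution map.

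\emph{Step 1: the limiting equation.} Since $Y_0\in\mathfrak C$, Definition \ref{def:c set} gives almost surely
\[
\mathbf Y:=(Y,Y^{:2:},\dots,Y^{:n:})\in\mathcal G \quad (\text{using } n\le n_0).
\]
Set
\[
K:=2+\norm{v_0}_{\Binf{\eta}}+\norm{\mathbf Y}_{\mathcal G}.
\]
Proposition \ref{prop:local_solution_with_weights} then yields $T^*=cK^{-1/\kappa}$ and a unique solution $v\in\mathcal S_{T^*}$ of \eqref{eq:remainder mild form}. By construction $T^*$ depends only on $\max_k\sup_{t\in[0,1]}\norm{Y^{:k:}_t}_{\Binf{-\alpha k}}$ and $\norm{v_0}_{\Binf{\eta}}$.

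\emph{Step 2: the mollified equation.} For the mollified problem the data are $\mathbf Y_\varepsilon:=(Y^{:k:}_\varepsilon)_{k\le n}$ and the initial condition $P_\varepsilon v_0$.
\begin{itemize}
\item By Lemma \ref{lem: holder regularity of mollifed y}, $\mathbf Y_\varepsilon\in\mathcal G$.
\item By Lemma \ref{lem:q_regulariz}, $\norm{P_\varepsilon v_0}_{\Binf{\eta}}\lesssim\norm{v_0}_{\Binf{\eta}}$.
\item Almost sure convergence in $\mathcal C([0,1];\Binf{-\alpha k})$ (again Definition \ref{def:c set}) gives $\sup_{\varepsilon\in(0,1]}\norm{\mathbf Y_\varepsilon}_{\mathcal G}<\infty$.
\end{itemize}
Hence, after enlarging $K$ to a random $K'$ bounding both the limiting and all mollified data uniformly in $\varepsilon$, Proposition \ref{prop:local_solution_with_weights} gives solutions $v_\varepsilon$ on a common interval $[0,cK'^{-1/\kappa}]$.

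To get $T^*_\varepsilon\ge T^*$ as stated, I would handle the mollified data separately. Since $\mathbf Y_\varepsilon$ has positive regularity (Lemma \ref{lem: holder regularity of mollifed y}), the fixed point can be restarted, so its maximal existence time can be taken at least the one produced by the bound $K$. Equivalently, one may simply redefine $T^*$ as the smaller time $cK'^{-1/\kappa}$, which still depends only on the stated quantities up to the uniform-in-$\varepsilon$ supremum. I would present this redefinition as the cleanest route.

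\emph{Step 3: convergence.} Both pairs $(v_0,\mathbf Y)$ and $(P_\varepsilon v_0,\mathbf Y_\varepsilon)$ lie in $B_{K'}$, and the solution map is Lipschitz on $B_{K'}$. Therefore
\[
\norm{v-v_\varepsilon}_{\mathcal S_{T^*\wedge T^*_\varepsilon}}\lesssim \norm{v_0-P_\varepsilon v_0}_{\Binf{\eta}}+\norm{\mathbf Y-\mathbf Y_\varepsilon}_{\mathcal G}.
\]
The second term vanishes almost surely by Definition \ref{def:c set} and Theorem \ref{thm:Wick Power Convergence}.

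\emph{Main obstacle.} The first term is the difficulty: $P_\varepsilon$ is strongly continuous on $\Binf{\eta}$ (Lemma \ref{lem:strong continuity of heat semi group}) only when $\Binf{\eta}$ is understood as the closure of $\mathcal S$, which is how Definition \ref{def:besov_space} defines it. So I would invoke that lemma here. If this were insufficient, I would instead measure the initial datum in $\Binf{\eta-\delta}$ for small $\delta>0$ and use the quantitative bound $\norm{P_\varepsilon v_0-v_0}_{\Binf{\eta-\delta}}\lesssim\varepsilon^{\delta/d_w}\norm{v_0}_{\Binf{\eta}}$. This costs a slightly weaker weight in the $\mathcal S$ norm, with $\eta$ replaced by $\eta-\delta$, which Proposition \ref{prop:local_solution_with_weights} still accommodates since $\eta-\delta>-d_w/n$.
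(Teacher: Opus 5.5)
Your proposal is correct and follows the paper's argument: both equations are treated as instances of \eqref{eq:DPD_general} via Proposition~\ref{prop:local_solution_with_weights}, and $v_\varepsilon\to v$ comes from the Lipschitz dependence of the solution map, the convergence $\mathbf Y_\varepsilon\to\mathbf Y$ in $\mathcal G$, and the strong continuity of $P_\varepsilon$ on $\Binf{\eta}$, which you rightly make explicit. The one point to tidy is $T^*_\varepsilon\ge T^*$. Redefining $T^*$ through $\sup_{\varepsilon}\norm{\mathbf Y_\varepsilon}_{\mathcal G}$ changes the dependence asserted in the statement, and the ``restart'' alternative needs an a priori bound you do not have. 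Instead, take $K$ with a margin, e.g.\ $K=3+\norm{v_0}_{\Binf{\eta}}+2\norm{\mathbf Y}_{\mathcal G}$; since $\norm{P_\varepsilon v_0}_{\Binf{\eta}}\le\norm{v_0}_{\Binf{\eta}}$ and $\mathbf Y_\varepsilon\to\mathbf Y$ almost surely, $(P_\varepsilon v_0,\mathbf Y_\varepsilon)\in B_K$ for all small $\varepsilon$, which is exactly the ``sufficiently small $\varepsilon$'' claim in the paper's proof.
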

\begin{remark}\label{rmk:local solution alpha initial condition}
Note that \eqref{ineq:regularity condition general alpha} implies $\alpha < \frac{d_w}{2n-1}\le \frac{d_w}{n}$. Hence $\eta = -\alpha$ satisfies condition of Proposition \ref{prop:local_solution_with_weights} and hence Lemma \ref{lem:local solution to DPD and convergence of approximation}.
\end{remark}
\begin{proof}
The existence and uniqueness of solutions to \eqref{eq:formal eq for v intro} and \eqref{eq: DPD decomposition mollified} is a direct consequence of Proposition \ref{prop:local_solution_with_weights} and Theorem \ref{thm:Wick Power Convergence}. The fact that $T^*_\varepsilon \ge T^*$ for sufficiently small $\varepsilon >0$ is due to the mollified initial condition and Wick powers of $Y$ has smaller Besov norms. Lastly, the convergence is given by the local Lipschitzness of solution map as in Proposition \ref{prop:local_solution_with_weights}.
\end{proof}

\subsection{Global Solution and Coming Down from Infinity}\label{sec:global sol}

One main difference between this section and the corresponding sections in \cite{MW17,TW18} is that on $\T^2$, the solution the the remainder equation \eqref{eq:remainder mild form} $v_t$ is sufficiently regular and belongs to the domain of the Dirichlet form, $\mathcal F$, for $t >0$. This allows them to control the $L^p_M$ norm of $v$ in terms of its Dirichlet energy on the torus. In our setting, $v$ need not be inside the domain $\mathcal F$.  However, we will show that running their arguments for the mollified solution $v_\varepsilon$ is sufficient to produce the desired a priori estimate.

In this subsection, we fix an arbitrary $Y_0\in\mathfrak{C}$ and let $Y$ be the solution to \eqref{eq: EW eqn} with initial condition $Y_0$. 
We assume also for the rest of the paper that \eqref{ineq:global solution condition} holds for some odd $n \ge 3$, which is equivalent to the following condition: for some $\alpha >\alpha_0$,
\begin{align}\label{ineq:global solution condition equivalent}
    n \alpha < \frac{d_w}{2}.
\end{align}
Hence, we shall fix an $\alpha>\alpha_0$ that satisfies both \eqref{ineq:global solution condition equivalent} and \eqref{ineq:regularity condition general alpha}, which is needed in Proposition \ref{prop:CDI_mollified}.

\begin{lemma}\label{lem:regularity of mollified solution}
Let $\varepsilon\in(0,1]$  and $v_0 \in \mathcal B^{-\alpha}_\infty$, suppose $v_\varepsilon$ is the local solution to \eqref{eq: DPD decomposition mollified}  given by Lemma \ref{lem:local solution to DPD and convergence of approximation} on $[0,T^*_\varepsilon]$. Then  for any small enough $\delta_1,\delta_2 ,\delta >0$, we have
\begin{align*}
  \Psi(Y_\varepsilon,v_\varepsilon) \in \mathcal C_{T^*_\varepsilon}\mathcal C_M^{\delta}, \qquad \text{ and }\qquad v_\varepsilon \in \mathcal C_{T^*_\varepsilon}^{\frac{1}{2}+\delta_1} \mathcal B^{\frac{d_w}{2}+\delta_2}_{2,\infty} \qquad \text{ almost surely.}
\end{align*}
\end{lemma}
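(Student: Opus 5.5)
We first establish that $v_\varepsilon$ is H\"older continuous in space, uniformly on compact subintervals of $(0,T^*_\varepsilon]$. Then we use the mild formulation together with Proposition~\ref{prop: schauder} to lift the regularity of the forcing to the stated space--time regularity of $v_\varepsilon$.

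\emph{Step 1: H\"older regularity of the forcing.} By Lemma~\ref{lem: holder regularity of mollifed y}, each $Y^{:k:}_\varepsilon$ lies in $\mathcal C^{\delta_1}(\R_+;\mathcal C^{\delta_2}_M)$, and hence in $\mathcal C_{T}\mathcal C^\delta_M$ for small $\delta$. For $v_\varepsilon$ we use the mild form
\[
v_\varepsilon(t)=e^{-t}P_{t+\varepsilon}v_0-\sum_k\binom nk\int_0^t e^{-(t-s)}P_{t-s}\bigl(Y^{:k:}_\varepsilon v_\varepsilon^{n-k}\bigr)\,ds .
\]
\begin{itemize}
\item The initial term is handled by Lemma~\ref{lem:q_regulariz}. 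Since $\varepsilon>0$, $P_{t+\varepsilon}v_0\in\Binf{\sigma}$ for every $\sigma$, uniformly in $t\ge0$, and it is continuous in $t$ by Lemma~\ref{lem:strong continuity of heat semi group}.
\item For the integral term, $v_\varepsilon\in\mathcal S_{T^*_\varepsilon}$ gives $\|v_\varepsilon(s)\|_{L^\infty_M}\lesssim s^{\eta\wedge0/d_w}$, which is integrable since $\eta>-d_w/n$. The products $Y^{:k:}_\varepsilon v_\varepsilon^{n-k}$ are bounded functions, so Proposition~\ref{prop: schauder} yields $\mathcal B^{d_w-}_\infty$ regularity. In particular $v_\varepsilon\in\mathcal C_{T}\Binf{\gamma'}$ for every $\gamma'<\Theta$.
\end{itemize}
By Proposition~\ref{prop:holder=besov}, $v_\varepsilon\in\mathcal C_T\mathcal C^{\delta}_M$. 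Since H\"older spaces are algebras, $\Psi(Y_\varepsilon,v_\varepsilon)\in\mathcal C_T\mathcal C^\delta_M$. There is a minor technical point: continuity at $t=0$ in $\mathcal C^\delta_M$ comes from the mollified datum $P_\varepsilon v_0$, which is smooth, so no blow-up occurs at $t=0$ for fixed $\varepsilon$.

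\emph{Step 2: Besov regularity of $v_\varepsilon$ in $L^2$.} Write $v_\varepsilon=e^{-t}P_{t+\varepsilon}v_0+\mathcal R(e^{-\cdot}\Psi)$, noting that the factor $e^{-(t-s)}$ can be absorbed harmlessly. Since $\Psi\in\mathcal C_T\mathcal C^\delta_M\subset\mathcal C_T\Binf{\delta}$ by \eqref{ineq:besov<holder}, and $M$ is compact, we also have $\Psi\in\mathcal C_T\mathcal B^{\delta}_{2,\infty}$. We then apply the second part of Proposition~\ref{prop: schauder} with $p=2$, $\beta=\delta$ and $\alpha=\frac{d_w}{2}+\delta_2$. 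This requires $-d_w<\beta-\alpha<0$, which holds. The resulting time H\"older exponent is
\[
\frac{\beta-\alpha+d_w}{d_w}=\frac12+\frac{\delta-\delta_2}{d_w},
\]
which exceeds $\frac12+\delta_1$ once $\delta_1,\delta_2$ are chosen small relative to $\delta$. The initial term is smooth in space. It is also Lipschitz in time into any Besov space, because $\|P_{t+\varepsilon}v_0-P_{s+\varepsilon}v_0\|\le\int_s^t\|LP_{r+\varepsilon}v_0\|\,dr$ and $LP_\varepsilon$ is bounded.

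\emph{Main obstacle.} The delicate point is Step 1: showing that $\Psi$ is genuinely H\"older continuous in space, uniformly up to $t=0$. The weight in $\mathcal S_T$ permits $\|v_\varepsilon(t)\|_{\Binf\gamma}\sim t^{-(\gamma-\eta)/d_w}$. To avoid this, we would first show that $v_\varepsilon$ has continuous bounded paths in $\Binf{\gamma}$. The argument uses that the initial datum $P_\varepsilon v_0$ already lies in $\Binf{\gamma}$, and reruns Proposition~\ref{prop:local_solution_with_weights} with $\eta=\gamma$, where the weights are trivial. Uniqueness then identifies this solution with $v_\varepsilon$ on $[0,T^*_\varepsilon]$, possibly after iterating over subintervals using the a priori $\mathcal S$-bound.
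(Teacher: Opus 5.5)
Your proposal is correct and takes essentially the paper's route: restart the local theory from the smooth datum $P_\varepsilon v_0$ with a positive regularity index to get $v_\varepsilon\in\mathcal C_T\mathcal C^\delta_M$, combine this with the H\"older regularity of the mollified Wick powers and the algebra property to get $\Psi\in\mathcal C_T\mathcal C^\delta_M$, and then apply the second part of the Schauder estimate (you embed into $\mathcal B^\delta_{2,\infty}$ first and use $p=2$, while the paper uses $p=\infty$ followed by Jensen). Your observation that the restarted solution must be identified with $v_\varepsilon$ on all of $[0,T^*_\varepsilon]$, by iterating with the weighted $\mathcal S$-bound, is a point the paper passes over silently.
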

\begin{proof}
Let $\varepsilon \in (0,1]$, by Lemma \ref{lem: holder regularity of mollifed y}, we can choose a sufficiently small $\delta \in (0,\Theta)$ so that $Y_\varepsilon^{:n:} \in \mathcal C_{T} \mathcal C^\delta_M$ for each $n \ge 1$. Note that by Lemma \ref{lem:q_regulariz}, $v_{0,\varepsilon}:=P_\varepsilon(v_0) \in \mathcal B^\beta_\infty$ for each $\beta>0$. Hence by Lemma \ref{lem:strong continuity of heat semi group}, we may pick an appropriate $\beta > 0$ so that 
\begin{align*}
    t\mapsto P_t\pr{v_{0,\varepsilon}} \in \mathcal C_T^{\frac{1}{2}+ \frac{\delta}{2d_w}}\mathcal B^{\frac{d_w}{2}+\frac{\delta}{2}}_\infty.
\end{align*}

By Proposition \ref{prop:local_solution_with_weights} and the fact $v_{\varepsilon,0} \in \Binf{\delta}$, we see 
and $v_\varepsilon \in \mathcal C_{T^*_\varepsilon}\mathcal B_{\infty}^{\delta} = \mathcal C_{T^*_\varepsilon}\mathcal C_M^\delta$. Since $C_{T^*_\varepsilon}\mathcal C_M^\delta$ forms an algebra, we see $\Psi(Y_\varepsilon,v_\varepsilon) \in \mathcal{C}_{T^*_\varepsilon}\mathcal C_M^\delta = \mathcal C_{T^*_\varepsilon} \mathcal B^\delta_{\infty}$ by Proposition \ref{prop:holder=besov}. Finally, by Jensen's inequality and  Proposition \ref{prop: schauder}, we  see for any $T< T^*_\varepsilon$
\begin{align*}
  \norm{\int_0^\cdot P_{t-s}\Psi_\varepsilon(s)\mathrm ds }_{\mathcal C_{T}^{\frac{1}{2}+\frac{\delta}{2d_w}}\mathcal B_{2,\infty}^{\frac{d_w}{2}+\frac{\delta}{2}}}\lesssim\norm{\int_0^\cdot P_{t-s}\Psi_\varepsilon(s)\mathrm ds }_{\mathcal C_{T}^{\frac{1}{2}+\frac{\delta}{2d_w}}\mathcal B_{\infty}^{\frac{d_w}{2}+\frac{\delta}{2}}} \lesssim \norm{\Psi_\varepsilon(s)}_{\mathcal C_{T}\mathcal B^{\delta}_{\infty}}<\infty.
\end{align*}
\end{proof}

\begin{lemma}[Weak formulation]\label{lem: MW weak sol}
Let $\varepsilon\in(0,1]$ and  $v_0 \in \mathcal B^{-\alpha}_\infty$, suppose $v_\varepsilon$ is the local solution to \eqref{eq: DPD decomposition mollified}  given by Lemma \ref{lem:local solution to DPD and convergence of approximation} on $[0,T^*_\varepsilon]$. For any $0\le t\le T\le T_\varepsilon^*$ and $\varphi\in \mathcal F$,
\[
  \braket{v_\varepsilon(T),\varphi}_{L^2_M}-\braket{v_\varepsilon(t),\varphi}_{L^2_M}
  =\int_t^T\pr{-\mathcal E(v_\varepsilon(s),\varphi)-\braket{v_\varepsilon(s),\varphi}_{L^2_M}+\braket{\Psi_\varepsilon(s),\varphi}_{L^2_M}}\,\mathrm ds,
\]
where $\Psi_\varepsilon(s):=\Psi\pr{Y_\varepsilon(s),v_\varepsilon(s)}$.
\end{lemma}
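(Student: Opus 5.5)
The plan is to pass from the mild formulation of \eqref{eq: DPD decomposition mollified} to the weak one by testing against $\varphi\in\mathcal F$, using the regularity of Lemma \ref{lem:regularity of mollified solution} to justify every step. Write
\[
v_\varepsilon(t)=e^{-t}P_{t}v_{0,\varepsilon}+\int_0^t e^{-(t-s)}P_{t-s}\Psi_\varepsilon(s)\,\mathrm ds,\qquad v_{0,\varepsilon}:=P_\varepsilon v_0 .
\]
First I reduce to $\varphi\in\mathcal D_2(L)$. Both sides of the identity are continuous in $\varphi$ for the $\mathcal E_1^{1/2}$-norm: the left side trivially, and the right side provided $s\mapsto\mathcal E(v_\varepsilon(s),v_\varepsilon(s))$ is integrable on $[t,T]$. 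This holds because $v_\varepsilon\in\mathcal C_{T^*_\varepsilon}\mathcal B^{d_w/2+\delta_2}_{2,\infty}$, and this space embeds into $\mathcal F$ with $\mathcal E(f,f)\lesssim\norm{f}^2_{\mathcal B^{d_w/2+\delta_2}_{2,\infty}}$. That last embedding must be checked: the cheap route is the spectral bound $\mathcal E(f,f)=\int_0^\infty\lambda\,\mathrm d\langle E_\lambda f,f\rangle\lesssim\int_0^1 \norm{Q^{(k)}_t f}_{L^2_M}^2 t^{-1}\frac{\mathrm dt}{t}+\norm{P_1 f}^2$, which is finite since the extra $\delta_2$ makes the $\mathrm dt/t$ integral converge from the $\sup$ bound. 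By density of $\mathcal D_2(L)$ in $\mathcal F$, it then suffices to take $\varphi\in\mathcal D_2(L)$, where $\mathcal E(v_\varepsilon(s),\varphi)=\langle v_\varepsilon(s),L\varphi\rangle$.

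For $\varphi\in\mathcal D_2(L)$, use self-adjointness to move the semigroup onto the test function: $\langle v_\varepsilon(t),\varphi\rangle=e^{-t}\langle v_{0,\varepsilon},P_t\varphi\rangle+\int_0^t e^{-(t-s)}\langle\Psi_\varepsilon(s),P_{t-s}\varphi\rangle\,\mathrm ds$, where Fubini is legitimate since $\Psi_\varepsilon\in\mathcal C_{T^*_\varepsilon}\mathcal C^\delta_M$ is bounded. The map $r\mapsto e^{-r}P_r\varphi$ is $C^1$ into $L^2_M$ with derivative $-e^{-r}(L+1)P_r\varphi$, and since $\Psi_\varepsilon$ is continuous in time into $L^2_M$, the function $t\mapsto\langle v_\varepsilon(t),\varphi\rangle$ is differentiable. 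Its derivative is
\[
\frac{\mathrm d}{\mathrm dt}\langle v_\varepsilon(t),\varphi\rangle=\langle\Psi_\varepsilon(t),\varphi\rangle-\langle v_\varepsilon(t),(L+1)\varphi\rangle ,
\]
by the Leibniz rule for the variable upper limit. Integrating from $t$ to $T$ and replacing $\langle v_\varepsilon(s),L\varphi\rangle$ by $\mathcal E(v_\varepsilon(s),\varphi)$ gives the claim.

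The main obstacle is the justification that $v_\varepsilon(s)\in\mathcal F$ with $\mathcal E(v_\varepsilon(s),v_\varepsilon(s))$ locally integrable, which is needed in the density step. This is where the mollification is essential (cf.\ the discussion at the start of Section \ref{sec:global sol}). Near $s=0$ I will rely on the fact that $v_{0,\varepsilon}=P_\varepsilon v_0$ is already smooth, so the time-continuity in Lemma \ref{lem:regularity of mollified solution} holds up to $s=0$. If the spectral embedding argument proves awkward, I would instead note that $\mathcal B^{d_w/2+\delta}_{2,\infty}\hookrightarrow\mathcal B^{d_w/2}_{2,2}$, and identify the latter with the form domain via $\int_0^1\norm{(tL)^{1/2}\cdots}$, which is again a spectral computation.
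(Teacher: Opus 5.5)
Your proposal is correct and follows the same skeleton as the paper's proof. You derive the identity for $\varphi\in\mathcal D_2(L)$ from the mild formula: you differentiate $t\mapsto\langle v_\varepsilon(t),\varphi\rangle$, whereas the paper computes $\int_t^T\langle v_\varepsilon(s),-(L+1)\varphi\rangle\,\mathrm ds$ by Fubini and subtracts it from the mild-formula increment, which is an equivalent variant. You then pass to $\varphi\in\mathcal F$ by density, using $v_\varepsilon(s)\in\mathcal F$, exactly as the paper does.

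Your spectral proof that $\mathcal E(f,f)\lesssim\norm{f}_{\mathcal B^{d_w/2+\delta}_{2,\infty}}^2$ is a worthwhile addition. The paper uses this embedding only tacitly, when it invokes \eqref{eq:dform and generator} and asserts continuity of $s\mapsto v_\varepsilon(s)$ in $\mathcal F$. Moreover, the inequality actually proved in Lemma \ref{lem:half dw and Dirichlet energy} goes in the opposite direction, so your argument supplies the direction that is needed here.
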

\begin{proof}
Set $S_t:=e^{-(1+L)t}=e^{-t}P_t$ and $v_{0,\varepsilon}:=P_\varepsilon v_0$. The mild formulation \eqref{eq:remainder mild form} reads
\begin{align*}
  v_\varepsilon(s)=S_s v_{0,\varepsilon}+\int_0^s S_{s-r}\Psi_\varepsilon(r)\,\mathrm dr,\qquad s\in[0,T].
\end{align*}

First take $\varphi\in \mathcal D_2(L)$. By the mild formula \eqref{eq:remainder mild form} at times $T$ and $t$,
\begin{align}
  \label{eq:weak_step_difference}
  \braket{v_\varepsilon(T)-v_\varepsilon(t),\varphi}_{L^2_M}
  &=\braket{(S_T-S_t)v_{0,\varepsilon},\varphi}_{L^2_M}
  +\int_0^t \braket{(S_{T-r}-S_{t-r})\Psi_\varepsilon(r),\varphi}_{L^2_M}\,\mathrm dr\nonumber\\
  &\qquad +\int_t^T \braket{S_{T-r}\Psi_\varepsilon(r),\varphi}_{L^2_M}\,\mathrm dr.
\end{align}
On the other hand, using $\frac{\mathrm d}{\mathrm du}\braket{S_u f,\varphi}_{L^2_M}=\braket{S_u f,-(L+1)\varphi}_{L^2_M}$ for $\varphi\in\mathcal D_2(L)$ and Fubini's theorem,
\begin{align}
  \label{eq:weak_step_generator}
  \int_t^T \braket{v_\varepsilon(s),-(L+1)\varphi}_{L^2_M}\,\mathrm ds
  &=\braket{(S_T-S_t)v_{0,\varepsilon},\varphi}_{L^2_M}
  +\int_0^t \braket{(S_{T-r}-S_{t-r})\Psi_\varepsilon(r),\varphi}_{L^2_M}\,\mathrm dr\nonumber\\
  &\qquad +\int_t^T \braket{(S_{T-r}-\mathrm{Id})\Psi_\varepsilon(r),\varphi}_{L^2_M}\,\mathrm dr.
\end{align}
Subtracting \eqref{eq:weak_step_generator} from \eqref{eq:weak_step_difference}, we get
\begin{align*}
  \braket{v_\varepsilon(T)-v_\varepsilon(t),\varphi}_{L^2_M}
  =\int_t^T\braket{v_\varepsilon(s),-(L+1)\varphi}_{L^2_M}\,\mathrm ds
  +\int_t^T\braket{\Psi_\varepsilon(s),\varphi}_{L^2_M}\,\mathrm ds.
\end{align*}
By Lemma \ref{lem:regularity of mollified solution} and  \eqref{eq:dform and generator}, this is
\begin{align*}
  \braket{v_\varepsilon(T),\varphi}_{L^2_M}-\braket{v_\varepsilon(t),\varphi}_{L^2_M}
  =\int_t^T\pr{-\mathcal E(v_\varepsilon(s),\varphi)-\braket{v_\varepsilon(s),\varphi}_{L^2_M}+\braket{\Psi_\varepsilon(s),\varphi}_{L^2_M}}\,\mathrm ds.
\end{align*}

Now let $\varphi\in\mathcal F$. Define $\varphi_m:=P_{1/m}\varphi\in\mathcal D_2(L)\subset\mathcal F$. By standard semigroup properties of regular symmetric Dirichlet forms, $\varphi_m\to\varphi$ in $\mathcal F$ and in $L^2_M$. Apply the identity above to $\varphi_m$.
Passing $m\to\infty$ is justified term by term. First,
\begin{align*}
  \braket{v_\varepsilon(T)-v_\varepsilon(t),\varphi_m-\varphi}_{L^2_M}\to 0,
\end{align*}
since $\varphi_m\to\varphi$ in $L^2_M$. Next, by Cauchy--Schwarz for the Dirichlet form,
\begin{align*}
  \abs{\int_t^T \mathcal E(v_\varepsilon(s),\varphi_m-\varphi)\,\mathrm ds}
  \le (T-t)\sup_{s\in[t,T]}\mathcal E(v_\varepsilon(s),v_\varepsilon(s))^{1/2}\,\mathcal E(\varphi_m-\varphi,\varphi_m-\varphi)^{1/2}\to 0,
\end{align*}
because $s\mapsto v_\varepsilon(s)$ is continuous in $\mathcal F$ on $[t,T]$. Finally,
\begin{align*}
  \abs{\int_t^T \braket{\Psi_\varepsilon(s),\varphi_m-\varphi}_{L^2_M}\,\mathrm ds}
  \le \norm{\varphi_m-\varphi}_{L^2_M}\int_t^T \norm{\Psi_\varepsilon(s)}_{L^2_M}\,\mathrm ds\to 0,
\end{align*}
and the integral is finite since, by Lemma \ref{lem:regularity of mollified solution}, $\Psi_\varepsilon\in \mathcal C([t,T];\mathcal C_M^{\delta})\subset L^1([t,T];L^2_M)$. Therefore the same identity holds for every $\varphi\in\mathcal F$.
\end{proof}

\begin{lemma}[Testing against $v^{p-1}_\varepsilon$]\label{lem: test Z p-1}
Let $\varepsilon\in(0,1]$ and $v_0 \in \mathcal B^{-\alpha}_\infty$,suppose $v_\varepsilon$ is the local solution to \eqref{eq: DPD decomposition mollified}  given by Lemma \ref{lem:local solution to DPD and convergence of approximation} on $[0,T^*_\varepsilon]$. Then for any integer $p\ge 2$ and  $0\le t\le T\le T_\varepsilon^*$,
\begin{align*}
  \frac{1}{p}\pr{\norm{v_{\varepsilon}(T)}_{L^p_M}^p-\norm{v_\varepsilon(t)}_{L^p_M}^p}
  =\int_t^T \pr{-\mathcal{E}(v_\varepsilon(s),v_\varepsilon^{p-1}(s)) -\norm{v_\varepsilon(s)}_{L^p_M}^p +\braket{\Psi_\varepsilon(s),v^{p-1}_{\varepsilon}(s)}_{L^2_M}}\mathrm ds.
\end{align*}
Consequently, for almost every $t\in(0,T_\varepsilon^*)$,
\begin{align*}
  \frac{1}{p}\partial_t \norm{v_{\varepsilon}(t)}^p_{L^p_M}
  =-\mathcal{E}(v_{\varepsilon}(t),v_{\varepsilon}^{p-1}(t))-\norm{v_\varepsilon(t)}_{L^p_M}^p+\braket{\Psi_{\varepsilon}(t),v^{p-1}_{\varepsilon}(t)}_{L^2_M}.
\end{align*}
\end{lemma}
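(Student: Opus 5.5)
The plan is to combine the weak formulation of Lemma \ref{lem: MW weak sol} with a time-regularity argument for $v_\varepsilon$, so that $\varphi = v_\varepsilon^{p-1}(s)$ can be used as a test function. We cannot simply insert $\varphi = v_\varepsilon^{p-1}(s)$ into Lemma \ref{lem: MW weak sol}, because the test function there is fixed in time. Instead, we will show that $s\mapsto v_\varepsilon(s)$ is regular enough in time with values in $L^2_M$, and that its time derivative is given by the weak equation. Then we will apply the chain rule to $u\mapsto \frac1p\int_M u^p\,\mathrm d\mu$.

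\textbf{Step 1 (regularity).} By Lemma \ref{lem:regularity of mollified solution}, $v_\varepsilon\in \mathcal C^{1/2+\delta_1}_{T^*_\varepsilon}\mathcal B^{d_w/2+\delta_2}_{2,\infty}$. By Lemma \ref{lem:half dw and Dirichlet energy} (in its $\mathcal B^{d_w/2}_{2,\infty}\subset\mathcal F$ direction) this gives continuity of $s\mapsto v_\varepsilon(s)$ in $\mathcal F$; this fact was already used in Lemma \ref{lem: MW weak sol}. We also have $v_\varepsilon\in\mathcal C_{T^*_\varepsilon}\mathcal C^\delta_M$, so $v_\varepsilon$ is uniformly bounded on $[0,T]\times M$. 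Since $f\in\mathcal F\cap\mathcal C_M$ implies $f^{p-1}\in\mathcal F$ (via the chain rule for the strongly local form, as in the $\mathcal E(f^p,f^p)$ proposition), $s\mapsto v_\varepsilon^{p-1}(s)$ is continuous in $\mathcal F$. Indeed, $\mathcal E(f^{p-1}-g^{p-1})$ is controlled using $\Gamma$, the Leibniz rule, and the sup-norm bounds.

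\textbf{Step 2 (discrete chain rule).} Take a partition $t=s_0<\dots<s_N=T$ and write
\[
\frac1p\int_M \bigl(v_\varepsilon(s_{i+1})^p-v_\varepsilon(s_i)^p\bigr)\,\mathrm d\mu=\braket{v_\varepsilon(s_{i+1})-v_\varepsilon(s_i),\,v_\varepsilon(s_i)^{p-1}}_{L^2_M}+\mathcal R_i ,
\]
where by Taylor expansion and uniform boundedness $|\mathcal R_i|\lesssim \norm{v_\varepsilon(s_{i+1})-v_\varepsilon(s_i)}_{L^2_M}^2$. The Hölder exponent $\tfrac12+\delta_1>\tfrac12$ in time (in $L^2_M$, which is controlled by the Besov norm) gives $\sum_i|\mathcal R_i|\lesssim \sum_i|s_{i+1}-s_i|^{1+2\delta_1}\to0$. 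This is precisely why Lemma \ref{lem:regularity of mollified solution} was formulated with exponent above $1/2$. For the main term, apply Lemma \ref{lem: MW weak sol} on $[s_i,s_{i+1}]$ with the fixed test function $\varphi=v_\varepsilon(s_i)^{p-1}\in\mathcal F$. Summing, we obtain a Riemann-type sum of
\[
\int_{s_i}^{s_{i+1}}\Bigl(-\mathcal E\bigl(v_\varepsilon(s),v_\varepsilon(s_i)^{p-1}\bigr)-\braket{v_\varepsilon(s),v_\varepsilon(s_i)^{p-1}}_{L^2_M}+\braket{\Psi_\varepsilon(s),v_\varepsilon(s_i)^{p-1}}_{L^2_M}\Bigr)\mathrm ds .
\]

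\textbf{Step 3 (limit).} As the mesh tends to zero, $v_\varepsilon(s_i)^{p-1}$ can be replaced by $v_\varepsilon(s)^{p-1}$. The error is controlled using the uniform continuity in $\mathcal F$ from Step 1 together with Cauchy--Schwarz for $\mathcal E$, and, for the two $L^2_M$ pairings, the continuity of $\Psi_\varepsilon$ in $\mathcal C_M$ and of $v_\varepsilon$ in $\mathcal C_M$. Since $\braket{v_\varepsilon,v_\varepsilon^{p-1}}_{L^2_M}=\norm{v_\varepsilon}_{L^p_M}^p$, this gives the integral identity. The integrand is continuous in $s$, so the identity for a.e. $t$ follows from the Lebesgue differentiation theorem (in fact it holds for every $t$).

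The main obstacle is Step 1: establishing continuity of $s\mapsto v_\varepsilon^{p-1}(s)$ in $\mathcal F$, i.e.\ controlling $\mathcal E(v^{p-1}-w^{p-1})$ by $\mathcal E(v-w)$ and $\mathcal E(w)$. The approach is to write $v^{p-1}-w^{p-1}=(v-w)\sum_k v^kw^{p-2-k}$. Then use the Leibniz rule and the Cauchy--Schwarz inequality \eqref{ineq:cs for energy measure} for energy measures, giving
\[
\mathcal E\bigl(v^{p-1}-w^{p-1}\bigr)\lesssim C\bigl(\norm{v}_\infty,\norm{w}_\infty\bigr)\Bigl(\mathcal E(v-w)+\norm{v-w}_\infty^2\bigl(\mathcal E(v)+\mathcal E(w)\bigr)\Bigr).
\]
This is sufficient, since $v_\varepsilon$ is continuous in both $\mathcal F$ and $\mathcal C_M$.
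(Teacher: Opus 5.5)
Your proof is correct and follows the same route as the paper, which also partitions $[t,T]$, applies Lemma~\ref{lem: MW weak sol} with the frozen test function $v_\varepsilon(t_i)^{p-1}$, and passes to the limit. In the paper's notation, your remainder $\sum_i\mathcal R_i$ is exactly $\mathfrak G(\underline t)-\frac{p-1}{p}\bigl(\int_M v_\varepsilon(T)^p\,\mathrm d\mu-\int_M v_\varepsilon(t)^p\,\mathrm d\mu\bigr)$, so you are simply making explicit (via the time exponent $\tfrac12+\delta_1$ and the $\mathcal F$-continuity of $v_\varepsilon^{p-1}$) the two limits the paper delegates to \cite{MW17,TW18}; two small remarks: derive the $\mathcal F$-continuity of $v_\varepsilon$ from the margin $\delta_2>0$ (the spectral theorem gives $\mathcal E(f,f)\lesssim\norm{f}^2_{\mathcal B^{d_w/2+\delta_2}_{2,\infty}}$) rather than from Lemma~\ref{lem:half dw and Dirichlet energy}, whose proved inequality bounds the Besov seminorm by $\mathcal E$ and not conversely, and note that, exactly as in the paper, the identification $\braket{v_\varepsilon,v_\varepsilon^{p-1}}_{L^2_M}=\norm{v_\varepsilon}_{L^p_M}^p$ requires $p$ even, which is the only case used in Proposition~\ref{prop:CDI_mollified}.
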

\begin{remark}\label{rmk:need for mollified}
    Lemma \ref{lem: test Z p-1} is why we need to look at the mollified equation \eqref{eq: DPD decomposition mollified}. Indeed, Proposition \ref{prop:local_solution_with_weights} only gives $v_t \in \mathcal B^{\eta}_\infty$ for $\eta \in (0,\Theta)$ and $t \in [0,T^*]$. But for $\mathcal E(v_t,v_t)$ to make sense, one needs $v_t \in \mathcal B^{d_w/2+\delta}_{2,\infty}$ for some $\delta>0$, which need not be true since $0<\Theta\le 1 <d_w/2$.
\end{remark}

\begin{proof}
Fix $0\le s\le s'\le T_\varepsilon^*$. We start from
\[
  \norm{v_{\varepsilon}(s')}_{L^p_M}^p-\norm{v_\varepsilon(s)}_{L^p_M}^p
  =\braket{v_{\varepsilon}(s')-v_\varepsilon(s),v^{p-1}_{\varepsilon}(s)}_{L^2_M}
  +\braket{v_{\varepsilon}(s'),v_{\varepsilon}(s')^{p-1}-v^{p-1}_{\varepsilon}(s)}_{L^2_M}.
\]
Applying Lemma \ref{lem: MW weak sol} with $t=s$, $T=s'$ and test function $\varphi=v_\varepsilon(s)^{p-1}\in\mathcal F$ yields
\begin{align*}
  &\norm{v_{\varepsilon}(s')}_{L^p_M}^p-\norm{v_\varepsilon(s)}_{L^p_M}^p-\braket{v_{\varepsilon}(s'),v_{\varepsilon}(s')^{p-1}-v^{p-1}_{\varepsilon}(s)}_{L^2_M}\\
  =&\int_s^{s'}\pr{-\mathcal{E}(v_\varepsilon(r),v^{p-1}_\varepsilon(s))
    -\braket{v_\varepsilon(r),v^{p-1}_\varepsilon(s)}_{L^2_M}
  +\braket{\Psi_\varepsilon(r),v^{p-1}_\varepsilon(s)}_{L^2_M}}\,\mathrm dr.
\end{align*}
Now fix $0\le t\le T\le T_\varepsilon^*$ and let $\underline t=(t_0,\dots,t_m)$ be a subdivision of $[t,T]$. Summing the above identity on each interval $[t_i,t_{i+1}]$ gives
\[
  \norm{v_{\varepsilon}(T)}_{L^p_M}^p-\norm{v_\varepsilon(t)}_{L^p_M}^p-\mathfrak G(\underline t)=\mathfrak J(\underline t),
\]
where
\begin{align*}
  \mathfrak G(\underline t):=&\sum_{i=0}^{m-1}\braket{v_\varepsilon(t_{i+1}),v^{p-1}_\varepsilon(t_{i+1})-v^{p-1}_\varepsilon(t_i)}_{L^2_M},\\
  \mathfrak J(\underline t):=&\sum_{i=0}^{m-1}\int_{t_i}^{t_{i+1}}\pr{-\mathcal{E}(v_\varepsilon(s),v^{p-1}_\varepsilon(t_i))
    -\braket{v_\varepsilon(s),v^{p-1}_\varepsilon(t_i)}_{L^2_M}
  +\braket{\Psi_\varepsilon(s),v^{p-1}_\varepsilon(t_i)}_{L^2_M}}\,\mathrm ds.
\end{align*}
Using Lemma \ref{lem:regularity of mollified solution} and the same Young-integration argument as in \cite{MW17,TW18}, as $|\underline t| := \max_{1 \le i \le m}|t_i - t_{i - 1}|\to0$,
\[
  \mathfrak J(\underline t)\to\int_t^T\pr{-\mathcal{E}(v_\varepsilon(s),v^{p-1}_\varepsilon(s))
    -\norm{v_\varepsilon(s)}_{L^p_M}^p
  +\braket{\Psi_\varepsilon(s),v^{p-1}_\varepsilon(s)}_{L^2_M}}\,\mathrm ds.
\]
Moreover, exactly as in \cite[Lemma 3.7]{MW17},
\[
  \mathfrak G(\underline t)\to\frac{p-1}{p}\pr{\norm{v_{\varepsilon}(T)}_{L^p_M}^p-\norm{v_\varepsilon(t)}_{L^p_M}^p}.
\]
Passing to the limit yields the first identity. The map $t\mapsto\norm{v_\varepsilon(t)}_{L^p_M}^p$ is therefore absolutely continuous on compact subintervals of $(0,T_\varepsilon^*)$, and differentiating gives the second identity for a.e. $t\in(0,T_\varepsilon^*)$.
\end{proof}

\noindent To obtain an initial condition independent bound, we need the following elementary result.
\begin{lemma}[{\cite[Lemma 3.8]{TW18}}]\label{lem:elementary comparison}
Let $\lambda>1$ and $f:[0,T]\to\R_+$ be differentiable such that \[f'(t)+c_1f(t)^\lambda\le c_2\]
for some $c_1,c_2>0$ and all $t\in [0,T]$. Then for all $t>0$, \[f(t)\leq \min\pr{\frac{f(0)}{(1+tf(0)^{\lambda-1}(\lambda-1)\frac{c_1}{c_2})^{\frac{1}{\lambda-1}}},\pr{\frac{2c_2}{c_1}}^{\frac{1}{\lambda}}}\leq \min\pr{t^{-\frac{1}{\lambda-1}}\pr{(\lambda-1)\frac{2c_2}{c_1}}^{-\frac{1}{\lambda-1}},\pr{\frac{2c_2}{c_1}}^{\frac{1}{\lambda}}}.\]
\end{lemma}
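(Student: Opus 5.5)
The plan is an ODE comparison argument. Two cases are handled by the same computation. First, suppose $f(0)\le (2c_2/c_1)^{1/\lambda}$. Then $f$ stays below the level $(2c_2/c_1)^{1/\lambda}$. Indeed, at any first time $t_0$ where $f(t_0)$ exceeds this level, we get
\[
f'(t_0)\le c_2-c_1f(t_0)^\lambda<c_2-2c_2<0,
\]
which contradicts the upward crossing. Second, suppose $f(0)$ exceeds the level. As long as $f(t)^\lambda\ge 2c_2/c_1$, we have $c_2\le \tfrac{c_1}{2}f^\lambda$, so
\[
f'\le -\tfrac{c_1}{2}f^\lambda .
\]
In this case we integrate the Bernoulli-type inequality.

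For the integration, note that $\frac{\mathrm d}{\mathrm dt}f^{1-\lambda}=(1-\lambda)f^{-\lambda}f'\ge(\lambda-1)\frac{c_1}{2}$ on any interval where $f>0$ and the above inequality holds. Integrating from $0$ gives
\[
f(t)\le f(0)\Bigl(1+t f(0)^{\lambda-1}(\lambda-1)\tfrac{c_1}{2}\Bigr)^{-1/(\lambda-1)} .
\]
The constant $\tfrac{c_1}{2}$ in place of $\tfrac{c_1}{c_2}$ only changes harmless constants; I would track whichever normalization the statement uses, possibly after rescaling. Once $f$ drops to the threshold, the first case applies from that time on. Combining the two regimes yields the minimum of the decaying bound and the level $(2c_2/c_1)^{1/\lambda}$. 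The decaying bound is valid throughout, because during the "above threshold" phase it is derived directly, and afterwards $f$ is below the level, which it dominates up to constants.

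The final inequality follows by dropping the $1$ in the denominator. This gives $f(0)\bigl(tf(0)^{\lambda-1}(\lambda-1)C\bigr)^{-1/(\lambda-1)}=\bigl(t(\lambda-1)C\bigr)^{-1/(\lambda-1)}$, which is independent of $f(0)$. The main (mild) obstacle is the first-crossing argument: with $f$ merely differentiable, I would justify it via the mean value theorem on $[t_1,t_0]$, where $t_1$ is the last time before $t_0$ at which $f$ equals the level. On that interval $f'<0$, hence $f(t_0)<f(t_1)$, which is a contradiction.
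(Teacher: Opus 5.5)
Your two regimes are the right ones. They are also the standard argument behind \cite[Lemma 3.8]{TW18}, which the paper cites without proof. With $B:=(2c_2/c_1)^{1/\lambda}$, your mean-value argument shows the set $\{f\le B\}$ is forward invariant. While $f>B$ one has $f'\le-\frac{c_1}{2}f^\lambda$, so integrating $f^{1-\lambda}$ gives $f(t)\le A(t):=f(0)\bigl(1+tf(0)^{\lambda-1}(\lambda-1)\frac{c_1}{2}\bigr)^{-1/(\lambda-1)}$ up to the hitting time of $B$.

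The step that fails is the combination, namely your claim that ``the decaying bound is valid throughout, because \dots\ afterwards $f$ is below the level, which it dominates up to constants.'' Below the threshold $f$ need not decay at all. By contrast, $A(t)<f(0)$ for every $t>0$ and $A(t)\to0$, so no constant rescues this. Concretely, $f\equiv(c_2/c_1)^{1/\lambda}$ satisfies $f'+c_1f^\lambda=c_2$, yet $f(t)=f(0)>A(t)$ for every $t>0$, whatever positive coefficient replaces $\frac{c_1}{2}$. Hence the first inequality of the lemma as printed is false, and no argument can prove it. What your two regimes actually establish is
\[
f(t)\le\max\bigl\{A(t),\,B\bigr\}\le\max\Bigl\{\bigl(t(\lambda-1)\tfrac{c_1}{2}\bigr)^{-\frac{1}{\lambda-1}},\,B\Bigr\},
\]
so the $\min$ should be a $\max$. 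This $f(0)$-independent bound is also all that Proposition~\ref{prop:CDI_mollified} uses.

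Second, $\frac{c_1}{2}$ versus $\frac{c_1}{c_2}$ is not a harmless normalization that a rescaling can absorb. Under $g(s)=\kappa f(\theta s)$ the hypothesis holds with constants $(\kappa^{1-\lambda}\theta c_1,\kappa\theta c_2)$. The quantity $tf(0)^{\lambda-1}c_1$ is invariant under this change, but $tf(0)^{\lambda-1}c_1/c_2$ is not. The printed constant is in fact false even with $\max$. Take $\lambda=2$, $c_1=1$ and $f(t)=(1+t)^{-1}$, which satisfies $f'+f^2=0\le c_2$ for every $c_2>0$. With $c_2=10^{-2}$ and $t=1$, the printed first entry is $1/101$ and the second is $\sqrt{0.02}$, both below $f(1)=1/2$.

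Likewise, dropping the $1$ gives $\bigl(t(\lambda-1)C\bigr)^{-1/(\lambda-1)}$ with the $C$ from the first entry. That is not the printed $\bigl(t(\lambda-1)\frac{2c_2}{c_1}\bigr)^{-1/(\lambda-1)}$, so your last step does not reach the printed right-hand side either. Both $\frac{c_1}{c_2}$ and $\frac{2c_2}{c_1}$ there are typos for $\frac{c_1}{2}$, which is the constant your Bernoulli integration already produces. You should state and prove this corrected version rather than argue toward the printed one.
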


We now prove an analogue of \cite[Proposition 3.7]{TW18} for $v_\varepsilon$.

\begin{prop}[Coming down from infinity]\label{prop:CDI_mollified}
%Let $p \ge n-2$ be a sufficiently large odd integer. 
Let $\varepsilon\in(0,1]$ and $v_0 \in \mathcal B^{-\alpha}_\infty$, let $v_\varepsilon$ be the mollified local solution of \eqref{eq: DPD decomposition mollified}  on $[0,T_\varepsilon^*]$. 
\begin{comment}
For each $k\in\set{1,\dots,n}$, $\varepsilon \in [0,1]$ and $T>0$, 
we define %fix $\beta_k>\frac{k(d_h-d_w)}{2}$ and define
\[
  \Xi_{k,T}^\varepsilon:=\sup_{0\le s\le T}\norm{Y_\varepsilon^{:k:}(s)}_{\mathcal B^{-\alpha k }_\infty},\qquad \varepsilon \in (0,1].
\]
\end{comment}
%Then for every $T>0$ there exist exponents $q_k>0$ and a constant $C_T>0$, independent of $\varepsilon$, such that for almost every $t\in(0,T\wedge T_\varepsilon^*]$,
%\begin{equation}\label{eq:CDI_differential}
%  \partial_t \norm{v_\varepsilon(t)}_{L^{2p}_M}^{2p}+c\norm{v_\varepsilon(t)}_{L^{2p}(M)}^{2p+n-1}\le C_T\pr{1+\sum_{k=1}^n (\Xi_{k,T}^\varepsilon)^{q_k}}.
%\end{equation}
Then for all $T >0$ and sufficiently large odd integer $p \ge 1$, there exists a constant $C = C(p,T)>0$ independent of $\varepsilon\in(0,1]$ and $v_0$
%initial conditions 
such that for all $t\in(0,T\wedge T_\varepsilon^*]$,
\begin{equation}\label{eq:CDI_bound}
  \norm{v_\varepsilon(t)}_{L^{2p}_M}\le C\pr{1+t^{-\frac{1}{n-1}}}\pr{1+\sum_{k=1}^n \pr{\Xi^\varepsilon}_{k,T}^{q_{k,p}}}^\frac{1}{n-1},
\end{equation}
for some $q_{k,p}>0$ depends on $k =1,\dots, n$ and $p$, where 
\begin{align*}
    \Xi_{k,T}^\varepsilon:=\sup_{0\le s\le T}\norm{Y_\varepsilon^{:k:}(s)}_{\mathcal B^{-\alpha k }_\infty},\qquad \varepsilon \in (0,1].
\end{align*}
\begin{comment}
Equivalently, for every $0<\tau\le T$,
\[
  \sup_{\tau\le t\le T}\|v_\varepsilon(t)\|_{L^{2p}_M}\le C_T\bigl(1+\tau^{-1/(n-1)}\bigr).
\]
\end{comment}
%In addition, both \eqref{eq:CDI_differential} and \eqref{eq:CDI_bound} hold for $v$ itself for $t\in (0,T^*)$, where $\Xi_{k,T}:=\sup_{0\leq s\leq T}\norm{Y^{:k:}(s)}_{\Binf{-\alpha k}}$.
As a consequence, \eqref{eq:CDI_bound} holds true for $v$. 
\end{prop}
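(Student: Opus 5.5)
We follow the energy argument of \cite[Proposition 3.7]{TW18}, applied to the mollified solution $v_\varepsilon$ so that Lemma \ref{lem: test Z p-1} is available. Fix a large odd integer $p$ and apply Lemma \ref{lem: test Z p-1} with exponent $2p$. This gives, for a.e. $t\in(0,T\wedge T^*_\varepsilon)$,
\begin{align*}
  \frac{1}{2p}\partial_t \norm{v_\varepsilon}_{L^{2p}_M}^{2p}
  +\mathcal E(v_\varepsilon,v_\varepsilon^{2p-1})+\norm{v_\varepsilon}_{L^{2p}_M}^{2p}
  +\int_M v_\varepsilon^{2p-1+n}\,\mathrm d\mu
  = -\sum_{k=1}^n\binom{n}{k}\braket{Y^{:k:}_\varepsilon, v_\varepsilon^{2p-1+n-k}}.
\end{align*}
Since $2p-1+n$ is even ($n$ odd), the term $\int v_\varepsilon^{2p-1+n}\,\mathrm d\mu=\norm{v_\varepsilon}_{L^{2p+n-1}_M}^{2p+n-1}$ is the coercive one. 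By strong locality and the chain rule for $\Gamma$ (the proposition preceding Lemma \ref{lem: HLS}), $\mathcal E(v_\varepsilon,v_\varepsilon^{2p-1})=(2p-1)\int v_\varepsilon^{2p-2}\,\mathrm d\Gamma(v_\varepsilon,v_\varepsilon)=\frac{2p-1}{p^2}\mathcal E(v_\varepsilon^p,v_\varepsilon^p)\ge0$. These identities are justified because Lemma \ref{lem:regularity of mollified solution} places $v_\varepsilon(t)$ in $\mathcal F\cap\mathcal C_M$.

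The key step is to bound each error term $\braket{Y^{:k:}_\varepsilon,v_\varepsilon^{m_k}}$, where $m_k:=2p-1+n-k$, by a small multiple of the good terms plus a constant. By duality (Lemma \ref{lem:Besov Duality}) together with Proposition \ref{prop:besov_embed} and Lemma \ref{lem: besov embedding q to infinity } (to pass from $\mathcal B^{-\alpha k}_\infty$ to $\mathcal B^{-\alpha k-\kappa}_{\infty,\infty}$), we estimate
\begin{align*}
\abs{\braket{Y^{:k:}_\varepsilon,v_\varepsilon^{m_k}}}\lesssim \Xi^\varepsilon_{k,T}\,\norm{v_\varepsilon^{m_k}}_{\mathcal B^{\alpha k+\kappa}_{1,1}}.
\end{align*}
We then interpolate (Lemma \ref{lem: Besov Interpolation}) between $\norm{v_\varepsilon^{m_k}}_{L^1_M}$ and $\norm{v_\varepsilon^{m_k}}_{\mathcal B^{d_w/2}_{1,\infty}}$. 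This is legitimate because $\alpha k+\kappa<\alpha n<d_w/2$ by \eqref{ineq:global solution condition equivalent}, possibly after a small loss in the third index handled by a slight increase of regularity. The top-order norm is controlled by Proposition \ref{lemma: Chain rule workaround}. For this we write $m_k=2p'+n'-2$ with $p'$ odd; choosing $p'$ comparable to $p$ and taking $p$ large makes this bookkeeping possible. The result is
\begin{align*}
\norm{v_\varepsilon^{m_k}}_{\dot{\mathcal B}^{d_w/2}_{1,\infty}}\lesssim \norm{v_\varepsilon^{2(p'+n'-2)}}_{L^1_M}^{1/2}\mathcal E(v_\varepsilon^{p'},v_\varepsilon^{p'})^{1/2}.
\end{align*}
The main obstacle is matching exponents so that the Dirichlet energy appearing here is that of $v_\varepsilon^{p}$, which is the quantity produced by the energy identity. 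I would first try to take $p'=p$ and absorb the mismatch $n'$ into the $L^1$ factor, then use H\"older to bound every $L^r$ norm of $v_\varepsilon$ by $\norm{v_\varepsilon}_{L^{2p+n-1}_M}$ on the compact $M$. Once this is done, Young's inequality with exponents determined by the interpolation parameter $\theta_k=(\alpha k+\kappa)/(d_w/2)<1$ gives
\begin{align*}
\Xi^\varepsilon_{k,T}\norm{v_\varepsilon^{m_k}}_{\mathcal B^{\alpha k+\kappa}_{1,1}}\le \tfrac{1}{4n}\Big(\mathcal E(v_\varepsilon^p,v_\varepsilon^p)+\norm{v_\varepsilon}_{L^{2p+n-1}_M}^{2p+n-1}\Big)+C(\Xi^\varepsilon_{k,T})^{q_{k,p}}.
\end{align*}
The total power of $v_\varepsilon$ on the right must be strictly less than the coercive degree $2p+n-1$ after weighting. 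This holds because $m_k<2p+n-1$ and $\theta_k<1$, where the strict inequality $\theta_k<1$ is exactly where \eqref{ineq:global solution condition equivalent} enters.

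Absorbing these bounds and using Jensen, $\norm{v_\varepsilon}_{L^{2p+n-1}_M}^{2p+n-1}\gtrsim (\norm{v_\varepsilon}_{L^{2p}_M}^{2p})^{\lambda}$ with $\lambda=\frac{2p+n-1}{2p}$, we reach $f'+c_1f^\lambda\le c_2$ for $f=\norm{v_\varepsilon}^{2p}_{L^{2p}_M}$, where $c_2\simeq 1+\sum_k(\Xi^\varepsilon_{k,T})^{q_{k,p}}$. Lemma \ref{lem:elementary comparison} then gives $f(t)\lesssim t^{-2p/(n-1)}c_2^{\cdots}+c_2^{1/\lambda}$, uniformly in $v_0$. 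Taking $2p$-th roots and adjusting the exponents $q_{k,p}$ yields \eqref{eq:CDI_bound}; the differentiability required by the lemma follows from the absolute continuity established in Lemma \ref{lem: test Z p-1}, via a standard comparison argument. Finally, the bound transfers to $v$: by Lemma \ref{lem:local solution to DPD and convergence of approximation}, $v_\varepsilon\to v$ in $\mathcal S_{T^*}$ and hence in $L^{2p}_M$ for $t>0$, and by Theorem \ref{thm:Wick Power Convergence}, $\Xi^\varepsilon_{k,T}\to\sup_s\norm{Y^{:k:}(s)}_{\Binf{-\alpha k}}$. Passing to the limit $\varepsilon\downarrow0$ in \eqref{eq:CDI_bound} completes the argument.
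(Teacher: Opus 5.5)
You have the same architecture as the paper: the energy identity with exponent $2p$, duality against $Y^{:k:}_\varepsilon$, interpolation between $L^1_M$ and $\mathcal B^{d_w/2}_{1,\infty}$, Proposition~\ref{lemma: Chain rule workaround} with $p'=p$ and $n'=n-k+1$, then Young, Jensen and Lemma~\ref{lem:elementary comparison}. However, the step where you control the lower-order factor by H\"older fails once $n\ge5$. With $p'=p$ and $n'=n-k+1$, the $L^1$ factor produced by Proposition~\ref{lemma: Chain rule workaround} is
\[
\norm{v_\varepsilon^{2(p+n'-2)}}_{L^1_M}=\norm{v_\varepsilon}_{L^{r_k}_M}^{r_k},\qquad r_k:=2p+2n-2k-2,
\]
and $r_k\le 2p+n-1$ holds only when $k\ge\frac{n-1}{2}$. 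For $k=1$ and $n\ge5$ one has $r_1=2p+2n-4>2p+n-1$; an example covered by the statement is $n=5$ on a space with $d_h=d_w$. On a finite measure space H\"older bounds lower $L^r$ norms by higher ones, never the reverse. So $\norm{v_\varepsilon}_{L^{r_1}_M}$ is not controlled by the coercive quantity $\norm{v_\varepsilon}_{L^{2p+n-1}_M}$.

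Changing $p'$ does not help. Making $2(p'+n'-2)=2m_k-2p'\le 2p+n-1$ forces $p'\ge p+\frac{n-1}{2}-k>p$. The energy $\mathcal E(v_\varepsilon^{p'},v_\varepsilon^{p'})$ that then appears is not comparable to the $\mathcal E(v_\varepsilon^{p},v_\varepsilon^{p})$ supplied by the energy identity. For $n=3$ your route does close, since $r_k\le 2p+2$ for all $k\ge1$.

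The missing ingredient is the Sobolev-type inequality, Lemma~\ref{lem: HLS}, applied to $v_\varepsilon^p\in\mathcal F$; this is what the paper does. Write $r_k=p\,s_k$ with $s_k=2+\frac{2(n-k-1)}{p}$, which is $\le\frac{2d_h}{d_h-d_w}$ for $p$ large. Then
\[
\norm{v_\varepsilon}_{L^{r_k}_M}^{r_k}=\norm{v_\varepsilon^p}_{L^{s_k}_M}^{s_k}\lesssim\Bigl(\norm{v_\varepsilon}_{L^{2p}_M}^{2p}+\mathcal E(v_\varepsilon^p,v_\varepsilon^p)\Bigr)^{1+\frac{n-k-1}{p}} .
\]
This costs an extra power of order $1/p$ of the energy. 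So your reason that the exponents close, ``because $m_k<2p+n-1$ and $\theta_k<1$'', is no longer enough. In the worst case, where the energy dominates the bracket, the total homogeneity is
\[
1-\frac{k(1-\theta_k)}{2p+n-1}+\frac{\theta_k(n-k-1)}{2p}.
\]
For large $p$ this is $<1$ exactly when $\theta_k(n-1)<k$. Up to your $\kappa$, that condition is $2\alpha(n-1)<d_w$, which follows from \eqref{ineq:global solution condition equivalent}. With this replacement the rest of your argument goes through.
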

\begin{proof}
Write $\Psi(v,Y)=-v^n+\Psi'(v,Y)$, $\Psi'_{\varepsilon}(s):=\Psi'(v_{\varepsilon}(s),Y_{\varepsilon}(s))$, and apply Lemma \ref{lem: test Z p-1} with exponent $2p$:
\begin{align*}
  \frac{1}{2p}\partial_t\norm{v_{\varepsilon}(t)}_{L^{2p}_M}^{2p}
  &+C_p\mathcal{E}(v_{\varepsilon}^{p}(t),v_{\varepsilon}^{p}(t))
  +\norm{v_{\varepsilon}(t)}_{L^{2p}_M}^{2p}
  +\norm{v_{\varepsilon}(t)}_{L^{n+2p-1}_M}^{n+2p-1}\\
  &= \braket{\Psi'_{\varepsilon}(t),v^{2p-1}_{\varepsilon}(t)}_{L^2_M}.
\end{align*}
%Following \cite{MW17,TW18}, we 
The goal is to control $\abs{\braket{\Psi'_{\varepsilon}(t),v_{\varepsilon}^{2p-1}(t)}_{L^2_M}}$ using $K_t$ and $L_t$, where 
%Let us denote 
\[K_t:={\mathcal E}(v_{\varepsilon}^p(t),v_{\varepsilon}^p(t))\text{ and } L_t:=\norm{v_{\varepsilon}(t)}_{L^{n+2p-1}_M}^{n+2p-1}.\]
%The goal is to control $\abs{\braket{\Psi'_{\varepsilon}(t),v_{\varepsilon}^{2p-1}(t)}_{L^2_M}}$ using $K_t$ and $L_t$.
The triangle inequality gives \[\abs{\braket{\Psi'_{\varepsilon}(t),v^{2p-1}_\varepsilon(t)}_{L^2_M}}\leq \sum_{k=1}^n \binom{n}{k}\abs{\braket{Y^{:k:}_\varepsilon(t),v_\varepsilon^{n-k+2p-1}(t)}_{L^2_M}},\]
and we will estimate each term in RHS above individually. %{\color{blue}We first argue for $k\in \set{1,\dots,n-1}$.} 
We now pick a sufficiently small $\delta>0$ so that $\alpha' = \alpha-\delta>\alpha_0$. By Lemma \ref{lem:Besov Duality},  it holds uniformly in $\varepsilon \in (0,1]$ and $t\in [0,T^*_\varepsilon]$ that
\begin{equation}\label{ineq:besov_duality_global_remainder}
\begin{aligned}
    \abs{\braket{Y^{:k:}_\varepsilon(t),v^{n-k+2p-1}_\varepsilon(t)}_{L^2_M}}&\lesssim \norm{Y^{:k:}_\varepsilon(t)}_{\mathcal B^{-\alpha' k }_\infty}\norm{v^{n-k+2p-1}_\varepsilon(t)}_{\mathcal B_{1,1}^{\alpha' k }}\\
    &\lesssim \norm{Y^{:k:}_s}_{\Binf{- \alpha' k}} \norm{v_\varepsilon ^{n-k+2p-1}(t)}_{\Besovp{\alpha k }{1,\infty}},
\end{aligned}
\end{equation}
where we used the fact that $\norm{f}_{\mathcal B^{\beta}_{1,1}}\lesssim \norm{f}_{\mathcal B^{\beta'}_{1,\infty}}$ for every $\beta< \beta'$ in the last inequality. % (this follows from the definition and the fact that integrability in one polynomial weight implies boundedness after multiplying polynomial weights of higher degree), we will pick for each $k$ such an $\alpha_k<\min(\beta_k,\frac{d_w}{2})$ and estimate $\norm{v^{n-k+2p-1}_\varepsilon(t)}_{\mathcal B_{1,\infty}^{\alpha_k}}$ instead.\\
 Recall $\norm{\cdot}_{\dot{\mathcal B}_{p,q}^\alpha}$ in \eqref{eq:besov norm def pq}, we see by  Lemma \ref{lem: Besov Interpolation} and \eqref{ineq:global solution condition equivalent}
\begin{align}\label{eq: apply besov duality}
    \norm{v^{n-k+2p-1}_\varepsilon(t)}_{\mathcal B_{1,\infty}^{\alpha k }}&\lesssim \norm{v^{n-k+2p-1}_\varepsilon(t)}_{L^1_M}^{1-\frac{2\alpha k}{d_w}}\norm{v^{n-k+2p-1}_\varepsilon(t)}_{\mathcal B^{d_w/2}_{1,\infty}}^{\frac{2\alpha k}{d_w}}\nonumber\\
    &\le\norm{v^{n-k+2p-1}_\varepsilon(t)}_{L^1_M}^{1-\frac{2\alpha k}{d_w}}\left(\norm{v^{n-k+2p-1}_\varepsilon(t)}_{L^1_M}+\norm{v^{n-k+2p-1}_\varepsilon(t)}_{\dot{\mathcal B}^{d_w/2}_{1,\infty}}\right)^{\frac{2\alpha k}{d_w}},
\end{align}
where we used the fact that $\norm{P_1f}_{L^p_M} \le \norm{f}_{L^p_M}$ for all $f \in L^p_M$ in the last inequality.

Since $\mu(M)<+\infty$, we have by Jensen's inequality,
$$
\norm{v^{n-k+2p-1}_\varepsilon(t)}_{L^1_M} = \norm{v_\varepsilon(t)}_{L^{n-k+2p-1}_M}^{n-k+2p-1} \lesssim \norm{v_\varepsilon(t)}_{L^{n+2p-1}_M}^{n-k+2p-1}= L_t^{\frac{n-k+2p-1}{n+2p-1}}.
$$
Since $p \ge 2-n$ is  odd, we can apply Lemma \ref{lemma: Chain rule workaround} and obtain
$$
\norm{v^{n-k+2p-1}_\varepsilon(t)}_{\dot{\mathcal B}^{d_w/2}_{1,\infty}}=\norm{v^{2p+(n-k+1)-2}_\varepsilon(t)}_{\dot{\mathcal B}^{d_w/2}_{1,\infty}}\lesssim \norm{v^{2p+2(n-k+1)-4}(t)}_{L^1_M}^{\frac{1}{2}}K_t^{\frac{1}{2}}.
$$
Let $2<q_0:= \frac{2d_h/d_w}{d_h/d_w -1}$ and note that $2p+2(n-k+1)-4= p(2+\frac{4(n-k-1)}{2p})$. Let $p \ge 1$ sufficiently large so that $(2+\frac{4(n-k-1)}{2p})\le q_0$ and apply Lemma \ref{lem: HLS} and Jessen's inequality to see
%By Lemma \ref{lem: HLS},  if $2p>\frac{4n-8}{q-2}$, where $q=\frac{d_h/d_w}{\frac{d_h}{d_w}-1}$(note that for $d_h=d_w$, any $p\geq 1$ works), then for all $1\leq k\leq n-1$ we have $2p+2(n-k+1)-4=p(2+\frac{4(n-k-1)}{2p})$ with $2+\frac{4(n-k-1)}{2p}\leq q$, thus %=A_t^{1+\frac{2(n-k-1)}{2p}}.$$
\begin{align*}
  \norm{v^{2p+2(n-k+1)-4}_\varepsilon(t)}_{L^1_M}&=\norm{v_{\varepsilon}^p(t)}_{L^{2+\frac{4(n-k-1)}{2p}}_M}^{2+\frac{4(n-k-1)}{2p}}\lesssim \left[\norm{v_\varepsilon^p(t)}_{L^2_M}+K_t^\frac{1}{2}\right]^{2+\frac{4(n-k-1)}{2p}}\\
  &\lesssim\left[\norm{v_\varepsilon^p(t)}_{L^2_M}^2+K_t\right]^{1+\frac{2(n-k-1)}{2p}}=\left[\norm{v_\varepsilon(t)}_{L^{2p}_M}^{2p}+K_t\right]^{1+\frac{2(n-k-1)}{2p}}\\
  &\lesssim \left[\norm{v_\varepsilon(t)}_{L^{n+2p-1}_M}^{2p}+K_t\right]^{1+\frac{2(n-k-1)}{2p}}\lesssim \left[L_t^{\frac{2p}{n+2p-1}}+K_t\right]^{1+\frac{2(n-k-1)}{2p}}.
\end{align*}
%Where the last inequality is another application of Jensen. 
Inserting those estimates into \eqref{eq: apply besov duality} to obtain
\begin{align*}
    \norm{v_\varepsilon^{n-k+2p - 1}}_{\mathcal B^{\alpha k}_{1,\infty}} &\lesssim L_t^{\frac{n-k+2p-1}{n+2p-1}\pr{1-\frac{2\alpha k }{d_w}}}\pr{L_t^{\frac{n-k+2p-1}{n+2p-1}}+ \left[L_t^{\frac{2p}{n+2p-1}}+K_t\right]^{\frac{1}{2}+\frac{(n-k-1)}{2p}}K_t^\frac{1}{2}}^\frac{2\alpha k}{d_w}\\
    &\le L_t^{\frac{n-k+2p-1}{n+2p-1}\pr{1-\frac{2\alpha k }{d_w}}}\pr{L_t^{\frac{n-k+2p-1}{n+2p-1}}+ \left[L_t^{\frac{2p}{n+2p-1}}+K_t\right]^{1+\frac{(n-k-1)}{2p}}}^\frac{\alpha k}{d_w}.
\end{align*}
Inserting this back into \eqref{ineq:besov_duality_global_remainder} gives
\begin{align*}
    &\abs{\braket{Y^{:k:}_\varepsilon(t),v^{n-k+2p-1}_\varepsilon(t)}_{L^2_M}}\\
    \lesssim&\norm{Y_\varepsilon^{:k:}}_{\mathcal B^{-\alpha k +\delta}_\infty} L_t^{\frac{n-k+2p-1}{n+2p-1}\pr{1-\frac{2\alpha k }{d_w}}}\pr{L_t^{\frac{n-k+2p-1}{n+2p-1}}+ \left[L_t^{\frac{2p}{n+2p-1}}+K_t\right]^{1+\frac{(n-k-1)}{2p}}}^\frac{2\alpha k}{d_w}.
\end{align*}
\begin{comment}
Combining the above, we have for some $n\frac{d_h-d_w}{2}<\beta_k<\alpha_k<0$,
\begin{align*}
  &\abs{\braket{Y^{:k:}_\varepsilon(t),v^{n-k+2p-1}_\varepsilon(t)}_{L^2_M}}\\
  \lesssim& \norm{Y^{:k:}_\varepsilon(t)}_{\mathcal B^{-\beta}} L_t^{\frac{n-k+2p-1}{n+2p-1}(1-\frac{2\alpha_k}{d_w})}\left(L_t^{\frac{n-k+2p-1}{n+2p-1}}+\left[L_t^{\frac{4p}{n+2p-1}}+K_t\right]^{1+\frac{2(n-k-1)}{2p}}\right)^{\frac{2\alpha_k}{d_w}}.
\end{align*}
\end{comment}
We now want to apply Young's product inequality alongside Lemma \ref{lem:elementary comparison}. First, we choose $p$ large enough such that $(1+\frac{n-k-1}{2p})\frac{2\alpha k}{d_w}<1$ for each $k = 1,\dots, n$, then choose $r_1(k),r_2(k),$ and $r_3(k)$ so that
\begin{align*}
  1=&\frac{1}{r_1(k)}+\frac{1}{r_2(k)}+\frac{1}{r_3(k)}\text{ and }\\
  1>& \frac{n-k+2p-1}{n+2p-1}\left(1-\frac{2\alpha k}{d_w}\right)r_2(k), \frac{n-k+2p-1}{n+2p-1}\left(\frac{2\alpha k}{d_w}\right)r_3(k),\frac{2p+(n-k-1)}{2p}\frac{2\alpha k}{d_w}r_3(k)\\
  =:&\gamma(k),\gamma'(k),\gamma''(k).
\end{align*}
Indeed, such choices are possible since 
\begin{align*}
    \lim_{p \to \infty} \frac{n-k+2p-1}{n+2p-1}\pr{1-\frac{2\alpha k}{d_w}} + \frac{2p + (n-k-1)}{2p} \frac{2\alpha k }{d_w} = 1.
\end{align*}

Applying Young's product inequality to see for each $k \in \set{1,\dots, n}$ 
\[
\abs{\braket{Y^{:k:}_\varepsilon(t),v^{n-k+2p-1}_\varepsilon(t)}_{L^2_M}}\lesssim \norm{Y^{:k:}_\varepsilon(t)}_{\mathcal B^{-\alpha k +\delta }_\infty}^{r_1(k)}+L_t^{\gamma(k)}+L_t^{\gamma'(k)}+K_t^{\gamma''(k)}.
\]
Combined, they give us 
\[
\frac{1}{2p}\partial_t\norm{v_\varepsilon(t)}_{L^{2p}_M}^{2p}\lesssim-C_pK_t-L_t+\sum_{k=1}^{n}\pr{\norm{Y^{:k:}_\varepsilon(t)}_{\mathcal B^{-\alpha k+\delta}_\infty}^{r_1(k)}+L_t^{\gamma(k)}+L_t^{\gamma'(k)}+K_t^{\gamma''(k)}}
\]
 Recall the elementary inequality that for any $c,c'>0$ and $\gamma \in (0,1)$, it holds uniformly in $x \ge0$ that $-cx+c'x^\gamma\lesssim 1$. Applying this to  $K_t$ leaves us with 
\[\frac{1}{2p}\partial_t\norm{v_\varepsilon(t)}_{L^{2p}_M}^{2p}\lesssim1-L_t+\sum_{k=1}^{n}\norm{Y^{:k:}_\varepsilon(t)}_{\mathcal B^{-\alpha k}_\infty}^{r_1(k)}+L_t^{\gamma(k)}+L_t^{\gamma'(k)},
\]
 which holds uniformly in $t\in[0,T^*_\varepsilon]$ and $\varepsilon \in (0,1]$. 
The elementary inequality $x^\gamma\leq cx+\delta$ with $c<1$ allowed to be arbitrarily close to 0 (at the cost of larger $\delta$) applied to $L_t^{\gamma'(k)}$ and $L_t^{\gamma''(k)}$ gives \[\frac{1}{2p}\partial_t\norm{v_\varepsilon(t)}_{L^{2p}_M}^{2p}+cL_t\lesssim 1+\sum_{k=1}^n \pr{\Xi^\varepsilon}_{k,T}^{r_1(k)}.\]
By Jensen, we have \(L_t=\norm{v_{\varepsilon}(t)}_{L^{n+2p-1}_M}^{n+2p-1}\gtrsim \norm{v_{\varepsilon}(t)}_{L^{2p}_M}^{n+2p-1} = \norm{v_\varepsilon(t)}_{L^{2p}_M}^{2p(1+\frac{n-1}{2p})}\). Hence
\[\frac{1}{2p}\partial_t\norm{v_\varepsilon(t)}_{L^{2p}_M}^{2p}+c'\pr{\norm{v_{\varepsilon}(t)}_{L^{2p}_M}^{2p}}^{1+\frac{n-1}{2p}}\lesssim 1+\sum_{k=1}^n \pr{\Xi^\varepsilon}_{k,T}^{r_1(k)}.\]
%This is exactly \eqref{eq:CDI_differential} with $q_k=r_1(k)$. 
%{\issue Need time-weighted bounds on Wick product norms to do some stuff here. After that, this theorem should be good.}\\
 We note that the implicit constants for the above inequalities are independent of $\varepsilon \in (0,1]$, hence by Lemma \ref{lem:elementary comparison}, we see \eqref{eq:CDI_bound} holds. 
%Applying Lemma \ref{lem:elementary comparison} gives us \eqref{eq:CDI_bound}. 

Lastly, we see by Lemma \ref{lem:local solution to DPD and convergence of approximation} that $v_\varepsilon(t)\to v(t)$ in $L^\infty_M$ as $\varepsilon \downarrow 0$ for each $t\in (0,T^*]$. Since $\Xi_{k,T}^\varepsilon$ is uniformly bounded in $\varepsilon$ for each $1 \le k \le n$ and $T>0$, we see the desired results also hold for $v$. 

%we pass the bounds from $v_\varepsilon$ to $v$ using Lemma \ref{lem:local solution to DPD and convergence of approximation}, which is allowed by our choice of $\beta_k>k\frac{d_h-d_w}{2}$ thanks to Lemma \ref{lem: holder besov space of wick product}.
\end{proof}

\begin{corollary}[Global well-posedness of Remainder Equation]\label{cor:global_wp}
Take $\gamma>0$, $\alpha>\alpha_0$, $2\le n \le n_0$ be an odd integer that satisfies the conditions of Lemma \ref{lem:local solution to DPD and convergence of approximation}. Let $Y$ be the solution to \eqref{eq: EW eqn} with $Y_0 \in \mathfrak{C}$ and $v_0 \in \Binf{-\alpha}$. Then there exists a unique solution $ \in \mathcal C(\R_+; \Binf{\gamma})$ of equation \eqref{eq:formal eq for v intro}.
\end{corollary}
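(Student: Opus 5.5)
The argument is a standard continuation: combine the local theory of Lemma \ref{lem:local solution to DPD and convergence of approximation} with the $\varepsilon$-uniform a priori bound of Proposition \ref{prop:CDI_mollified}. Restarting at positive times uses $L^\infty_M$-type control rather than $\Binf{\eta}$-control of the data.

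\emph{Step 1: local solution and maximal time.} Fix $\omega$ in the full-measure event where $Y_0\in\mathfrak C$ and the Wick powers $Y^{:k:}$, $k\le n$, lie in $\mathcal C(\R_+;\Binf{-\alpha k})$. By Lemma \ref{lem:uniform unit interval moments of wick powers} and the stationary-shift argument used there, for every $T$ the quantity $K_T:=\max_k\sup_{t\le T}\norm{Y^{:k:}_t}_{\Binf{-\alpha k}}$ is finite. Lemma \ref{lem:local solution to DPD and convergence of approximation} with $\eta=-\alpha$ (see Remark \ref{rmk:local solution alpha initial condition}) gives a unique solution on $[0,T^*]$. Define $T_{\max}$ as the supremum of times up to which a solution exists in $\mathcal C((0,T];\Binf{\gamma})$. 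Uniqueness on overlapping intervals, via the Lipschitz dependence in Proposition \ref{prop:local_solution_with_weights}, makes the solution well defined on $[0,T_{\max})$.

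\emph{Step 2: restart with a uniform step.} Suppose $T_{\max}<T<\infty$. For $t_0\in(T_{\max}/2,T_{\max})$, restart the equation at $t_0$ with initial datum $v(t_0)$. The driving data are the shifted Wick powers $Y^{:k:}(t_0+\cdot)$, whose $\mathcal G$-norm is at most $K_{T+1}$. The mild form \eqref{eq:remainder mild form} is consistent under this shift, since the semigroup property splits the Duhamel integral at $t_0$. For the datum, I will use a slightly negative $\eta\in(-d_w/n,0)$ and Lemma \ref{lem:besov_norm_compare} to bound $\norm{v(t_0)}_{\Binf{\eta}}\lesssim\norm{v(t_0)}_{L^\infty_M}$. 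The local existence time $cK^{-1/\kappa}$ is then uniform in $t_0$, provided $\sup_{t\in[T_{\max}/2,T_{\max})}\norm{v(t)}_{L^\infty_M}<\infty$. Choosing $t_0$ closer to $T_{\max}$ than this uniform step extends the solution beyond $T_{\max}$, a contradiction. After a further time $O(K^{-1/\kappa})$, the smoothing built into the $\mathcal S_T$ norm returns the solution to $\Binf{\gamma}$.

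\emph{Step 3: $L^\infty_M$ bound from the $L^{2p}$ bound.} This is the main obstacle. Proposition \ref{prop:CDI_mollified} only controls $\norm{v(t)}_{L^{2p}_M}$ uniformly for $t\in[T_{\max}/2,T_{\max})$, for all large odd $p$. To upgrade to $L^\infty_M$, I will bootstrap through the mild formulation started at $t-\tau$ for a fixed small $\tau$:
\[
v(t)=e^{-\tau}P_\tau v(t-\tau)+\int_{t-\tau}^t e^{-(t-s)}P_{t-s}\Psi(Y,v)(s)\,\mathrm ds.
\]
The first term is handled by Lemma \ref{lem:hk LpLq interpolation}: $\norm{P_\tau f}_{L^\infty_M}\lesssim \tau^{-d_h/(2pd_w)}\norm{f}_{L^{2p}_M}$. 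For the products $v^{n-k}Y^{:k:}$, I will estimate them in $\mathcal B^{-\alpha k-\epsilon}_{2p',\infty}$ by an $L^{p}$-type analogue of Theorem \ref{thm:besov_product}, and then apply Proposition \ref{prop:besov_embed} and Proposition \ref{prop: schauder} to return to $L^\infty_M$, in the spirit of \cite{TW18}. This produces a closed estimate on short intervals, using $\sup\norm{v}_{L^\infty_M}$ raised to a power with a small time factor.

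If that product estimate is unavailable, I will fall back on the mollified picture: run the bootstrap for $v_\varepsilon$, which is function-valued, and pass to the limit with Lemma \ref{lem:local solution to DPD and convergence of approximation}.

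Once $T_{\max}=\infty$, uniqueness on each $[0,T]$ follows from the local uniqueness in Proposition \ref{prop:local_solution_with_weights}. This gives $v\in\mathcal C(\R_+;\Binf{\gamma})$.
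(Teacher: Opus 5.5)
Your Steps 1 and 2 follow the paper's strategy. You restart the local theory of Lemma~\ref{lem:local solution to DPD and convergence of approximation}, with step length controlled by the $\varepsilon$-uniform bound of Proposition~\ref{prop:CDI_mollified}. The paper iterates over consecutive intervals instead of arguing by contradiction at $T_{\max}$, but the two are equivalent.

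The gap is Step 3, which you flag as the main obstacle but do not resolve. The bootstrap must estimate $v^{n-k}Y^{:k:}$ with $v$ controlled only in $L^{2p}_M$. Theorem~\ref{thm:besov_product} is a purely $\mathcal B_\infty$-scale statement and needs the function factor to have positive regularity, so the $L^p$-type product estimate you invoke is not available. Uniform positive-regularity control of $v$ near $T_{\max}$ is exactly what you do not yet have, so the argument is circular. The mollified fallback does not help either: $\norm{Y^{:k:}_\varepsilon}_{L^\infty_M}$ blows up as $\varepsilon\downarrow 0$. Any $L^\infty_M$ bound on $v_\varepsilon$ obtained without the same negative-regularity product estimates will therefore not be uniform in $\varepsilon$.

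The missing observation is that you never need $L^\infty_M$ control. The exponent $p$ in Proposition~\ref{prop:CDI_mollified} can be taken as large as you like.
\begin{itemize}
\item By Proposition~\ref{prop:kernel_bound_dt}, $Q^{(k)}_t$ and $P_1$ are bounded on $L^{2p}_M$, so $\norm{f}_{\mathcal B^0_{2p,\infty}}\lesssim\norm{f}_{L^{2p}_M}$.
\item Combining this with Proposition~\ref{prop:besov_embed} and Lemma~\ref{lem:besov_norm_compare} gives $\norm{f}_{\Binf{-\alpha}}\lesssim\norm{f}_{\Binf{-d_h/(2p)}}\lesssim\norm{f}_{L^{2p}_M}$ whenever $2p\ge d_h/\alpha$.
\item The datum regularity $\eta=-\alpha$ is admissible by Remark~\ref{rmk:local solution alpha initial condition}. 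If you prefer a slightly negative $\eta$, $\eta=-d_h/(2p)\in(-d_w/n,0)$ also works for large $p$.
\item The restart time $cK^{-1/\kappa}$ then depends only on the $L^{2p}_M$ a priori bound and on $K_{T+1}$. Both are uniform over restart times $t_0\in[T_{\max}/2,T_{\max})$, since there $t_0^{-1/(n-1)}$ is bounded.
\end{itemize}
With this, your Step 2 closes directly, and this is the route the paper takes.
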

\begin{proof}
Let $T>0$ be arbitrary but fixed, and fix $p \ge 2$ sufficiently large so that $L^p \hookrightarrow \Binf{-\alpha}$ (see Proposition \ref{prop:besov_embed}). Let $\underline{\mathbf{Y}}_n:=(Y,Y^{:2:},\cdots,Y^{:n:})$. 
Recall the definition of $\Xi_{k,T}$ from Proposition \ref{prop:CDI_mollified}, and observe that
\[
\sum_{k=1}^n \Xi_{k,T}^{q_{k,p}}\leq n\norm{\underline{\mathbf{Y}}_n}_{\mathcal G_T}^{Q}+n\norm{\underline{\mathbf{Y}}_n}_{\mathcal G_T}^{q},\qquad Q:=\max_{1\leq k\leq n}q_{k,p},\, q:= \min_{1\leq k\leq n}q_{k,p}.
\] 

Hence,  Proposition \ref{prop:CDI_mollified} gives an estimate depending only on $\norm{\underline{\mathbf{Y}}_n}_{\mathcal G_T}$, which provides an a priori estimate on $\norm{v_t}_{\Binf{-\alpha}}$. Thus by Lemma \ref{lem:local solution to DPD and convergence of approximation} there exists $T^*:=T^*(\norm{v_t}_{\Binf{-\alpha}}, \norm{\underline{\mathbf Y}_n}_{\mathcal G_T}) >0$ and a unique solution up to $T^*$ in $\mathcal C([0,T^*];\Binf{\gamma})$ of \eqref{eq:formal eq for v intro}. Use again Lemma \ref{lem:local solution to DPD and convergence of approximation} to construct a solution to \eqref{eq:formal eq for v intro} on $[T^*,(2T^*)\wedge T]$ with initial condition $v_{T^*}$ which satisfies the same $L^p_M$ norm bound (and hence $\Binf{-\alpha}$ norm bound) depending again only on $\norm{\underline{\mathbf{Y}}_n}_{\mathcal G_T}$. We may iterate this process until the whole interval $(0,T]$ is covered.
\end{proof}
\begin{comment}
\begin{corollary}[Global well-posedness]\label{cor:global_wp}
Let $v_0$ be an admissible initial condition for Proposition \ref{prop:local_solution_with_weights}, and let
\[
Z\in \bigcap_{\alpha\in(\alpha_{n-1},\,d_w-\alpha_n)} C\bigl([0,T^*);B^\alpha_\infty\bigr)
\]
be the corresponding maximal local solution of \eqref{eq:formal eq for v intro}, defined up to explosion time $T^*\in(0,\infty]$. Then $T^*=\infty$ almost surely. In particular, \eqref{eq:formal eq for v intro} admits a unique global solution.
\end{corollary}

\begin{proof}
Fix $T>0$. By Proposition~\ref{prop:CDI_mollified}, for every $t\in(0,T\wedge T^*)$,
\[
\norm{Z(t)}_{L_M^{2p}}\le C_T\pr{1+t^{-1/(n-1)}},
\]
hence
\[
\sup_{\tau\le t\le T\wedge T^*}\norm{Z(t)}_{L^{2p}_M}<\infty
\qquad\text{for every }\tau>0.
\]
By Proposition~\ref{prop:local_solution_with_weights}, restarting the equation at any time $\tau\in(0,T\wedge T^*)$ yields a local solution whose lifespan depends only on the norm of the datum $v(\tau)$ and the corresponding stochastic coefficients on a short interval after $\tau$. Since the above $L_M^{2p}$ bound is finite uniformly on $[\tau,T\wedge T^*)$, and {\issue the Wick powers are controlled on finite intervals by Section~5}, the local existence time cannot degenerate as $t\uparrow T^*$.

If $T^*<\infty$, choose $\tau<T^*$ sufficiently close to $T^*$ so that the local theory started from $v(\tau)$ extends past $T^*$. This contradicts maximality of $T^*$. Therefore $T^*=\infty$ almost surely.
\end{proof}
\end{comment}
\noindent The following corollary is a direct consequence of  Proposition \ref{prop:CDI_mollified} and Lemma \ref{lem:uniform unit interval moments of wick powers}. We omit its proof.
\begin{corollary}[Positive-time moment bound]\label{cor:moment_bound}
Under the assumptions of Corollary \ref{cor:global_wp}, let $p\ge 1$ be as in Proposition~\ref{prop:CDI_mollified} and denote by $v(\cdot;v_0)$ the solution to \eqref{eq:formal eq for v intro} starting from $v_0$. Then for every $q\ge 1$,
\[
  \sup_{v_0\in \Binf{-\alpha}}\sup_{0<t<\infty}
  \pr{t^{q/(n-1)}\wedge 1}\,\E\Big[\|v(t;v_0)\|_{L_M^{2p}}^q\Big]
  <\infty.
\]
\end{corollary}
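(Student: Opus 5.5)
The plan is to combine the deterministic, initial-condition-independent bound of Proposition~\ref{prop:CDI_mollified} with the uniform-in-time moment control of the Wick powers from Lemma~\ref{lem:uniform unit interval moments of wick powers}. We treat small and large times separately, and we use the time-homogeneity of the equation for $v$ to restart at time $t-1$.

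\emph{Case $t\in(0,1]$.} Apply \eqref{eq:CDI_bound} with $T=1$. This gives
\[
\|v(t;v_0)\|_{L^{2p}_M}\le C\bigl(1+t^{-\frac1{n-1}}\bigr)\Bigl(1+\sum_{k=1}^n\Xi_{k,1}^{q_{k,p}}\Bigr)^{\frac1{n-1}},
\]
where $\Xi_{k,1}=\sup_{s\le1}\|Y^{:k:}(s)\|_{\Binf{-\alpha k}}$ and $C$ does not depend on $v_0$. Multiplying by $t^{1/(n-1)}\wedge 1=t^{1/(n-1)}$ removes the singular factor. Raising to the power $q$ and taking expectations, we then need
\[
\E\bigl[\Xi_{k,1}^{q q_{k,p}/(n-1)}\bigr]<\infty .
\]
This follows from Lemma~\ref{lem:uniform unit interval moments of wick powers} with $t=0$, since $Y_0\in\mathfrak C$.

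\emph{Case $t>1$.} Set $s=t-1$. By uniqueness (Corollary~\ref{cor:global_wp}), $u(r):=v(s+r;v_0)$ solves the same remainder equation on $[0,1]$. Its initial datum is $v(s)\in\Binf{\gamma}\subset\Binf{-\alpha}$, and its driving Wick powers are $Y^{:k:}(s+\cdot)$, the time shift of the original ones. The proof of Proposition~\ref{prop:CDI_mollified} uses only the mild/weak formulation together with the sup-norms of the driving distributions on the time window. We therefore apply it to $u$ at $r=1$ and get
\[
\|v(t)\|_{L^{2p}_M}\le 2C\Bigl(1+\sum_k\sup_{r\in[s,s+1]}\|Y^{:k:}(r)\|_{\Binf{-\alpha k}}^{q_{k,p}}\Bigr)^{\frac1{n-1}} .
\]
This bound is independent of $v_0$. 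Taking the $q$-th moment and using Lemma~\ref{lem:uniform unit interval moments of wick powers}, which gives $\sup_{s\ge0}\E[\|Y^{:k:}\|^{m}_{\mathcal C([s,s+1];\Binf{-\alpha k})}]<\infty$ for every $m$, we obtain a bound uniform in $t>1$.

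\emph{Main obstacle.} The delicate step is justifying the restart at time $s$. Proposition~\ref{prop:CDI_mollified} is stated for the mollified problem started at time $0$ from $P_\varepsilon v_0$ with $Y_0\in\mathfrak C$. We must check that the shifted data fall in the same framework: the shifted Wick powers are the limits of the shifted mollified ones, and the constant $C(p,1)$ depends on neither the initial datum nor the starting time. Since the mollified equation at time $s$ starts from $v_\varepsilon(s)$ rather than from $P_\varepsilon v(s)$, I would sidestep this by applying the proposition directly to the limiting equation. The argument is as follows. The energy identity of Lemma~\ref{lem: test Z p-1} and the differential inequality hold for the mollified solution $v_\varepsilon$ on $[s,s+1]$ with constants uniform in $\varepsilon$. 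Lemma~\ref{lem:elementary comparison} then yields a bound at $s+1$ that is independent of the value at $s$. Finally, we pass $\varepsilon\downarrow0$ using the convergence $v_\varepsilon\to v$ of Lemma~\ref{lem:local solution to DPD and convergence of approximation}, iterated over the whole interval.
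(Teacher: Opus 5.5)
Your proposal is correct and follows essentially the same route as the paper's (omitted) argument. That argument uses a unit-window version of the coming-down-from-infinity bound \eqref{eq:CDI_bound}, applied from time $0$ for $t\le 1$ and restarted at $a=t-1$ for $t>1$, combined with the uniform-in-time moment bounds of Lemma~\ref{lem:uniform unit interval moments of wick powers}; your extra care in justifying the restart (global mollified solution, comparison on $[s,s+1]$, iterated convergence $v_\varepsilon\to v$) addresses a point the paper passes over without comment.
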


\begin{comment}
    \begin{proof}
Fix $q\ge 1$. By Proposition~\ref{prop:CDI_mollified} %(applied to $v$ itself), together with the differential inequality\eqref{eq:CDI_differential} and Lemma~\ref{lem:elementary comparison}, 
we obtain the following unit-interval estimate: for each
$a\ge 0$, there exist exponents $q_k>0$ (independent of $a$ and $v_0$) such that for all $r\in(0,1]$,
\[
  \norm{v(a+r;v_0)}_{L_M^{2p}}
  \lesssim
  \pr{1+r^{-\frac{1}{n-1}}}
  \pr{1+\sum_{k=1}^n
  \norm{Y^{:k:}}_{\mathcal C([a,a+1];\Binf{-\beta_k})}^{q_k}}.
\]
Take first $a=0$. By Lemma~\ref{lem:uniform unit interval moments of wick powers}, the random factor
\(
  1+\sum_{k=1}^n \norm{Y^{:k:}}_{\mathcal C([0,1];\Binf{-\beta_k})}^{q_k}
\)
has finite moments of every order. Hence
\[
  \sup_{v_0}\sup_{0<t\le 1}
  t^{q/(n-1)}\E\Big[\norm{v(t;v_0)}_{L_M^{2p}}^q\Big]<\infty.
\]
Now let $t\ge 1$ and set $a=t-1$, $r=1$. Then
\[
  \norm{v(t;v_0)}_{L_M^{2p}}
  \lesssim
  1+\sum_{k=1}^n
  \norm{Y^{:k:}}_{\mathcal C([t-1,t];\Binf{-\beta_k})}^{q_k}.
\]
Applying Lemma~\ref{lem:uniform unit interval moments of wick powers} again gives
\[
  \sup_{v_0}\sup_{t\ge 1}
  \E\Big[\norm{v(t;v_0)}_{L_M^{2p}}^q\Big]<\infty.
\]

Since $t^{q/(n-1)}\wedge 1=t^{q/(n-1)}$ on $(0,1]$ and equals $1$ on $[1,\infty)$, combining the two displays yields
\[
  \sup_{v_0}\sup_{0<t<\infty}
  \pr{t^{q/(n-1)}\wedge 1}\,\E\Big[\|v(t;v_0)\|_{L_M^{2p}}^q\Big]
  <\infty.
\]
\end{proof}
\end{comment}

\section{The solution as a Markov process and its invariant measure}\label{sec:inv meas}

This section constructs invariant measures for the renormalized $\Phi^{n+1}$ dynamics  as a process in $\mathfrak C$ in Definition \ref{def:c set}.
We first recall the global solution framework from \cite{chandra2025non}, then prove that the associated transition operators form a Feller semigroup, derive a positive-time moment bound, and finally apply the classical Bogoliubov--Krylov theorem (in the style of \cite[Proposition 4.4]{TW18}) to obtain existence of invariant measures.

 Recall $\alpha_0$ as in \eqref{eq:alpha_0} and $n_0 \in \N$ as in \eqref{eq:n0}. Through out this section, we assume both \eqref{ineq: dbd condition} and \eqref{ineq:global solution condition} hold for some odd integer $3 \le n \le n_0$. We shall fix $\alpha>\alpha_0$ so that \eqref{ineq:regularity condition general alpha} and \eqref{ineq:global solution condition equivalent} holds. As a consequence of Corollary \ref{cor:global_wp}, the remainder equation \eqref{eq:formal eq for v intro} admits a global solution for any $v \in \mathcal B^{-\alpha}_\infty$. 
\begin{definition}[{\cite[Definition 3.15]{chandra2025non}}]\label{def:global solution of phi n equation}
Fix an arbitrary $Y_0 \in \mathfrak{C}$.  Let $\phi_0 \in \mathcal B^{-\alpha}_\infty$ and 
$\gamma >0$ and $v \in \mathcal C((0,\infty);\Binf{\gamma})$ be the solution to \eqref{eq:formal eq for v intro} with initial condition $v_0 = \phi_0 - Y_0 \in \Binf{-\alpha}$ given by Corollary \ref{cor:global_wp}. We define the solution to \eqref{eq:phi n formal} with initial condition $\phi_0$ by
\[
  \phi_t(\phi_0):=v_t+Y_t \in \Binf{-\alpha},\qquad t\ge 0.
\]
\end{definition}

\begin{remark}
By \cite[Proposition 3.16]{chandra2025non}, the above definition is independent of the choice of $Y_0$.
\end{remark}

\begin{lemma}[{\cite[Lemma 3.17]{chandra2025non}}]\label{lem:u t in c}
%Under the setting of Definition \ref{def:global solution of phi n equation}, 
For every $t>0$, $\phi_t \in \mathfrak{C}$ almost surely.
\end{lemma}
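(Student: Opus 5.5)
The plan is to decompose $\phi_t = v_t + Y_t$ at a fixed $t>0$ and invoke the stability of $\mathfrak C$ under smooth perturbations, Lemma \ref{lem:stability of c}. Definition \ref{def:c set} asks that, when \eqref{eq: EW eqn} is restarted from $\phi_t$ with a fresh noise, the regularized Wick powers converge in $\mathcal C([0,1];\Binf{-\alpha k})$, both in every $L^q_\Omega$ and almost surely. By the Markov structure (the noise on $[t,\infty)$ is independent of $\mathcal F_t$), it therefore suffices to check two things: that $Y_t\in\mathfrak C$ almost surely, and that $v_t\in\Binf{\gamma}$ with $\gamma>\alpha(n_0-1)$. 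Lemma \ref{lem:stability of c} then gives $Y_t+v_t\in\mathfrak C$.

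\textbf{Step 1: the remainder $v_t$.} Corollary \ref{cor:global_wp} gives $v\in\mathcal C((0,\infty);\Binf{\gamma})$ with $\gamma\in(\alpha(n-1),\Theta)$. However, Lemma \ref{lem:stability of c} requires regularity exceeding $\alpha(k-1)$ for every $k\le n_0$, and $n_0$ may exceed $n$. To gain regularity I plan to restart the mild equation at time $t/2$ and use the Schauder estimate, Proposition \ref{prop: schauder}. The nonlinearity $\Psi(Y,v)$ lies in $\Binf{-\alpha n}$, so $v_t$ has regularity up to $d_w-\alpha n-\epsilon$. Above $\Theta$ this Besov regularity is still meaningful, and Lemma \ref{lem:stability of c} only needs it through Theorem \ref{thm:besov_product}. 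If the $\Theta$ ceiling in Theorem \ref{thm:besov_product} blocks this, I would instead interpret $\mathfrak C$ with $n_0$ replaced by $n$, as in Lemma \ref{lem:wick power stability}; that is all the later sections use.

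\textbf{Step 2: the Gaussian part $Y_t$.} Let $\tilde Y$ solve \eqref{eq: EW eqn} from the stationary initial condition $\tilde Y_0\sim\nu$, independent of $\xi$. Then
\[
Y_t=\tilde Y_t+e^{-(1+L)t}\pr{Y_0-\tilde Y_0}.
\]
By stationarity, $\tilde Y_t\sim\nu$, and $\nu(\mathfrak C)=1$ by Theorem \ref{thm:Wick Power Convergence}. The correction term lies in every $\Binf{\beta}$ by Lemma \ref{lem:q_regulariz}. Applying Lemma \ref{lem:stability of c} once more gives $Y_t\in\mathfrak C$ almost surely.

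\textbf{Step 3: combine.} The restarted Wick powers of $\phi_t$ decompose through the binomial identity \eqref{eq:binomial hermit} into Wick powers of $Y_t$ multiplied by powers of $v_t$. Convergence in $L^q_\Omega$ then follows from the moment bound $\llbracket\Psi_0+v_0\rrbracket_p\lesssim(1+\llbracket\Psi_0\rrbracket_p)(1+\|v_0\|)^{n_0}$ applied conditionally on $\mathcal F_t$. The main obstacle is the measurability and conditioning bookkeeping: $v_t$ and $Y_t$ are $\mathcal F_t$-measurable random elements, whereas $\mathfrak C$ is defined through a fresh noise. I would handle this by disintegrating with respect to $\mathcal F_t$ and applying the deterministic-datum statement pointwise, exactly as in \cite[Lemma 3.17]{chandra2025non}.
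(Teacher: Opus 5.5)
Your argument is the natural route behind the paper's bare citation of \cite[Lemma 3.17]{chandra2025non}, and it works: split $\phi_t=Y_t+v_t$; get $Y_t\in\mathfrak C$ almost surely by coupling with the stationary solution (as in the proof of Lemma \ref{lem:uniform unit interval moments of wick powers}) together with $\nu(\mathfrak C)=1$; then absorb $v_t\in\Binf{\gamma}$ via Lemma \ref{lem:stability of c}, which is a pathwise statement about the deterministic set $\mathfrak C$, so the conditioning on $\mathcal F_t$ in your Step 3 is not actually needed. The Schauder bootstrap in Step 1 would not help, because the binding constraint in Theorem \ref{thm:besov_product} sits on the distributional factor ($\alpha m<\Theta$ for $Y^{:m:}$), and its proof cuts the positive index below $\Theta$ anyway; so the $n$-versus-$n_0$ mismatch you noticed is real and is shared by the paper's Lemma \ref{lem:stability of c} (for $\Phi^4$ on the Vicsek product, $n_0=6$ and $\alpha_0(n_0-1)>1=\Theta$), and your fallback of defining $\mathfrak C$ with Wick powers only up to order $n$ is the right repair.
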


For completeness, we first show that this solution defines a Markov process.

\begin{lemma}[Markov property of $u$]\label{lem:markov property of global solution}
Under the setting of Definition \ref{def:global solution of phi n equation}, for each $\phi_0\in\Binf{-\alpha}$ let
\(\phi=\phi(\cdot;\phi_0)=\{\phi=\phi_t(\phi_0)\}_{t\ge 0}\)
denote the corresponding global solution with initial condition $\phi_0$. For bounded Borel $f:\Binf{-\alpha}\to\R$, define
\[
  \mathbf P_tf(v):=\E\left[f\pr{\phi_t(v)}\right],\qquad t\ge 0,\,v\in\Binf{-\alpha}.
\]
Then $\{\mathbf P_t\}_{t\ge 0}$ is a Markov semigroup, and for every $s,t\ge 0$,
\begin{align}\label{eq:markov property white noise filtration}
  \E\left[f\pr{\phi_{s+t}(\phi_0)}\mid \mathcal F_s\right]
  =\mathbf P_tf\pr{\phi_s(\phi_0)}\qquad\text{a.s.}
\end{align}
\end{lemma}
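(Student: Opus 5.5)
The plan is to realize the solution as a deterministic, measurable function of the initial datum and of the noise increments after time $s$, and then use independence of white noise on disjoint time slabs. Fix $s\ge 0$. Write $\theta_s\xi$ for the noise shifted by $s$, i.e. $(\theta_s\xi)(\varphi):=\xi(\varphi(\cdot-s,\cdot))$. It is a space-time white noise independent of $\mathcal F_s$, since it is generated by $\xi(\varphi)$ with $\varphi$ supported in $(s,\infty)\times M$. I would first establish the flow (cocycle) identity
\begin{align*}
  \phi_{s+t}(\phi_0;\xi)=\phi_t\pr{\phi_s(\phi_0;\xi);\theta_s\xi},\qquad t\ge 0,\ \text{a.s.}
\end{align*}
To do this, decompose $\phi_{s+\cdot}=v_{s+\cdot}+Y_{s+\cdot}$. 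The restarted process $Y_{s+\cdot}$ solves \eqref{eq: EW eqn} driven by $\theta_s\xi$ with initial condition $Y_s$, by the semigroup property in \eqref{eq: def of solution to EW}. Its Wick powers $Y^{:k:}_{s+\cdot}$ are the $\varepsilon\downarrow 0$ limits of $H_k(P_\varepsilon Y_{s+\cdot},C_\varepsilon)$, and these coincide with the Wick powers built from the restarted noise. Moreover $Y_s\in\mathfrak C$, by Lemma \ref{lem:u t in c} applied to the linear part, or directly by Theorem \ref{thm:Wick Power Convergence} together with Lemma \ref{lem:stability of c}.

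Next, $v_{s+\cdot}$ satisfies the mild equation \eqref{eq:remainder mild form} with initial datum $v_s$ and coefficients $\underline{\mathbf Y}$ shifted by $s$; this follows by splitting the Duhamel integral at $s$. Uniqueness in Corollary \ref{cor:global_wp} then identifies $v_{s+\cdot}$ with the restarted remainder solution. Finally, the remark following Definition \ref{def:global solution of phi n equation} (independence of the choice of $Y_0$, \cite[Proposition 3.16]{chandra2025non}) shows that the restarted decomposition gives $\phi_t(\phi_s;\theta_s\xi)$ regardless of how $\phi_s$ is split.

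Second, I would prove measurability. The map $(\phi_0,\omega)\mapsto\phi_t(\phi_0)$ should be jointly measurable, and $\phi_0\mapsto\mathbf P_tf(\phi_0)$ Borel. The route is to write $\phi_t=\Phi_t(\phi_0-Y_0,\underline{\mathbf Y})+Y_t$. Here $\Phi_t$ is the deterministic solution map, which is continuous (locally Lipschitz) in $(v_0,\mathbf Y)$ by Proposition \ref{prop:local_solution_with_weights}; the global iteration in Corollary \ref{cor:global_wp} preserves continuity. The enhanced noise $\underline{\mathbf Y}$ is a measurable function of $(Y_0,\xi)$, being an a.s. limit of continuous functionals by Definition \ref{def:c set}.

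With these two facts the conditional identity \eqref{eq:markov property white noise filtration} follows from the freezing lemma. $\phi_s(\phi_0)$ is $\mathcal F_s$-measurable, $\theta_s\xi$ is independent of $\mathcal F_s$ with the same law as $\xi$, and $(x,\omega)\mapsto f(\phi_t(x;\theta_s\xi))$ is jointly measurable. Hence $\E[f(\phi_t(\phi_s;\theta_s\xi))\mid\mathcal F_s]=\mathbf P_tf(\phi_s)$. Taking expectations in this identity gives the semigroup law $\mathbf P_{s+t}=\mathbf P_s\mathbf P_t$. The remaining Markov semigroup properties, $\mathbf P_0=\mathrm{Id}$, positivity and $\mathbf P_t1=1$, are immediate.

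The main obstacle I expect is the cocycle step at the level of renormalized objects. One must check that the $\varepsilon\to0$ limits defining $Y^{:k:}$ on $[s,s+t]$ agree almost surely, simultaneously for all $t$, with those built from $(Y_s,\theta_s\xi)$. The point is that $\mathfrak C$ is defined by convergence for a fixed initial condition, while here $Y_s$ is random. I would handle this by conditioning on $\mathcal F_s$ and applying Definition \ref{def:c set} for each fixed $Y_s\in\mathfrak C$. The time-continuity in Theorem \ref{thm:Wick Power Convergence} then upgrades the identity from fixed $t$ to all $t$ simultaneously.
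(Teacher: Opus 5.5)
Your proposal is correct and follows essentially the same route as the paper. You restart the decomposition $\phi=Y+v$ at time $s$ with the shifted noise, identify $\phi_{s+t}(\phi_0)$ with $\phi_t(\phi_s(\phi_0);\theta_s\xi)$ via the mild formulations, uniqueness and independence of the splitting, and then condition on $\mathcal F_s$ using independence of the shifted noise, with your measurability/freezing-lemma step making explicit what the paper leaves implicit. The only slip is that $\theta_s\xi$ as you define it also encodes the noise on $(-\infty,s)\times M$, so it is independent of $\mathcal F_s$ only after restricting it to positive times; the paper's $\xi^{(s)}$ carries the indicator $\1_{[s,\infty)}$ for exactly this reason. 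This is harmless, because the solution started from a deterministic $Y_0$ only sees the noise on $(0,\infty)\times M$.
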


\begin{proof}
Fix $\phi_0\in\Binf{-\alpha}$. By Definition \ref{def:global solution of phi n equation}, write the corresponding solution as
\[
  \phi=Y+v,
\]
where $Y$ solves \eqref{eq: EW eqn} and $v$ solves \eqref{eq:formal eq for v intro} globally by Corollary \ref{cor:global_wp}.

Fix $s\ge 0$ and define the time-shifted noise $\xi^{(s)}$ by
\[
  \xi^{(s)}(\varphi):=\xi\pr{\varphi(\cdot-s,\cdot)\1_{[s,\infty)}(\cdot)},\qquad \varphi\in L^2(\R\times M).
\]
By independent increments of white noise, $\xi^{(s)}$ is independent of $\mathcal F_s$ and has the same law as $\xi$.
%{\issue Maybe use $\tau_sY_t:=Y_{s+t}$ instead?}
Let
\[
  \tau_sY_t:=Y_{s+t},\qquad \tau_s v_t:=v_{s+t},\qquad \tau_s \phi_t:=\phi_{s+t},\qquad t\ge 0.
\]
By the mild formulations of \eqref{eq: EW eqn} and \eqref{eq:formal eq for v intro},
$(\tau_sY,\tau_sv)$ solves the same renormalized system on $[0,\infty)$, driven by $\xi^{(s)}$, with initial datum
$(\tau_sY_0,\tau_s v_0)=(Y_s,v_s)$; hence $\tau_s\phi$ is a renormalized solution started from $\phi_s$.
For $s>0$, Lemma \ref{lem:u t in c} gives $\phi_s\in\mathfrak C$ almost surely, so Definition \ref{def:global solution of phi n equation} applies at time $s$; for $s=0$, the claim is trivial. Therefore, by Corollary \ref{cor:global_wp} and independence of the splitting \cite[Proposition 3.16]{chandra2025non}, for each $t,s>0$
\[
  \phi_{s+t}(\phi_0)=\phi_t\pr{\phi_s(\phi_0);\xi^{(s)}}\qquad
  \text{almost surely,}
\]
where $\phi_t(\phi_0,\eta)$ is the solution to \eqref{eq:phi n formal} with initial condition $\phi_0$ and noise input $\eta$. 
Now let $f:\Binf{-\alpha}\to \R$ be bounded and Borel. Conditioning on $\mathcal F_s$ and using the previous identity,
\[
  \E\left[f\pr{\phi_{s+t}(\phi_0)}\mid\mathcal F_s\right]
  =\E\left[f\pr{\phi_t\pr{\phi_s(\phi_0);\xi^{(s)}}}\mid\mathcal F_s\right].
\]
Since $\xi^{(s)}$ is independent of $\mathcal F_s$ and has the same law as $\xi$, the conditional law of
$ \phi_t(\phi_s(\phi_0);\xi^{(s)})$ given $\mathcal F_s$ equals the law of $ \phi_t(\phi_s(\phi_0);\xi)$. Hence
\[
  \E\left[f\pr{\mathcal \phi_t\pr{\phi_s(\phi_0);\xi^{(s)}}}\mid\mathcal F_s\right]
  =\mathbf P_tf\pr{\phi_s(\phi_0)},
\]
which proves \eqref{eq:markov property white noise filtration}.

Taking expectation in \eqref{eq:markov property white noise filtration}, for every $v\in\Binf{-\alpha}$,
\[
  \mathbf P_{s+t}f(v)
  =\E\left[\mathbf P_tf\pr{\phi_s(v)}\right]
  =\mathbf P_s\pr{\mathbf P_tf}(v),
\]
so $\{\mathbf P_t\}_{t\ge 0}$ is a semigroup.
Also $\mathbf P_0f=f$, so $\{\mathbf P_t\}_{t\ge 0}$ is a Markov semigroup.
\end{proof}

\begin{prop}[Feller property]\label{prop:feller property}
Under the assumptions of Definition \ref{def:global solution of phi n equation}, the Markov semigroup
\(\{\mathbf P_t\}_{t\ge 0}\) from Lemma \ref{lem:markov property of global solution} is Feller on
\(\Binf{-\alpha}\): for every $t\ge 0$ and every bounded continuous
\(f:\Binf{-\alpha}\to\R\), the map
\[
  \Binf{-\alpha}\ni v\mapsto \mathbf P_tf(v)
\]
is bounded and continuous.
\end{prop}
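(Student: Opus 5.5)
The plan is to combine dominated convergence with continuity of the solution map in the initial datum. Boundedness is immediate, since $|\mathbf P_tf(v)|\le\norm{f}_\infty$. For continuity, fix $t>0$ and a sequence $v^{(m)}\to v$ in $\Binf{-\alpha}$. By dominated convergence it is enough to show $\phi_t(v^{(m)})\to\phi_t(v)$ in $\Binf{-\alpha}$ almost surely, or even just in probability, applied along subsequences. Fix once and for all the stationary splitting: take $Y_0\sim\nu$ independent of $\xi$, so that $Y_0\in\mathfrak C$ almost surely by Theorem \ref{thm:Wick Power Convergence}. Because of \cite[Proposition 3.16]{chandra2025non}, I may use this single $Y$ for every initial datum and write $\phi_t(w)=Y_t+v_t(w-Y_0)$. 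The Gaussian part $Y_t$ does not depend on $w$. So the task is reduced to proving that, pathwise, the remainder map $w_0\mapsto v_t(w_0)$ from $\Binf{-\alpha}$ to $\Binf{-\alpha}$ is continuous for each fixed realisation of $\underline{\mathbf Y}_n$ in $\mathcal G_T$.

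I will get that continuity by iterating the local theory. Put $\eta=-\alpha$, which is admissible by Remark \ref{rmk:local solution alpha initial condition}. Proposition \ref{prop:local_solution_with_weights} says the solution map $(v_0,\mathbf Y)\mapsto v$ is Lipschitz on $B_K$ into $\mathcal S_{T^*}$, where $T^*=cK^{-1/\kappa}$. In particular $\sup_{s\le T^*}\norm{v_s(w_0)-v_s(w_0')}_{\Binf{-\alpha}}\lesssim\norm{w_0-w_0'}_{\Binf{-\alpha}}$. Now choose $K$ larger than $\norm{\underline{\mathbf Y}_n}_{\mathcal G_T}$ and larger than a bound on $\norm{v_s}_{\Binf{-\alpha}}$ that holds uniformly over $s\in[0,T]$ and over all initial data in a neighbourhood of $v-Y_0$. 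This uniform bound comes from the a priori estimate in Proposition \ref{prop:CDI_mollified}. For $s\ge\tau$ it gives an $L^{2p}_M$ bound independent of the initial datum, and $L^{2p}_M\hookrightarrow\Binf{-\alpha}$ holds for $p$ large by Proposition \ref{prop:besov_embed}. On the short interval $[0,\tau]$ with $\tau\le T^*$, the bound comes directly from Proposition \ref{prop:local_solution_with_weights}, which gives $\norm{v}_{\mathcal S}\le C(\norm{v_0}+1)$.

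With $K$ fixed, the step size $T^*$ is deterministic given $\underline{\mathbf Y}_n$. I restart at the times $T^*,2T^*,\dots$, each time using the Lipschitz property on $B_K$, but with the shifted Wick powers $\tau_{jT^*}\underline{\mathbf Y}_n$. Their $\mathcal G$-norm is also controlled by $\norm{\underline{\mathbf Y}_n}_{\mathcal G_T}$. After $\lceil t/T^*\rceil$ steps the Lipschitz constants compound, which gives $\norm{v_t(w_0)-v_t(w_0')}_{\Binf{-\alpha}}\le C(\omega,t)\norm{w_0-w_0'}_{\Binf{-\alpha}}$ for all $w_0,w_0'$ near $v-Y_0$. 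Hence $\phi_t(v^{(m)})\to\phi_t(v)$ almost surely, and dominated convergence together with continuity of $f$ finishes the proof. The case $t=0$ is trivial.

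I expect the main obstacle to be the uniform-in-initial-datum bound at restart times, because that bound is what keeps every restart inside $B_K$. Proposition \ref{prop:CDI_mollified} only gives $L^{2p}_M$ control for strictly positive times, and even that degenerates like $t^{-1/(n-1)}$ as $t\to0$. To handle this I will treat $[0,\tau]$ with the local bound for data in a $\Binf{-\alpha}$-ball around the limit, and treat $[\tau,t]$ with the coming-down-from-infinity bound. If the passage from the $L^{2p}_M$ bound to a $\Binf{-\alpha}$ bound, or the matching of regularity at restart times, causes trouble, I would instead restart in $\Binf{\gamma}$, using the parabolic smoothing built into the $\mathcal S_{T^*}$ norm.
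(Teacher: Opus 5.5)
Your proposal is correct and follows essentially the same route as the paper's proof. Both use one Wick-power realisation for all initial data, a pathwise a priori $\mathcal B^{-\alpha}_\infty$ bound from Proposition~\ref{prop:CDI_mollified} together with $L^{2p}_M\hookrightarrow\mathcal B^{-\alpha}_\infty$, the local Lipschitz estimate of Proposition~\ref{prop:local_solution_with_weights} iterated over $\lceil t/T^*\rceil$ steps, and dominated convergence. Taking the stationary $Y_0\sim\nu$ instead of a fixed $Y_0\in\mathfrak C$ is an inessential difference.

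Your handling of small times is slightly more careful than the paper's. The paper asserts a bound on $\sup_{0<s\le t}\|v^{(m)}(s)\|_{\mathcal B^{-\alpha}_\infty}$ independent of the initial condition, yet the coming-down estimate degenerates like $s^{-1/(n-1)}$ as $s\downarrow 0$. Your use of the local bound $\|v\|_{\mathcal S}\le C(\|v_0\|_{\mathcal B^{-\alpha}_\infty}+1)$ on $[0,\tau]$, for data in a fixed ball, is exactly what settles that point.
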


\begin{proof}
Recall that by Corollary \ref{cor:global_wp}, the renormalized solution to \eqref{eq:phi n formal} is pathwise unique and depends continuously on the initial condition in the space $\Binf{-\alpha}$ for an appropriate $\alpha>0$.
Fix $t\ge 0$ and $f\in \mathcal C_b(\Binf{-\alpha})$,  We note that boundedness is immediate, so it remains to prove continuity. The case $t=0$ is trivial, so let $t>0$, and let
$v^{(m)}_0\to v_0$ in $\Binf{-\alpha}$.

Choose one $Y_0\in\mathfrak C$ and use the same driving noise $\xi$ and the same process $Y$ for all initial data. Write
\[
  \phi(v^{(m)}_0)=Y+v^{(m)},\qquad \phi(v_0)=Y+v,
\]
where $v^{(m)}$ and $v$ solve \eqref{eq:formal eq for v intro} with initial conditions
$v^{(m)}_0-Y_0$ and $v_0-Y_0$, respectively.

Since $v^{(m)}_0\to v_0$ in $\Binf{-\alpha}$, there exists $R>0$ such that
\(
  \sup_m \|v^{(m)}_0\|_{\Binf{-\alpha}}\vee \norm{v_0}_{\Binf{-\alpha}}\le R.
\)
Fix $\omega$ in a full-probability set where all Wick powers are defined continuously on $[0,t]$.
Set
\[
  M_t(\omega):=\max_{1\le k\le n}\sup_{0\le s\le t}
  \norm{Y^{:k:}(s,\omega)}_{\Binf{-\alpha k}}<\infty.
\]

By Proposition \ref{prop:CDI_mollified} (applied to $v$ itself) and the embedding used in Corollary \ref{cor:global_wp}, there exists a finite random constant $A_t(\omega)$, independent of the initial condition, such that
\[
  \sup_{0<s\le t}\sup_{m\ge 1}\norm{v^{(m)}(s,\omega)}_{\Binf{-\alpha}}
  \vee
  \sup_{0<s\le t}\norm{v(s,\omega)}_{\Binf{-\alpha}}
  \le A_t(\omega).
\]

Now apply Proposition \ref{prop:local_solution_with_weights} pathwise on successive time intervals. Since the coefficients are bounded by $M_t(\omega)$ on $[0,t]$ and all restart data are bounded by $A_t(\omega)$, there exists
\(
  \delta(\omega)>0
\)
(depending only on $A_t(\omega)$ and $M_t(\omega)$) such that the local solution map on any interval of length at most $\delta(\omega)$ is Lipschitz in the initial datum, with Lipschitz constant depending only on the same bounds.
Partition $[0,t]$ into at most
\(
  N(\omega):=\lceil t/\delta(\omega)\rceil
\)
subintervals and iterate the local Lipschitz estimate. This gives
\[
  \norm{v^{(m)}(t,\omega)-v(t,\omega)}_{\Binf{-\alpha}}
  \le C_{t}(\omega)\norm{v^{(m)}_0-v_0}_{\Binf{-\alpha}},
\]
for some finite $C_t(\omega)$ independent of $m$. Hence
\(
  v^{(m)}(t,\omega)\to v(t,\omega)
\)
and therefore
\(
  \phi_t(v^{(m)},\omega)\to \phi_t(v,\omega)
\)
almost surely.

Because $f$ is bounded and continuous,
\(
  f(\phi_t(v^{(m)}))\to f(\phi_t(v))
\)
almost surely, and dominated convergence yields
\[
  \mathbf P_tf(v^{(m)})=\E\left[f\pr{\phi_t(v^{(m)})}\right]
  \longrightarrow
  \E\left[f\pr{\phi_t(v)}\right]=\mathbf P_tf(v).
\]
Thus $v\mapsto \mathbf P_tf(v)$ is continuous, as desired.
\end{proof}

\begin{corollary}[Positive-time moment bound for $u$]\label{cor:moment_bound_u}
Under the assumptions of Definition \ref{def:global solution of phi n equation}, for every
\(q\ge 1\),
\[
  \sup_{\phi_0\in\Binf{-\alpha}}\sup_{t>0}
  \pr{t^{q/(n-1)}\wedge 1}
  \E\Big[\norm{\phi_t(\phi_0)}_{\Binf{-\alpha}}^q\Big]
  <\infty.
\]
\end{corollary}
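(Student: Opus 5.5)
We split $\phi_t(\phi_0)=Y_t+v_t$ as in Definition \ref{def:global solution of phi n equation}, with a fixed choice of splitting, and bound the two pieces separately. By the triangle inequality and $(a+b)^q\lesssim a^q+b^q$,
\[
\E\big[\norm{\phi_t(\phi_0)}_{\Binf{-\alpha}}^q\big]\lesssim \E\big[\norm{Y_t}_{\Binf{-\alpha}}^q\big]+\E\big[\norm{v_t}_{\Binf{-\alpha}}^q\big].
\]
Independence of the splitting \cite[Proposition 3.16]{chandra2025non} lets us choose $Y_0$ conveniently. The natural choice is $Y_0\sim\nu$, independent of $\xi$, which is legitimate since $\nu(\mathfrak C)=1$ by Theorem \ref{thm:Wick Power Convergence}. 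Then $Y$ is stationary, and the same theorem together with Lemma \ref{lem:uniform unit interval moments of wick powers} gives
\[
\sup_{t\ge0}\E\big[\norm{Y_t}_{\Binf{-\alpha}}^q\big]<\infty,
\]
uniformly in $\phi_0$ because $Y$ does not depend on $\phi_0$. The factor $t^{q/(n-1)}\wedge1\le1$ can only help here.

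For the remainder we have $v_0=\phi_0-Y_0\in\Binf{-\alpha}$ almost surely. We fix $p$ as in Proposition \ref{prop:CDI_mollified}, large enough that $L^{2p}_M\hookrightarrow\Binf{-\alpha}$ (Proposition \ref{prop:besov_embed}; this is the embedding already used in Corollary \ref{cor:global_wp}). Then $\norm{v_t}_{\Binf{-\alpha}}\lesssim\norm{v_t}_{L^{2p}_M}$, and Corollary \ref{cor:moment_bound} gives
\[
\sup_{v_0}\sup_{t>0}\big(t^{q/(n-1)}\wedge1\big)\E\big[\norm{v_t}_{L^{2p}_M}^q\big]<\infty.
\]
This bound is uniform in the initial datum, so it holds in particular for the random datum $v_0=\phi_0-Y_0$: condition on $Y_0$ and use the independence of $Y_0$ from $\xi$.

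The main obstacle is exactly this uniformity for $t\ge1$. The bound \eqref{eq:CDI_bound} is stated on $[0,T]$ with constants depending on $T$, so it cannot be applied on $[0,t]$ as $t\to\infty$. To handle this, restart at time $t-1$ using the Markov property (Lemma \ref{lem:markov property of global solution}). Applying \eqref{eq:CDI_bound} on the unit interval $[t-1,t]$ gives
\[
\norm{v_t}_{L^{2p}_M}\lesssim 1+\sum_k\norm{Y^{:k:}}_{\mathcal C([t-1,t];\Binf{-\alpha k})}^{q_{k,p}/(n-1)},
\]
with a constant independent of both the restart datum and $t$. The right-hand side has moments bounded uniformly in $t$ by Lemma \ref{lem:uniform unit interval moments of wick powers}.

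For $t\in(0,1]$ we use \eqref{eq:CDI_bound} directly on $[0,1]$, which yields the factor $t^{-1/(n-1)}$; this is absorbed by the weight $t^{q/(n-1)}$. Combining the bounds for $Y$ and for $v$ in both time regimes gives the claim.
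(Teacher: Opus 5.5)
Your proposal is correct and follows the paper's proof: choose $Y_0\sim\nu$ independent of $\xi$ (justified by independence of the splitting), control $Y_t$ through stationarity and Theorem \ref{thm:Wick Power Convergence}, and control $v_t$ through the embedding $L^{2p}_M\hookrightarrow\Binf{-\alpha}$ (Proposition \ref{prop:besov_embed}) combined with Corollary \ref{cor:moment_bound}. Your extra restart-at-$t-1$ argument only unpacks Corollary \ref{cor:moment_bound}, whose proof the paper omits, and it needs only the pathwise restart of the time-homogeneous remainder equation with the uniform-in-datum bound \eqref{eq:CDI_bound}, not the Markov property.
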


\begin{proof}
 Recall $\nu$ as in \eqref{def:GFF measure} and fix $q\ge 1$. By \cite[Proposition 3.16]{chandra2025non}, the law of the solution does not depend on the chosen splitting of initial value. Thus we may choose $Y_0\sim\nu$ (independent of $\xi$), and write
\[
  \phi_t(u_0)=Y_t+v_t,
\]
where $Y$ is stationary and $v$ solves \eqref{eq:formal eq for v intro} with initial condition $\phi_0 - Y_0$.

Choose $p\ge 1$ as in Corollary \ref{cor:moment_bound}, and enlarge $p$ if needed so that
\(
  \frac{d_h}{2p}<\alpha.
\)
Then Proposition \ref{prop:besov_embed} gives
\[
  \norm{f}_{\Binf{-\alpha}}\lesssim \norm{f}_{L^{2p}_M}.
\]
Hence Corollary \ref{cor:moment_bound} implies
\[
  \sup_{\phi_0\in\Binf{-\alpha}}\sup_{t>0}
  \pr{t^{q/(n-1)}\wedge 1}
  \E\Big[\norm{v_t}_{\Binf{-\alpha}}^q\Big]
  <\infty.
\]

For the linear part, stationarity yields
\[
  \E\Big[\norm{Y_t}_{\Binf{-\alpha}}^q\Big]
  =\E_{Y_0\sim\nu}\Big[\norm{Y_0}_{\Binf{-\alpha}}^q\Big]
  <\infty,
\]
where finiteness follows from Theorem \ref{thm:Wick Power Convergence} with $n=1$.

Using \(\norm{a+b}^q\lesssim_q \norm{a}^q+\norm{b}^q\), we get
\[
  \pr{t^{q/(n-1)}\wedge 1}
  \E\Big[\norm{\phi_t(\phi_0)}_{\Binf{-\alpha}}^q\Big]
  \lesssim
  \E\Big[\norm{Y_t}_{\Binf{-\alpha}}^q\Big]
  +
  \pr{t^{q/(n-1)}\wedge 1}
  \E\Big[\norm{v_t}_{\Binf{-\alpha}}^q\Big].
\]
Taking suprema in $\phi_0$ and $t$ gives the claim.
\end{proof}

\begin{prop}[Existence of invariant measure (Bogoliubov--Krylov)]\label{prop:existence invariant measure}
Under the assumptions of Definition \ref{def:global solution of phi n equation}, the Feller semigroup
\(\{\mathbf P_t\}_{t\ge 0}\) from Lemma \ref{lem:markov property of global solution} admits at least one invariant probability measure
\(\mu_\star\) on \(\Binf{-\alpha}\). Moreover,
\[
  \mu_\star(\mathfrak C)=1,
\]
so \(\mu_\star\) is supported on \(\mathfrak C\) (by Lemma \ref{lem:u t in c}).
\end{prop}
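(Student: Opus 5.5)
The plan is the standard Krylov--Bogoliubov argument, run with a fixed deterministic initial condition. I fix $\phi_0:=0\in\Binf{-\alpha}$; this is admissible in Definition \ref{def:global solution of phi n equation}, where one takes $Y_0\sim\nu$ and $v_0=-Y_0$. For $T\ge 2$ I define the time-averaged laws
\[
  \mu_T(A):=\frac{1}{T}\int_1^{T+1}\mathbf P_t\mathbb 1_A(0)\,\mathrm dt,\qquad A\subset\Binf{-\alpha}\ \text{Borel}.
\]
Starting the averaging at time $1$ avoids the singular factor $t^{q/(n-1)}\wedge 1$ in Corollary \ref{cor:moment_bound_u}. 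Measurability of $t\mapsto\mathbf P_t\mathbb 1_A(0)$ follows from continuity of the paths $t\mapsto\phi_t$ in $\Binf{-\alpha}$.

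The main step is tightness of $\{\mu_T\}_{T\ge 2}$ on $\Binf{-\alpha}$. I fix $\alpha'\in(\alpha_0,\alpha)$ which still satisfies \eqref{ineq:regularity condition general alpha} and \eqref{ineq:global solution condition equivalent}; this is possible because both conditions are open in $\alpha$. Applying Corollary \ref{cor:moment_bound_u} with $\alpha'$ in place of $\alpha$ gives
\[
  \sup_{t\ge 1}\E\bigl[\norm{\phi_t(0)}_{\Binf{-\alpha'}}\bigr]<\infty .
\]
To make this legitimate I must check that the solution built with $\alpha'$ coincides with the one built with $\alpha$. This holds because $\Binf{-\alpha'}\hookrightarrow\Binf{-\alpha}$ and the solution is unique, so the two constructions agree. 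By Chebyshev, $\mu_T\bigl(\{\norm{\cdot}_{\Binf{-\alpha'}}>R\}\bigr)\lesssim R^{-1}$ uniformly in $T$. Theorem \ref{thm:compact embedding} says closed balls of $\Binf{-\alpha'}$ are relatively compact in $\Binf{-\alpha}$. Their closures in $\Binf{-\alpha}$ are therefore compact, which gives tightness.

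I expect the delicate point to be that $\Binf{-\alpha}$ is defined as the closure of $\mathcal S$, and so it may not be separable in the $q=\infty$ norm. If Prokhorov cannot be applied there directly, I will work instead on the closed subspace containing the support of the laws. This subspace is separable, since the approximations $Y_\varepsilon^{:k:}$ and $v$ are continuous functions.

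Once tightness is in hand, Prokhorov's theorem yields $T_j\to\infty$ with $\mu_{T_j}\Rightarrow\mu_\star$. To show invariance, I take $f\in\mathcal C_b(\Binf{-\alpha})$. The Feller property (Proposition \ref{prop:feller property}) gives $\mathbf P_sf\in\mathcal C_b$. The semigroup property from Lemma \ref{lem:markov property of global solution} then gives
\[
  \int\mathbf P_sf\,\mathrm d\mu_T-\int f\,\mathrm d\mu_T=\frac1T\Bigl(\int_{T+1}^{T+1+s}-\int_1^{1+s}\Bigr)\mathbf P_tf(0)\,\mathrm dt ,
\]
whose absolute value is bounded by $2s\norm{f}_\infty/T\to 0$. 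Passing to the limit along $T_j$ gives $\int\mathbf P_sf\,\mathrm d\mu_\star=\int f\,\mathrm d\mu_\star$. Bounded continuous functions determine measures, so $\mu_\star$ is invariant.

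It remains to show $\mu_\star(\mathfrak C)=1$. By invariance, $\mu_\star(\mathfrak C)=\int\mathbf P_1\mathbb 1_{\mathfrak C}\,\mathrm d\mu_\star$. Lemma \ref{lem:u t in c} gives $\phi_1(\phi_0)\in\mathfrak C$ almost surely for every $\phi_0$, so $\mathbf P_1\mathbb 1_{\mathfrak C}\equiv 1$ and hence $\mu_\star(\mathfrak C)=1$. This requires $\mathfrak C$ to be Borel (or at least $\mu_\star$-measurable). I will justify that by writing $\mathfrak C$ as a countable intersection of sets defined through convergence of the continuous maps $Y_0\mapsto\E\bigl[\norm{Y^{:k:}_\varepsilon-Y^{:k:}_{\varepsilon'}}^q\bigr]$ along rational $\varepsilon$. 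This last measurability point is the only part I expect to need real care.
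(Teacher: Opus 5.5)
Your proposal is correct and follows the paper's own Krylov--Bogoliubov argument essentially step for step: time-averaged laws of $\phi_t(0)$ over $[1,T+1]$, tightness from Corollary~\ref{cor:moment_bound_u} in $\Binf{-\alpha'}$ together with the compact embedding of Theorem~\ref{thm:compact embedding}, Prokhorov, invariance from the Feller property and the $2s\norm{f}_{L^\infty}/T$ bound, and $\mu_\star(\mathfrak C)=1$ from Lemma~\ref{lem:u t in c}. If anything, your version is a little more careful than the paper's: the paper runs the Prokhorov and Feller steps in $\Binf{-\alpha'}$ even though its compact sets $K_R$ live in $\Binf{-\alpha}$, and it leaves implicit that the support claim goes through invariance, which you make explicit with $\mathbf P_1$; your separability worry is also harmless, because the direct half of Prokhorov's theorem (tight implies relatively compact) holds on any metric space.
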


\begin{proof}
We follow the Bogoliubov--Krylov strategy as in \cite[Proposition 4.4]{TW18}. 
Pick $\alpha'\in(\alpha_0,\alpha)$,  Corollary \ref{cor:moment_bound_u}  gives, for every $q\ge 1$,
\[
  \sup_{t\ge 1}\E\Big[\norm{\phi_t(0)}_{\Binf{-\alpha'}}^q\Big]<\infty,
\]
where $\phi_t(0)$ is the solution started from $0$.

Recall that by Theorem \ref{thm:compact embedding} (with $p=q=\infty$), the embedding
\[
  \Binf{-\alpha'}\hookrightarrow\Binf{-\alpha}
\]
is compact. For $R>0$, let
\[
  B'_R:=\set{f\in\Binf{-\alpha'}:\norm{f}_{\Binf{-\alpha'}}\le R},
\]
and let $K_R$ be its closure in $\Binf{-\alpha}$; then $K_R$ is compact in $\Binf{-\alpha}$ by Theorem \ref{thm:compact embedding}.
Hence Markov's inequality yields
\[
  \sup_{t\ge 1}\P\pr{\phi_t(0)\notin K_R}
  \le
  \sup_{t\ge 1}\P\pr{\norm{\phi_t(0)}_{\Binf{-\alpha'}}>R}
  \lesssim_q R^{-q}.
\]
Therefore \(\{\mathrm{Law} (\phi_t(0)):t\ge 1\}\) is uniformly tight in $\Binf{-\alpha'}$. Hence, the family of probability measures $\{\mu_T\}_{T \ge 1}$ on $\Binf{-\alpha'}$ given by 
\[
  \mu_T(A):=\P\pr{\frac{1}{T}\int_1^{T+1}\mathrm{Law} \pr{\phi_t(0)}\,\mathrm dt\in A},\qquad T>0.
\] for any Borel set $A\subset \Binf{-\alpha'}$,  is tight. By Prokhorov, this family is compact in the space of probability measures on $\Binf{-\alpha'}$, and there exists $T_n\uparrow\infty$ and a probability measure
\(\mu_\star\) on $\Binf{-\alpha'}$ such that \(\mu_{T_n}\Rightarrow\mu_\star\).

It remains to prove invariance. Let $s\ge 0$ and $f\in \mathcal C_b(\Binf{-\alpha'})$. By Markov property,
\[
  \int_{\Binf{-\alpha'}} \mathbf P_sf\,\mathrm d\mu_T
  =\frac{1}{T}\int_1^{T+1}\E\left[f\pr{\phi_{t+s}(0)}\right]\mathrm dt
  =\frac{1}{T}\int_{1+s}^{T+1+s}\E\left[f\pr{\phi_r(0)}\right]\mathrm dr.
\]
Hence
\[
  \abs{\int_{\Binf{-\alpha'}} \mathbf P_sf\,\mathrm d\mu_T-\int_{\Binf{-\alpha'}} f\,\mathrm d\mu_T}
  \le \frac{2s\norm{f}_{L^\infty}}{T}\xrightarrow[T\to\infty]{}0.
\]
Using the Feller property of $\mathbf P_t$ as in Proposition \ref{prop:feller property}, \(\mathbf P_sf\in \mathcal C_b(\Binf{-\alpha'})\). Passing to the limit along $T_n$ gives
\[
  \int_{\Binf{-\alpha'}} \mathbf P_sf\,\mathrm d\mu_\star
  =\int_{\Binf{-\alpha'}} f\,\mathrm d\mu_\star,
\]
for all $s\ge 0$, $f\in \mathcal C_b(\Binf{-\alpha'})$. Thus $\mu_\star$ is invariant.

Finally, by Lemma \ref{lem:u t in c}, for every $t>0$ and every initial condition $\phi_0\in\Binf{-\alpha'}$,
\[
  \P\pr{\phi_t(\phi_0)\in\mathfrak C}=1,
\]
which implies that $\mu_\star$ is supported on $\mathfrak C$.
\end{proof}

\section{Appendix}\label{sec:appendix}
\subsection{Proof of Lemma \ref{lem:long time semi-group norm bound},\ref{lem:long time heat kernel bounds}, Proposition \ref{prop:kernel_bound_dt} and Lemma \ref{lem:hk LpLq interpolation} }
\begin{proof}[Proof of Lemma \ref{lem:long time semi-group norm bound}]
Let $a >0$ be fixed. Since $\int_M g(x) \mu(\mathrm dx) = 0$, we have
\begin{align*}
  e^{-aL}g(x) &= \int_M p_a(x,y) g(y) \mu(\mathrm dy)\\
  &= c_a\int_M  g(y) \mu(\mathrm dy)+\int_M \pr{p_a(x,y)-c_a} g(y) \mu(\mathrm dy)\\
  &=\int_M \pr{p_a(x,y)-c_a} g(y) \mu(\mathrm dy).
\end{align*}
Note $p_a(x,y)-c_a \ge 0$ and  $\int_M p_a(x,y)-c_a \mu(\mathrm dy) = 1 - c_a \in(0,1)$. Hence, by Jensen's inequality, we see for each $p \ge 1$,
\begin{align*}
  \norm{e^{-aL}g}_{L^p_M} &=\pr{\int_M \pr{\int_M \pr{p_a(x,y)-c_a} g(y) \mu(\mathrm dy)}^p \mu(\mathrm{d}x)}^\frac{1}{p}\\
  &\le \pr{\int_M (1-c_a)^{p-1}\int_M \pr{p_a(x,y) - c_a}\abs{g(y)}^p \mu(\mathrm dy)\mu(\mathrm dx)}^\frac{1}{p}\\
  &\le (1-c_a) \norm{g}_{L^p_M},
\end{align*}
where we used Fubini's and symmetry of the heat kernel in the last inequality. On the other hand, if $p = \infty$, then 
\begin{align*}
    \norm{e^{-aL}g}_{L^\infty_M}&\le \sup_{x \in M}\int_M \pr{p_a(x,y) -c_a}\abs{g(y)} \mu\pr{ \mathrm dy}\\
    &\le \sup_{x \in M} \int_M p_a(x,y) -c_a\mu(\mathrm dy) \norm{g}_{L^\infty_M}\\
    &=(1-c_a)\norm{g}_{L^\infty_M}.
\end{align*}

Finally, note that for all $s >0$, $\int_M e^{-sL}g(x) \mu(\mathrm{d}x) = 0$. Hence, for $t \ge a$, we see by the semi-group property that
\begin{align*}
  \norm{e^{-tL}g}_{L^p_M} &= \norm{e^{-\left \lfloor \frac{t}{a} \right  \rfloor a L}e^{-\pr{t-\left \lfloor \frac{t}{a} \right  \rfloor a}L}g}_{L^p_M}\\
  &\le (1-c_a)^{\left \lfloor \frac{t}{a}\right \rfloor} \norm{g}_{L^p_M}.
\end{align*}
\end{proof}

\begin{proof}[Proof of Lemma \ref{lem:long time heat kernel bounds}]
Since Diam$(M)< \infty$, we see by \eqref{ineq:sgu} that there are $0<a<b<\infty$ so that $ a<p_1(x,y) <b$, hence $p_1(x,y) \asymp 1$ uniformly in $x,y \in M$. Suppose now $t>0$ and consider
\begin{align*}
  p_{1+t}(x,y)  = \int_M p_1(x,z) p_t(z,y) \mu(\mathrm dz) \asymp \int_M p_t(z,y) \mu(\mathrm dz) = 1,
\end{align*}
where we used stochastic completeness \eqref{eq:s_complete} and the symmetric property of the heat kernel.

To show \eqref{ineq:holder_kernel} holds globally in $t >0$, it is enough to assume $t> 1$. Let $g_{x,y}(u):=p_1(x,u) - p_1(y,u)$ for $x,y,u \in M$, then $ \int_M g_{x,y}(u)\mathrm d\mu(u) = 0$, and 
$$
\norm{g_{x,y}}_{L^\infty_M} \overset{\eqref{ineq:holder_kernel}}{\lesssim} d(x,y)^\Theta,\qquad \text{ uniformly in }y,z\in M.
$$
Hence by Lemma \ref{lem:long time semi-group norm bound}, there exists $c>0$ so that the following inequalities holds uniformly in $t>1$ and $x,x' \in M$ 
\begin{align*}
    \abs{p_t(x,y) - p_t(x',y)}&=\abs{\int_M g_{x,x'}(u)p_{t-1}(u,y) \mu\pr{\mathrm du}}\\
    &\lesssim e^{-2ct}\norm{g_{x,x'}}_{L^\infty_M}\lesssim e^{-2ct}d(x,x')^\Theta\\
    &\lesssim e^{-ct} \pr{\frac{d(x,x')}{t^\frac{1}{d_w}}}^\Theta\exp \pr{-\pr{\frac{d(x,y)}{t^\frac{1}{d_w}}}^\frac{d_w}{d_w-1}}
\end{align*}
where we used the fact that $t\ge 1$ and $\mathrm{diam}(M) <\infty$ in the last line. This completes the proof.
\end{proof}

\begin{proof}[Proof of Proposition \ref{prop:kernel_bound_dt}]
Suppose $k\ge 1$ is an integer, by \cite[Theorem 4, Corollary 5]{davies1997non}, we see uniformly in $(t,x,y) \in (0,1]\times M^2$ that 
\begin{align*}
    \abs{t^k\partial_t^k p_t(x,y)} \lesssim t^{-d_h/d_w}\exp \pr{-C\pr{\frac{d(x,y)}{t^{1/d_w}}}^\frac{d_w}{d_w - 1}}.
\end{align*}
By stochastic completeness \eqref{eq:s_complete}, we may use \cite[Theorem 2.27]{folland1999real} to differentiate under the integral sign to get $\int_M \partial_t^k p_t(x,\cdot) \mathrm d\mu = 0$ for all $(t,x) \in (0,\infty)\times M.$ 

By \cite[Proposition 3]{lierl2015scale}, $(P_t)_{t}$ forms a Feller semi-group on $\mathcal C_M$, hence,  we may use \cite[Lemma 2.27]{folland1999real} to see for any $f \in \mathcal C_M$ and $(t,x) \in (0,\infty)\times M$
\begin{align*}
     Q^{(k)}_tf(x) = (t\partial_t)^kP_t f(x) =\int_M t^k \partial_t^kp_t(x,y) f(y) \mu(\mathrm dy)=:\int_M  q_{k,t}(x,y) f(y) \mu(\mathrm dy).
\end{align*}
 On the other hand, using the identity $Q^{(k)}_t = (tL)^k P_t = 2^k(tL/2)^k P_{t/2} P_{t/2}$, we see by Lemma \ref{lem:long time heat kernel bounds}, there is some $c>0$ so that uniformly in $(t,x,y,y') \in (0,\infty)\times M^3$ with $y,y' \le (t/2)^{1/d_w}$,
\begin{align*}
    \abs{q_{k,t}(x,y) -q_{k,t}(x,y')}  &\lesssim \int_M \abs{q_{k,t/2}(x,z)} \abs{p_{t/2}(z,y) - p_{t/2}(z,y')} \mu(\mathrm dz)\\
    &\lesssim\int_M p_{ct}(x,z) \abs{p_{t/2}(z,y) - p_{t/2}(z,y')} \mu(\mathrm dz)\\
    &\lesssim \pr{\frac{d(y,y')}{t^{1/d_w}}}^\Theta\int_M p_{ct}(x,z) p_{ct}(z,y) \mu(\mathrm dz) e^{-ct}\\
    &\le \pr{\frac{d(y,y')}{t^{1/d_w}}}^\Theta p_{2ct}(x,y)e^{-ct}.
\end{align*}
It remains to show \eqref{ineq: q sub gaussian bound} for $t\ge 1$. However, this follows from \eqref{eq:locality integral q equals 0} and Lemma \ref{lem:long time semi-group norm bound} as in the proof of Lemma \ref{lem:long time heat kernel bounds}.
\end{proof}

\begin{proof}[Proof of Lemma \ref{lem:hk LpLq interpolation}]
Note that uniformly in $t >0$ and $x,y \in M$, $p_t(x,y) \lesssim 1+t^{- d_h/d_w}$. Hence,
\begin{align*}
  \norm{e^{-tL}}_{L^1_M \to L^\infty_M} \lesssim 1+t^{- d_h/d_w}.
\end{align*}
Then the Riesz–Thorin interpolation theorem, gives for each $p \ge 1$,
\begin{align*}
  \norm{e^{-tL}}_{L^p_M \to L^\infty_M}  \le \norm{e^{-tL}}_{L^1_M \to L^\infty_M}^\frac{1}{p} \norm{e^{-tL}}_{L^\infty_M \to L^\infty_M} ^{1-\frac{1}{p}} \lesssim 1+t^{-\frac{d_h}{d_wp}}.
\end{align*}
Apply Riesz–Thorin again to see for $q \ge p$,
\begin{align*}
  \norm{e^{-tL}}_{L^p_M \to L^q_M}  \le \norm{e^{-tL}}_{L^p_M \to L^\infty_M}^{1-\frac{p}{q}}  \norm{e^{-tL}}_{L^p_M \to L^p_M}^{\frac{p}{q}}  \lesssim 1+ t^{- \frac{d_h}{d_w}\pr{\frac{1}{p} - \frac{1}{q}}}.
\end{align*}
\end{proof}

\subsection{Proof of  Proposition \ref{prop: mod of baudoin berstein} } 
\begin{comment}
\begin{proof}[Proof of Lemma \ref{lem:elementary maximizer}]
Let us fix $s >0$ and note that it is enough to consider the maximizer of $\varphi(t):= \log F(t,s) = p\log(t)+ q\log(t+s)$.
\begin{align*}
  \varphi'(t) = \frac{p}{t}+ \frac{q}{t+s},\qquad t>0.
\end{align*}
For $p+q \ge 0$, the map $t \mapsto \varphi(t)$ (and hence $t \mapsto F(t,s)$) is non-decreasing for $t>0$, which implies
\begin{align*}
  \sup_{t \in (0,1]} F(t,s) = F(1,s)= (1+s)^q.
\end{align*}

Now suppose $p+q < 0$, then solving for $\varphi'(t) >0$ to see $F(\cdot,s)$ is increasing in $\left(0, \frac{-ps}{p+q} \right]$ and decreases for $t \ge \frac{-ps}{p+q} $. Hence
\begin{align*}
  \sup_{t >0} F(t,s) =  \pr{\frac{-p}{p+q}}^p \pr{\frac{-p}{p+q} +1}^q s^{p+q}.
\end{align*}
\end{proof}
\end{comment}
\begin{proof}[Proof of Proposition \ref{prop: mod of baudoin berstein}]
Let $n \in \mathbb N$ be positive and  $p \in \mathbb N$ is odd with $ p > n - 2$. Then for any $a,b \in \R$,
\begin{align}\label{ineq:elementary inequality}
  \abs{a^{2p+n - 2} - b^{2p+n - 2}} \le \frac{2p+n - 2}{p}\pr{\abs{a}^{p+n - 2}+\abs{b}^{p+n - 2} } \abs{a^p - b^p}.
\end{align}
Indeed, if $a,\, b$ have the same sign, i.e. $ab \ge 0$, then the inequality holds by the mean value theorem. Suppose $a,b$ have different signs, i.e. $ab < 0$. Let us assume without loss of generality that  $a>0>b = -c$ for some $c>0$,   then
\begin{align*}
  \pr{2+ \frac{n-2}{p}}\pr{\abs{a}^{p+n - 2}+\abs{b}^{p+n - 2}}\abs{a^p - b^p}&=\pr{2+ \frac{n-2}{p}}\pr{a^{p+n - 2}+c^{p+n - 2}}\pr{a^p + c^p}\\
  &\ge a^{2p+n-2}+c^{2p+n - 2},
\end{align*}
where the equality is due to the fact that $p$ is odd and $\abs{\frac{n-2}{p}}\le 1$. If $n$ is even, then we see \eqref{ineq:elementary inequality} holds. On the other hand, if $n$ is odd, then
\begin{align*}
  \abs{a^{2p+n - 2} - b^{2p+n-2}} =a^{2p+n - 2} + c^{2p+n-2},
\end{align*}
which also implies \eqref{ineq:elementary inequality} holds. Let $f: M\to \R$ be a measurable function, define $v(x):= f(x)^{p}$ and $u(x):= |f(x)|^{p+n - 2}$ for all $x \in M$. Denote $C_p := \frac{2p+n-2}{p}$ and consider for any $\alpha \ge 0$ that
\begin{align*}
  \norm{f^{2p+n - 2}}_{1,\alpha} &=\sup_{t >0} t^{-\alpha} \pr{\iint_{M^2} \abs{f(x)^{2p+n - 2} - f(y)^{2p+n - 2}}p_t(x,y)\mu(\mathrm dx) \mu(\mathrm dy)}  \\
  &\overset{\eqref{ineq:elementary inequality}}{\le}  C_p\sup_{t >0} t^{-\alpha} \iint_{M^2}\pr{ u(x)+u(y)}\abs{v(x) - v(y)}p_t(x,y) \mu(\mathrm dy) \mu(\mathrm dx)\\
  &\le 2C_p \sup_{t >0} t^{-\alpha} \pr{\iint_{M^2}u(x)p_t(x,y)\mu(\mathrm dy)\mu(\mathrm dx)}^\frac{1}{2}\pr{\iint_{M^2}\pr{v(x) - v(y)}^2p_t(x,y) \mu(\mathrm dy)\mu(\mathrm dx)}^\frac{1}{2}\\
  &\le 2C_p\norm{u^2}_{L^1_M}^\frac{1}{2} \sup_{t >0}\pr{t^{-2\alpha} \iint_{M^2}\pr{v(x) - v(y)}^2 p_t(x,y) \mu(\mathrm dy)\mu(\mathrm dx)}^\frac{1}{2}\\
  &=2C_p \norm{u^2}_{L^1_M}^\frac{1}{2}  \norm{v}_{2,2\alpha}^\frac{1}{2},
\end{align*}
where we used H\"{o}lder's inequality in the third line. This completes the proof of the first desired inequality. The second inequality follows immediately by \cite[Proposition 4.6]{UConn1}.
\end{proof}

\subsection{Proof of Lemma \ref{lem:cacciopoli} and Lemma \ref{lem:besov_norm_compare}}

To prove Lemma \ref{lem:cacciopoli}, we will first need to state some consequences of our assumptions  due to \cite[Theorem 4.9]{anttila2025approach}.

Denote for any measurable function $f$ on $M$ and open set $B \subset M$ the average of $f$ on $B$ as $f_B$, i.e.
\begin{align*}
    f_B:= \frac{1}{\mu(B)}\int_B f\mathrm d\mu.
\end{align*}
We say the \textit{Poincare inequality}  holds if there exists $\kappa\ge 1$ so that the following inequality holds uniformly in $(r,x) \in (0,\infty) \times M$ and $f \in \mathcal F$,
\begin{align}\label{ineq:pioncare}
    \int_{B(x,r)} \pr{f - f_{B(x,r)}}^2 \mathrm d\mu \lesssim r^{d_w} \int_{B(x,\kappa r)  } \mathrm{d}\Gamma(f,f).  \tag{$\mathrm{PI}_{d_w}$}
\end{align}
Let $E\subset F \subset M$ be two non-empty open subsets, we say $\varphi$ is a \textit{cutoff function} of $E \subset F$ if $\varphi$ taking values in $[0,1]$ $\mu$-almost everywhere, $\varphi\equiv 1$ on a neighborhood of $\overline{E}$ and $\varphi\equiv 0$ on $F^c$.

We say the \textit{cutoff Sobolev inequality} \eqref{ineq:cutoff sob} holds if there exists $C_S>0$ so that for every $x \in M$ and $R,r>0$, there is a cutoff function $\varphi \in \mathcal F$ for $B(x,R) \subset B(x,R+r)$ so that the following inequality holds for all  $f \in \mathcal F$,
\begin{align}\label{ineq:cutoff sob}
    \int_{M} f^2 \mathrm d \Gamma(\varphi,\varphi) \le \frac{1}{8}\int_{B(x,R+r)\backslash B(x,R)}\varphi^2\mathrm d \Gamma(f,f) + \frac{C_S}{r^{d_w}}\int_{B(x,R+r)\backslash B(x,R)}f^2 \mathrm d\mu,\tag{$\mathrm{CS}(d_w)$}
\end{align}
where we have identified $\varphi$ with its quasi-continuous version. For details about quasi-continuity, we refer the reader to \cite{FukushimaDF}.
\begin{theorem}[{\cite[Theorem 2.8]{kajino2020singularity}}]\label{thm:subg hk imply poincare and CS}
    Let $(\mathcal E,\mathcal F)$ be a strongly local regular Dirichlet form on $L^2_M$ whose heat kernel satisfies \eqref{ineq:hk_subg_compact_form}, then $(\mathcal E,\mathcal F)$ satisfies both the Poincare inequality \eqref{ineq:pioncare} and the cutoff Sobolev inequality \eqref{ineq:cutoff sob} hold.  
\end{theorem}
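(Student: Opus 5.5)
The plan is to reduce both conclusions to the known characterisation of sub-Gaussian heat kernel bounds for strongly local regular Dirichlet forms (Grigor'yan--Hu--Lau, Andres--Barlow). That theorem states that, in the presence of volume doubling, \eqref{ineq:hk_subg_compact_form} is equivalent to the conjunction of \eqref{ineq:pioncare} and \eqref{ineq:cutoff sob}. The only extra care needed is that our bounds are stated for all $t>0$ on a compact space, via \eqref{ineq:hk_subg_compact_form} and Lemma~\ref{lem:long time heat kernel bounds}. I would first record the standing consequences. These are volume regularity \eqref{ineq:a_reg}, which gives doubling for $r\le \mathrm{diam}(M)$ and trivially for larger $r$ since then $B(x,r)=M$. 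The second is regularity and strong locality of $(\mathcal E,\mathcal F)$.

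\textbf{Poincar\'e inequality.} For \eqref{ineq:pioncare} I would argue directly. By strong locality and the approximation formula already used in Lemma~\ref{lem: Dform to Qop}, together with the standard monotonicity of $t\mapsto \tfrac1t\langle f-P_tf,f\rangle$, one has for all $t>0$
\[
\frac{1}{2t}\iint_{M^2}(f(x)-f(y))^2p_t(x,y)\,\mu(\mathrm dx)\mu(\mathrm dy)\le \mathcal E(f,f).
\]
Taking $t=r^{d_w}$ and using the lower bound in \eqref{ineq:hk_subg_compact_form} gives $p_t(x,y)\gtrsim \mu(B(x,r))^{-1}$ for $x,y\in B(x_0,r)$. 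This yields a weak (global-energy) Poincar\'e inequality on $B(x_0,r)$. To localise the right-hand side to $B(x_0,\kappa r)$, I would run the same argument for the killed semigroup on $B(x_0,\kappa r)$. The near-diagonal lower bound for the Dirichlet heat kernel follows from the two-sided bounds by the standard chain/exit-time argument, using $\mathbb P_x(\tau_{B(x,\kappa r)}\le r^{d_w})\le \tfrac12$ for large $\kappa$, which is a consequence of the upper bound. For $r\ge \mathrm{diam}(M)$ the ball is all of $M$, and I would use the spectral gap that follows from Lemma~\ref{lem:long time semi-group norm bound}.

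\textbf{Cutoff Sobolev inequality.} This is the main obstacle, since no elementary argument is known. I would follow the Andres--Barlow route. First, the upper bound gives the exit-time estimate $\mathbb E_x\tau_{B(x,r)}\asymp r^{d_w}$, together with an elliptic Harnack inequality from the two-sided bounds. Next, take as cutoff function a suitable average of the equilibrium potentials, i.e.\ Green functions of annuli $B(x,R+r)\setminus B(x,R)$. Their energy measures are controlled via the capacity bound $\mathrm{cap}(B(x,R),B(x,R+r))\lesssim \mu(B(x,R+r))r^{-d_w}$ and the Harnack chain. For $R+r$ beyond $\mathrm{diam}(M)$, $\varphi\equiv1$ is admissible, so only scales $r\lesssim \mathrm{diam}(M)$ matter, where our local-in-time bounds suffice. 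Alternatively, I would simply invoke \cite[Theorem 2.8]{kajino2020singularity} (equivalently the Andres--Barlow theorem) after checking that its hypotheses are exactly \eqref{ineq:hk_subg_compact_form} plus regularity and strong locality.
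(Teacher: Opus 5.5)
Your proposal rests on the same step as the paper, which gives no argument of its own and simply cites \cite[Theorem 2.8]{kajino2020singularity} (the Grigor'yan--Hu--Lau/Andres--Barlow characterisation); your fallback of invoking that theorem, after checking that \eqref{ineq:hk_subg_compact_form} holds for all $t>0$ and that scales beyond $\mathrm{diam}(M)$ are trivial, is exactly the paper's route. Your own sketches should not replace that citation: the killed-semigroup localisation of \eqref{ineq:pioncare} is not ``the same argument'', because the spectral bound $\frac1t\langle g-P^U_tg,g\rangle\le\mathcal E(g,g)$ only holds for $g\in\mathcal F_U$, whereas \eqref{ineq:pioncare} concerns arbitrary $f\in\mathcal F$ (cutting $f$ off reintroduces $\int f^2\,\mathrm d\Gamma(\eta,\eta)$, which is precisely what \eqref{ineq:cutoff sob} controls); likewise a capacity bound controls only the total energy of a cutoff, not $\int f^2\,\mathrm d\Gamma(\varphi,\varphi)$ for every $f$, which is the whole content of \eqref{ineq:cutoff sob}.
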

\begin{remark}
    See \cite{anttila2025approach} for the converse of Theorem \ref{thm:subg hk imply poincare and CS}. 
\end{remark}

\begin{proof}[Proof of Lemma \ref{lem:cacciopoli}]
    We fix arbitrary $x \in M$ and $r \in (0,\mathrm{diam}(M)]$ in this proof, and denote $\sigma B:= B(x,\sigma r)$ for any $\sigma >0$. We will let $\varphi$ be the cutoff function for $B\subset 2B$ provided by \eqref{ineq:cutoff sob} with $R=r$.
    
    Let $f \in \mathcal D_2(L)\cap \mathcal C_M$ and set $\bar{f} := f - f_{2B}$. By the strong locality of $(\mathcal E, \mathcal F)$, we see $\Gamma(f,f) = \Gamma(\bar{f},\bar{f})$. Hence, we will work with $\bar{f}$.    
    
    By the strong locality of the Dirichlet form, the Leibniz rule gives:
    \begin{align*}
        \mathrm{d}\Gamma(\bar{f}, \varphi^2 \bar{f}) = \varphi^2 \mathrm{d}\Gamma(\bar{f},\bar{f}) + 2\varphi \bar{f} \mathrm{d}\Gamma(\bar{f},\varphi).
    \end{align*}
    Integrating this over the space, and applying the definition of the generator $L$ for $f \in \mathcal D_2(L)$ (noting that $Lf = L\bar{f}$), we obtain:
    \begin{align*}
       \int_{2B} \varphi^2 \bar{f} (Lf) \mathrm{d}\mu =\int_M L(\overline{f})\varphi^2 \overline{f} \mathrm d\mu = \int_M \mathrm d \Gamma\pr{\overline{f}, \varphi^2 \overline{f}} =   \int_{2B} \varphi^2 \mathrm{d}\Gamma(f,f) + 2 \int_{2B} \varphi \bar{f} \mathrm{d}\Gamma(f,\varphi).
    \end{align*}
    Rearranging to isolate the energy of $f$ on the left side:
    \begin{align}\label{eq:direct_cacc}
        \int_{2B} \varphi^2 \mathrm{d}\Gamma(f,f) = \int_{2B} \varphi^2 \bar{f} (Lf) \mathrm{d}\mu - 2 \int_{2B} \varphi \bar{f} \mathrm{d}\Gamma(f,\varphi).
    \end{align}
    
    We bound the two terms on the right-hand side of \eqref{eq:direct_cacc} in absolute value. 
    For the first term, applying Young's inequality ($ab \le \frac{\epsilon}{2}a^2 + \frac{1}{2\epsilon}b^2$ with $\epsilon = r^{-d_w}$), and noting $\varphi \le 1$:
    \begin{align}\label{eq:bound_1}
        \left| \int_{2B} \varphi^2 \bar{f} (Lf) \mathrm{d}\mu \right| \le \frac{r^{-d_w}}{2} \int_{2B} |\bar{f}|^2 \mathrm{d}\mu + \frac{r^{d_w}}{2} \int_{2B} |Lf|^2 \mathrm{d}\mu.
    \end{align}
    
    For the second term, we apply the Cauchy-Schwarz inequality for the energy measure $\mathrm{d}\Gamma$, followed by the integral Cauchy-Schwarz inequality, and finally Young's inequality in the form $2ab \le \frac{1}{2}a^2 + 2b^2$:
    \begin{align}\label{eq:bound_2}
        2 \left| \int_{2B} \varphi \bar{f} \mathrm{d}\Gamma(f,\varphi) \right| 
        &\le 2 \left( \int_{2B} \varphi^2 \mathrm{d}\Gamma(f,f) \right)^\frac{1}{2} \left( \int_{2B} |\bar{f}|^2 \mathrm{d}\Gamma(\varphi,\varphi) \right)^\frac{1}{2} \nonumber \\
        &\le \frac{1}{2} \int_{2B} \varphi^2 \mathrm{d}\Gamma(f,f) + 2 \int_{2B} |\bar{f}|^2 \mathrm{d}\Gamma(\varphi,\varphi).
    \end{align}
    Substituting \eqref{eq:bound_1} and \eqref{eq:bound_2} back into \eqref{eq:direct_cacc} and subtracting $\frac{1}{2}\int_{2B} \varphi^2 \mathrm{d}\Gamma(f,f)$ from both sides leaves:
    \begin{align}\label{eq:intermediate_cacc}
        \frac{1}{2} \int_{2B} \varphi^2 \mathrm{d}\Gamma(f,f) \le \frac{r^{-d_w}}{2} \int_{2B} |\bar{f}|^2 \mathrm{d}\mu + \frac{r^{d_w}}{2} \norm{Lf}_{L^2_{2B}}^2 + 2 \int_{2B} |\bar{f}|^2 \mathrm{d}\Gamma(\varphi,\varphi).
    \end{align}
    By Theorem \ref{thm:subg hk imply poincare and CS} and \eqref{ineq:cutoff sob}, we see 
    \begin{align*}
        2\int_{2B} \abs{\overline{f}}^2 \mathrm d\Gamma(\varphi,\varphi) \le \frac{1}{4}\int_{2B } \varphi^2\mathrm{d}\Gamma(f,f) + \frac{2C_S}{r^{d_w}}\int_{2B\backslash B} \abs{\overline{f}}^2 \mathrm d\mu.
    \end{align*}
    Inserting this back into \eqref{eq:intermediate_cacc}, and unitize the fact that $\varphi$ is bounded between $0$ and $1$ to see the desired result. 
\end{proof}

\begin{proof}[Proof of Lemma \ref{lem:besov_norm_compare}]
For (1): suppose $\alpha \le \beta$, then
\begin{align*}
  \norm{u}_{\mathcal B^\alpha_{\infty}} = \norm{P_1u}_{L_M^\infty}+ \sup_{t \in (0,1]} t^{-\frac{\alpha}{d_w}} \norm{Q^{(b)}_tu}_{L^\infty_M}\le \norm{P_1u}_{L_M^\infty}+ \sup_{t \in (0,1]} t^{-\frac{\beta}{d_w}} \norm{Q^{(b)}_tu}_{L^\infty_M}=\norm{u}_{\Binf{\beta}}.
\end{align*}
For (2), we note if $\alpha \le 0$,
\begin{align*}
  \norm{u}_{\mathcal B^\alpha_{\infty}} = \norm{P_1u}_{L_M^\infty}+ \sup_{t \in (0,1]} t^{-\frac{\alpha}{d_w}} \norm{Q^{(b)}_tu}_{L^\infty_M}\lesssim\norm{u}_{L^\infty_M}+ \sup_{t \in (0,1]} t^{-\frac{\alpha}{d_w}} \norm{u}_{L^\infty_M}\lesssim \norm{u}_{L^\infty_M}.
\end{align*}
For (3), we take $\beta \in (0, \alpha\wedge \Theta)$, then
\begin{align*}
  \norm{u}_{L^\infty_M}\le \norm{u}_{{\mathcal C}^\beta} \lesssim \norm{u}_{\mathcal B^\beta_{\infty}}\lesssim \norm{u}_{\mathcal B_\infty^\alpha}.
\end{align*}
\end{proof}

\subsection{Proof of Lemma \ref{lem: holder regularity of mollifed y} and Lemma \ref{lem:imcomplete beta function bounds}}
\begin{proof}[Proof of Lemma \ref{lem: holder regularity of mollifed y}]
Since for $n \ge 2$, the Wick powers $Y^{:n:}_\varepsilon$ of $Y_\varepsilon$ are defined via Hermit polynomials and the space of H\"{o}lder continuous functions forms an algebra, it is enough to show both $Y_\varepsilon \in \mathcal C^{\delta_1}( \mathcal C_M^{\delta_2})$ for some  $\delta_1,\delta_2>0$, and the renormalization counterterm $C_\varepsilon:M\to \R$ given by Definition \ref{def:wick renormaliztion of distributions} is in $\mathcal C^{\delta_2}_M$ for some sufficiently small $\delta_2$. 

Suppose $Y$ is the solution to \eqref{eq: EW eqn} with $Y_0 \in \Binf{-\alpha}$. Then for every $\varepsilon>0$ and $(t,x) \in \R_+\times M$,
\begin{align*}
    Y_\varepsilon(t,x)&= P_{t}P_{\varepsilon }\pr{Y_0}(x) + \int_0^t \int_M p_{t-s+\varepsilon}(x,y) \xi\pr{\mathrm dy, \mathrm ds}=:P_t P_\varepsilon\pr{Y_0}(x)+\widetilde Y_\varepsilon(t,x).
\end{align*}

Recall for $G_\varepsilon:M^2 \to \R$ from \eqref{eq:green function}. It holds uniformly in $\varepsilon \in (0,1]$  and $y,z ,x\in M$ that 
\begin{align*}
  \abs{ G_\varepsilon(x,y) - G_\varepsilon(x,z)}=& 2\abs{\int_0^\infty e^{-t} \pr{p_{t+2\varepsilon}(x,y) - p_{t+2\varepsilon}(x,z) }\mathrm dt}\\
  &= 2\abs{\int_0^\infty e^{-t} \int_M p_{t}(x,w)\pr{p_{2\varepsilon}(w,y) - p_{2\varepsilon}(w,z)}\mu(\mathrm dw) \mathrm dt}\\
    &\overset{\eqref{ineq:holder_kernel}}{\lesssim}\varepsilon^{-\frac{d_h}{d_w} - \Theta} d(x,y)^\Theta.
\end{align*}
Hence uniformly in $\varepsilon>0$ and $x,x' \in M$,
\begin{align*}
    \abs{C_{\varepsilon}(x) - C_\varepsilon(x')} &= \abs{G_\varepsilon(x,x) - G_\varepsilon(x,x') +G_\varepsilon(x,x') - G_\varepsilon(x',x')}\lesssim \varepsilon^{-\frac{d_h}{d_w}-\Theta }d(x,x')^\Theta.
\end{align*}
Also, by Lemma \ref{lem:long time heat kernel bounds}, for each $\varepsilon \in (0,1]$
\begin{align*}
    \sup_{x\in M} C_\varepsilon(x) &\overset{\eqref{ineq:sgu}}{\lesssim}\int_0^1 (t+2\varepsilon)^{-\frac{d_h}{d_w}} \mathrm dt + \int_1^\infty e^{-t} \mathrm dt,
\end{align*}
which is finite. Hence, for all $\varepsilon \in (0,1],\,C_\varepsilon\in \mathcal C^\frac{\Theta}{2}_M.$

Next, we show that for each $\varepsilon >0$ and $T>0$, the process $Y_\varepsilon$ is jointly continuous on $[0,T]\times M$ via the Kolmogorov's continuity lemma. Let $t \in [0,T]$ and $x,y \in M$, consider
\begin{align*}
  \norm{\widetilde Y_\varepsilon(t,x) - \widetilde Y_\varepsilon(t,x')}_{L^{2p}_\Omega}^2&\lesssim \int_0^t \int_M \int_M \pr{p_{t-s}(x',y_1) - p_{t-s}(x,y_1)}\\
  &\qquad\qquad \times \pr{p_{t-s}(x',y_2) - p_{t-s}(x,y_2)}p_\varepsilon(y_1,y_2)\mu(\mathrm dy_1) \mu(\mathrm dy_2) \mathrm ds\\
  &=\int_0^t\int_M \pr{p_{t-s+\varepsilon}(x',y_1) - p_{t-s+\varepsilon}(x,y_1)}\pr{p_{t-s}(x',y_1) - p_{t-s}(x,y_1)}\mu(\mathrm dy_1)\\
  &=\int_0^t p_{2(t-s)+\varepsilon}(x',x') - 2p_{2(t-s)+\varepsilon}(x',x)+p_{2(t- s)+\varepsilon}(x,x)\mathrm ds\\
  &\overset{\eqref{ineq:holder_kernel}}{\lesssim}\int_0^t \pr{\frac{d(x,x')}{(t-s+\varepsilon)^\frac{1}{d_w}}}^\Theta (t-s+\varepsilon)^{- \frac{d_h}{d_w}}\mathrm ds\\
  &\lesssim d(x,x')^\Theta \varepsilon^{1 - \frac{d_h+\Theta}{d_w}}.
\end{align*}

Similarly, we have
\begin{align*}
  \norm{\widetilde Y_\varepsilon(t,x) - \widetilde Y_\varepsilon(t',x)}_{L^{2p}_\Omega}^2&\lesssim \int_0^t\int_M\pr{p_{t'-s+\varepsilon}(x,y) - p_{t-s+\varepsilon}(x,y)} \pr{p_{t'-s}(x,y) - p_{t-s}(x,y)} \mu(\mathrm dy) \mathrm ds\\
  &\qquad +\int_t^{t'} p_{2(t'-s)+\varepsilon}(x,x) \mathrm ds\\
  &\lesssim\int_0^t \pr{p_{2(t'-s)+\varepsilon}(x,x) - 2p_{t+t'-2s+\varepsilon}(x,x) -p_{2(t-s)+\varepsilon}(x,x)} \mathrm ds\\
  &\qquad+\int_t^{t'} p_{2(t'-s)+\varepsilon}(x,x) \mathrm ds.
\end{align*}
Let us call the two terms on the right hand side $I_1$ and $I_2$ respectively. We have
\begin{align*}
  I_1 &\lesssim \int_0^t (2t-2s+\varepsilon)^{- \frac{d_h}{d_w}-1}\mathrm ds \abs{t-t'}\lesssim \varepsilon^{- \frac{d_h}{d_w} } \abs{t-t'}
\end{align*}
and
\begin{align*}
  I_2 &\overset{\eqref{ineq:sgu}}{\lesssim }\int_t^{t'}(2t-2s+\varepsilon)^{-\frac{d_h}{d_w}} \mathrm ds\lesssim\varepsilon^{-\frac{d_h}{d_w}}\abs{t-t'}.
\end{align*}
Therefore, by triangular inequality,
\begin{align*}
  \norm{\widetilde Y_\varepsilon(t,x) -\widetilde Y_\varepsilon(t',x')}_{L^{2p}} \lesssim \pr{d(x,x')^\frac{\Theta}{2} + \abs{t-t'}^\frac{1}{2}} \varepsilon^{- \frac{d_h}{2d_w}}.
\end{align*}
Hence by the Kolmogorov's continuity lemma (c.f.\cite[Theorem 1.1]{kratschmer2023kolmogorov}), we see for all $q \ge 1$ and  sufficiently small $\delta_1,\delta_2>0$,
\begin{align}\label{ineq: y epsilon joint holder expectation bound}
  \norm{ \sup_{(t,x),\, (t',x') \in [0,T]\times M \atop (t,x) \neq (t',x') } \frac{\abs{\widetilde Y_\varepsilon(t,x) - \widetilde Y_\varepsilon(t',x')}}{\abs{t-t'}^{2\delta_1} + d(x,x')^{2\delta_2}}}_{L^{q}_\Omega} \lesssim 1.
\end{align}
In addition, we see from \cite[Theorem 11.18]{ledoux2013probability} that for each $T>0$ , $ q \ge 1$ and $\varepsilon >0$,
\begin{align}\label{ineq: y epsilon sup bound}
  \norm{\sup_{(t,x) \in [0,T]\times M} \widetilde Y_\varepsilon(t,x)}_{L^q_\Omega} \lesssim1.
\end{align}
Hence, we see from a sequence of elementary inequalities that for any $\varepsilon >0$, $q \ge 1$ and $T>0$, there are sufficiently small (and possibly different) $\delta_1,\delta_2>0$
\begin{align*}
  \norm{\norm{\widetilde Y_\varepsilon}_{\mathcal C_T^{\delta_1} \mathcal C_M^{\delta_2}} }_{L^q_\Omega}&= \norm{\sup_{t \in [0,T]} \norm{\widetilde Y_\varepsilon(t,\cdot)}_{\mathcal C_M^{\delta_2}} +\sup_{t,t' \in [0,T]\atop t \neq t'} \frac{\norm{ \widetilde Y_\varepsilon(t,\cdot) - \widetilde Y_\varepsilon(t',\cdot)}_{\mathcal C_M^{\delta_2}}}{\abs{t-t'}^{\delta_1}}}_{L^q_\Omega}< \infty.
\end{align*}
Finally, we see from Proposition \ref{prop:holder=besov}, for $\delta_2 \in (0,\Theta)$, we have
\begin{align*}
  \norm{\norm{\widetilde Y_\varepsilon}_{\mathcal C_T \mathcal B^{\delta_2}_{\infty}}}_{L^q_\Omega} < \infty.
\end{align*}
Finally, by Lemma \ref{lem:q_regulariz} and Lemma \ref{lem:strong continuity of heat semi group}, we see for each $\varepsilon \in (0,1]$, $P_tP_\varepsilon\pr{ Y_0} \in \mathcal C^{\delta_1}\mathcal C^{\delta_2}_M$. Collecting all terms above to see the desired result.  
\end{proof}
\begin{proof}[Proof of Lemma \ref{lem:imcomplete beta function bounds}]
We split the integral at $s = u$:
\begin{align*}
  \mathrm I(u) = \int_0^u s^{-a}(u+s)^{-b} \mathrm ds+\int_u^1 s^{-a}(u+s)^{-b} \mathrm ds =: \mathrm I_1(u) + \mathrm I_2(u).
\end{align*}
For $0< s \le u$, we have $u < u+s \le 2 u$ so $(2u)^{-b} \le  (u+s)^{-b} < u^{-b}$. Hence uniformly in $u \in (0,1]$,
\begin{align*}
  \mathrm I_1(u) \asymp u^{-b} \int_0^u s^{-a} \mathrm ds \asymp u^{1-b-a}.
\end{align*}
Now for $u \le s \le 1$, we have $s \le u+s \le 2s$, so $(2s)^{-b} \le (u+s)^{-b} \le s^{-b}$. Hence uniformly in $u \in (0,1]$,
\begin{align*}
  \mathrm I_2(u) \asymp \int_{u}^1 s^{-b-a} \mathrm ds \asymp
  \begin{cases}
    \abs{1-u^{1-a-b}},& \text{ for } a+b \neq 1, \\
    \log \pr{1/u},&\text{ for }a+b = 1.
  \end{cases}
\end{align*}
Collecting all terms above we see
\begin{align*}
  \mathrm I(u) = \mathrm I_1(u) + \mathrm I_2(u) \asymp
  \begin{cases}
    1,& \text{ if }a+b <1,\\
    1+\log(1/u),&\text{ if }a+b = 1,\\
    u^{1-a-b},&\text{ if }a+b >1,
  \end{cases}
\end{align*}
which is the desired result.
\end{proof}

\section*{Declaration of competing interest}
The authors declare that they have no known competing financial interests or personal relationships that could have appeared to influence the work reported in this paper.

\section*{Data availability}
Data sharing is not applicable to this article, as no datasets were generated or analyzed during the current study.

\newpage
\bibliographystyle{alpha}
\bibliography{ref}

\end{document}